\documentclass[11pt]{article}
\usepackage{amssymb}
\usepackage{graphicx}
\usepackage{xcolor} 
\usepackage{tensor}
\usepackage{fullpage} 
\usepackage{amsmath}
\usepackage{amsthm}
\usepackage{verbatim}
\usepackage{hyperref}
\usepackage{enumitem}
\usepackage{mathrsfs}
\setlist[enumerate]{leftmargin=1.5em}
\setlist[itemize]{leftmargin=1.5em}

\providecommand{\MR}{\relax\ifhmode\unskip\space\fi MR }

\providecommand{\href}[2]{#2}

\setlength{\marginparwidth}{.6in}
\setlength{\marginparsep}{.2in} 
\usepackage{seqsplit} 
\definecolor{green}{rgb}{0,0.8,0} 



\newtheorem{theorem}{Theorem}[section]

\newtheorem{lemma}[theorem]{Lemma}
\newtheorem{proposition}[theorem]{Proposition}

\theoremstyle{definition}

\theoremstyle{remark}
\newtheorem{remark}[theorem]{Remark}

\numberwithin{equation}{section}

\newcommand{\nnrm}[1]{{\vert\kern-0.25ex\vert\kern-0.25ex\vert #1 
    \vert\kern-0.25ex\vert\kern-0.25ex\vert}}

\newcommand{\ud}{\mathrm{d}}

\newcommand{\ddt}{\frac{\mathrm{d}}{\mathrm{d}t}}

\newcommand{\tht}{\theta}


\newcommand{\bfb}{{\bf b}}

\newcommand{\bfu}{{\bf u}}


\newcommand{\bbC}{\mathbb C}

\newcommand{\bbN}{\mathbb N}

\newcommand{\bbR}{\mathbb R}

\newcommand{\bbT}{\mathbb T}

\newcommand{\bbX}{\mathbb X}

\newcommand{\bbZ}{\mathbb Z}




\vfuzz2pt 
\hfuzz2pt 


\begin{document}

\title{Asymptotic stability and sharp decay rates to the linearly stratified Boussinesq equations in horizontally periodic strip domain}
\author{Juhi Jang\thanks{Department of Mathematics, University of Southern California, Los Angeles, CA 90089, USA and Korea Institute for Advanced Study, Seoul, Korea} \and Junha Kim\thanks{School of Mathematics, Korea Institute for Advanced Study, 85 Hoegi-ro, Dongdaemun-gu, Seoul 02455, Korea.}} 
\date{\today}



\maketitle


\begin{abstract}
We consider an initial boundary value problem of the multi-dimensional Boussinesq equations in the absence of thermal diffusion with velocity damping or velocity diffusion under the stress free boundary condition in horizontally periodic strip domain.   
We prove the global-in-time existence of classical solutions in high order Sobolev spaces 
satisfying high order compatibility conditions around the linearly stratified equilibrium, the convergence of the temperature to the asymptotic profile, and sharp decay rates of the velocity field and temperature fluctuation in all intermediate norms based on spectral analysis combined with energy estimates. To the best of our knowledge, our results provide first sharp decay rates for the temperature fluctuation and the vertical velocity to the linearly stratified Boussinesq equations in all intermediate norms. 

\end{abstract}


\section{Introduction}

We consider the Boussinesq equations for buoyant fluids

\begin{equation}\label{Boussines}
\left\{
\begin{array}{ll}
v_t +\nu (-\Delta)^\alpha v +(v \cdot \nabla)v = -\nabla p +\rho e_d,\\
\rho_t  +\kappa (-\Delta)^\beta \rho
+(v \cdot \nabla)\rho=0,\\
\operatorname{div}\,\, v =0,\\
v(x, 0)=v_0(x),\,\, \rho(x, 0)=\rho_0(x),
\end{array}
\right.
\end{equation}
where $v$, $p$, and $\rho$ denote the fluid velocity field, scalar pressure and density (or  temperature) respectively. The parameter $\alpha\ge 0$ and $\beta\ge 0$  
represent the strength of dissipation, and  thermal diffusion, 
while 
the parameters $\nu\ge0 $ and $\kappa\ge 0$ stand for the nonnegative constant fluid viscosity and thermal diffusivity, respectively. 
 The $d$-dimensional vector $e_d$ stands for $(0, \cdots, 0, 1)^T$. 
 
The Boussinesq equations \eqref{Boussines} arise in geophysical fluid dynamics to model and study atmospheric and oceanographic flows \cite{Majda, Pedlosky} and describe interesting physical phenomena such as Rayleigh-B\'enard convection \cite{ES, Getling} and turbulence \cite{CD96}. From a mathematical point of view, the Boussinesq equations are intimately tied to the Euler and Navier-Stokes equations and they share important features such as the vortex stretching. In fact, the two-dimensional inviscid Boussinesq equations can be viewed as the three-dimensional axisymmetric Euler equations for swirling flows \cite{MB01}. 
Due to its physical and mathematical relevance, there have been a lot of works and progress made on the Boussinesq system in the past decades: for instance, see \cite{AH07, BN, Cao-Wu, Chae, CN97, CH, Danchin, DLS, EW, HL, HKZ, JKL, Ju, KPY, KW, LLT, LPZ, LT17, LWZ1, Tak1, Tak2, Wan, Zhai, Wid} and references therein on the local, global well-posedness and regularity problem. 

On the other hand, it is well-known that the system \eqref{Boussines} has the exact solutions, called hydrostatic equilibrium, with the balance equation 
	\[
	v = 0, \quad \frac{\partial}{\partial x_{d}}p(x_d)= \rho(x_d). 
	\]
In recent years, the 
stability around the linearly stratified state $(v_s, \rho_s, p_s):=(0, \cdots, 0, x_d, x_d^2/2)$ has been a subject of active research in the presence of dissipation where damping is 
understood as a limit of fractional diffusion. For $d=2$, there exist many stability results (see \cite{BN, BCP} and references therein),  
while less works are available for other space dimension. Among others, asymptotic stability with velocity damping was studied in $\mathbb R^3$  \cite{Dong}, and the stability result has been extended to $\mathbb R^d$ with more general initial data in \cite{KL}.




In this paper, we focus on the domain with boundary, in particular $\Omega=\bbT^{d-1} \times [-1,1]$. This type of domain with  $\rho = 1$ and $\rho = -1$ fixed on the bottom boundary and top boundary has been used to demonstrate the Rayleigh-B\'enard convection \cite{ES, Getling}, which leads to the instability of the solution by a continuously heated bottom fluid. On the contrary, the opposite case where  $\rho = -1$ and $\rho = 1$ on the bottom and top boundary respectively stabilizes the system. We will show stabilizing aspects of the latter by analyzing the dynamics near linearly stratified hydrostatic equilibrium $(v_s, \rho_s, p_s)=(0, \cdots, 0, x_d, x_d^2/2)$. 
We consider two cases: $\alpha = 0$ (velocity damping) and $\alpha=1$ (velocity diffusion) without thermal diffusion $(\kappa = 0)$. When $\alpha=0$, we take the no-penetration boundary condition $v \cdot n = 0$ and when $\alpha = 1$, we impose the stress free boundary condition, also known as the Lions boundary condition $v \cdot n = 0$ and $\operatorname{curl}v \times n = 0$, where the temperature is fixed at $\rho_s = -1$ and $\rho_s = 1$ on the each boundary. Here, $n$ denotes the outward unit normal vector to $\partial \Omega$. Let us set
\[
\rho(x,t)= x_d+\theta(x,t),\qquad p(x,t)=x_d^2/2 +P(x,t).
\]
Then, the perturbed system is given by
\begin{equation}\label{EQ}
\left\{\begin{aligned}
&v_t + (-\Delta)^{\alpha}v + (v \cdot \nabla)v = -\nabla P + \theta e_d, \qquad \mathrm{div}\, v = 0, \\
&\theta_t + (v \cdot \nabla)\theta = -v_d, \\
&v(x, 0)=v_0(x),\,\, \theta(x, 0)=\theta_0(x),
\end{aligned}
\right.
\end{equation}
where the boundary conditions of the velocity field are preserved and $\theta$ vanishes on $\partial \Omega$ in each case $\alpha = 0$ and $\alpha = 1$ with $\theta_0|_{\partial\Omega} =0$. 

We now discuss some relevant prior works regarding \eqref{EQ} starting with the case $\alpha = 0$. 
Castro, C\'ordoba, and Lear \cite{CCL} showed 
 the asymptotic stability of \eqref{EQ} for $d=2$. In particular, the authors showed that high order compatibility conditions are satisfied for well-prepared data, and introduced 
proper solution spaces $X^m(\Omega)$, $Y^m(\Omega) \subset H^m(\Omega)$ with orthonormal bases (see Section \ref{sec:2.2} for the definitions). For their main result, 
for $m \in \bbN$ with $m \geq 17$, 
the small data global existence with temporal decay estimate $(1+t)^{\frac {m-7}8} (\| v(t) \|_{H^4} + \| \bar{\tht} (t) \|_{H^4}) \leq C$ was obtained, where $\bar{\tht}(t) := \tht(t) - \int_{\bbT} \tht(t,x) \,\ud x_1$. 
It is worth pointing out that the temporal decay rates of $H^4$-norm increase as $m$ gets larger, namely the solutions are  
more regular. Next we consider the case $\alpha = 1$. In $d=2$, long time behavior was first considered by Doering et al \cite{DWZZ} for $v\in H^2$ and $\theta\in H^1$ and explicit decay rates were given in $\mathbb T^2$ by Tao et al \cite{TWZZ} using the spectral analysis. Recently, Dong and Sun considered the asymptotic stability problem on the infinite flat strip $\bbR^{d-1} \times (0,1)$ for $d = 2$ and $3$ in \cite{DS1} and \cite{DS2} respectively, and Dong \cite{Dong2} obtained the stability result on $\bbT \times (0,1)$.

In the aforementioned works, some explicit decay rates were obtained with high regularity index $m$ or global existence (2D) with more general initial data was obtained without explicit decay rates. However, the convergence of the temperature fluctuation and its optimal equilibration rate has remained elusive. The goal of this paper is to establish the global existence in $H^m$, $m>2+\alpha+ \frac{d}{2}$ satisfying high order compatibility conditions, the convergence of $\theta$ to the asymptotic profile $\sigma$, and sharp decay rates of $(v, \theta-\sigma)$ in $H^s$ norms for all $s\in [0,m]$. We now state the main results: 


\begin{theorem}\label{thm1}
	Let $d \in \mathbb{N}$ with $d \geq 2$ and let $m \in \mathbb{N}$ satisfying $m > 3+\frac d2$. Then there exists a constant $\delta > 0$ such that if initial data $(v_0,\,\theta_0) \in \bbX^m \times X^m(\Omega)$ with  $\operatorname{div} v_0 = 0$, $\int_{\Omega} v_0 \,\ud x = 0$, and $\| (v_0, \theta_0) \|_{H^m}^2 < \delta^2$, then \eqref{EQ} with $\alpha = 1$ possesses a unique global classical solution $(v,\,\theta)$ satisfying 
	\begin{equation*}
		v \in C([0,\infty); \bbX^m(\Omega)) \cap L^2([0,\infty); \bbX^{m+1}(\Omega)), \qquad \tht \in C([0,\infty); X^m(\Omega))
	\end{equation*}
	with
	\begin{equation}\label{sol_bdd_1}
		\sup_{t \in [0,\infty)} \| (v,\tht)(t) \|_{H^m}^2 + \int_0^{\infty} \| \nabla v(t) \|_{H^m}^2 \,\mathrm{d}t+ \int_0^{\infty} \| \nabla_h \theta(t) \|_{H^{m-2}}^2 \,\mathrm{d}t \leq 4 \|  (v_0,\tht_0) \|_{H^m}^2.
	\end{equation}
	Moreover, there exists a function 
	\begin{equation}\label{df_sgm}
	\sigma(x_d) := \int_{\bbT^{d-1}} \tht_0\,\ud x_h - \int_{\bbT^{d-1}} \int_0^\infty \left( (v \cdot \nabla)\tht + v_d \right) \,\ud t \ud x_h
	\end{equation} such that
	\begin{equation}\label{tem_1}
		(1+t)^{\frac {m-s}4} \| \tht (t) - \sigma(x_d) \|_{H^s} + (1+t)^{\frac 12 + \frac {m-s}4} || v(t) \|_{H^s} + (1+t)^{1+\frac {m-s}4} \| v_d (t) \|_{H^s} \leq C
	\end{equation}
	for any $s \in [0,m]$.
\end{theorem}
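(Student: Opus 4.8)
The plan is to combine a nonlinear energy estimate (to propagate the $H^m$ bound and integrability of $\nabla v$, $\nabla_h\theta$) with sharp linear decay estimates obtained from spectral analysis of the linearized operator, using a bootstrap argument. First I would set up the linearized system around the stratified equilibrium: writing $L$ for the operator governing $(v,\theta)\mapsto(-(-\Delta)v-\nabla P+\theta e_d,\,-v_d)$ with the divergence-free constraint and the stress-free/Dirichlet boundary conditions, one diagonalizes $L$ in the orthonormal bases of $\mathbb{X}^m(\Omega)$ and $X^m(\Omega)$ described in Section~\ref{sec:2.2} (Fourier in $x_h\in\mathbb{T}^{d-1}$ and $\sin/\cos$ in $x_d$). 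The crucial point is the behavior of the zero horizontal frequency mode: there $v_h$ decouples and decays like the heat semigroup, $v_d$ and the horizontal-mean of $\theta$ satisfy a damped wave/heat coupling forcing $v_d\to 0$ but leaving a nontrivial limit for $\int_{\mathbb{T}^{d-1}}\theta\,\ud x_h$ — this is exactly the origin of the asymptotic profile $\sigma(x_d)$ in \eqref{df_sgm}. For nonzero horizontal frequencies $k_h\neq 0$, the $2\times2$ (block) symbol has eigenvalues whose real parts are bounded below by $c\,\min(1,|k_h|^2)$ after accounting for the viscosity, which yields exponential-in-$t$ decay at fixed frequency and, via the standard Fourier-splitting / frequency-decomposition argument, algebraic decay $(1+t)^{-(m-s)/4}$ when measuring an $H^s$ norm of data that lives in $H^m$; the extra $(1+t)^{-1/2}$ for $v$ and $(1+t)^{-1}$ for $v_d$ come from the additional powers of $|k_h|$ in the corresponding components of the semigroup (one power of $|k_h|$ for $v$, noting $v$ carries no low-frequency-$k_h$ zero mode because $\int_\Omega v_0\,\ud x=0$ kills the genuinely constant mode, and two powers for $v_d$ since $v_d$ vanishes identically at $k_h=0$ in the limit).

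**Key steps in order.** (1) Local existence and the a priori energy estimate: differentiate \eqref{EQ} up to order $m$, test against $(v,\theta)$ in $H^m$, use $\mathrm{div}\,v=0$ and the boundary conditions to kill boundary terms, and use the structure $\theta e_d\cdot v$ versus $-v_d\theta$ cancellation so that only $\|\nabla v\|_{H^m}^2$ is dissipated from the velocity equation; the temperature has no parabolic smoothing, so $\|\nabla_h\theta\|_{H^{m-2}}$-integrability must be extracted from the coupling (this is the well-known "degenerate dissipation" mechanism — one tests the $\theta$-equation against $-v_d$-type quantities or uses the equation for $v_d$ to recover horizontal derivatives of $\theta$, losing two derivatives, which is why the exponent is $m-2$). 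Closing this with the smallness $\|(v_0,\theta_0)\|_{H^m}<\delta$ gives \eqref{sol_bdd_1} and global existence by continuation. (2) Define the candidate profile $\sigma$ by \eqref{df_sgm}; the integral $\int_0^\infty\int_{\mathbb{T}^{d-1}}((v\cdot\nabla)\theta+v_d)\,\ud t\,\ud x_h$ converges because, from the $\theta$-equation, $\partial_t\int_{\mathbb{T}^{d-1}}\theta\,\ud x_h=-\int_{\mathbb{T}^{d-1}}((v\cdot\nabla)\theta+v_d)\,\ud x_h$ and the right side is integrable in $t$ once we have the $v_d$ decay and the bilinear decay (which in turn follow from step (3)), so $\int_{\mathbb{T}^{d-1}}\theta(t)\,\ud x_h\to\sigma$ in $H^{m}_{x_d}$. (3) Bootstrap for decay: assume \eqref{tem_1} with a large constant $2C$ on a maximal interval, use Duhamel's formula $(v,\theta-\sigma)(t)=e^{tL}(v_0,\theta_0-\sigma)+\int_0^t e^{(t-s)L}\mathcal{N}(s)\,\ud s$ where $\mathcal{N}$ collects the quadratic terms $-(v\cdot\nabla)v$, $-(v\cdot\nabla)\theta$ (and the correction from shifting by $\sigma$), apply the linear decay from the spectral analysis to the linear term, and estimate the Duhamel integral by splitting $[0,t/2]\cup[t/2,t]$, using the bootstrap bounds on $\mathcal{N}$ (which are quadratic, hence decay at twice the rate, with room to spare) together with the uniform $H^m$ bound from \eqref{sol_bdd_1}; the gain is that the nonlinearity is one derivative smoother-weighted in the right places ($(v\cdot\nabla)v$ carries the extra $|k_h|$ or vanishing-at-zero-mode structure needed for the $v$ and $v_d$ rates). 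Recover \eqref{tem_1} with constant $C<2C$, closing the bootstrap.

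**Main obstacle.** The hardest part is the interplay between the \emph{lack of any dissipation in $\theta$} and the need for \emph{sharp, non-integer-rate} decay in \emph{all} intermediate $H^s$ norms simultaneously. Specifically: (i) in the energy estimate one must correctly identify the degenerate-dissipation quantity that produces the $\int_0^\infty\|\nabla_h\theta\|_{H^{m-2}}^2$ term without losing more than two derivatives and without a closure-breaking constant, handling the stress-free boundary condition so that all integration-by-parts boundary contributions vanish (the curl condition is used here); and (ii) in the decay step, proving the sharp rate requires that the spectral projection onto the "slow" part of the solution (the $k_h=0$ sector for $\theta$, giving the profile $\sigma$, and the low-$|k_h|$ tail) is precisely matched by the frequency-localized energy, i.e. one needs anisotropic interpolation between the slowly-decaying low-frequency piece measured in the $H^m$-based function spaces and the fast-decaying high-frequency piece, which is where the exponent $(m-s)/4$ — rather than the naive heat rate $(m-s)/2$ — comes from (the square root reflects that the effective dissipation for the coupled velocity–temperature system at frequency $k_h$ is of order $|k_h|^2$ only through a resolvent that degenerates, effectively behaving like $|k_h|$ for the temperature component). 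Verifying that the nonlinear terms, propagated through Duhamel, do not destroy this sharp rate — in particular that $(v\cdot\nabla)\theta$ does not feed back into the $k_h=0$ temperature mode faster than $t^{-1}$-integrably — is the delicate bookkeeping that makes the proof work.
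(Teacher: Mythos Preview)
Your outline captures the right high-level structure (energy estimate + spectral analysis + coupling via $A_{m-1}$), but there is a genuine gap in step~(1) that the paper's proof addresses explicitly and that you gloss over with ``Closing this with the smallness\ldots gives \eqref{sol_bdd_1}.'' The energy inequality (Proposition~\ref{prop_eng}) has the form
\[
\ddt(E_m^2 - A_{m-1}) + \tfrac12\|\nabla v\|_{H^m}^2 + \tfrac12\|\nabla_h\theta\|_{H^{m-2}}^2 \le C\|\theta\|_{H^m}\|\nabla v\|_{H^m}\|\nabla_h\theta\|_{H^{m-2}} + CE_m^2\,\|\nabla v\|_{L^\infty}.
\]
The first right-hand term is absorbable by smallness, but the second is not: since $\theta$ does \emph{not} decay, $E_m$ stays of order one, and the dissipation only places $\|\nabla v\|_{H^m}$ in $L^2_t$, not $L^1_t$; any attempt to close by Young's inequality leaves a residual $C\delta\,E_m^2$ on the right, producing exponential growth under Gr\"onwall. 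The paper's essential extra ingredient is a separate, uniform-in-$T$ bound on $\int_0^T\|\nabla v(t)\|_{L^\infty}\,\ud t$ (Propositions~\ref{prop_v} and~\ref{prop_tht}), obtained by writing $v$ via Duhamel in the spectral basis, summing $|\eta|\,|\mathscr{F}_c v_h|$ and $|\eta|^2|\mathscr{F}_b v_d|$ over all modes, and feeding the $\theta$-contribution back through the eigenvalue decomposition of the coupled $(v_d,\theta)$ system. This estimate is then combined with the energy inequality in a joint continuation argument. You should identify this $L^1_tL^\infty_x$ control of $\nabla v$ as the mechanism that actually closes global existence; smallness alone does not suffice.

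For step~(3), your Duhamel bootstrap is a different route from the paper's. The paper does \emph{not} assume the decay rates and close via Duhamel; instead it proves the $L^2$ decay of $\bar\theta$, $v$, $v_d$ (and their $R_h\Lambda^{-\alpha}$ variants) by Elgindi's splitting scheme: derive a differential inequality with dissipation $\|R_h^{k+1}\Lambda^{-(k+1)\alpha}\theta\|_{L^2}^2$, insert the frequency-splitting estimate
\[
\tfrac1M\|R_h^k\Lambda^{-k\alpha}\theta\|_{L^2}^2 - \|R_h^{k+1}\Lambda^{-(k+1)\alpha}\theta\|_{L^2}^2 \le M^{-p}\|R_h\theta\|_{\dot H^{m-\alpha}}^2,
\]
choose $M = 1 + ct$, multiply by $M^p$, and integrate using the already-established $L^2_t$ bound on $\|R_h\theta\|_{\dot H^{m-\alpha}}$ from \eqref{sol_bdd_1}. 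No bootstrap on the decay rate is needed. Your Duhamel approach could in principle work, but you must be careful with the term $v_d\,\partial_d\theta$ in the nonlinearity: $\partial_d\theta$ converges to $\partial_d\sigma \neq 0$ and does not decay, so that term decays only at the rate of $v_d$, and propagating it through the slow branch $e^{-\lambda_-t}$ (with $\lambda_- \sim |\tilde n|^2/|\eta|^{2+2\alpha}$ arbitrarily small) requires exactly the kind of frequency splitting the paper performs directly. Your sketch does not explain how the bootstrap absorbs this.
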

\begin{remark}
	The assumption $\int_{\Omega} v_{0}\,\ud x = 0$ is essential for the velocity field $v$ decaying in $t$ (see Lemma~\ref{lem_obs}).
\end{remark}
\begin{remark}
	Indeed for any $\epsilon >0$, there exists a constant $C>0$ such that
	\begin{equation*}
		t^{\frac 34 + \frac {m-s}4} \| v (t) \|_{H^{s-\epsilon}} \leq C
	\end{equation*}
	for any $s \in [0,m+1]$. See Proposition~\ref{prop_rmk}.
\end{remark}

\begin{theorem}\label{thm2}
	Let $d \in \mathbb{N}$ with $d \geq 2$ and let $m \in \mathbb{N}$ satisfying $m > 2+\frac d2$. Then there exists a constant $\delta > 0$ such that if initial data $(v_0,\,\theta_0) \in \bbX^m \times X^m(\Omega)$ with  $\operatorname{div} v_0 = 0$ and $\| (v_0, \theta_0) \|_{H^m}^2 < \delta^2$, then \eqref{EQ} with $\alpha = 0$ possesses a unique global classical solution $(v,\,\theta)$ satisfying 
	\begin{equation*}
		v \in C([0,\infty); \bbX^m(\Omega)) \cap L^2([0,\infty); \bbX^{m}(\Omega)), \qquad \tht \in C([0,\infty); X^m(\Omega))
	\end{equation*}
	with
	\begin{equation}\label{sol_bdd_0}
		\sup_{t \in [0,\infty)} \| (v,\tht)(t) \|_{H^m}^2 + \int_0^{\infty} \| v(t) \|_{H^m}^2 \,\mathrm{d}t+ \int_0^{\infty} \| \nabla_h \theta(t) \|_{H^{m-1}}^2 \,\mathrm{d}t \leq 4 \|  (v_0,\tht_0) \|_{H^m}^2.
	\end{equation}
	Moreover, there exists a function $\sigma(x_d)$ defined by \eqref{df_sgm} such that
	\begin{equation}\label{tem_0}
		(1+t)^{\frac {m-s}2} \| \tht (t) - \sigma(x_d) \|_{H^s} + (1+t)^{\frac 12 + \frac {m-s}2} \| v (t) \|_{H^s}  + (1+t)^{1+\frac {m-s}2} \| v_d (t) \|_{H^s} \leq C
	\end{equation}
	for any $s \in [0,m]$.
\end{theorem}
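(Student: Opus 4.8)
\emph{Strategy.} The plan for Theorem~\ref{thm2} is a coupled bootstrap for two quantities: a top-order energy--dissipation functional $\mathcal{G}_m\simeq\|v\|_{H^m}^2+\|\theta\|_{H^m}^2$, and a family of time-weighted norms $M(t)$ encoding the rates in \eqref{tem_0}. The energy bound is closed using the decay (which makes $\int_0^\infty\|\nabla v(t)\|_{L^\infty}\,\ud t$ finite, precisely when $m>2+\tfrac d2$), and the decay is closed by Duhamel's formula against the explicit linear semigroup. I would first record local well-posedness of \eqref{EQ} with $\alpha=0$ in $\mathbb{X}^m\times X^m(\Omega)$ for $m>2+\tfrac d2$ (so that $H^{m-1}$ is an algebra, $H^{m-1}\hookrightarrow L^\infty$, and the compatibility conditions built into $\mathbb{X}^m,X^m$ persist along the flow, in the spirit of \cite{CCL}), together with a continuation criterion; this reduces the theorem to the global a priori estimates below, obtained by a continuity argument from $\|(v_0,\theta_0)\|_{H^m}^2<\delta^2$.

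\emph{Linear spectral analysis.} Decomposing the $\alpha=0$ linearization $\partial_t v+v=-\nabla P+\theta e_d$, $\operatorname{div}v=0$, $\partial_t\theta=-v_d$ over horizontal Fourier modes $k$ and vertical eigenmodes of frequency $\mu_n$, each $k\neq0$ block reduces to $\partial_t\hat v_d=-\hat v_d+\gamma_{k,n}\hat\theta$, $\partial_t\hat\theta=-\hat v_d$ with $\gamma_{k,n}=|k|^2/(|k|^2+\mu_n^2)\in(0,1)$ and characteristic roots $\lambda_\pm=\tfrac12(-1\pm\sqrt{1-4\gamma_{k,n}})$. The slow branch $\lambda_+\sim-\gamma_{k,n}$ (when $\mu_n\gg|k|$) is the bottleneck, and trading $e^{-\gamma_{k,n}t}$ against the weight $(|k|^2+\mu_n^2)^{(m-s)/2}$ supplied by $H^m$ data gives, writing $e^{tL_0}$ for the linear flow and $U_0=(v_0,\theta_0)$, $\|e^{tL_0}U_0\|_{H^s}\lesssim(1+t)^{-(m-s)/2}\|U_0\|_{H^m}$ for the $\theta$-component, with an extra $(1+t)^{-1}$ for the $v_d$-component (as $\hat v_d=-\partial_t\hat\theta\sim\lambda_\pm\hat\theta$) and an extra $(1+t)^{-1/2}$ for $v_h$ (incompressibility forces the slowly-decaying part of $\hat v_h$ to satisfy $|k|\,|\hat v_h|\sim\mu_n|\hat v_d|$). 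The $k=0$ block is degenerate: $v_d\equiv0$, $\overline{v_h}$ is damped like $e^{-t}$, and $\overline\theta$ is stationary at the linear level — the source of the asymptotic profile, and the reason no condition $\int_\Omega v_0=0$ is needed here, unlike the $\alpha=1$ case.

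\emph{Energy estimates and decay.} Testing the $v$- and $\theta$-equations in $H^m$: the linear coupling cancels ($\langle\theta e_d,v\rangle_{H^m}=\langle v_d,\theta\rangle_{H^m}$), the pressure drops by incompressibility, the damping yields $+2\|v\|_{H^m}^2$, and the transport commutators are cubic. Since $\theta$ has no dissipation of its own, I would add an interaction functional $\mathcal{I}_m$ of order $m-1$ pairing $v_d$ (a horizontal Riesz transform of $v$) with $\theta$, à la \cite{CCL,TWZZ}, which injects a dissipative term $c\|\nabla_h\theta\|_{H^{m-1}}^2$ at the cost of $\|v\|_{H^m}^2$ plus cubic terms; forming $\mathcal{G}_m=\mathcal{E}_m+\eta\mathcal{I}_m\simeq\mathcal{E}_m$ with $\eta$ small and observing that every cubic term carries either a factor $\|\nabla v\|_{L^\infty}$, a factor $\|v\|_{H^m}^2$ (absorbed by the damping once $\mathcal{E}_m\lesssim\delta^2$), or an extra $\nabla_h\theta$ (absorbed by the hidden dissipation, using $\operatorname{div}v=0$ to tame the $v_d\partial_d\theta$ piece), one reaches $\tfrac{d}{dt}\mathcal{G}_m+c\big(\|v\|_{H^m}^2+\|\nabla_h\theta\|_{H^{m-1}}^2\big)\lesssim\|\nabla v(t)\|_{L^\infty}\mathcal{G}_m$. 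In parallel I would control
\begin{align*}
M(t):=\sup_{0\le\tau\le t}\sup_{s\in[0,m]}\Big[&(1+\tau)^{\frac{m-s}{2}}\|(\theta-\overline\theta)(\tau)\|_{H^s}+(1+\tau)^{\frac12+\frac{m-s}{2}}\|v(\tau)\|_{H^s}\\
&+(1+\tau)^{1+\frac{m-s}{2}}\|v_d(\tau)\|_{H^s}\Big]
\end{align*}
by writing $(v,\theta)=e^{tL_0}U_0+\int_0^t e^{(t-\tau)L_0}N\,\ud\tau$ with $N$ the quadratic transport/Leray nonlinearity, bounding the forcing via \eqref{sol_bdd_0} and the bootstrap hypothesis $M(t)\le 2C\delta$ — crucially using that $v$ decays half a power faster than $\theta$ (and $v_d$ a full power) to control $(v\cdot\nabla)\theta$ — to get $M(t)\lesssim\delta+M(t)^2$. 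Since this forces $\|\nabla v(t)\|_{L^\infty}\lesssim\|v(t)\|_{H^{1+d/2+\epsilon}}$ to decay at a rate exceeding $1$ precisely when $m>2+\tfrac d2$ (choosing the Sobolev slack $\epsilon$ small), Grönwall keeps $\mathcal{G}_m(t)\le 2\delta^2$, proving \eqref{sol_bdd_0}, and then $M(t)\le C\delta$, closing the coupled bootstrap. For the profile: $\overline{v_d}\equiv0$ and $\overline{v_d\overline\theta}=0$ reduce the horizontal average to $\partial_t\overline\theta=-\partial_d\overline{v_d(\theta-\overline\theta)}$, so $\overline\theta(t)-\sigma(x_d)=\partial_d\int_t^\infty\overline{v_d(\theta-\overline\theta)}\,\ud\tau$ with $\sigma$ as in \eqref{df_sgm}; since $v_d$ and $\theta-\overline\theta$ each decay and their rates add in a bilinear estimate, this is $O((1+t)^{-(m-s)/2})$ in $H^s$ for $s<m$ and $O(1)$ for $s=m$, which together with the bound on $M$ yields \eqref{tem_0}.

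\emph{Main obstacle.} The crux is twofold. First, constructing $\mathcal{I}_m$ so that every integration by parts is licit under the mere no-penetration condition $v\cdot n=0$ — which, unlike the stress-free condition used when $\alpha=1$, controls essentially no normal derivatives of $v$ on $\partial\Omega$ — forces one to work inside the adapted orthonormal bases of $\mathbb{X}^m$ and $X^m$ and to account for every boundary term. Second, the sharpness: a vertical-frequency-$N$ mode of $\theta$ decays only like $e^{-t/N^2}$, so essentially the whole $H^m$ regularity budget is spent to reach the rate $(1+t)^{-(m-s)/2}$, leaving no slack; closing the coupled bootstrap then requires the precise hierarchy ($v_d$ faster than $v_h$ faster than $\theta$), the fact that the non-decaying $k=0$ part feeds back only through $v_d$, and the borderline time-integrability of $\|\nabla v\|_{L^\infty}$ that dictates the hypothesis $m>2+\tfrac d2$.
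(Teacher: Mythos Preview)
Your overall plan is reasonable but differs from the paper in two places, and one of them hides a real gap.

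\textbf{Closing global existence.} The paper does \emph{not} run a coupled energy/decay bootstrap. It first closes the energy alone: Proposition~\ref{prop_eng2} isolates $\|\partial_d v_d\|_{L^\infty}$ (not all of $\|\nabla v\|_{L^\infty}$) as the only dangerous coefficient, and Section~5.2 then bounds $\int_0^T\|\partial_d v_d\|_{L^\infty}\,\ud t\le\sum_\eta\int_0^T|\eta|\,|\mathscr{F}_b v_d|\,\ud t$ by Duhamel \emph{in $\ell^1$-in-frequency}, closing against the $L^2_t$ dissipation $\int\|v\|_{H^m}^2+\int\|\nabla_h\theta\|_{H^{m-1}}^2$ already sitting in $B_m$. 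No pointwise-in-$t$ decay is used to obtain \eqref{sol_bdd_0}; the decay is proved afterwards in Section~6 with \eqref{sol_bdd_0} as input. Your coupled scheme could in principle be made to work, but it is more delicate and is not the route taken.

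\textbf{Sharp decay of $\bar\theta$.} Here the pure Duhamel plan has a genuine gap. The slow eigenvalue is $\lambda_-\sim|\tilde n|^2/|\eta|^2$, so turning $e^{-\lambda_-(t-\tau)}$ into algebraic decay costs powers of $|\eta|/|\tilde n|$, i.e.\ inverse horizontal derivatives on the forcing. But the worst piece of $N_\theta=(v\cdot\nabla)\theta$ is $v_d\,\partial_d\tilde\theta$ with $\tilde\theta=\int_{\bbT^{d-1}}\theta\,\ud x_h$ non-decaying and carrying no $\nabla_h$; you cannot extract the needed horizontal regularity there. Concretely, since $\|N_\theta\|_{H^r}$ is available only up to $r=m-1$, the smoothing $\|e^{(t-\tau)L_0}N\|_{L^2}\lesssim(1+t-\tau)^{-(m-1)/2}\|N\|_{H^{m-1}}$ combined with the best available decay $\|N_\theta(\tau)\|_{H^{m-1}}\lesssim(1+\tau)^{-3/2}$ (from $\|v_d\|_{H^{m-1}}$) yields at most $(1+t)^{-\min((m-1)/2,\,3/2)}$, not $(1+t)^{-m/2}$; your bootstrap $M(t)\lesssim\delta+M(t)^2$ therefore does not close at the sharp rate. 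The paper's fix is an energy argument with a frequency-splitting parameter $M$ (the Elgindi scheme): one uses $\tfrac1M\|\bar\theta\|_{L^2}^2-\|R_h\theta\|_{L^2}^2\le M^{-m}\|R_h\theta\|_{\dot H^{m-1}}^2$ inside the damped energy identity for $\|v\|_{L^2}^2+\|\bar\theta\|_{L^2}^2-\langle v_d,\theta\rangle$, then takes $M\sim 1+t$. The rates for $v$ and $v_d$ are obtained by iterating the same trick with extra powers of $R_h$, with a true Duhamel used only for the fast component $\langle\mathscr{F}_b\mathbf u,\mathbf a_+\rangle$.
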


\begin{remark}
The decay rates for $\tht$ and $v_d$ in Theorem~\ref{thm1} and \ref{thm2} are sharp (see section 7).
\end{remark}


To the best of our knowledge, our results provide the first sharp decay rates for the temperature fluctuation and the vertical velocity in all intermediate norms. In particular, they show  
the enhanced $L^2$ decay rate for higher order initial data, while $H^m$ decay rate doesn't change for both velocity damping and velocity diffusion. This is in contrast to parabolic equations for which higher norms enjoy faster decay rates. The regularity index $m$ required in our analysis is higher than the one required for the local existence, but it is still significantly smaller than the ones required in the previous results.  Also our results demonstrate that the velocity damping leads to faster decay than the velocity diffusion in the presence of the slip boundary, despite having the Poincar\'{e} inequality for the velocity field in hand. This is because of coupling structure between the velocity field and the temperature fluctuation of Boussinesq equations: it causes the temperature to decay much slower than the velocity field and the velocity diffusion weakens the temperature damping in high frequency. 
Moreover, the method developed in this paper is robust and applicable to the periodic box $\mathbb T^d$, and to various partially dissipative PDEs including  
non-resistive MHD and IPM (cf. \cite{JK}). 

The main difficulty comes from the non-decaying $\theta$ and weak damping in $\nabla_h\theta$, which makes the standard energy estimates alone hard to bootstrap the local theory to global theory and to capture precise decay rates. To establish the results, 
we employ the spectral analysis using the orthonormal basis associated to our domain with the slip boundary together with energy estimates, first to obtain the global existence and then to prove the decay rates by relying on the already established uniform bounds of the solutions. The relaxed condition for $m$ comes from estimating the key quantities $\int \| \nabla v(t) \|_{L^{\infty}} \,\ud t $ and $ \int \| \partial_d v_d(t) \|_{L^{\infty}} \,\ud t$ 
which appear in the energy estimates. The previous works on the stability problem of \eqref{EQ} ($d=2$) were devoted to obtaining the temporal decay estimate for $\| u(t) \|_{H^4}$ or $\| \partial_1 \operatorname{curl} v(t) \|_{H^2}$, which obviously require stronger condition for $m$ (see \cite{CCL} and \cite{Dong2}). Getting decay rates in bounded domains turns out to be more subtle than in the whole space, since $\tht$ does not decay, while it decays in the whole space. 
To prove the sharp decay rates of $(v, \theta-\sigma)$ in $H^s$ norms for all $s\in [0,m]$ in our domain, we adapt Elgindi's the splitting scheme of the density 
first used for the linearly stratified IPM equation in $\bbT^2$ \cite{Elgindi}. 
In particular, splitting the density into decaying part and non-decay part and using the boundedness of high norms obtained from the global existence part, the decay of low norms can be obtained through optimizing splitting scale of frequency in spirit of \cite{Elgindi}. 
We refer to Lemma~\ref{lem_lin} for a clear view of the sharp decay estimates for the linearized system of \eqref{EQ}, and Section~\ref{sec6} for controlling the nonlinear terms in \eqref{EQ} with the splitting scheme. 



The rest of this paper proceeds as follows. In Section \ref{sec2}, we give some preliminary results used for the paper and introduce key function spaces $X^m(\Omega), Y^{m}(\Omega), \bbX^m(\Omega)$ and their orthonormal bases. Section \ref{sec_spec} is devoted to spectral analysis of \eqref{EQ} in frequency variables and the proof of linear decay estimates.  
In Section \ref{sec:energy}, we present the energy-dissipation inequalities for \eqref{EQ}. In Section \ref{sec:global}, we extend the local existence to global-in-time result by combining the energy estimates with spectral analysis to estimate key quantities \eqref{key_quan} appearing in the energy estimates. Section \ref{sec6} is devoted to the proof of temporal decay estimates based on the spectral analysis and the splitting scheme. In Section \ref{sec:sharp}, we argue  that the decay rates are sharp by showing that the linear decay rates can't be algebraically improved. 

\section{Preliminaries}\label{sec2}
We first introduce some notations that will be used throughout this paper. Let $\langle \cdot,\cdot \rangle$ be the standard 
inner product on $\bbC^d$ for any $d \geq 2$. We use $\gamma$ as a multi-index, and let $v_h := (v_1, \cdots, v_{d-1})^T$, $x_h := (x_1, \cdots, x_{d-1})^T$, and $\nabla_h := (\partial_1, \cdots, \partial_{d-1})^T$. For any smooth function $f:\Omega 
\to \bbR$, we use the notation $$\bar{f} := f - \int_{\bbT^{d-1}} f(x) \,\ud x_h.$$

Next we investigate the average of the solution $(v,\tht)$ over time.
\begin{lemma}\label{lem_obs}
	Let $(v,\tht)$ be a smooth solution to \eqref{EQ} with $\alpha \in \{0, 1\}$. Then, there hold
	\begin{equation}\label{avg_vd}
		\int_{\bbT^{d-1}} v_d(t,x) \,\ud x_h = 0, \qquad x_d \in [-1,1]
	\end{equation}
	and
	\begin{equation}\label{avg_tht}
		\int_{\Omega} \tht(t,x) \,\ud x = \int_{\Omega} \tht_0(x) \,\ud x
	\end{equation}
	for all $t \geq 0$. Moreover, if $\alpha = 1$, then
	\begin{equation}\label{avg_vh}
		\int_{\Omega} v_h(t,x) \,\ud x = \int_{\Omega} v_h(0,x) \,\ud x,
	\end{equation}
	and if $\alpha = 0$,
	\begin{equation}\label{avg_vh2}
		\int_{\Omega} v_h(t,x) \,\ud x = e^{-t}\int_{\Omega} v_h(0,x) \,\ud x.
	\end{equation}
\end{lemma}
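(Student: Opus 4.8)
The plan is to extract each identity by integrating the appropriate component of the equation \eqref{EQ} over a suitable slice or over the whole domain $\Omega$, using the divergence-free condition, the boundary conditions, and periodicity in $x_h$ to kill the transport and pressure contributions.

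For \eqref{avg_vd}: I would first observe that from $\operatorname{div} v = 0$ we have $\nabla_h \cdot v_h + \partial_d v_d = 0$, so integrating in $x_h$ over $\bbT^{d-1}$ and using periodicity gives $\partial_d \int_{\bbT^{d-1}} v_d \,\ud x_h = 0$; hence $\int_{\bbT^{d-1}} v_d(t,x)\,\ud x_h$ is a function of $t$ alone. To see it vanishes, note the no-penetration condition $v\cdot n = 0$ forces $v_d = 0$ on $x_d = \pm 1$, so that constant-in-$x_d$ function is $0$. (Alternatively one integrates the $x_d$-component of the momentum equation, but the divergence-free + boundary argument is cleanest and works for both $\alpha=0,1$.)

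For \eqref{avg_tht}: integrate the $\tht$-equation $\tht_t + (v\cdot\nabla)\tht = -v_d$ over $\Omega$. The transport term $\int_\Omega (v\cdot\nabla)\tht\,\ud x = \int_\Omega \operatorname{div}(\tht v)\,\ud x$ vanishes since $v\cdot n = 0$ on the top and bottom and everything is periodic in $x_h$; and $\int_\Omega v_d\,\ud x = \int_{-1}^1 \big(\int_{\bbT^{d-1}} v_d\,\ud x_h\big)\,\ud x_d = 0$ by \eqref{avg_vd}. Hence $\ddt \int_\Omega \tht\,\ud x = 0$, giving \eqref{avg_tht}.

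For \eqref{avg_vh} and \eqref{avg_vh2}: integrate the horizontal components of the momentum equation over $\Omega$. The nonlinear term $\int_\Omega (v\cdot\nabla)v_h\,\ud x = \int_\Omega \operatorname{div}(v_h\, v)\,\ud x = 0$ by the same boundary/periodicity argument; the pressure term $\int_\Omega \nabla_h P\,\ud x = 0$ by periodicity; and the buoyancy term $\theta e_d$ has no horizontal component. For $\alpha = 1$ the dissipation term is $\int_\Omega (-\Delta) v_h\,\ud x = -\int_\Omega \Delta v_h\,\ud x$, which I claim vanishes: its horizontal part integrates to zero by periodicity, and $\int_\Omega \partial_d^2 v_h\,\ud x = \int_{\bbT^{d-1}} [\partial_d v_h]_{x_d=-1}^{x_d=1}\,\ud x_h$, which is zero because the Lions boundary condition $\operatorname{curl} v \times n = 0$ together with $v\cdot n = 0$ forces $\partial_d v_h = 0$ on $x_d = \pm 1$ (this should be recorded or is standard for the slip condition). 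Thus $\ddt\int_\Omega v_h\,\ud x = 0$, giving \eqref{avg_vh}. For $\alpha = 0$ the dissipation term is simply $\int_\Omega v_h\,\ud x$ (the damping term $(-\Delta)^0 v = v$), so $\ddt \int_\Omega v_h\,\ud x = -\int_\Omega v_h\,\ud x$, and Gr\"onwall (i.e., solving the ODE) yields the exponential factor $e^{-t}$ in \eqref{avg_vh2}.

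The only genuinely delicate point is verifying that the boundary terms from the $\partial_d^2$ dissipation in the $\alpha = 1$ case vanish, i.e., that the Lions/stress-free boundary condition implies $\partial_d v_h = 0$ at $x_d = \pm 1$; this is a short computation unpacking $\operatorname{curl} v \times n = 0$ together with $\partial_d v_d = -\nabla_h\cdot v_h = 0$ on the flat boundary. Everything else is a routine integration by parts exploiting $v\cdot n = 0$ and horizontal periodicity, and I would present it compactly rather than belaboring each term.
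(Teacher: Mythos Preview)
Your proposal is correct and follows essentially the same approach as the paper's proof: the paper likewise uses the divergence-free condition with the boundary condition $v_d|_{\partial\Omega}=0$ for \eqref{avg_vd}, integrates the $v_h$-equation over $\Omega$ with the observation that $\partial_d v_h = 0$ on $\partial\Omega$ (from the stress-free condition) to handle the $\alpha=1$ Laplacian term, and obtains \eqref{avg_tht} from \eqref{avg_vd} exactly as you describe. The only cosmetic difference is that for \eqref{avg_vd} the paper integrates $\partial_d v_d = -\nabla_h\cdot v_h$ over $\bbT^{d-1}\times[-1,x_d]$ directly rather than first showing $\partial_d\int_{\bbT^{d-1}} v_d\,\ud x_h = 0$, but this is the same computation.
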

\begin{proof}
	By the divergence-free condition and the boundary condition $v_d(x_h,-1) = 0$, we have $$0 = -\int_{\bbT^{d-1} \times [-1,x_d]} \nabla_h \cdot v_h \,\ud x = \int_{\bbT^{d-1} \times [-1,x_d]} \partial_d v_d \,\ud x = \int_{\bbT^{d-1}} v_d(x_h,x_d) \,\ud x_h$$ for all $x_d \in [-1,1]$. From the $v_h$ equation in \eqref{EQ}, we have $$\frac {\ud}{\ud t} \int_{\Omega} v_h \,\ud x_h + \int_{\Omega} (-\Delta)^{\alpha} v_h \,\ud x + \int_{\Omega} (v \cdot \nabla) v_h \,\ud x = -\int_{\Omega} \nabla_h P \,\ud x. $$ Integration by parts and the boundary condition for $v_d$ yield $$\int_{\Omega} (v \cdot \nabla) v_h \,\ud x = \int_{\Omega} \nabla_h P \,\ud x = 0,$$ thus, $$\frac {\ud}{\ud t} \int_{\Omega} v_h \,\ud x_h + \int_{\Omega} (-\Delta)^{\alpha} v_h \,\ud x = 0.$$ This gives \eqref{avg_vh2} when $\alpha = 0$. In the case of $\alpha = 1$, $\partial_d v_h = 0$ on $\partial \Omega$ implies \eqref{avg_vh}. Similarly, we can obtain \eqref{avg_tht} by the use of \eqref{avg_vd}. This completes the proof.
\end{proof}

\subsection{Boundary conditions}\label{sec:2.1}
In the section, we briefly show in both cases $\alpha = 0$ and $\alpha =1$ the high compatibility conditions, whose statement is as follows: 
Let $(v,\tht)$ be a global-in-time smooth solution to \eqref{EQ} and suppose that there exists $n \in \bbN$ such that $\partial_d^{2k} \tht_0 = 0$ holds on the boundary for all $0 \leq k \leq n$. Then, we have \begin{equation}\label{bd_cond2}
\partial_d^{2k} v_d = \partial_d^{2k-1+2\alpha} v_h = \partial_d^{2k} \tht = \partial_d^{2k-1} P = 0
\end{equation}
for any $1 \leq k \leq n$. 

When $d=2$, Castro, C\'ordoba, and Lear \cite{CCL} and Dong \cite{Dong2} showed \eqref{bd_cond2} for $\alpha=0$ and $\alpha = 1$ respectively. It is not hard to extend it to the $d \geq 3$ case. Here, we only give details for the case $\alpha = 1$.

From our boundary conditions, we see that \begin{equation}\label{bd_cond}
	v_d(x) = \tht(x) = 0 \qquad \mbox{and} \qquad \partial_d v_h(x) = 0, \qquad x \in \partial\Omega. 
\end{equation}
By \eqref{bd_cond} and the incompressibility, it holds $$\partial_d^2 v_d(x) = -\nabla_h \cdot \partial_dv_h(x) = 0, \qquad x \in \partial \Omega.$$ Then from the $v_d$ equation in \eqref{EQ}, we can see 
\begin{equation*}
	-\partial_d P = \partial_t v_d -\Delta v_d + (v \cdot \nabla)v_d - \tht = 0
\end{equation*}
on the boundary. Next, we apply $\partial_d$ to the $v_h$ equation in \eqref{EQ} and have
\begin{equation*}
	\partial_t \partial_d v_h - \Delta \partial_d v_h + \partial_d (v \cdot \nabla) v_h = -\nabla_h \partial_d P.
\end{equation*}
The previous results implies that $\partial_d^3 v_h = 0$ on the boundary. From the $\tht$ equation in \eqref{EQ}, we can see $$\partial_t \partial_d^2 \tht + \partial_d^2 (v \cdot \nabla) \tht = -\partial_d^2 v_d,$$ hence, $$\partial_t \partial_d^2 \tht + \partial_d v_d \partial_d^2 \tht + (v_h \cdot \nabla_h) \partial_d^2 \tht = 0, \qquad x \in \partial \Omega.$$ Consider the flow map $\Phi(t,x)$ with $\partial_t \Phi(t,x) = (v_h(t,\Phi(t,x)),0)$. Then, it holds $$\frac{\ud}{\ud t} \partial_d^2 \tht(t,\Phi(t,x)) + \partial_dv_d(t,\Phi(t,x)) \partial_d^2 \tht(t,\Phi(t,x))= 0.$$ By the use of Gr\"{o}nwall's inequality, we have $$\partial_d^2 \tht(t,\Phi(t,x)) = \partial_d^2 \tht_0(x)\exp\left(\int_0^t \partial_dv_d(\tau,\Phi(\tau,x)) \,\ud \tau\right).$$ Thus, $\partial_d^2 \tht_0 = 0$ is conserved over time on the boundary, whenever $\partial_d v_d \in L^1_t$. 
Thus, \eqref{bd_cond2} with $k=1$ is obtained. It is clear that $$\partial_d^4 v_d(x) = -\nabla_h \cdot \partial_d^3v_h(x) = 0, \qquad x \in \partial \Omega.$$ Repeating the above processes, we can deduce \eqref{bd_cond2} for all $1 \leq k \leq n$.

\subsection{Functional spaces and orthonormal bases}\label{sec:2.2}
To introduce our solution spaces, we define orthonormal sets $\{ b_q \}_{q \in \bbN}$ and $\{ c_q \}_{q \in \bbN \cup \{0\}}$ by
\begin{align*}
	b_{q} (x_d) &= 
	\begin{cases}
		\displaystyle \sin(\frac {\pi}{2} q x_d)  &\quad  q : \mbox{ even }  \vspace{2mm}\\
		\displaystyle \cos(\frac {\pi}{2} q x_d)  &\quad  q : \mbox{ odd } 
	\end{cases} \qquad \mbox{with }x_d \in [-1,1],
\end{align*}
\begin{align*}
	c_{q} (x_d) &= 
	\begin{cases}
		\displaystyle -\sin(\frac {\pi}{2} q x_d)  &\quad  q : \mbox{ odd } \vspace{2mm}\\
		\displaystyle \cos(\frac {\pi}{2} q x_d)  &\quad  q : \mbox{ even }
	\end{cases} \qquad \mbox{with }x_d \in [-1,1].
\end{align*}
Note that each set is orthonormal basis for $L^2([-1,1])$. Let $$\mathscr{B}_{n,q}(x) := e^{2\pi i n \cdot x_h} b_{q}(x_d), \qquad (n,q) \in \bbZ^{d-1} \times \bbN,$$ $$\mathscr{C}_{n,q}(x) := e^{2 \pi n \cdot x_h} c_{q}(x_d), \qquad (n,q) \in \bbZ^{d-1} \times \bbN \cup \{ 0 \}.$$ Then we have the following relations 
$$\nabla_h \mathscr{B}_{n,q} = 2\pi i n \mathscr{B}_{n,q}, \qquad \nabla_h \mathscr{C}_{n,q} = 2\pi i n \mathscr{C}_{n,q}, \qquad \partial_d \mathscr{B}_{n,q} = \frac \pi2q \mathscr{C}_{n,q}, \qquad \partial_d \mathscr{C}_{n,q} = -\frac \pi2 q \mathscr{B}_{n,q}.$$ Now, we consider the function spaces
\begin{align*}
	X^m(\Omega) &:= \{ f \in H^m(\Omega) ; \partial_d^k f |_{\partial\Omega} = 0, \quad k = 0,2,4, \cdots, m^*\}, \\
	Y^m(\Omega) &:= \{ f \in H^m(\Omega) ; \partial_d^k f |_{\partial\Omega} = 0, \quad k = 1,3,5, \cdots, m_*\},
\end{align*}
where
\begin{align*}
	m^* := 
	\begin{cases}
		\displaystyle m-2, &\quad m : \mbox{ even } \vspace{2mm}\\
		\displaystyle m-1, &\quad m : \mbox{ odd } 
	\end{cases}  \qquad and \qquad m_* :=
	\begin{cases}
		\displaystyle m-1, &\quad m : \mbox{ even } \vspace{2mm}\\
		\displaystyle m-2, &\quad m : \mbox{ odd } .
	\end{cases}	
\end{align*}
Then, $\{ \mathscr{B}_{n,q} \}_{(n,q) \in \bbZ^{d-1} \times \bbN}$ and $\{ \mathscr{C}_{n,q} \}_{(n,q) \in \bbZ^{d-1} \times \bbN \cup \{0\}}$ become orthonormal bases of $X^m(\Omega)$ and $Y^m(\Omega)$ respectively. For the velocity field, we define a $d$-dimensional vector space $\bbX^m(\Omega)$ by $$\bbX^m(\Omega) := \{ v \in H^m(\Omega) ; v = (v_h,v_d) \in Y^m(\Omega) \times X^m(\Omega)\}.$$ We introduce series expansions of the elements in $X^m(\Omega)$ and $Y^m(\Omega)$. Let $$\mathscr{F}_b f(n,q) := \int_{\Omega} f(x) \overline{\mathscr{B}_{n,q}(x)} \,\ud x, \qquad \mathscr{F}_c f(n,q) := \int_{\Omega} f(x) \overline{\mathscr{C}_{n,q}(x)} \,\ud x$$ for each $(n,q) \in \bbZ^{d-1} \times \bbN$ and $(n,q) \in \bbZ^{d-1} \times \bbN \cup \{ 0 \}$ respectively. Then for any $f \in X^m(\Omega)$ and $g \in Y^m(\Omega)$, we can write $$f(x) = \sum_{(n,q) \in \bbZ^{d-1} \times \bbN} \mathscr{F}_bf(n,q) \mathscr{B}_{n,q}(x), \qquad g(x) = \sum_{(n,q) \in \bbZ^{d-1} \times \bbN \cup \{ 0 \}} \mathscr{F}_cg(n,q) \mathscr{C}_{n,q}(x).$$ We refer to \cite[Lemma 3.1]{CCL} for details.

We give two simple lemmas. The first one implies $fg \in X^m$ when $f \in X^m$ and $g \in Y^m$, and the second one implies $fg \in Y^m$ when $f,g \in X^m$ or $f,g \in Y^m$ for any given $m \in \bbN$ with $m > d/2$. Since the proofs are elementary, we omit them.
\begin{lemma}\label{lem_conv}
	Let $q_e$ and $q_o$ be even and odd number respectively. Then, there hold
	\begin{equation*}
		\begin{aligned}
			-\sin(\frac {\pi}{2}q_e x_d) \sin(\frac {\pi}{2} q_o x_d) &= \frac 12 \left( \cos(\frac {\pi}{2}(q_e + q_o)x_d) - \cos(\frac {\pi}{2}(q_e - q_o)x_d) \right), \\
			\sin(\frac {\pi}{2}q_e x_d) \cos(\frac {\pi}{2}q'_e x_d) &= \frac 12 \left( \sin(\frac {\pi}{2}(q_e+q_e')x_d) + \sin(\frac {\pi}{2}(q_e-q_e')x_d) \right), \\
			-\cos(\frac {\pi}{2} q_o x_d) \sin(\frac {\pi}{2} q_o' x_d) &= -\frac 12 \left( \sin(\frac {\pi}{2}(q'_o+q_o)x_d) + \sin(\frac {\pi}{2}(q_o'-q_o)x_d) \right), \\
			\cos(\frac {\pi}{2}q_o x_d) \cos(\frac {\pi}{2} q_e x_d) &= \frac 12 \left( \cos(\frac {\pi}{2}(q_o+q_e)x_d) + \cos(\frac {\pi}{2}(q_o - q_e)x_d) \right).
		\end{aligned}
	\end{equation*}
\end{lemma}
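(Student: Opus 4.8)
The plan is to read all four equations as special cases of the classical product-to-sum trigonometric identities, so that no analysis is involved. I would begin from the addition formulas
\[
\cos(A+B) = \cos A \cos B - \sin A \sin B, \qquad \cos(A-B) = \cos A \cos B + \sin A \sin B,
\]
\[
\sin(A+B) = \sin A \cos B + \cos A \sin B, \qquad \sin(A-B) = \sin A \cos B - \cos A \sin B,
\]
which hold for all real $A,B$. Subtracting the two cosine relations gives $\sin A \sin B = \tfrac12\big(\cos(A-B) - \cos(A+B)\big)$; adding them gives $\cos A \cos B = \tfrac12\big(\cos(A+B) + \cos(A-B)\big)$; adding the two sine relations gives $\sin A \cos B = \tfrac12\big(\sin(A+B) + \sin(A-B)\big)$.

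Next I would substitute the relevant arguments into these three identities. For the first equation of the lemma, set $A = \tfrac{\pi}{2}q_e x_d$ and $B = \tfrac{\pi}{2}q_o x_d$ in the $\sin A \sin B$ relation and multiply through by $-1$. For the second, set $A = \tfrac{\pi}{2}q_e x_d$, $B = \tfrac{\pi}{2}q_e' x_d$ in the $\sin A \cos B$ relation. For the third, set $A = \tfrac{\pi}{2}q_o' x_d$, $B = \tfrac{\pi}{2}q_o x_d$ in the $\sin A \cos B$ relation and multiply through by $-1$. For the fourth, set $A = \tfrac{\pi}{2}q_o x_d$, $B = \tfrac{\pi}{2}q_e x_d$ in the $\cos A \cos B$ relation. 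In each case one reads off exactly the displayed equality after using $\tfrac{\pi}{2}(q \pm q')x_d = A \pm B$.

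There is no obstacle of substance here: the argument is a one-line application of elementary identities, which is why the paper omits it. The only point worth flagging is that the parity hypotheses on $q_e,q_o,q_e',q_o'$ play no role in the identities themselves; they matter for the intended consequence, since the index sums and differences occurring on the right-hand sides — $q_e \pm q_o$ odd, $q_e \pm q_e'$ and $q_o' \pm q_o$ even, $q_o \pm q_e$ odd — have precisely the parity that makes each right-hand side a finite combination of functions from the family $\{b_q\}$ (using that $\cos$ is even and $\sin$ is odd to absorb signs and to handle a vanishing index), which is what is needed to conclude $fg \in X^m$ when $f \in X^m$ and $g \in Y^m$.
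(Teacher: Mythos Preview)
Your proof is correct and is precisely the elementary product-to-sum verification the paper has in mind; the paper explicitly omits the argument as elementary, and your observation about the role of the parity hypotheses is accurate and useful for the subsequent application.
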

\begin{lemma}
	Let $q-q'$ and $q-q''$ be odd and even number respectively. Then, there hold
	\begin{equation*}
		\begin{aligned}
			-\sin(\frac {\pi}{2}q x_d) \sin(\frac {\pi}{2} q'' x_d) &= \frac 12 \left( \cos(\frac {\pi}{2}(q + q'')x_d) - \cos(\frac {\pi}{2}(q - q'')x_d) \right), \\
			\sin(\frac {\pi}{2}q x_d) \cos(\frac {\pi}{2}q' x_d) &= \frac 12 \left( \sin(\frac {\pi}{2}(q+q')x_d) + \sin(\frac {\pi}{2}(q-q')x_d) \right), \\
			-\cos(\frac {\pi}{2} q x_d) \sin(\frac {\pi}{2} q' x_d) &=- \frac 12 \left( \sin(\frac {\pi}{2}(q'+q)x_d) + \sin(\frac {\pi}{2}(q'-q)x_d) \right), \\
			\cos(\frac {\pi}{2}q x_d) \cos(\frac {\pi}{2} q'' x_d) &= \frac 12 \left( \cos(\frac {\pi}{2}(q+q'')x_d) + \cos(\frac {\pi}{2}(q - q'')x_d) \right).
		\end{aligned}
	\end{equation*}
\end{lemma}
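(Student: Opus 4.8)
The statement to prove is the pair of trigonometric product formulas in the second \textsc{Lemma}, i.e., the claim that when $q-q'$ is odd and $q-q''$ is even, the four products $-\sin(\tfrac{\pi}{2}qx_d)\sin(\tfrac{\pi}{2}q''x_d)$, $\sin(\tfrac{\pi}{2}qx_d)\cos(\tfrac{\pi}{2}q'x_d)$, $-\cos(\tfrac{\pi}{2}qx_d)\sin(\tfrac{\pi}{2}q'x_d)$, and $\cos(\tfrac{\pi}{2}qx_d)\cos(\tfrac{\pi}{2}q''x_d)$ decompose into the indicated sums/differences of single trigonometric functions of $\tfrac{\pi}{2}(q\pm q')x_d$ or $\tfrac{\pi}{2}(q\pm q'')x_d$. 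These are nothing but the classical product-to-sum identities, so the plan is to invoke them directly.

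The plan is as follows. First, I would recall the elementary identities $\sin A\sin B=\tfrac12(\cos(A-B)-\cos(A+B))$, $\sin A\cos B=\tfrac12(\sin(A+B)+\sin(A-B))$, $\cos A\sin B=\tfrac12(\sin(A+B)-\sin(A-B))$, and $\cos A\cos B=\tfrac12(\cos(A+B)+\cos(A-B))$, valid for all real $A,B$. Then I would substitute $A=\tfrac{\pi}{2}qx_d$ together with $B=\tfrac{\pi}{2}q''x_d$ in the first and fourth formulas, and $B=\tfrac{\pi}{2}q'x_d$ in the second and third. Since cosine is even, $\cos(\tfrac{\pi}{2}(q-q'')x_d)=\cos(\tfrac{\pi}{2}(q''-q)x_d)$, so the sign conventions match the stated right-hand sides verbatim; for the sine terms one keeps the arguments as written. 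This yields exactly the four displayed equalities, with the overall minus signs on the left distributed in the obvious way.

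The only genuine content beyond the algebraic identities is the parity bookkeeping, which is needed not for the equalities themselves (those hold regardless of parity) but to see that the right-hand sides again lie in the span of the basis functions $b_q$ and $c_q$: when $q-q'$ is odd, both $q+q'$ and $q-q'$ are odd, so $\cos(\tfrac{\pi}{2}(q\pm q')x_d)$ and $\sin(\tfrac{\pi}{2}(q\pm q')x_d)$ are of the right type; when $q-q''$ is even, both $q+q''$ and $q-q''$ are even, with the analogous conclusion. I would note this parity remark so that the lemma fulfills its stated purpose, namely that $fg\in Y^m$ whenever $f,g\in X^m$ or $f,g\in Y^m$: a product of two $\cos$ (or two $\sin$) basis functions of the same parity class expands into even-index functions of $c_q$-type, while a $\sin\cdot\cos$ product of opposite parity classes expands into odd-index functions of the appropriate type, which is precisely the composition rule one needs.

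There is essentially no obstacle here; the statement is a direct corollary of the product-to-sum formulas, and the authors themselves remark that the proof is elementary and omitted. If I were to write it out, the only care required is consistency of sign and argument conventions between the left- and right-hand sides (particularly the placement of the leading minus sign and the order $q''-q$ versus $q-q''$ inside cosines), which the evenness of cosine resolves immediately. Hence I would simply state: "These follow from the standard identities $2\sin A\sin B=\cos(A-B)-\cos(A+B)$, $2\sin A\cos B=\sin(A+B)+\sin(A-B)$, $2\cos A\sin B=\sin(A+B)-\sin(A-B)$, $2\cos A\cos B=\cos(A+B)+\cos(A-B)$ applied with $A=\tfrac{\pi}{2}qx_d$ and $B=\tfrac{\pi}{2}q'x_d$ or $B=\tfrac{\pi}{2}q''x_d$, using that cosine is even; the parity hypotheses ensure the resulting indices $q\pm q'$ are odd and $q\pm q''$ are even."
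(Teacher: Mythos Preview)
Your proposal is correct and matches the paper's approach exactly: the paper explicitly states that the proofs of this lemma (and the preceding one) are elementary and omits them, and your argument via the standard product-to-sum identities is precisely the elementary verification intended. Your additional remark on the parity bookkeeping is also on point and accurately captures the lemma's purpose in the paper.
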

The next proposition provides convolution estimates similar to the Fourier expansion.
\begin{proposition}\label{cor_conv}
	Let $f,f' \in X^m$ and $g,g' \in Y^m$ for some $m \in \bbN$ with $m > \frac d2$. Then, there hold
	\begin{equation*}
	\begin{aligned}
		\sum_{(n,q) \in \bbZ^{d-1} \times \bbN} |\mathscr{F}_b[fg](n,q)| &\leq \left( \sum_{(n,q) \in \bbZ^{d-1} \times \bbN} |\mathscr{F}_b f(n,q)| \right) \left(\sum_{(n,q) \in \bbZ^{d-1} \times \bbN \cup \{ 0 \}} |\mathscr{F}_c g(n,q)| \right), \\
		\sum_{(n,q) \in \bbZ^{d-1} \times \bbN \cup \{ 0 \}} |\mathscr{F}_c[ff'](n,q)| &\leq \left( \sum_{(n,q) \in \bbZ^{d-1} \times \bbN} |\mathscr{F}_b f(n,q)| \right) \left( \sum_{(n,q) \in \bbZ^{d-1} \times \bbN} |\mathscr{F}_b f'(n,q)| \right), \\
		\sum_{(n,q) \in \bbZ^{d-1} \times \bbN \cup \{0\}} |\mathscr{F}_c[gg'](n,q)| &\leq \left( \sum_{(n,q) \in \bbZ^{d-1} \times \bbN \cup \{ 0 \}} |\mathscr{F}_c g(n,q)| \right) \left( \sum_{(n,q) \in \bbZ^{d-1} \times \bbN \cup \{ 0 \}} |\mathscr{F}_c g'(n,q)| \right).
	\end{aligned}
	\end{equation*}
\end{proposition}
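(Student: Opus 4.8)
The plan is to expand each of the two factors in the orthonormal basis attached to its function space, multiply the expansions, and rewrite every product of two basis functions in the target basis by means of the product-to-sum identities of Lemma~\ref{lem_conv} and the lemma following it. The decisive feature of those identities is that such a product is a combination of at most two basis functions of the target type whose coefficients sum, in absolute value, to at most $1$; granting this, the asserted estimates are a standard convolution-type inequality, once convergence and the interchange of summations are justified.

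First, since $m > d/2$ (equivalently $2m > d$), the weight $(1+|n|^2+q^2)^{-m}$ is summable over $\bbZ^{d-1}\times\bbN$, so Cauchy--Schwarz together with the description of the $H^m(\Omega)$ norm on $X^m(\Omega)$ and $Y^m(\Omega)$ through the coefficients $\mathscr{F}_b(\cdot)$, $\mathscr{F}_c(\cdot)$ (see \cite[Lemma~3.1]{CCL}) gives $\sum_{(n,q)}|\mathscr{F}_b f(n,q)| \lesssim \|f\|_{H^m} < \infty$, and the same for $\mathscr{F}_c g$, $\mathscr{F}_b f'$, $\mathscr{F}_c g'$. As $\|\mathscr{B}_{n,q}\|_{L^\infty}\le 1$ and $\|\mathscr{C}_{n,q}\|_{L^\infty}\le 1$, the expansions of $f$ and $g$ converge absolutely in $C(\overline{\Omega})\subset L^2(\Omega)$, hence so does their product:
\[
fg \;=\; \sum_{(n_1,q_1)}\sum_{(n_2,q_2)} \mathscr{F}_b f(n_1,q_1)\,\mathscr{F}_c g(n_2,q_2)\,\mathscr{B}_{n_1,q_1}\mathscr{C}_{n_2,q_2}\qquad\text{in }L^2(\Omega).
\]

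For the first inequality, note that $\mathscr{B}_{n_1,q_1}\mathscr{C}_{n_2,q_2} = e^{2\pi i(n_1+n_2)\cdot x_h}\, b_{q_1}(x_d)\,c_{q_2}(x_d)$. If $q_2 = 0$, then $c_0\equiv 1$ and this equals $\mathscr{B}_{n_1+n_2,q_1}$; if $q_2\ge 1$, Lemma~\ref{lem_conv} expresses $b_{q_1}c_{q_2}$ (in each of the four parity cases for $(q_1,q_2)$) as a signed combination of $b_{q_1+q_2}$ and $b_{|q_1-q_2|}$ with total coefficient mass $\le 1$, the coefficient of $b_{|q_1-q_2|}$ vanishing whenever $q_1=q_2$, so that all surviving indices lie in $\bbN$. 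Thus $\mathscr{B}_{n_1,q_1}\mathscr{C}_{n_2,q_2}$ is a combination of two $\mathscr{B}$-modes at horizontal frequency $n_1+n_2$, and $L^2$-orthonormality of $\{\mathscr{B}_{n,q}\}$ yields $\sum_{(n,q)}|\mathscr{F}_b[\mathscr{B}_{n_1,q_1}\mathscr{C}_{n_2,q_2}](n,q)| \le 1$. Applying the bounded functional $\mathscr{F}_b(\cdot)(n,q) = \langle\cdot,\mathscr{B}_{n,q}\rangle$ termwise to the absolutely convergent double series above, summing in $(n,q)$, and using Tonelli (valid by the summability bounds and the $\le 1$ estimate), one gets
\[
\sum_{(n,q)}|\mathscr{F}_b[fg](n,q)| \;\le\; \sum_{(n_1,q_1)}\sum_{(n_2,q_2)}|\mathscr{F}_b f(n_1,q_1)|\,|\mathscr{F}_c g(n_2,q_2)| \;=\; \Big(\sum_{(n,q)}|\mathscr{F}_b f(n,q)|\Big)\Big(\sum_{(n,q)}|\mathscr{F}_c g(n,q)|\Big),
\]
which is the first claim.

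The second and third inequalities follow by the identical argument, with Lemma~\ref{lem_conv} replaced by the lemma immediately after it: it writes $b_{q_1}b_{q_2}$ (for $ff'$) and $c_{q_1}c_{q_2}$ (for $gg'$) as signed combinations of $c_{q_1+q_2}$ and $c_{|q_1-q_2|}$ with total coefficient mass $\le 1$; here the index $0$ is admissible (the $\mathscr{C}$-basis runs over $\bbN\cup\{0\}$ and $c_0\equiv 1$), so no coefficient need vanish. The only genuinely delicate point of the whole proof is this parity bookkeeping---checking case by case that the product of two basis functions collapses precisely onto the frequencies $q_1+q_2$ and $|q_1-q_2|$ with $\ell^1$ coefficient mass $\le 1$, and that the forbidden index $q_1=q_2$ in the $\mathscr{B}$-valued products always carries a zero coefficient---but this is exactly the content of the two product-to-sum lemmas; with those in hand the remainder is the routine convolution-type computation, the interchange of sums being supplied by $m>d/2$. (One could instead reflect elements of $X^m(\Omega)$, $Y^m(\Omega)$ oddly or evenly in $x_d$ to genuine functions on $\bbT^{d-1}\times(\bbR/4\bbZ)$ and quote the classical bound $\|\widehat{FG}\|_{\ell^1}\le\|\widehat F\|_{\ell^1}\|\widehat G\|_{\ell^1}$, but the direct route above is more transparent.)
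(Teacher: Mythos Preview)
Your proof is correct and follows essentially the same route as the paper's: expand each factor in its basis, use the product-to-sum identities of Lemma~\ref{lem_conv} (and the lemma after it) to rewrite each product $\mathscr{B}_{n_1,q_1}\mathscr{C}_{n_2,q_2}$ as a combination of at most two target-basis modes with total coefficient mass $\le 1$, and then sum. The paper records this as the convolution-type bound $|\mathscr{F}_b[fg](n,q)|\le \tfrac12\sum_{n'+n''=n}\big(\sum_{q'+q''=q}+\sum_{|q'-q''|=q}\big)|\mathscr{F}_bf||\mathscr{F}_cg|$ and sums over $(n,q)$; you phrase the same computation via $\sum_{(n,q)}|\mathscr{F}_b[\mathscr{B}_{n_1,q_1}\mathscr{C}_{n_2,q_2}](n,q)|\le 1$ and Tonelli, and you are somewhat more explicit about the absolute summability from $m>d/2$ and the vanishing of the $|q_1-q_2|=0$ coefficient in the $\mathscr{B}$-case, which the paper leaves implicit.
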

\begin{proof}
	We only show the first inequality because the others can be proved similarly. By the series expansions of $f \in X^m(\Omega)$ and $g \in Y^m(\Omega)$, it holds
	\begin{align*}
		fg(x) &= \left( \sum_{(n,q) \in \bbZ^{d-1} \times \bbN} \mathscr{F}_bf(n,q) \mathscr{B}_{n,q}(x) \right) \left( \sum_{(n,q) \in \bbZ^{d-1} \times \bbN \cup \{ 0 \}} \mathscr{F}_cg(n,q) \mathscr{C}_{n,q}(x) \right).
	\end{align*}
	By the use of Lemma~\ref{lem_conv}, 
	we can see for each $(n,q) \in \bbZ^{d-1} \times \bbN$ that
	\begin{gather*}
		|\mathscr{F}_b[fg](n,q)| \\
		\leq \sum_{n'+n''=n} \left( \frac 12 \sum_{q'+q''=q} |\mathscr{F}_bf(n',q')| |\mathscr{F}_cg(n'',q'')| + \frac 12 \sum_{|q'-q''|=q} |\mathscr{F}_bf(n',q')| |\mathscr{F}_cg(n'',q'')| \right).
	\end{gather*}
	This estimate infers
	\begin{equation*}
	\begin{aligned}
		\sum_{(n,q) \in \bbZ^{d-1} \times \bbN} |\mathscr{F}_b[fg]| &\leq \left( \sum_{(n,q) \in \bbZ^{d-1} \times \bbN} |\mathscr{F}_b f| \right) \left( \sum_{(n,q) \in \bbZ^{d-1} \times \bbN \cup \{ 0 \}} |\mathscr{F}_c g| \right).
	\end{aligned}
	\end{equation*}
	This finishes the proof.
\end{proof}

\section{Spectral analysis}\label{sec_spec}
In this section, we give a different form of \eqref{EQ} via spectral analysis. Then, we provide temporal decay estimates for the linear operator of \eqref{EQ}. From now on, we use the notations $\tilde{n} := 2\pi n$ and $\tilde{q} := \frac \pi2 q$ for each $n \in \bbZ^{d-1}$ and $q \in \bbN \cup \{0\}$. We define two sets $$I:= \{\eta = (\tilde{n},\tilde{q}) ; (n,q) \in \bbZ^{d-1} \times \bbN \cup \{0\} \}, \qquad J:= \{\eta = (\tilde{n},\tilde{q}) ; (n,q) \in \bbZ^{d-1} \times \bbN \}.$$ We estimate the pressure term frist. From the $v$ equation in \eqref{EQ}, we can see $$\mathrm{div}\, (v \cdot \nabla) v = -\Delta P + \partial_d \tht.$$ Using the basis $\mathscr{C}_{n,q}(x) = \mathscr{C}_{\eta}(x)$, we have for each $\eta \in I \setminus \{0\}$ that
\begin{equation*}
	\begin{aligned}
		\mathscr{F}_c P(\eta) &= \frac {1}{|\eta|^2} \mathscr{F}_c [\mathrm{div}\, (v \cdot \nabla) v] (\eta) - \frac {1}{|\eta|^2} \mathscr{F}_c \partial_d \tht (\eta).
	\end{aligned}
\end{equation*}
Since $\nabla_h \mathscr{C}_{\eta} = i\tilde{n} \mathscr{C}_{\eta}$ and $\partial_d \mathscr{C}_{\eta} = -\tilde{q} \mathscr{B}_{\eta}$, we can see 
\begin{align*}
	\mathscr{F}_c [\mathrm{div}\, (v \cdot \nabla) v] (\eta) &= \int_{\Omega} \nabla_h \cdot (v \cdot \nabla)v_h(x) \overline{\mathscr{C}_{\eta}(x)}\,\ud x + \int_{\Omega} \partial_d (v \cdot \nabla)v_d (x)\overline{\mathscr{C}_{\eta}(x)} \,\ud x \\
	&= i\tilde{n} \cdot \int_{\Omega} (v \cdot \nabla)v_h(x) \overline{\mathscr{C}_{\eta}(x)}\,\ud x + \tilde{q} \int_{\Omega} (v \cdot \nabla)v_d(x) \overline{\mathscr{B}_{\eta}(x)}\,\ud x \\
	&= i\tilde{n} \cdot \mathscr{F}_c [(v \cdot \nabla)v_h] (\eta) + \tilde{q} \mathscr{F}_b [(v \cdot \nabla)v_d] (\eta)
\end{align*}
and $$ \mathscr{F}_c \partial_d \tht (\eta) = \tilde{q} \mathscr{F}_b \tht (\eta).$$ Thus, we obtain $$\nabla P(x) = \left( \sum_{\eta \in I} \mathscr{F}_c \nabla_h P(\eta) \mathscr{C}_{\eta}(x), \sum_{\eta \in J} \mathscr{F}_b \partial_d P(\eta) \mathscr{B}_{\eta}(x) \right)^{T},$$ where
\begin{equation*}
	\begin{aligned}
		\mathscr{F}_c \nabla_h P(\eta) &= -\frac {\tilde{n} \otimes \tilde{n}}{|\eta|^2} \mathscr{F}_c [(v \cdot \nabla)v_h] (\eta) + i\frac {\tilde{q}\tilde{n}}{|\eta|^2} \mathscr{F}_b [(v \cdot \nabla)v_d] (\eta) - i \frac {\tilde{q}\tilde{n}}{|\eta|^2}  \mathscr{F}_b \tht (\eta)
	\end{aligned}
\end{equation*}
	and
\begin{equation*}
	\begin{aligned}
		\mathscr{F}_b \partial_d P(\eta) &= -i\frac {\tilde{q}\tilde{n}}{|\eta|^2} \cdot \mathscr{F}_c [(v \cdot \nabla)v_h] (\eta) - \frac {\tilde{q}^2}{|\eta|^2} \mathscr{F}_b [(v \cdot \nabla)v_d] (\eta) + \frac {\tilde{q}^2}{|\eta|^2}  \mathscr{F}_b \tht (\eta).
	\end{aligned}
\end{equation*}
From these formulas, we have
\begin{equation}\label{df_vh}
	\begin{gathered}
		\partial_t \mathscr{F}_c v_h + |\eta|^{2\alpha} \mathscr{F}_c v_h + \left(I - \frac {\tilde{n} \otimes \tilde{n}}{|\eta|^2} \right) \mathscr{F}_c [(v \cdot \nabla)v_h] + i\frac {\tilde{q}\tilde{n}}{|\eta|^2} \mathscr{F}_b [(v \cdot \nabla)v_d]  - i\frac {\tilde{q} \tilde{n}}{|\eta|^2}  \mathscr{F}_b \tht = 0
	\end{gathered}
\end{equation}
for $\eta \in I \setminus \{0\}$, and
\begin{equation}\label{df_vd}
	\begin{gathered}
		\partial_t \mathscr{F}_b v_d + |\eta|^{2\alpha} \mathscr{F}_b v_d + \left( 1- \frac {\tilde{q}^2}{|\eta|^2} \right) \mathscr{F}_b [(v \cdot \nabla)v_d] - i\frac {\tilde{q} \tilde{n}}{|\eta|^2} \cdot \mathscr{F}_c [(v \cdot \nabla)v_h] - \frac {|\tilde{n}|^2}{|\eta|^2}  \mathscr{F}_b \tht = 0,
	\end{gathered}
\end{equation}
\begin{equation}\label{df_tht}
	\begin{gathered}
		\partial_t \mathscr{F}_b \tht + \mathscr{F}_b [(v \cdot \nabla)\tht]  + \mathscr{F}_b v_d = 0
	\end{gathered}
\end{equation}
for $\eta \in J$. Due to the linear structure of \eqref{df_vd} and \eqref{df_tht}, we can observe a partially dissipative nature by writing the two equaitons at once with $\bold{u} := (v_d,\tht)^{T}$. 

Let us define an operator $\mathscr{F} : (L^2)^{d-1} \times L^2  \to \bbC^{d-1} \times \bbC$ by $\mathscr{F} := (\mathscr{F}_c,\mathscr{F}_b)$. Then, it follows
\begin{equation}\label{UEQ}
	\partial_t \mathscr{F}_b\bold{u} + M \mathscr{F}_b \bold{u} + \langle \mathbb{P}\, \mathscr{F}(v \cdot \nabla)v, e_{d} \rangle e_1 + \mathscr{F}_b [(v \cdot \nabla)\tht] e_2  = 0,
\end{equation}
where
\begin{equation*}
	\mathbb{P} := I - \frac {1}{|\eta|^2}
	\left(
		\begin{array}{c|c}
			\tilde{n} \otimes \tilde{n} & -i \tilde{q}\tilde{n} \\
			\hline
			i\tilde{q} \tilde{n} & \tilde{q}^2
		\end{array}
	\right), \qquad M := \begin{pmatrix}
		|\eta|^{2\alpha} & -\frac {|\tilde{n}|^2}{|\eta|^2} \\
		1 & 0
	\end{pmatrix}.
\end{equation*}
For simplicity, we use the notation $$N(v,\tht) := \langle \mathbb{P}\, \mathscr{F}(v \cdot \nabla)v, e_{d} \rangle e_1 + \mathscr{F}_b [(v \cdot \nabla)\tht] e_2.$$ Since the characteristic equation of $M^{T}$ is given by
\begin{equation*}
	\operatorname{det} \big( M^{T} - \lambda I \big) = \lambda^2 - |\eta|^{2\alpha} \lambda + \frac {|\tilde{n}|^2}{|\eta|^2},
\end{equation*}
the two pair of eigenvalue and eigenvector $(\lambda_\pm(\eta), \overline{\bold{a}_\pm(\eta)})$ satisfy
\begin{equation*}
	\lambda_{\pm}(\eta) = \frac {|\eta|^{2\alpha} \pm \sqrt{|\eta|^{4\alpha} - {4 |\tilde{n}|^2}/{|\eta|^2}}}{2}, \qquad \overline{\bold{a}_{\pm} (\eta)} = 
	\begin{pmatrix}
		\lambda_{\pm} \\
		-\frac {|\tilde{n}|^2}{|\eta|^2}
	\end{pmatrix} ,
\end{equation*}
where $M^{T} \overline{\bold{a}_\pm (\eta)} = \lambda(\eta)_\pm \overline{\bold{a}_\pm (\eta)}$ holds. We note that there is no pair $\eta \in J$ satisfying $|\eta|^{4\alpha} - {4 |\tilde{n}|^2}/{|\eta|^2} = 0$. Since 
\begin{equation*}
	A:= (\overline{\bold{a}_+}\,\, \overline{\bold{a}_-}) \qquad \mbox{and} \qquad  B := \frac {1}{\lambda_{+}-\lambda_{-}} 
	\begin{pmatrix}
		1 & \frac {| \eta|^2}{|\tilde{n}|^2} \lambda_{-} \\
		-1 & -\frac {| \eta|^2}{|\tilde{n}|^2} \lambda_{+}
		\end{pmatrix} = 
	\begin{pmatrix}
		\bold{b}_+ \\
		\bold{b}_-
	\end{pmatrix}
\end{equation*}
satisfy $BA = I$, it follows by Duhamel's principle
\begin{equation}\label{df_u+}
	\langle \mathscr{F}_b\bold{u}(t),\bold{a}_\pm \rangle \bold{b}_\pm = e^{-\lambda_\pm t} \langle \mathscr{F}_b\bold{u}_0,\bold{a}_\pm \rangle \bold{b}_\pm - \int_0^t e^{-\lambda_\pm (t - \tau)} \langle N(v,\tht)(\tau),\bold{a}_\pm \rangle \bold{b}_\pm \,\mathrm{d}\tau .
\end{equation}
However, using this formula directly can be problematic because of the unboundedness of $|\bold{b}_{\pm}|$ around the set $\{ |\eta|^{4\alpha} = 4|\tilde{n}|^2/|\eta|^2 \}$. For this reason, we employ
\begin{equation}\label{df_tht_1}
	\begin{aligned}
	\mathscr{F}_b \tht(t) &= \sum_{j \in \pm} e^{-\lambda_j t} \langle \mathscr{F}_b\bold{u}_0,\bold{a}_j \rangle \langle \bold{b}_j,e_{2} \rangle - \sum_{j \in \pm} \int_0^t e^{-\lambda_j (t - \tau)} \langle N(v,\tht)(\tau),\bold{a}_j \rangle \langle \bold{b}_j,e_{2} \rangle \,\mathrm{d}\tau \\
	&= (e^{-\lambda_- t} - e^{-\lambda_+ t}) \langle \mathscr{F}_b\bold{u}_0,\bold{a}_- \rangle \langle \bold{b}_-,e_{2} \rangle + e^{-\lambda_+ t} \mathscr{F}_b \tht_0 \\
	&\hphantom{\qquad\qquad}- \int_0^t (e^{-\lambda_- (t - \tau)} - e^{-\lambda_+ (t - \tau)}) \langle N(v,\tht)(\tau),\bold{a}_- \rangle \langle \bold{b}_-,e_{2} \rangle \,\mathrm{d}\tau \\
	&\hphantom{\qquad\qquad}- \int_0^t e^{-\lambda_+ (t - \tau)} \mathscr{F}_b [(v \cdot \nabla)\tht] \,\mathrm{d}\tau,
	\end{aligned}
\end{equation}
which allows us to get rid of the singularity of $\mathbf{b}_{\pm}$. Here, we note some useful calculations when using \eqref{df_tht_1}. From the definition of $\lambda_{\pm}$, $\mathbf{a}_{\pm}$, and $\mathbf{b}_{\pm}$, we have
\begin{equation}\label{ker_est}
	|e^{-\lambda_{+}(\eta)t}| \leq e^{-|\eta|^{2\alpha} \frac t2}, \qquad |e^{\lambda_{-}(\eta)t}| \leq \begin{cases} e^{-|\eta|^{2\alpha} \frac t2}, &\qquad |\eta|^{2+4\alpha} - 4|\tilde{n}|^2 \leq 0, \\
	e^{-\frac {|\tilde{n}|^2}{|\eta|^{2+2\alpha}}t}, &\qquad |\eta|^{2+4\alpha} - 4|\tilde{n}|^2 \geq 0, \end{cases}
\end{equation}
\begin{equation*}
	|\bold{a}_{-}|^2 = |\lambda_-|^2 + \frac {|\tilde{n}|^4}{|\eta|^4}, \qquad |\langle \bold{b}_-,e_2 \rangle|^2 = \frac {|\eta|^4|\lambda_{+}|^2}{|\tilde{n}|^4|\lambda_+- \lambda_-|^2}.
\end{equation*}
Thus, it follows
\begin{equation*}
	|\bold{a}_-||\langle \bold{b}_-,e_2 \rangle| \leq \begin{cases} \displaystyle\frac {C}{|\lambda_+ - \lambda_-|}, &\qquad |\eta|^{2+4\alpha} - 4|\tilde{n}|^2 \leq 0, \vspace{2mm}\\
	\displaystyle\frac {C|\eta|^{2\alpha}}{|\lambda_+ - \lambda_-|}, &\qquad |\eta|^{2+4\alpha} - 4|\tilde{n}|^2 \geq 0. \end{cases}
\end{equation*}
Let us consider the three sets
\begin{equation*}
	\begin{aligned}
		D_1 &:= \{ \eta \in J ; |\eta|^{4\alpha} - \frac {4 |\tilde{n}|^2}{|\eta|^2} \leq 0\}, \\
		D_2 &:= \{ \eta \in J ; 0 \leq |\eta|^{4\alpha} - \frac {4 |\tilde{n}|^2}{|\eta|^2} \leq \frac 14 |\eta|^{4\alpha}\}, \\
		D_3 &= \{ \eta \in J ; |\eta|^{4\alpha} - \frac {4 |\tilde{n}|^2}{|\eta|^2} \geq \frac 14 |\eta|^{4\alpha}\},
	\end{aligned}	
\end{equation*}
with $J = D_1 \cup D_2 \cup D_3$. Then, for any $\bold{f} \in \bbC^2$, there exists a constant $C>0$ such that
\begin{equation}\label{sing_est}
\begin{aligned}
	|(e^{-\lambda_- t} - e^{-\lambda_+ t}) \langle \bold{f},\bold{a}_- \rangle \langle \bold{b}_-,e_{2} \rangle| &\leq C e^{-|\eta|^{2\alpha} \frac t4} |\bold{f}|, &\quad\eta \in D_1, \\
	|(e^{-\lambda_- t} - e^{-\lambda_+ t}) \langle \bold{f},\bold{a}_- \rangle \langle \bold{b}_-,e_{2} \rangle| &\leq Ce^{-|\eta|^{2\alpha} \frac t4} |\bold{f}|, &\quad\eta \in D_2, \\
	|(e^{-\lambda_- t} - e^{-\lambda_+ t}) \langle \bold{f},\bold{a}_- \rangle \langle \bold{b}_-,e_{2} \rangle| &\leq Ce^{-\frac {|\tilde{n}|^2}{|\eta|^{2+2\alpha}} t} |\bold{f}|, &\quad\eta \in D_3.
\end{aligned}
\end{equation}
For the first and second inequalities, we apply the mean value theorem so that $$|(e^{-\lambda_- t} - e^{-\lambda_+ t}) \langle \bold{f},\bold{a}_- \rangle \langle \bold{b}_-,e_{2} \rangle| \leq C \frac {|e^{-\lambda_- t} - e^{-\lambda_+ t}|}{|\lambda_+ - \lambda_-|} |\bold{f}| \leq Cte^{-|\eta|^{2\alpha} \frac t2} |\bold{f}|$$ for any $\eta \in D_1$ and $$|(e^{-\lambda_- t} - e^{-\lambda_+ t}) \langle \bold{f},\bold{a}_- \rangle \langle \bold{b}_-,e_{2} \rangle| \leq C|\eta|^{2\alpha} \frac {|e^{-\lambda_- t} - e^{-\lambda_+ t}|}{|\lambda_+ - \lambda_-|} |\bold{f}| \leq C|\eta|^{2\alpha}te^{-|\eta|^{2\alpha} \tau} |\bold{f}|, \qquad \tau \in (\frac t4 , \frac t2)$$ for any $\eta \in D_2$. One can easily obtain the last inequality in \eqref{sing_est} by the use of $|\lambda_+ - \lambda_-| \geq \frac 12|\eta|^{2\alpha}$.

Now, we are ready to show temporal decay estimates of solutions to the linearized system of \eqref{UEQ}:
\begin{equation}\label{lin_UEQ}
	\partial_t \mathscr{F}_b\bold{u} + M \mathscr{F}_b \bold{u} = 0.
\end{equation}
\begin{lemma}\label{lem_lin}
	Let $d \in \mathbb{N}$ with $d \geq 2$ and $m \in \mathbb{N}$. Let $\bfu_0 \in X^m(\Omega)$. Then, there exists a unique smooth global smooth global solution $\bfu = (v_d,\tht)$ to \eqref{lin_UEQ} such that
	\begin{equation}\label{lin_dec_vd}
		\| v_d(t) \|_{\dot{H}^s}^2 \leq Ce^{-\frac t4} \| \bfu_0 \|_{\dot{H}^s} + C(1+t)^{-(1+\frac {m-s}{2(1+\alpha)})} \| \bfu_0 \|_{\dot{H}^m}
	\end{equation}
	and
	\begin{equation}\label{lin_dec_tht}
		\| \bar{\tht}(t) \|_{\dot{H}^s} \leq Ce^{-\frac t4} \| \bfu_0 \|_{\dot{H}^s} + C(1+t)^{-\frac {m-s}{2(1+\alpha)}} \| \bfu_0 \|_{\dot{H}^m}
	\end{equation}	
	for all $s \in [0,m]$.
\end{lemma}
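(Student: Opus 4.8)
The linearized system \eqref{lin_UEQ} decouples Fourier mode by Fourier mode: for each fixed $\eta \in J$, the pair $\mathscr{F}_b\bfu(\eta,t) = (\mathscr{F}_b v_d(\eta,t), \mathscr{F}_b\tht(\eta,t))^T$ evolves by the linear ODE $\partial_t \mathscr{F}_b\bfu + M\mathscr{F}_b\bfu = 0$ with the $2\times 2$ matrix $M = M(\eta)$ already diagonalized in the text (eigenvalues $\lambda_\pm(\eta)$, eigenvectors $\mathbf a_\pm(\eta)$, inverse change of basis $B$). So $\mathscr{F}_b\bfu(\eta,t) = e^{-\lambda_+ t}\langle\mathscr{F}_b\bfu_0,\mathbf a_+\rangle\mathbf b_+ + e^{-\lambda_- t}\langle\mathscr{F}_b\bfu_0,\mathbf a_-\rangle\mathbf b_-$ — this is just \eqref{df_u+} with the nonlinearity $N$ set to zero, and reading off the components gives the analogue of \eqref{df_tht_1} with no Duhamel integral. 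Since $\| f \|_{\dot H^s}^2 \simeq \sum_{\eta} |\eta|^{2s} |\mathscr{F} f(\eta)|^2$ in these bases, the whole lemma reduces to pointwise-in-$\eta$ bounds on the scalar kernels multiplying $\mathscr{F}_b v_d(\eta,0)$ and $\mathscr{F}_b\tht(\eta,0)$, summed against $|\eta|^{2s}$.

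\textbf{Key steps.} First I would record the kernel representations: use the singularity-free form \eqref{df_tht_1} (with $N\equiv 0$) for $\mathscr{F}_b\tht$, and the analogous identity for $\mathscr{F}_b v_d$ obtained by pairing \eqref{df_u+} with $e_1$ and splitting off the $e^{-\lambda_+ t}$ term the same way, so that in both cases the coefficients are products $|\mathbf a_-||\langle\mathbf b_-,e_i\rangle|$ and plain exponentials — precisely the quantities estimated in \eqref{ker_est}–\eqref{sing_est}. Second, I would split $J = D_1\cup D_2\cup D_3$ and feed in those three bounds: on $D_1\cup D_2$ every kernel is $\le C(1+t)e^{-|\eta|^{2\alpha} t/4}$ (absorb the polynomial factor), so these modes decay like any analytic semigroup and contribute at worst $e^{-t/8}\|\bfu_0\|_{\dot H^s}$ after summation (here one uses that on $D_1\cup D_2$ one has $|\eta|^{2\alpha}\gtrsim |\tilde n|^2/|\eta|^{2+2\alpha}$ too, but the clean exponential in $t$ already suffices). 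Third, the genuine algebraic decay lives on $D_3$, where \eqref{ker_est}/\eqref{sing_est} give the damping rate $e^{-c|\tilde n|^2 t/|\eta|^{2+2\alpha}}$ for the $v_d$ component and the $\tht$ component, and — crucially — the $v_d$ kernel carries an \emph{extra} factor of size $|\eta|^{2\alpha}/|\lambda_+-\lambda_-|\lesssim 1$ times $|\lambda_-|\lesssim |\tilde n|^2/|\eta|^{2+2\alpha}$ (since $\lambda_+\simeq|\eta|^{2\alpha}$ and $\lambda_-\simeq|\tilde n|^2/|\eta|^{2+2\alpha}$ on $D_3$), which is exactly the one extra power of the decay rate producing the improved exponent $1+\frac{m-s}{2(1+\alpha)}$ for $v_d$ versus $\frac{m-s}{2(1+\alpha)}$ for $\tht$. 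The remark that $\tht$ only loses its zero horizontal mode (i.e. the bar on $\tht$ in \eqref{lin_dec_tht}) is because for $\tilde n = 0$ one has $\lambda_- = 0$ and that branch does not decay — restricting to $\bar\tht$ removes exactly those modes.

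\textbf{Interpolation step.} Having, for each $\eta\in D_3$, a bound of the form $|\text{kernel}|^2 \lesssim \big(\tfrac{|\tilde n|^2}{|\eta|^{2+2\alpha}}\big)^{2\kappa} e^{-c\frac{|\tilde n|^2}{|\eta|^{2+2\alpha}} t}$ with $\kappa = 0$ for $\tht$ and $\kappa = 1$ for $v_d$, I would use the elementary inequality $x^{\kappa}e^{-cxt}\lesssim (1+t)^{-\kappa-\theta}x^{-\theta}$ (valid for $x,t\ge 0$, $\theta\ge 0$) to trade a factor $\big(\tfrac{|\tilde n|^2}{|\eta|^{2+2\alpha}}\big)^{-\theta}$ for the time decay $(1+t)^{-\kappa-\theta}$. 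Choosing $\theta = \frac{m-s}{2(1+\alpha)}$ converts the surplus $|\eta|^{2(m-s)}$ available from $\|\bfu_0\|_{\dot H^m}^2 = \sum |\eta|^{2m}|\mathscr{F}\bfu_0|^2$ down to $|\eta|^{2s}$ — here one checks $|\tilde n| \le |\eta|$ so $\big(\tfrac{|\tilde n|^2}{|\eta|^{2+2\alpha}}\big)^{-\theta}\ge |\eta|^{2\alpha\theta}$... in fact on $D_3$ one has $|\tilde n|^2 \simeq |\eta|^{2+4\alpha}$ up to constants, giving $\tfrac{|\tilde n|^2}{|\eta|^{2+2\alpha}}\simeq |\eta|^{2\alpha}$, so the exchanged factor is $|\eta|^{-2\alpha\theta} = |\eta|^{-(m-s)}$ exactly as needed — then summing $\sum_\eta |\eta|^{2s}|\mathscr{F}\bfu_0(\eta)|^2 \cdot (1+t)^{-2\kappa-2\theta}|\eta|^{-(m-s)}\cdot|\eta|^{\text{(from the bound)}}$ telescopes to $(1+t)^{-2\kappa-\frac{m-s}{1+\alpha}}\|\bfu_0\|_{\dot H^m}^2$. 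Combining the $D_1\cup D_2$ exponential contribution with this gives \eqref{lin_dec_vd} and \eqref{lin_dec_tht}.

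\textbf{Main obstacle.} The only delicate point is the bookkeeping near the "resonant" set $|\eta|^{4\alpha}\simeq 4|\tilde n|^2/|\eta|^2$ where $\lambda_+ - \lambda_-\to 0$ and $\mathbf b_\pm$ blows up: one must genuinely use the singularity-cancelled forms \eqref{df_tht_1} and its $v_d$-analogue together with the mean-value-theorem estimates in \eqref{sing_est}, rather than the naive \eqref{df_u+}, so that no spurious blow-up of the kernel occurs on $D_1\cup D_2$. Everything else — the mode-by-mode ODE solve, the Parseval identification of $\dot H^s$ norms, and the $x^\kappa e^{-cxt}$ interpolation — is routine; the substantive content is exactly that the extra factor $\lambda_-$ in the $v_d$ kernel on $D_3$ upgrades the decay exponent by $1$, which is what makes $v_d$ decay faster than $\tht$ and ultimately yields the sharp rates.
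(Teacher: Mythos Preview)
Your overall plan matches the paper's proof: solve the mode-by-mode ODE, use the singularity-cancelled representation \eqref{df_tht_1} together with \eqref{sing_est} to handle $D_1\cup D_2$ exponentially, and extract the algebraic rate on $D_3$ by trading powers of the decay parameter $x=\tfrac{|\tilde n|^2}{|\eta|^{2+2\alpha}}$ for powers of $|\eta|$. The identification of the extra factor $\tfrac{|\tilde n|^2}{|\eta|^{2+2\alpha}}$ in the $v_d$ kernel on $D_3$ (via $\langle\bfb_-,e_1\rangle=\tfrac{|\tilde n|^2}{|\eta|^2\lambda_+}\langle\bfb_-,e_2\rangle$) is also correct and is exactly how the paper gains the additional power of $(1+t)^{-1}$.

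However, your interpolation step on $D_3$ contains a genuine error. You assert that ``on $D_3$ one has $|\tilde n|^2\simeq|\eta|^{2+4\alpha}$'', hence $x\simeq|\eta|^{2\alpha}$. This is false: $D_3$ is defined by $\tfrac{4|\tilde n|^2}{|\eta|^2}\le\tfrac34|\eta|^{4\alpha}$, i.e.\ $|\tilde n|^2\le\tfrac3{16}|\eta|^{2+4\alpha}$, an \emph{upper} bound only. The region $D_3$ contains, for example, all modes with $|n|=1$ and $|q|$ large, for which $x\sim|\eta|^{-(2+2\alpha)}\to 0$. If your claimed two-sided bound held, $e^{-cxt}$ would decay exponentially on $D_3$ and there would be no algebraic rate at all. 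Your subsequent arithmetic (``$|\eta|^{-2\alpha\theta}=|\eta|^{-(m-s)}$'' with $\theta=\tfrac{m-s}{2(1+\alpha)}$) is also inconsistent.

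The fix is the one the paper uses and which you alluded to earlier when explaining the bar on $\tht$: on $\{\tilde n\neq 0\}$ one has the discreteness lower bound $|\tilde n|\ge 2\pi$, hence $x\ge c\,|\eta|^{-(2+2\alpha)}$. Then
\[
e^{-2cxt}\,|\eta|^{2s}\;\le\;C\,(xt)^{-\frac{m-s}{1+\alpha}}\,|\eta|^{2s}\;\le\;C\,t^{-\frac{m-s}{1+\alpha}}\,|\eta|^{(2+2\alpha)\cdot\frac{m-s}{1+\alpha}}\,|\eta|^{2s}\;=\;C\,t^{-\frac{m-s}{1+\alpha}}\,|\eta|^{2m},
\]
which after summation gives \eqref{lin_dec_tht}; the $v_d$ case carries the extra $x^2$ in the squared kernel and yields \eqref{lin_dec_vd} the same way. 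So the missing ingredient is precisely the lower bound $|\tilde n|\gtrsim 1$, not any structural feature of $D_3$.
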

\begin{proof}
	We recall
	\begin{equation*}
	\mathscr{F}_b\bfu  = (e^{-\lambda_- t} - e^{-\lambda_+ t}) \langle \mathscr{F}_b\bold{u}_0,\bold{a}_- \rangle \bold{b}_- + e^{-\lambda_+ t} \mathscr{F}_b \bfu_0
\end{equation*}
	and prove \eqref{lin_dec_tht} first. We can see $$\| \bar{\tht} \|_{\dot{H}^s} \leq \left( \sum_{\tilde{n} \neq 0} |\eta|^{2s} |(e^{-\lambda_- t} - e^{-\lambda_+ t}) \langle \mathscr{F}_b\bold{u}_0,\bold{a}_- \rangle \langle \bold{b}_-,e_2 \rangle|^2 \right)^{\frac 12} + \left( \sum_{\tilde{n} \neq 0} |\eta|^{2s} |e^{-\lambda_+ t} \langle \mathscr{F}_b \bfu_0,e_2 \rangle|^2 \right)^{\frac 12}.$$ From \eqref{ker_est} it is clear that $$\left( \sum_{\tilde{n} \neq 0} |\eta|^{2s} |e^{-\lambda_+ t} \langle \mathscr{F}_b \bfu_0,e_2 \rangle|^2 \right)^{\frac 12} \leq e^{-\frac t2} \left( \sum_{\tilde{n} \neq 0} |\eta|^{2s} |\mathscr{F}_b \bfu_0|^2 \right)^{\frac 12}.$$ On the other hand, \eqref{sing_est} gives 
	\begin{gather*}
		\left( \sum_{\tilde{n} \neq 0} |\eta|^{2s} |(e^{-\lambda_- t} - e^{-\lambda_+ t}) \langle \mathscr{F}_b\bold{u}_0,\bold{a}_- \rangle \langle \bold{b}_-,e_2 \rangle|^2 \right)^{\frac 12} \\
		\leq Ce^{-\frac t4} \left( \sum_{\tilde{n} \neq 0} |\eta|^{2s} |\mathscr{F}_b \bfu_0|^2 \right)^{\frac 12} + C\left( \sum_{\{ \tilde{n} \neq 0 \} \cap D_3} e^{-\frac {2|\tilde{n}|^2}{|\eta|^{2+2\alpha}} t} |\eta|^{2s} |\mathscr{F}_b \bfu_0|^2 \right)^{\frac 12}.
	\end{gather*}
	Since
	\begin{align*}
		e^{-\frac {2|\tilde{n}|^2}{|\eta|^{2+2\alpha}} t} |\eta|^{2s} |\mathscr{F}_b \bfu_0|^2 &\leq \left( \frac {|\tilde{n}|^2}{|\eta|^{2+2\alpha}} t \right)^{-\frac {m-s}{1+\alpha}} \left( \frac {|\tilde{n}|^2}{|\eta|^{2+2\alpha}} t \right)^{\frac {m-s}{1+\alpha}} e^{-\frac {2|\tilde{n}|^2}{|\eta|^{2+2\alpha}} t} |\eta|^{2s} |\mathscr{F}_b \bfu_0|^2 \\
		&\leq Ct^{-\frac {m-s}{1+\alpha}} |\eta|^{2m} |\mathscr{F}_b \bfu_0|^2, 
	\end{align*}
	and $$e^{-\frac {2|\tilde{n}|^2}{|\eta|^{2+2\alpha}} t} |\eta|^{2s} |\mathscr{F}_b \bfu_0|^2 \leq |\eta|^{2s} |\mathscr{F}_b \bfu_0|^2,$$
	it follows 
	\begin{equation}\label{lin_est}
	\left( \sum_{\{ \tilde{n} \neq 0 \} \cap D_3} e^{-\frac {|\tilde{n}|^2}{|\eta|^{2+2\alpha}} t} |\eta|^{2s} |\mathscr{F}_b \bfu_0|^2 \right)^{\frac 12} \leq C(1+t)^{-\frac {m-s}{2(1+\alpha)}} \left( \sum_{\tilde{n} \neq 0} |\eta|^{2m} |\mathscr{F}_b \bfu_0|^2 \right)^{\frac 12}.
	\end{equation} Collecting the above estimates, we deduce \eqref{lin_dec_tht}.
	
	It remains to show \eqref{lin_dec_vd}. We can see $$\| v_d \|_{\dot{H}^s} \leq \left( \sum_{\tilde{n} \neq 0} |\eta|^{2s} |(e^{-\lambda_- t} - e^{-\lambda_+ t}) \langle \mathscr{F}_b\bold{u}_0,\bold{a}_- \rangle \langle \bold{b}_-,e_1 \rangle|^2 \right)^{\frac 12} + e^{-\frac t2} \left( \sum_{\tilde{n} \neq 0} |\eta|^{2s} |\mathscr{F}_b \bfu_0|^2 \right)^{\frac 12}.$$ Since $\langle \bfb_{-},e_1 \rangle = \frac {|\tilde{n}|^2}{|\lambda_+||\eta|^2}\langle \bfb_{-},e_2 \rangle$, using $\frac {|\tilde{n}|^2}{|\lambda_+||\eta|^2} \leq C$ for $\eta \in D_1 \cup D_2$ and $\frac {|\tilde{n}|^2}{|\lambda_+||\eta|^2} \leq 2 \frac {|\tilde{n}|^2}{|\eta|^{2+2\alpha}}$ for $\eta \in D_3$, we have 	\begin{gather*}
		\left( \sum_{\tilde{n} \neq 0} |\eta|^{2s} |(e^{-\lambda_- t} - e^{-\lambda_+ t}) \langle \mathscr{F}_b\bold{u}_0,\bold{a}_- \rangle \langle \bold{b}_-,e_1 \rangle|^2 \right)^{\frac 12} \\
		\leq Ce^{-\frac t4} \left( \sum_{\tilde{n} \neq 0} |\eta|^{2s} |\mathscr{F}_b \bfu_0|^2 \right)^{\frac 12} + C\left( \sum_{\{ \tilde{n} \neq 0 \} \cap D_3} \frac {|\tilde{n}|^4}{|\eta|^{4+4\alpha}} e^{-\frac {2|\tilde{n}|^2}{|\eta|^{2+2\alpha}} t} |\eta|^{2s} |\mathscr{F}_b \bfu_0|^2 \right)^{\frac 12}.
	\end{gather*}
  	Estimating as in \eqref{lin_est}, we deduce $$\left( \sum_{\{ \tilde{n} \neq 0 \} \cap D_3} \frac {|\tilde{n}|^4}{|\eta|^{4+4\alpha}} e^{-\frac {2|\tilde{n}|^2}{|\eta|^{2+2\alpha}} t} |\eta|^{2s} |\mathscr{F}_b \bfu_0|^2 \right)^{\frac 12} \leq C(1+t)^{-(1+\frac {m-s}{2(1+\alpha)})} \left( \sum_{\tilde{n} \neq 0} |\eta|^{2m} |\mathscr{F}_b \bfu_0|^2 \right)^{\frac 12}$$ from which we obtain \eqref{lin_dec_vd}. 
	This completes the proof.
\end{proof}

\section{Energy estimates}\label{sec:energy}

In this section, we provide 
the energy estimates which specify the quantities that should be computed via the spectral analysis. We start with the following standard local existence result. For the proof, we refer to \cite{CCL,MB01}. 

\begin{proposition}[Local well-posedness]\label{prop_loc}
	Let $d \in \mathbb{N}$ with $d \geq 2$ and $\alpha \in \{0,1\}$. Let $m \in \bbN$ with $m > 1+\frac d2-\alpha$ and an initial data $\tht_0 \in X^m$ and $v_0 \in \bbX^m$. Then there exists a $T>0$ such that there exists a unique classical solution $(v, \theta)$ to the stratified Boussinesq equations \eqref{EQ} satisfying
	\begin{equation*}
		v \in L^\infty(0,T ; \bbX^m(\Omega)), \qquad \tht \in L^\infty(0,T ; X^m(\Omega)).
	\end{equation*}
	Let $T^* \in (0,\infty]$ be the maximal time of existence. Moreover, if $T^* < \infty$, then it holds
	\begin{equation*}
		\lim_{t \nearrow T^*} \big( \| v( t) \|_{H^m}^2 + \| \theta( t) \|_{H^m}^2 \big) = \infty.
	\end{equation*}
\end{proposition}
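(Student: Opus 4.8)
The final statement is the local well-posedness Proposition, which the authors explicitly defer to references \cite{CCL, MB01}. Let me sketch how I would prove it.

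The plan is to prove this by a standard energy method combined with a mollification/Galerkin approximation scheme adapted to the function spaces $X^m(\Omega)$, $Y^m(\Omega)$, $\bbX^m(\Omega)$ introduced in Section~\ref{sec:2.2}, since the slip/stress-free boundary conditions and the compatibility structure make the naive PDE formulation awkward on a domain with boundary. First I would set up an approximate system: either project the equations \eqref{EQ} onto the finite-dimensional span of the first $\sim N$ basis functions $\mathscr{B}_{n,q}$, $\mathscr{C}_{n,q}$ (so that the boundary conditions and the structure $v_h \in Y^m$, $v_d, \tht \in X^m$ are built in automatically by the choice of basis), or regularize by replacing the transport terms with mollified versions $(\calJ_\veps v \cdot \nabla)\calJ_\veps(\cdot)$ and the forcing appropriately. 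The Galerkin route is cleanest here because Proposition~\ref{cor_conv} guarantees that the nonlinear products stay in the right spaces, and the eigenfunction expansion diagonalizes $-\Delta$; one then has an ODE system on $\bbR^N$ with locally Lipschitz right-hand side, hence a local solution on a maximal interval.

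The core is the a priori estimate: applying $\nb^\gamma$ for $|\gamma| \le m$ to \eqref{EQ}, pairing with $\nb^\gamma v$ and $\nb^\gamma \tht$ in $L^2$, and summing. Here one uses that $\int \nb^\gamma(\nb P)\cdot \nb^\gamma v = 0$ by incompressibility and the no-penetration condition (integrating by parts, the boundary term vanishes because $v\cdot n=0$); that the buoyancy term $\int \nb^\gamma(\tht e_d)\cdot\nb^\gamma v$ and the back-reaction term $\int \nb^\gamma(-v_d)\nb^\gamma\tht$ cancel exactly in the combined energy $\|v\|_{H^m}^2 + \|\tht\|_{H^m}^2$; the dissipation $(-\Delta)^\alpha v$ contributes $\nu\|v\|_{\dot H^{m+\alpha}}^2 \ge 0$ (or, when $\alpha=0$, the damping $\|v\|_{\dot H^m}^2$) with a good sign; and the commutator terms $[\nb^\gamma, v\cdot\nb]v$ and $[\nb^\gamma, v\cdot\nb]\tht$ are controlled by the Kato--Ponce / Moser product and commutator estimates, yielding a bound $\lesssim \|\nb v\|_{L^\infty}(\|v\|_{H^m}^2 + \|\tht\|_{H^m}^2)$, with $\|\nb v\|_{L^\infty} \lesssim \|v\|_{H^m}$ by Sobolev embedding precisely when $m > 1 + \tfrac d2 - \alpha$ (for $\alpha=1$ one can afford the extra derivative from parabolic smoothing, hence the $-\alpha$ in the threshold). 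Setting $E(t) := \|v(t)\|_{H^m}^2 + \|\tht(t)\|_{H^m}^2$ this gives $\ddt E \le C E^{3/2}$, so $E(t)$ stays finite on $[0,T]$ with $T \sim (C\sqrt{E(0)})^{-1}$; these bounds are uniform in the approximation parameter, so one passes to the limit (weak-$*$ compactness plus Aubin--Lions for strong convergence of the transport terms) to produce a solution with $v \in L^\infty(0,T;\bbX^m)$, $\tht \in L^\infty(0,T;X^m)$.

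For uniqueness I would take two solutions, subtract, and run the same energy estimate at the $L^2$ level on the difference $(\delta v, \delta\tht)$, getting $\ddt(\|\delta v\|_{L^2}^2 + \|\delta\tht\|_{L^2}^2) \lesssim (\|\nb v_1\|_{L^\infty} + \|\nb\tht_2\|_{L^\infty} + \dots)(\|\delta v\|_{L^2}^2 + \|\delta\tht\|_{L^2}^2)$ and concluding by Gr\"onwall; the $H^m$ regularity of at least one solution supplies the needed $L^\infty$ bounds. Continuity in time $v \in C([0,T];\bbX^m)$ rather than merely $L^\infty$ follows from the Bona--Smith argument (mollify the data, use continuous dependence plus the uniform bounds). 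Finally, the blow-up criterion: if $T^* < \infty$ but $\limsup_{t\nearrow T^*} E(t) = K < \infty$, then one can re-solve from a time close to $T^*$ on a uniform time interval depending only on $K$, contradicting maximality; hence $E(t)\to\infty$. The main obstacle is purely bookkeeping rather than conceptual: verifying that all the differential operations, integrations by parts, and the Galerkin projection genuinely respect the boundary/compatibility conditions encoded in $X^m, Y^m$ (e.g.\ that the pressure recovered from the elliptic problem lies in the right space and that $\Delta$ maps $X^{m}$ to $X^{m-2}$ etc.), which is exactly why the authors point to the detailed treatments in \cite{CCL} and \cite{MB01} rather than reproducing it.
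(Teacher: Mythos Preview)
Your sketch is correct and is essentially the standard approach; the paper itself does not give a proof but simply writes ``For the proof, we refer to \cite{CCL,MB01}.'' The Galerkin scheme using the bases $\{\mathscr{B}_{n,q}\}$, $\{\mathscr{C}_{n,q}\}$, the $H^m$ energy estimate with the Kato--Ponce commutator bound, the $L^2$ difference estimate for uniqueness, and the continuation/blow-up argument are exactly what one finds in those references (in particular \cite{CCL} carries out precisely this construction in the $X^m/Y^m$ framework for $d=2$, $\alpha=0$), so there is nothing to compare.
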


We will frequently used the following result on the product estimates 
 (see \cite{Ferrari} for the proof).

\begin{lemma}\label{cal_lem}
	Let $m \in \bbN$. Then for any subset $D \subset \{ \gamma ; |\gamma| = m \}$, there exists a constant $C=C(m)>0$ such that $$\left\| \sum_{\gamma \in D} \partial^{\gamma} (fg) \right\|_{L^2} \leq C \left( \| f \|_{H^m} \| g \|_{L^{\infty}} + \| f \|_{L^{\infty}} \| g \|_{H^m} \right)$$ for all $f, g \in H^m(\Omega) \cap L^{\infty}(\Omega)$. Moreover, if $m > 1$, then it holds $$\left\| \sum_{\gamma \in D} \partial^{\gamma} (fg) \right\|_{L^2} \leq C \| f \|_{H^m} \| g \|_{H^m}.$$ 
\end{lemma}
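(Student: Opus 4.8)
The statement to prove is Lemma~\ref{cal_lem}, a standard Moser-type product estimate for sums of top-order derivatives. Here is my plan.

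\textbf{Approach.} The plan is to reduce everything to the classical fractional Leibniz / Moser inequalities. For a single multi-index $\gamma$ with $|\gamma|=m$, the Leibniz rule gives $\partial^\gamma(fg) = \sum_{\beta \le \gamma} \binom{\gamma}{\beta} \partial^\beta f \, \partial^{\gamma-\beta}g$, so it suffices to bound each term $\|\partial^\beta f \, \partial^{\gamma-\beta}g\|_{L^2}$ with $|\beta|=k$, $|\gamma-\beta|=m-k$, $0\le k\le m$. The two extreme cases $k=0$ (i.e.\ $f\,\partial^\gamma g$) and $k=m$ ($\partial^\gamma f\, g$) are immediately controlled by $\|f\|_{L^\infty}\|g\|_{H^m}$ and $\|f\|_{H^m}\|g\|_{L^\infty}$ respectively. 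For the intermediate terms $1\le k\le m-1$ I would use the Gagliardo--Nirenberg interpolation inequalities on $\Omega = \bbT^{d-1}\times[-1,1]$ (a bounded Lipschitz domain, so GN holds),
\[
\|\partial^\beta f\|_{L^{2m/k}} \le C \|f\|_{L^\infty}^{1-k/m}\|f\|_{\dot H^m}^{k/m}, \qquad \|\partial^{\gamma-\beta}g\|_{L^{2m/(m-k)}} \le C \|g\|_{L^\infty}^{k/m}\|g\|_{\dot H^m}^{(m-k)/m},
\]
followed by Hölder's inequality with exponents $\tfrac{2m}{k}$ and $\tfrac{2m}{m-k}$ (which are conjugate after dividing by $2$), yielding
\[
\|\partial^\beta f\,\partial^{\gamma-\beta}g\|_{L^2} \le C\left(\|f\|_{L^\infty}\|g\|_{\dot H^m}\right)^{(m-k)/m}\left(\|f\|_{\dot H^m}\|g\|_{L^\infty}\right)^{k/m} \le C\left(\|f\|_{H^m}\|g\|_{L^\infty} + \|f\|_{L^\infty}\|g\|_{H^m}\right),
\]
the last step by Young's inequality $a^{\theta}b^{1-\theta}\le a+b$. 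Summing over $\beta\le\gamma$ and then over the finitely many $\gamma\in D$ gives the first inequality with a constant depending only on $m$ (and $d$, $\Omega$).

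\textbf{Second inequality.} When $m>1$, I would derive the $H^m\times H^m$ bound from the first one by absorbing the $L^\infty$ norms via Sobolev embedding. However, this requires $m > d/2$, which is not assumed here; so instead I would argue directly. For each product $\partial^\beta f\,\partial^{\gamma-\beta}g$ with $|\beta|=k$, $0\le k\le m$, use Gagliardo--Nirenberg in the form $\|\partial^\beta f\|_{L^{p_k}} \le C\|f\|_{H^m}$ and $\|\partial^{\gamma-\beta}g\|_{L^{p_k'}}\le C\|g\|_{H^m}$ where $\tfrac1{p_k}+\tfrac1{p_k'}=\tfrac12$ and each exponent is chosen admissible: concretely $\|\partial^\beta f\|_{L^{p_k}} \le C\|f\|_{W^{k,p_k}} \le C\|f\|_{H^m}$ provided $k - d/p_k \le m - d/2$, i.e.\ $d/p_k \ge d/2 - (m-k)$; one checks that the symmetric choice leading to $\tfrac{1}{p_k} = \tfrac14 + \tfrac{m-2k}{2dm}\cdot\tfrac{d}{?}$... rather than fiddling with this, the cleanest route is: since $m>1$, pick $p_k$ so that $W^{k,p_k}\hookrightarrow$ is available from $H^m$ for every $0\le k\le m$ simultaneously with the conjugate condition, which is possible exactly because $m\ge 2$ gives enough room (the borderline $k=m$, $k=0$ terms use $H^m\hookrightarrow L^\infty$ only when $m>d/2$, so for those two terms instead bound $\|\partial^\gamma f\|_{L^2}\|g\|_{L^\infty}$ is replaced by $\|\partial^\gamma f\|_{L^2}\| g\|_{L^\infty}$ — no: here one uses $\|fg\|$ differently). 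The robust statement is that $H^m(\Omega)$ is a multiplication algebra relative to $L^2$ in the sense $\|fg\|_{\dot H^m}\lesssim \|f\|_{H^m}\|g\|_{H^m}$ for $m>1$ on bounded domains, which is exactly \cite{Ferrari}; I would simply cite it.

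\textbf{Main obstacle.} The only genuine subtlety is the second inequality's endpoint terms ($\partial^\gamma f \cdot g$ and $f\cdot \partial^\gamma g$): bounding $\|g\|_{L^\infty}$ by $\|g\|_{H^m}$ needs $m>d/2$, which is \emph{not} hypothesized ($m>1$ only). The resolution is that one does \emph{not} isolate those terms that way; instead every term in the Leibniz expansion, including the endpoints, is estimated by a single Gagliardo--Nirenberg--Hölder pair with $L^{p}\times L^{p'}$ exponents strictly between $2$ and $\infty$ — possible precisely because $m>1$ lets us ``spread'' one derivative onto the factor carrying none. This is the content of the reference \cite{Ferrari}, and in the write-up I would either reproduce that GN bookkeeping for general $d$ or, as the excerpt already announces, simply invoke it. The first inequality has no such issue since the $L^\infty$ norms appear explicitly on the right-hand side.
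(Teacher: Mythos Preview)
The paper does not give its own proof of this lemma; it simply refers to \cite{Ferrari}. Your sketch of the first inequality---Leibniz expansion, Gagliardo--Nirenberg interpolation $\|\partial^\beta f\|_{L^{2m/k}} \lesssim \|f\|_{L^\infty}^{1-k/m}\|f\|_{H^m}^{k/m}$, H\"older with conjugate exponents, then Young---is the standard argument and is correct.

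For the second inequality, however, your proposed fix for the endpoint terms does not work. In the term $(\partial^\gamma f)\cdot g$ with $|\gamma|=m$, the factor $\partial^\gamma f$ carries \emph{all} available derivatives of $f$ and is therefore controlled only in $L^2$; H\"older then forces $g$ into $L^\infty$, and there is no Gagliardo--Nirenberg pairing with both exponents strictly between $2$ and $\infty$ for this term. One cannot ``spread'' a derivative onto $g$ at the level of a pointwise product. In fact the estimate $\|fg\|_{\dot H^m}\lesssim \|f\|_{H^m}\|g\|_{H^m}$ is essentially the algebra property of $H^m$, which holds precisely when $m>d/2$; the bare hypothesis $m>1$ is insufficient once $d\ge 4$ (take $m=2$). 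The stated condition $m>1$ matches the 3D setting of \cite{Ferrari} (where $m\ge 2>3/2$ suffices), and in any case every application of the lemma in this paper is made under the running assumption $m>2+d/2$, so $m>d/2$ is always available. Your instinct to defer to the cited reference is the right bottom line, but you should not claim that $m>1$ alone closes the argument in arbitrary dimension.
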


Let us use the notations for $k \in \bbN$
\begin{equation*}
	E_k(t) := \left( \| v(t) \|_{H^k}^2 + \| \theta(t) \|_{H^k}^2 \right)^{\frac 12},
\end{equation*}
\begin{equation*}
	A_k(t) := \sum_{|\gamma|=1}^{k} \int_{\Omega} \partial^\gamma v_d(t) \partial^\gamma \theta(t) \,\mathrm{d}x.
\end{equation*}
Note that Young's inequality implies
\begin{equation}\label{A}
	|A_{k}(t)| \leq \frac 12 E_{k}(t)^2 .
\end{equation}
\begin{proposition}\label{prop_eng}
	Let $d \in \mathbb{N}$ with $d \geq 2$ and $m \in \mathbb{N}$ with $m \geq 2+\frac d2$. Assume that $(v, \theta)$ is a smooth global solution to \eqref{EQ} with $\alpha = 1$. Then there exists a constant $C > 0$ such that
	\begin{equation}\label{eng_est}
		\begin{gathered}
			\ddt ( E_m(t)^2 - A_{m-1}(t) ) + \frac 12 \| \nabla v(t) \|_{H^m}^2 + \frac 12 \| \nabla_h \theta(t) \|_{H^{m-2}}^2 \\
			\leq C\| \nabla v(t) \|_{H^m} \| \nabla_h \theta(t) \|_{H^{m-2}} \| \tht(t) \|_{H^m} + C\big(\| \tht(t) \|_{H^m}^2 + \| v(t) \|_{H^m}^2 \big) \| \nabla v(t) \|_{L^{\infty}} 
		\end{gathered}
	\end{equation}
	for all $t>0$.
\end{proposition}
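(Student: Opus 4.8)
\textbf{Proof strategy for Proposition~\ref{prop_eng}.}

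The plan is to perform an $H^m$ energy estimate on the system \eqref{EQ} with $\alpha=1$, together with an auxiliary estimate for the cross term $A_{m-1}(t)$ that is designed to extract damping in $\nabla_h\theta$, which otherwise is absent from the pure energy identity. First I would apply $\partial^\gamma$ for $1 \le |\gamma| \le m$ to the $v$-equation and the $\theta$-equation, pair with $\partial^\gamma v$ and $\partial^\gamma\theta$ respectively, integrate over $\Omega$, and sum. The pressure term drops by $\mathrm{div}\,v=0$ after integration by parts (the boundary terms vanish because of the compatibility/boundary conditions recorded in Section~\ref{sec:2.1} and the slip conditions). The viscosity term $\langle \partial^\gamma(-\Delta)v, \partial^\gamma v\rangle$ integrates to $\|\nabla\partial^\gamma v\|_{L^2}^2$ after integration by parts, producing, after summing, the good term $\|\nabla v\|_{H^m}^2$ (up to adding the $|\gamma|=0$ contribution, which is controlled by Poincaré since $\int_\Omega v\,\ud x = 0$ by Lemma~\ref{lem_obs} when $\alpha=1$, or simply absorbing the zeroth order term). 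Crucially, the buoyancy term $\langle \partial^\gamma(\theta e_d), \partial^\gamma v\rangle = \langle\partial^\gamma\theta, \partial^\gamma v_d\rangle$ cancels exactly against the forcing term $\langle -\partial^\gamma v_d, \partial^\gamma\theta\rangle$ coming from the $\theta$-equation — this is the classical linear cancellation that makes the energy $E_m^2$ (almost) nonincreasing in the linear part. Hence the $H^m$ identity reads $\ddt E_m^2 + \|\nabla v\|_{H^m}^2 = (\text{transport commutator terms})$, with no dissipation on $\theta$.

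Next I would estimate the transport commutator terms. For the velocity equation these are $\sum_{1\le|\gamma|\le m}\int_\Omega [\partial^\gamma,(v\cdot\nabla)]v \cdot \partial^\gamma v\,\ud x$ (the top-order term $\int v\cdot\nabla\,\partial^\gamma v\cdot\partial^\gamma v = 0$ by incompressibility and the boundary conditions), bounded via Lemma~\ref{cal_lem} by $C\|\nabla v\|_{L^\infty}\|v\|_{H^m}^2$; similarly for the $\theta$-equation one gets $\sum\int[\partial^\gamma,(v\cdot\nabla)]\theta\,\partial^\gamma\theta$, but here one must be careful since $\theta$ has no smallness/decay — the commutator $[\partial^\gamma,(v\cdot\nabla)]\theta$ still has at least one derivative hitting $v$, so by Lemma~\ref{cal_lem} it is bounded by $C(\|\nabla v\|_{L^\infty}\|\theta\|_{H^m} + \|v\|_{H^m}\|\nabla_h\theta\|_{H^{m-1}})\|\theta\|_{H^m}$ — wait, the vertical derivatives of $\theta$ are problematic, so I would split $\nabla\theta = (\nabla_h\theta, \partial_d\theta)$ and use the $\theta$-equation to trade $\partial_d\theta$ for lower-order quantities; more straightforwardly, I'd bound this term by $C\|\nabla v\|_{L^\infty}(\|v\|_{H^m}^2+\|\theta\|_{H^m}^2)$ after noting every summand has a derivative on $v$ whose $L^\infty$ norm can be pulled out (since $|\gamma|\ge 1$, the commutator is a sum $\sum_{\gamma'\le\gamma, |\gamma'|\ge 1}\partial^{\gamma'}v\cdot\nabla\partial^{\gamma-\gamma'}\theta$, so at least one derivative is on $v$ in every term and one can arrange the $L^\infty$ to land on the $v$-factor), which matches the second term on the RHS of \eqref{eng_est}.

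For the cross term I would differentiate $A_{m-1}(t) = \sum_{1\le|\gamma|\le m-1}\int_\Omega \partial^\gamma v_d\,\partial^\gamma\theta\,\ud x$ in time using the equations. The linear part gives $\sum\int(\partial^\gamma\theta_t\,\partial^\gamma v_d + \partial^\gamma v_d{}_t\,\partial^\gamma\theta)$; substituting $\theta_t = -v_d - (v\cdot\nabla)\theta$ and $v_{d,t} = \Delta v_d - \partial_d P - (v\cdot\nabla)v_d + \theta$, the dominant linear contribution is $-\sum\int|\partial^\gamma v_d|^2 + \sum\int|\partial^\gamma\theta|^2 + \sum\int\partial^\gamma\Delta v_d\,\partial^\gamma\theta - \sum\int\partial^\gamma\partial_d P\,\partial^\gamma\theta$. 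The term $\sum\int|\partial^\gamma\theta|^2$ is \emph{not} directly $\|\nabla_h\theta\|_{H^{m-2}}^2$ — here one uses the pressure: from the spectral formulas of Section~\ref{sec_spec}, $\partial_d P$ carries a factor $\tilde q^2/|\eta|^2$ on the $\theta$-contribution and $|\tilde n|^2/|\eta|^2$ appears in the $v_d$-equation structure, so $-\int\partial^\gamma\partial_d P\,\partial^\gamma\theta + \int|\partial^\gamma\theta|^2$ combines, via $1 - \tilde q^2/|\eta|^2 = |\tilde n|^2/|\eta|^2$, into $\sum_\eta (|\tilde n|^2/|\eta|^2)|\eta|^{2|\gamma|}|\mathscr{F}_b\theta(\eta)|^2$, which is comparable to $\|\nabla_h\theta\|_{H^{m-2}}^2$ on the frequency range $|\eta|\gtrsim 1$ (and the low-frequency $\tilde n = 0$ modes of $\theta$ are exactly the non-decaying part $\sigma$, harmless since $\nabla_h$ kills them — indeed $\bar\theta$ is what is being damped). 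The Laplacian cross term $\int\partial^\gamma\Delta v_d\,\partial^\gamma\theta$ and the transport pieces are then absorbed: $\int\partial^\gamma\Delta v_d\,\partial^\gamma\theta \le \|\nabla v\|_{H^m}\|\nabla\theta\|_{H^{m-2}}$ — but $\nabla\theta$ again has a bad $\partial_d$ component, so one integrates by parts to move a derivative and uses $\|\Delta v_d\|_{H^{m-1}} \le \|\nabla v\|_{H^m}$ against $\|\partial^\gamma\theta\|$, yielding a term $\le C\|\nabla v\|_{H^m}\|\theta\|_{H^m}$ which must be reorganized as $C\|\nabla v\|_{H^m}\|\nabla_h\theta\|_{H^{m-2}}\|\theta\|_{H^m}$ (the cubic structure on the RHS of \eqref{eng_est}) — this requires exploiting that the genuinely dangerous direction is only $\nabla_h\theta$ and that $\|\partial_d\theta\|$ at top order is controlled through the equation. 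After choosing the coefficient of $A_{m-1}$ appropriately (it enters with a small multiplicative constant so that $E_m^2 - A_{m-1} \simeq E_m^2$ by \eqref{A}) and absorbing all error terms into $\tfrac12\|\nabla v\|_{H^m}^2 + \tfrac12\|\nabla_h\theta\|_{H^{m-2}}^2$ using Young's inequality, one arrives at \eqref{eng_est}. \textbf{The main obstacle} is the handling of the vertical derivative $\partial_d\theta$ at top order, which is not damped and not small: one must consistently use the pressure relation $1 - \tilde q^2/|\eta|^2 = |\tilde n|^2/|\eta|^2$ (equivalently, the structure of $\mathbb{P}$ and $M$ from Section~\ref{sec_spec}) to show that every apparently-bad occurrence of $\|\theta\|_{H^m}$ in the $A_{m-1}$ evolution can be paired with either $\|\nabla v\|_{H^m}$ or $\|\nabla_h\theta\|_{H^{m-2}}$, never standing alone, so the cubic right-hand side closes after Young's inequality. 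The requirement $m \ge 2 + d/2$ is exactly what makes $\|\nabla v\|_{L^\infty} \lesssim \|\nabla v\|_{H^{m-1}} \lesssim \|\nabla v\|_{H^m}$ available via Sobolev embedding, so that all $L^\infty$ factors in the commutator estimates are legitimate.
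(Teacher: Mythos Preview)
Your overall architecture (an $H^m$ energy identity plus the cross-term $A_{m-1}$ to extract horizontal damping on $\theta$) matches the paper's, but there is a genuine gap in your treatment of the $\theta$-transport commutator in the $E_m^2$ estimate. You claim that in $\sum_{0<\gamma'\le\gamma}\int \partial^{\gamma'}v\cdot\nabla\partial^{\gamma-\gamma'}\theta\,\partial^\gamma\theta$ ``one can arrange the $L^\infty$ to land on the $v$-factor'' because every summand carries at least one derivative on $v$. That is false at the top of the sum: when $\gamma'=\gamma$ (so $|\gamma'|=m$), the term is $\int \partial^\gamma v\cdot\nabla\theta\,\partial^\gamma\theta$, and $\partial^\gamma v$ lives only in $L^2$. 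Putting $\nabla\theta$ in $L^\infty$ instead gives $\|v\|_{H^m}\|\nabla\theta\|_{L^\infty}\|\theta\|_{H^m}$, and the vertical piece $\|\partial_d\theta\|_{L^\infty}$ is neither controlled by $\|\nabla v\|_{L^\infty}$ nor by $\|\nabla_h\theta\|_{H^{m-2}}$, so this does not close into the right-hand side of \eqref{eng_est}.

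The paper resolves exactly this obstruction by a careful decomposition that you did not carry out. One splits $v\cdot\nabla\theta = v_h\cdot\nabla_h\theta + v_d\,\partial_d\theta$; the horizontal part produces the cubic term $C\|\nabla v\|_{H^m}\|\nabla_h\theta\|_{H^{m-2}}\|\theta\|_{H^m}$ already in the $E_m^2$ estimate (this is where that term in \eqref{eng_est} actually originates, not from the $A_{m-1}$ computation as you suggest). The dangerous vertical part $\int\partial^\gamma v_d\,\partial_d\theta\,\partial^\gamma\theta$-type terms are handled by repeatedly invoking the divergence-free relation $\partial_d v_d=-\nabla_h\cdot v_h$ (and $\partial_d^2 v_d=-\nabla_h\cdot\partial_d v_h$) together with integration by parts justified by the compatibility conditions $v_d,\theta\in X^m$, $v_h\in Y^m$, plus a spectral swap of $\partial_d$ and $\nabla_h$ between the two $\theta$-factors (the identity \eqref{sum_est}). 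This converts every occurrence of $\partial_d\theta$ at top order into either a horizontal derivative on $\theta$ or a gradient on $v$, and is the substantive work in the proof. Your $A_{m-1}$ discussion is also imprecise: the paper applies $-\Delta$ to the $v_d$-equation first, so that the linear right-hand side becomes $-\Delta_h\theta$ directly and the good term $\|\nabla_h\theta\|_{H^{m-2}}^2$ appears without any pressure gymnastics; the cross-diffusion term $\int\nabla(-\Delta)\partial^\gamma v_d\cdot\nabla\partial^\gamma\theta$ is then quadratic (not cubic) and is absorbed by Young after using $\partial_d v_d=-\nabla_h\cdot v_h$ to land on $\|\nabla_h\partial^\gamma\theta\|_{L^2}$.
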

\begin{proof}
	From the system \eqref{EQ}, we have
	\begin{equation*}
		\begin{gathered}
			\frac 12 \ddt \big( \| v(t) \|_{H^m}^2 + \| \theta(t) \|_{H^m}^2 \big) + \| \nabla v \|_{H^m}^2 \\
			\leq - \sum_{1 \leq |\gamma| \leq m} \int_{\Omega} \partial^\gamma (v \cdot \nabla) v  \partial^\gamma v \, \mathrm{d}x - \sum_{1 \leq |\gamma| \leq m} \int_{\Omega} \partial^\gamma (v \cdot \nabla) \theta \partial^\gamma \theta \, \mathrm{d}x.
		\end{gathered}
	\end{equation*}
	We only consider the $|\gamma| = m$ case because the others can be treated similarly. It is clear by the divergence-free condition and the boundary condition that $$\int_{\Omega} (v \cdot \nabla \partial^\gamma v) \partial^\gamma v \,\mathrm{d}x = \int_{\Omega} (v \cdot \nabla \partial^\gamma \theta) \partial^\gamma \theta \,\mathrm{d}x = 0, \qquad |\gamma| = m.$$ Thus, Lemma~\ref{cal_lem} implies
	\begin{equation*}
		\left| - \sum_{|\gamma| = m} \int_{\Omega} \partial^\gamma (v \cdot \nabla) v  \partial^\gamma v \, \mathrm{d}x -\sum_{|\gamma|=m-1} \int_{\Omega} (\nabla v \cdot \partial^\gamma\nabla \theta) \cdot \partial^\gamma \nabla \theta \,\mathrm{d}x \right| \leq C\| \nabla v \|_{L^{\infty}} (\| v \|_{H^m}^2+\| \tht \|_{H^m}^2).
	\end{equation*}
	For estimating the remainder term
	\begin{equation*}
		\left| \sum_{|\gamma| = m-2} \int_{\Omega} \partial^\gamma (\Delta v \cdot \nabla \theta) \partial^\gamma \Delta \theta \,\mathrm{d}x \right|,
	\end{equation*}
	we use a simple formula $\Delta v \cdot \nabla \tht = \Delta v_h \cdot \nabla_h \tht + \Delta v_d \partial_d \tht$. We can infer from H\"{o}lder's inequality with Sobolev embeddings that 
	\begin{align*}
		\left| \sum_{|\gamma| = m-2} \int_{\Omega} \partial^\gamma (\Delta v_h \cdot \nabla_h \theta) \partial^\gamma \Delta \theta \,\mathrm{d}x \right| &\leq \| \Delta v_h \cdot \nabla_h \theta \|_{H^{m-2}} \| \tht \|_{H^{m}} \\
		&\leq C\| \nabla v \|_{H^m} \| \nabla_h \tht \|_{H^{m-2}} \| \tht \|_{H^m} + \| \Delta v \|_{L^{\infty}} \| \nabla_h \tht \|_{H^{m-2}} \| \tht \|_{H^m}.
	\end{align*}
	Since $m \geq 2 + d/2$ implies $$\| \Delta v \|_{L^{\infty}} \leq C\| \nabla v \|_{L^{\infty}} + C\| \nabla v \|_{H^m},$$ it follows
	\begin{align*}
		\left| \sum_{|\gamma| = m-2} \int_{\Omega} \partial^\gamma (\Delta v_h \cdot \nabla_h \theta) \partial^\gamma \Delta \theta \,\mathrm{d}x \right| &\leq C\| \nabla v \|_{H^m} \| \nabla_h \tht \|_{H^{m-2}} \| \tht \|_{H^m} + C\| \nabla v \|_{L^{\infty}} \| \tht \|_{H^m}^2.
	\end{align*}	
	Otherwise, we have 
	\begin{equation}\label{eng_1}
	\begin{gathered}
		\left| \sum_{|\gamma| = m-2} \int_{\Omega} \partial^\gamma (\Delta v_d \partial_d \theta) \partial^\gamma  \Delta \theta \,\mathrm{d}x \right| \\
		\leq \left| \sum_{|\gamma| = m-3} \int_{\Omega} \partial^\gamma \nabla_h(\Delta v_d \partial_d \theta) \cdot \partial^\gamma \Delta \nabla_h \theta \,\mathrm{d}x \right| + \left| \int_{\Omega} \partial_d^{m-2}(\partial_d^2 v_d \partial_d \theta) \partial_d^m \theta \,\mathrm{d}x \right|.
	\end{gathered}
	\end{equation}
	Here, we need to estimate carefully with the boundary conditions. The first integral on the right-hand side is bounded by $$\left| \sum_{|\gamma| = m-3} \int_{\partial\Omega} \partial^\gamma \nabla_h(\Delta v_d \partial_d \theta) \cdot \partial_d \partial^\gamma \nabla_h \theta \,\mathrm{d}x_h \right| + \left| \sum_{|\gamma| = m-2} \int_{\Omega} \partial^\gamma \nabla_h(\Delta v_d \partial_d \theta) \cdot \partial^\gamma \nabla_h \theta \,\mathrm{d}x \right|.$$ Since $v_d \in X^{m+1}(\Omega)$ and $\theta \in X^m(\Omega)$ for a.e. $t>0$, the boundary term vanishes. Lemma~\ref{cal_lem} implies $$\left| \sum_{|\gamma| = m-2} \int_{\Omega} \partial^\gamma \nabla_h(\Delta v_d \partial_d \theta) \cdot \partial^\gamma \nabla_h \theta \,\mathrm{d}x \right| \leq C\| \nabla v \|_{H^m} \| \nabla_h \tht \|_{H^{m-2}} \| \tht \|_{H^m}.$$ On the other hand, we write the second integral in \eqref{eng_1} as
	\begin{align*}
		\left| \int_{\Omega} \partial_d^{m-2}(\partial_d^2 v_d \partial_d \theta) \partial_d^m \theta \,\mathrm{d}x \right| \leq \left| \int_{\Omega} \partial_d^{m-2}\nabla_h \cdot (\partial_d v_h \partial_d \theta) \partial_d^m \theta \,\mathrm{d}x \right| + \left| \int_{\Omega} \partial_d^{m-2}(\partial_d v_h \cdot \nabla_h \partial_d \theta) \partial_d^m \theta \,\mathrm{d}x \right|.
	\end{align*}
	It can be shown by H\"older's inequalities and Sobolev embeddings that 
	\begin{align*}
		\left| \int_{\Omega} \partial_d^{m-2}(\partial_d v_h \cdot \nabla_h \partial_d \theta) \partial_d^m \theta \,\mathrm{d}x \right| &\leq (\| \partial_d^{m-3}(\partial_d^2 v_h \cdot \nabla_h \partial_d \theta) \|_{L^2} + \| \partial_d v_h \cdot \nabla_h \partial_d^{m-1} \theta \|_{L^2}) \| \partial_d^m \tht \|_{L^2} \\
		&\leq C \| \nabla v \|_{H^m} \| \nabla_h \tht \|_{H^{m-2}} \| \tht \|_{H^{m}} + C \| \nabla v \|_{L^{\infty}} \| \tht \|_{H^{m}}^2.
	\end{align*}
	Since $\partial_d v_h \partial_d \tht \in X^{m-1}$ and $\tht \in X^m$, it holds
	\begin{equation}\label{sum_est}
	\begin{aligned}
		\left| \int_{\Omega} \partial_d^{m-2} \nabla_h \cdot (\partial_d v_h \partial_d \tht) \partial_d^m \tht \,\ud x \right| &= \left| \sum_{\eta \in J} \tilde{q}^{m-2} i\tilde{n} \cdot \mathscr{F}_b (\partial_d v_h \partial_d \tht)(\eta) \overline{\tilde{q}^{m} \mathscr{F}_b \tht(\eta)} \right| \\
		&= \left| \sum_{\eta \in J} \tilde{q}^{m-1} \mathscr{F}_b (\partial_d v_h \partial_d \tht)(\eta) \cdot \overline{i\tilde{n} \tilde{q}^{m-1} \mathscr{F}_b \tht(\eta)} \right| \\
		&= \left| \int_{\Omega} \partial_d^{m-1} (\partial_d v_h \partial_d \tht) \cdot \nabla_h \partial_d^{m-1} \tht \,\ud x \right|.
	\end{aligned}
	\end{equation}
	Then, we have
	\begin{align*}
		&\left| \int_{\Omega} \partial_d^{m-1}(\partial_d v_h \partial_d \theta) \cdot \nabla_h \partial_d^{m-1} \theta \,\mathrm{d}x \right| \\
		&\hphantom{\qquad\qquad} \leq \left| \int_{\Omega} (\partial_d v_h \partial_d^m \theta) \cdot \nabla_h \partial_d^{m-1} \theta \,\mathrm{d}x \right| + \left| \int_{\Omega} \partial_d^{m-2}(\partial_d^2 v_h \partial_d \theta) \cdot \nabla_h \partial_d^{m-1} \theta \,\mathrm{d}x \right| \\
		&\hphantom{\qquad\qquad}= \left| \int_{\Omega} (\partial_d v_h \partial_d^m \theta) \cdot \nabla_h \partial_d^{m-1} \theta \,\mathrm{d}x \right| + \left| \int_{\Omega} \partial_d^{m-1}(\partial_d^2 v_h \partial_d \theta) \cdot \nabla_h \partial_d^{m-2} \theta \,\mathrm{d}x \right| \\
		&\hphantom{\qquad\qquad}\leq C \| \nabla v \|_{L^{\infty}} \| \tht \|_{H^{m}}^2 + C \| \nabla v \|_{H^m} \| \nabla_h \tht \|_{H^{m-2}} \| \tht \|_{H^{m}}.
	\end{align*}
Combining the above estimates, we have
	\begin{equation}\label{est_A_1}
		\begin{gathered}
			\frac 12 \ddt \big( \| v(t) \|_{H^m}^2 + \| \theta(t) \|_{H^m}^2 \big) + \| \nabla v \|_{H^m}^2 \\
			 \leq C \| \nabla v \|_{L^{\infty}} (\| v \|_{H^m}^2 + \| \tht \|_{H^{m}}^2) + C \| \nabla v \|_{H^m} \| \nabla_h \tht \|_{H^{m-2}} \| \tht \|_{H^{m}}.
		\end{gathered}
	\end{equation}

	Now we claim that
	\begin{equation}\label{est_A}
		\frac 32 \| \nabla v \|_{H^m}^2 \geq \frac 12 \| \nabla_h \theta \|_{H^{m-2}}^2 - \ddt A_{m-1}(t) - C E_m(t) \| \nabla v \|_{H^{m-2}}^2.
	\end{equation}
	We recall the $v_d$ equation in \eqref{EQ}
	\begin{equation}\label{Pvd}
		\partial_t v_d -\Delta v_d + \langle \mathbb{P}\,(v \cdot \nabla)v , e_d \rangle = \langle \mathbb{P} \,\theta e_d,e_d \rangle.
	\end{equation}
	We first take $-\Delta$ on the both sides of \eqref{Pvd}. Since the definition of $\mathbb{P}$ implies
	\begin{equation*}
		-\Delta \langle \mathbb{P}\,(v \cdot \nabla)v , e_d \rangle = -\Delta (v\cdot \nabla) v_d + \partial_d \nabla \cdot (v\cdot \nabla) v 
	\end{equation*}
	and
	\begin{equation*}
		-\Delta \langle \mathbb{P} \,\theta e_d,e_d \rangle =  - \Delta_h \theta,
	\end{equation*}
	it follows $$\partial_t (-\Delta)v_d + (-\Delta)^2 v_d  -\Delta (v\cdot \nabla) v_d + \partial_d \nabla \cdot (v\cdot \nabla) v  = -\Delta_h \theta.$$ Then, we have for $|\gamma| = m-2$ that
	\begin{gather*}
		\int_{\Omega} \partial_t \nabla \partial^{\gamma} v_d \cdot \nabla \partial^{\gamma} \tht \,\ud x + \int_{\Omega} \nabla (-\Delta) \partial^{\gamma} v_d \cdot \nabla  \partial^{\gamma}  \tht \,\ud x + \int_{\Omega} \nabla \partial^{\gamma} (v \cdot \nabla) v_d \cdot \nabla \partial^{\gamma} \tht \,\ud x \\
	- \int_{\Omega} \nabla \cdot \partial^{\gamma} (v \cdot \nabla)v \partial_d \partial^{\gamma} \tht \,\ud x = \int_{\Omega} | \nabla_h \partial^{\gamma} \tht|^2 \,\ud x. 
	\end{gather*}
	On the other hand, we have from the $\tht$ equation in \eqref{EQ}
	\begin{equation*}
		\int_{\Omega} \partial_t \nabla \partial^{\gamma} \theta \cdot \nabla \partial^{\gamma} v_d \,\mathrm{d}x + \int_{\Omega} \nabla \partial^{\gamma} (v\cdot \nabla) \theta \cdot \nabla \partial^{\gamma} v_d \,\mathrm{d}x + \int_{\Omega} |\nabla \partial^{\gamma} v_d|^2 \,\ud x = 0.
	\end{equation*}
	Adding these two equalities gives
	\begin{gather*}
		\frac {\mathrm{d}}{\mathrm{d}t} \int_{\Omega} \nabla \partial^{\gamma} v_d \cdot \nabla \partial^{\gamma} \theta \, \mathrm{d}x + \int_{\Omega} \nabla (-\Delta) \partial^{\gamma} v_d \cdot \nabla \partial^{\gamma} \tht \,\ud x - \int_{\Omega} \nabla \cdot \partial^{\gamma} (v \cdot \nabla)v \partial_d \partial^{\gamma} \tht \,\ud x \\
		+ \int_{\Omega} \big( \nabla\partial^{\gamma} (v \cdot \nabla) v_d \cdot \nabla \partial^{\gamma} \theta + \nabla \partial^{\gamma} (v\cdot \nabla) \theta \cdot \nabla \partial^{\gamma} v_d\big) \, \mathrm{d}x + \int_{\Omega} |\nabla \partial^{\gamma} v_d|^2 \,\ud x = \int_{\Omega} | \nabla_h \partial^{\gamma} \tht|^2 \,\ud x.
	\end{gather*}
	We have by Lemma~\ref{cal_lem} and the divergence-free condition that
	\begin{equation*}
		\bigg| -\int_{\Omega} \nabla \cdot \partial^{\gamma} (v \cdot \nabla)v \partial_d \partial^{\gamma} \tht \,\ud x \bigg| \leq C \| v \|_{H^{m}}^2 \| \theta \|_{H^{m}}.
	\end{equation*}
	From $\partial_dv_d = -\nabla_h \cdot v_h$ and the cancellation property, we deduce $$\left| \int_{\Omega} \nabla (-\Delta) \partial^{\gamma} v_d \cdot \nabla \partial^{\gamma} \tht \,\ud x \right| \leq \| \Delta \partial^{\gamma} \nabla v \|_{L^2} \| \partial^{\gamma} \nabla_h \tht \|_{L^2} \leq \frac 12 \| \Delta \partial^{\gamma} \nabla v \|_{L^2}^2 + \frac 12 \| \partial^{\gamma} \nabla_h \tht \|_{L^2}^2$$
	and
	\begin{equation*}
		\bigg| \int_{\Omega} \big( \nabla\partial^{\gamma} (v \cdot \nabla) v_d \cdot \nabla \partial^{\gamma} \theta + \nabla \partial^{\gamma} (v\cdot \nabla) \theta \cdot \nabla \partial^{\gamma} v_d\big) \, \mathrm{d}x \bigg| \leq C \| v \|_{H^{m}}^2 \| \theta \|_{H^{m}}
	\end{equation*}
	respectively. The above estimates yields
	\begin{gather*}
		\sum_{|\gamma|=m-2} \frac {\mathrm{d}}{\mathrm{d}t} \int_{\Omega} \nabla \partial^{\gamma} v_d \cdot \nabla \partial^{\gamma} \theta \, \mathrm{d}x + \frac 12 \| \nabla v \|_{\dot{H}^m}^2 + \| \nabla v_d \|_{\dot{H}^{m-2}}^2 + C \| v \|_{H^{m}}^2 \| \theta \|_{H^{m}} \geq \frac 12 \| \nabla_h \tht \|_{\dot{H}^{m-2}}^2.
	\end{gather*}
	Similarly, we can repeat the above procedure for the lower order derivatives. Then, \eqref{est_A} is obtained. 
	
	We multiply \eqref{est_A_1} by $2$
	\begin{equation*}
		\begin{gathered}
			\ddt \big( \| v(t) \|_{H^m}^2 + \| \theta(t) \|_{H^m}^2 \big) + 2\| \nabla v \|_{H^m}^2 \\
			 \leq C \| \nabla v \|_{L^{\infty}} (\| v \|_{H^m}^2 + \| \tht \|_{H^{m}}^2) + C \| \nabla v \|_{H^m} \| \nabla_h \tht \|_{H^{m-2}} \| \tht \|_{H^{m}},
		\end{gathered}
	\end{equation*}
	and recall \eqref{est_A}
	\begin{equation*}
		-\frac 32 \| \nabla v \|_{H^m}^2 + \frac 12 \| \nabla_h \theta \|_{H^{m-2}}^2 - \ddt A_{m-1}(t) \leq C E_m(t) \| \nabla v \|_{H^{m-2}}^2.
	\end{equation*}
	Adding these two inequality, we obtain \eqref{eng_est}. This completes the proof.
\end{proof}

\begin{proposition}\label{prop_eng2}
	Let $d \in \mathbb{N}$ with $d \geq 2$ and $m \in \mathbb{N}$ with $m > 1+\frac d2$. Assume that $(v, \theta)$ is a smooth global solution to \eqref{EQ} with $\alpha = 0$. Then there exists a constant $C > 0$ such that
	\begin{equation}\label{eng_est2}
		\begin{gathered}
			\ddt ( E_m(t)^2 - A_m(t) ) + \frac 12 \| v(t) \|_{H^m}^2 + \frac 12 \| \nabla_h \theta(t) \|_{H^{m-1}}^2 \\
			\leq C\| v(t) \|_{H^m} \| \nabla_h \theta(t) \|_{H^{m-1}} \| \tht(t) \|_{H^m} + C\big(\| \tht(t) \|_{H^m}^2 + \| v(t) \|_{H^m}^2 \big) \| \partial_d v_d(t) \|_{L^{\infty}} 
		\end{gathered}
	\end{equation}
	for all $t>0$.
\end{proposition}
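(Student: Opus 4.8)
The plan is to follow the two-step template of Proposition~\ref{prop_eng}: a basic $H^m$ energy inequality, then a cross-term (hypocoercivity) inequality in which the functional $A_m$ extracts dissipation in $\nabla_h\theta$, the two being combined at the end. Two features make $\alpha=0$ structurally lighter than $\alpha=1$. The damping $+v$ in \eqref{EQ} produces the full-order dissipation $\|v\|_{H^m}^2$ directly, so there is no need to apply $-\Delta$ to the $v_d$-equation; and, correspondingly, the right-hand side $\langle\mathbb{P}\theta e_d,e_d\rangle$ of the projected $v_d$-equation carries the Fourier multiplier $|\tilde{n}|^2/|\eta|^2$ (cf.\ \eqref{df_vd}), which is of order zero, so that pairing it against $\theta$ yields dissipation one derivative stronger than in the $\alpha=1$ case, namely $\|\nabla_h\theta\|_{H^{m-1}}^2$.

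For the \emph{basic energy estimate} I would apply $\partial^\gamma$, $0\le|\gamma|\le m$, to the $v$- and $\theta$-equations, pair with $\partial^\gamma v$ and $\partial^\gamma\theta$, integrate over $\Omega$, and sum: the pressure drops out by the divergence-free/spectral structure of \eqref{df_vh}--\eqref{df_tht}, the damping gives $\|v\|_{H^m}^2$, and the two linear couplings $\pm\int_\Omega\partial^\gamma\theta\,\partial^\gamma v_d\,\ud x$ cancel. In the transport terms the top-order pieces $\int_\Omega(v\cdot\nabla\partial^\gamma v)\cdot\partial^\gamma v\,\ud x$ and $\int_\Omega(v\cdot\nabla\partial^\gamma\theta)\partial^\gamma\theta\,\ud x$ vanish after integration by parts using $\operatorname{div}v=0$ and $v_d|_{\partial\Omega}=0$, and the commutator remainders are treated with Lemma~\ref{cal_lem}. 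The delicate part is to organize these remainders — using systematically the incompressibility identity $\partial_d v_d=-\nabla_h\cdot v_h$ together with integration by parts in the periodic horizontal variables (each boundary term produced in $x_d$ vanishing thanks to $v_d,\theta\in X^m$, $v_h\in Y^m$ and the compatibility conditions of Section~\ref{sec:2.1}) — so that the only non-absorbable $L^\infty$ coefficient that survives is $\|\partial_d v_d\|_{L^\infty}$, multiplying $\|v\|_{H^m}^2$ or $\|\theta\|_{H^m}^2$, while the $\theta$-transport contributions fit into $C\|v\|_{H^m}\|\nabla_h\theta\|_{H^{m-1}}\|\theta\|_{H^m}$ — here one uses $\nabla_h\sigma\equiv 0$, which gives $\|\nabla_h\theta\|_{L^\infty}\le C\|\nabla_h\theta\|_{H^{m-1}}$. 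The outcome would be
\[
\tfrac12\ddt E_m^2 + \|v\|_{H^m}^2 \le C\|v\|_{H^m}\|\nabla_h\theta\|_{H^{m-1}}\|\theta\|_{H^m} + C\|\partial_d v_d\|_{L^\infty}E_m^2 .
\]

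For the \emph{cross-term estimate} I would differentiate the projected $v_d$-equation $\partial_t v_d + v_d + \langle\mathbb{P}(v\cdot\nabla)v,e_d\rangle = \langle\mathbb{P}\theta e_d,e_d\rangle$ by $\partial^\gamma$, $1\le|\gamma|\le m$, pair with $\partial^\gamma\theta$; do likewise for the $\theta$-equation against $\partial^\gamma v_d$; then add and sum. The coupling term contributes (up to $\gamma$-dependent constants) $\sum_\eta|\tilde{n}|^2\big(\sum_{1\le|\gamma|\le m}|\eta|^{2(|\gamma|-1)}\big)|\mathscr{F}_b\theta(\eta)|^2$, which is $\gtrsim\|\nabla_h\theta\|_{H^{m-1}}^2$ because $\sum_{k=1}^m|\eta|^{2(k-1)}\simeq 1+|\eta|^{2(m-1)}$ and $\|\nabla_h\theta\|_{L^2}^2+\|\nabla_h\theta\|_{\dot H^{m-1}}^2\simeq\|\nabla_h\theta\|_{H^{m-1}}^2$ by interpolation. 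The $-v_d$ damping contributes a harmless $-\sum\|\partial^\gamma v_d\|_{L^2}^2$ (absorbed by the $v$-dissipation) and a $-A_m$ term, the latter being controlled because $v_d$ has zero horizontal average (Lemma~\ref{lem_obs}): substituting $\partial_d^k v_d=-\nabla_h\cdot\partial_d^{k-1}v_h$ in the terms with $\gamma$ purely vertical and integrating by parts once more gives $|A_m|\le C\|v\|_{H^m}\|\nabla_h\theta\|_{H^{m-1}}$. The top-order advection terms from the two equations combine into $\sum\int_\Omega v\cdot\nabla(\partial^\gamma v_d\,\partial^\gamma\theta)\,\ud x=0$ (divergence-free, $v_d|_{\partial\Omega}=0$), leaving commutators and the pressure-projection correction $\langle\mathbb{P}(v\cdot\nabla)v,e_d\rangle-(v\cdot\nabla)v_d=\partial_d(-\Delta)^{-1}\big(\sum_{i,j}\partial_iv_j\,\partial_jv_i\big)$; since $\sum_{i,j}\partial_iv_j\,\partial_jv_i=\operatorname{div}((v\cdot\nabla)v)$ is first order in $v$, hence in $H^{m-1}$ by Lemma~\ref{cal_lem}, and $\partial_d(-\Delta)^{-1}$ is of order $-1$, this correction lies in $H^m$ with norm $\lesssim\|v\|_{H^m}^2$, and all of these terms can be bounded — after interpolation and Young's inequality — by $C\|v\|_{H^m}\|\nabla_h\theta\|_{H^{m-1}}\|\theta\|_{H^m}$ plus contributions absorbed by the dissipations. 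This would give an inequality of the form $-\ddt A_m+\tfrac12\|\nabla_h\theta\|_{H^{m-1}}^2\le\tfrac12\|v\|_{H^m}^2+C\|v\|_{H^m}\|\nabla_h\theta\|_{H^{m-1}}\|\theta\|_{H^m}$, and adding twice the basic energy estimate yields \eqref{eng_est2} after renaming constants and one more use of Young's inequality.

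The hard part, and the place where the relaxed threshold $m>1+\tfrac d2$ is really used, is the commutator bookkeeping in the first step (and, to a lesser extent, in the cross-term step): one must produce the coefficient $\|\partial_d v_d\|_{L^\infty}$ rather than the a priori more natural $\|\nabla v\|_{L^\infty}$, since in Section~\ref{sec:global} only $\int_0^\infty\|\partial_d v_d(t)\|_{L^\infty}\,\ud t<\infty$ will be available (thanks to the faster decay of $v_d$), whereas $\int_0^\infty\|\nabla v(t)\|_{L^\infty}\,\ud t$ would not be; this forces the systematic use of $\partial_d v_d=-\nabla_h\cdot v_h$ and horizontal integration by parts, and one must simultaneously check that no boundary term in $x_d$ is left over, which is exactly where the $X^m,Y^m$-structure and the compatibility conditions of Section~\ref{sec:2.1} enter. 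A secondary care point, absent when $\alpha=1$, is that $A_m$ reaches the top level $|\gamma|=m$, so both the top-order advection cancellation and the bound on $A_m$ itself must be arranged to use no more than $m$ derivatives of $(v,\theta)$.
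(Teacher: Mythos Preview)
Your proposal is essentially correct and mirrors the paper's two-step argument (basic $H^m$ energy inequality \eqref{est_A3} plus the cross-term inequality \eqref{est_A2}, then combine). A few small corrections are worth noting. First, there is no need to force the coefficient $\|\partial_d v_d\|_{L^\infty}$ in the $v$-transport commutator: the paper simply bounds $\big|\int\partial^\gamma(v\cdot\nabla)v\cdot\partial^\gamma v\,\ud x\big|\le C\|v\|_{H^m}^3$, and similarly the cross-term advection remainders by $C\|v\|_{H^m}^2\|\theta\|_{H^m}$. Because the damping already produces the full dissipation $\|v\|_{H^m}^2$, any cubic term of the form $(\text{small})\times\|v\|_{H^m}^2$ is absorbable in Section~\ref{sec:global}; the careful reduction to $\|\partial_d v_d\|_{L^\infty}$ is only needed for the $\theta$-transport (where $\|\theta\|_{H^m}^2$ has no dissipation counterpart), and the paper does this exactly as you outline, via $\partial_d^2 v_d=-\nabla_h\cdot\partial_d v_h$ and the spectral swap \eqref{sum_est}. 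Second, the remark ``here one uses $\nabla_h\sigma\equiv 0$'' is a slip: $\sigma$ plays no role in this pointwise-in-time estimate; the bound $\|\nabla_h\theta\|_{L^\infty}\le C\|\nabla_h\theta\|_{H^{m-1}}$ is just Sobolev embedding since $m-1>d/2$. Finally, the paper does in fact apply $-\Delta$ to the $v_d$-equation in the cross-term step, but only as the usual device to undo the Leray projector ($-\Delta\langle\mathbb{P}\theta e_d,e_d\rangle=-\Delta_h\theta$) before pairing at level $|\gamma|=m-1$ with an extra $\nabla$; your direct multiplier computation through \eqref{df_vd} is equivalent.
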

\begin{proof}
	From \eqref{EQ}, we can have
	\begin{equation*}
		\begin{gathered}
			\frac 12 \ddt \big( \| v(t) \|_{H^m}^2 + \| \theta(t) \|_{H^m}^2 \big) + \| v \|_{H^m}^2 \\
			\leq - \sum_{1 \leq |\gamma| \leq m} \int_{\Omega} \partial^{\gamma} (v \cdot \nabla) v \partial^{\gamma} v \,\ud x - \sum_{1 \leq |\gamma| \leq m} \int_{\Omega} \partial^\gamma (v \cdot \nabla) \theta \partial^\gamma \theta \, \mathrm{d}x.
		\end{gathered}
	\end{equation*}
	We only estimate $|\gamma| = m$ case because the others can be treated similarly. The first integral on the right-hand side can be estimated by lemma~\ref{cal_lem} and the divergence-free condition that $$\left| \int_{\Omega} \partial^{\gamma} (v \cdot \nabla) v \partial^{\gamma} v \,\ud x \right| \leq C \| v \|_{H^m}^3.$$ To estimate the remainder term, we consider the case $\partial^{\gamma} \neq \partial_d^m$ first. As estimating the first one, we can see $$\left| \int_{\Omega} \partial^\gamma(v \cdot \nabla) \theta \partial^\gamma \theta \,\mathrm{d}x \right| \leq C \| v \|_{H^m} \| R_h \tht \|_{H^m} \| \tht \|_{H^m}.$$
	In the case of $\partial^{\gamma} = \partial_d^m$, we have
	\begin{align*}
		\int_{\Omega} \partial_d^m (v \cdot \nabla) \tht \partial_d^m \tht \,\ud x &= \int_{\Omega} \partial_d^{m-1} (\partial_d v_h \cdot \nabla_h \tht) \partial_d^m \tht \,\ud x + \int_{\Omega} \partial_d^{m-1} (\partial_d v_d \partial_d \tht) \partial_d^m \tht \,\ud x \\
		&\leq C \| v \|_{H^m} \| R_h \tht \|_{H^m} \| \tht \|_{H^m} + C\| \partial_d v_d \|_{L^{\infty}} \| \tht \|_{H^m}^2 + \int_{\Omega} \partial_d^{m-2} (\partial_d^2 v_d \partial_d \tht) \partial_d^m \tht \,\ud x.
	\end{align*}
	We note that 
	\begin{align*}
		\int_{\Omega} \partial_d^{m-2} (\partial_d^2 v_d \partial_d \tht) \partial_d^m \tht \,\ud x &= -\int_{\Omega} \partial_d^{m-2} \nabla_h \cdot (\partial_d v_h \partial_d \tht) \partial_d^m \tht \,\ud x + \int_{\Omega} \partial_d^{m-2} (\partial_d v_h \cdot \nabla_h \partial_d \tht) \partial_d^m \tht \,\ud x\\
		&\leq - \int_{\Omega} \partial_d^{m-2} \nabla_h \cdot (\partial_d v_h \partial_d \tht) \partial_d^m \tht \,\ud x + C \| v \|_{H^m} \| R_h \tht \|_{H^m} \| \tht \|_{H^m}.
	\end{align*}
	By \eqref{sum_est}, we can deduce
	\begin{equation*}
	\begin{aligned}
		\left| \int_{\Omega} \partial_d^{m-2} \nabla_h \cdot (\partial_d v_h \partial_d \tht) \partial_d^m \tht \,\ud x \right| \leq  C \| v \|_{H^m} \| R_h \tht \|_{H^m} \| \tht \|_{H^m},
	\end{aligned}
	\end{equation*}
	thus,
	\begin{align*}
		\left| \int_{\Omega} \partial_d^{m-2} (\partial_d^2 v_d \partial_d \tht) \partial_d^m \tht \,\ud x \right| \leq C \| v \|_{H^m} \| R_h \tht \|_{H^m} \| \tht \|_{H^m}.
	\end{align*}
	Collecting the above estimates, we have
	\begin{equation}\label{est_A3}
		\begin{gathered}
			\frac 12 \ddt \big( \| v(t) \|_{H^m}^2 + \| \theta(t) \|_{H^m}^2 \big) + \| v \|_{H^m}^2 \leq C \| v \|_{H^m}^3 + C \| v \|_{H^m} \| R_h \tht \|_{H^m} \| \tht \|_{H^m} + C\| \partial_d v_d \|_{L^{\infty}} \| \tht \|_{H^m}^2.
		\end{gathered}
	\end{equation}

	As estimating \eqref{est_A}, we can show 
	\begin{equation}\label{est_A2}
		\frac 32 \| v \|_{H^m}^2 \geq \frac 12 \| \nabla_h \theta \|_{H^{m-1}}^2 - \ddt A_{m}(t) - C E_m(t) \| v \|_{H^{m}}^2.
	\end{equation}
	We only consider the highest derivative case. We can see from the $v_d$ equation in \eqref{EQ} $$\partial_t (-\Delta)v_d + (-\Delta) v_d  -\Delta (v\cdot \nabla v_d) + \partial_d \nabla \cdot (v\cdot \nabla) v  = -\Delta_h \theta.$$ Thus, we have for $|\gamma| = m-1$ that
	\begin{gather*}
		\int_{\Omega} \partial_t \nabla \partial^{\gamma} v_d \cdot \nabla \partial^{\gamma} \tht \,\ud x + \int_{\Omega} \nabla \partial^{\gamma} v_d \cdot \nabla \partial^{\gamma}  \tht \,\ud x + \int_{\Omega} \nabla \partial^{\gamma} (v \cdot \nabla v_d) \cdot \nabla \partial^{\gamma} \tht \,\ud x \\
	- \int_{\Omega} \nabla \cdot \partial^{\gamma} (v \cdot \nabla)v \partial_d \partial^{\gamma} \tht \,\ud x = \int_{\Omega} | \nabla_h \partial^{\gamma} \tht|^2 \,\ud x. 
	\end{gather*}
	From the $\tht$ equation in \eqref{EQ}, it follows
	\begin{equation*}
		\int_{\Omega} \partial_t \nabla \partial^{\gamma} \theta \cdot \nabla \partial^{\gamma} v_d \,\mathrm{d}x + \int_{\Omega} \nabla \partial^{\gamma} (v\cdot \nabla \theta) \cdot \nabla \partial^{\gamma} v_d \,\mathrm{d}x + \int_{\Omega} |\nabla \partial^{\gamma} v_d|^2 \,\ud x = 0.
	\end{equation*}
	Combining the two above gives
	\begin{gather*}
		\frac {\mathrm{d}}{\mathrm{d}t} \int_{\Omega} \nabla \partial^{\gamma} v_d \cdot \nabla \partial^{\gamma} \theta \, \mathrm{d}x + \int_{\Omega} \nabla \partial^{\gamma} v_d \cdot \nabla \partial^{\gamma} \tht \,\ud x - \int_{\Omega} \nabla \cdot \partial^{\gamma} (v \cdot \nabla)v \partial_d \partial^{\gamma} \tht \,\ud x \\
		+ \int_{\Omega} \big( \nabla\partial^{\gamma} (v \cdot \nabla v_d) \cdot \nabla \partial^{\gamma} \theta + \nabla \partial^{\gamma} (v\cdot \nabla \theta) \cdot \nabla \partial^{\gamma} v_d\big) \, \mathrm{d}x + \int_{\Omega} |\nabla \partial^{\gamma} v_d|^2 \,\ud x = \int_{\Omega} | \nabla_h \partial^{\gamma} \tht|^2 \,\ud x.
	\end{gather*}
	The divergence-free condition and Lemma~\ref{cal_lem} imply
	\begin{equation*}
		\bigg| \int_{\Omega} \nabla \cdot \partial^{\gamma} (v \cdot \nabla)v \partial_d \partial^{\gamma} \tht \,\ud x \bigg| \leq C \| v \|_{H^{m}}^2 \| \theta \|_{H^{m}}.
	\end{equation*}
	We also have with the divergence-free condition that $$\left| \int_{\Omega} \nabla \partial^{\gamma} v_d \cdot \nabla \partial^{\gamma} \tht \,\ud x \right| \leq \| \partial^{\gamma} \nabla v \|_{L^2} \| \partial^{\gamma} \nabla_h \tht \|_{L^2} \leq \frac 12 \| \partial^{\gamma} \nabla v \|_{L^2}^2 + \frac 12 \| \partial^{\gamma} \nabla_h \tht \|_{L^2}^2.$$
	The cancellation property yields
	\begin{equation*}
		\bigg| \int_{\Omega} \big( \nabla\partial^{\gamma} (v \cdot \nabla v_d) \cdot \nabla \partial^{\gamma} \theta + \nabla \partial^{\gamma} (v\cdot \nabla \theta) \cdot \nabla \partial^{\gamma} v_d\big) \, \mathrm{d}x \bigg| \leq C \| v \|_{H^{m}}^2 \| \theta \|_{H^{m}}.
	\end{equation*}
	Therefore, we deduce that
	\begin{gather*}
		\sum_{|\gamma|=m-1} \frac {\mathrm{d}}{\mathrm{d}t} \int_{\Omega} \nabla \partial^{\gamma} v_d \cdot \nabla \partial^{\gamma} \theta \, \mathrm{d}x + \frac 32 \| v \|_{\dot{H}^m}^2 + C \| v \|_{H^{m}}^2 \| \theta \|_{H^{m}} \geq \frac 12 \| \nabla_h \tht \|_{\dot{H}^{m-1}}^2,
	\end{gather*}
	which implies \eqref{est_A2}.
	
	Multiplying \eqref{est_A3} by 2 and adding \eqref{est_A2} gives \eqref{eng_est2}. This completes the proof.
\end{proof}

\section{Global-in-time existence}\label{sec:global}
In this section, we prove the global existence part of Theorem \ref{thm1} and \ref{thm2}. It remains to estimate the key quantities in Proposition~\ref{prop_eng} and Proposition~\ref{prop_eng2}, namely, 
\begin{equation}\label{key_quan}
\int \| \nabla v(t) \|_{L^{\infty}} \,\ud t \qquad \mbox{and} \qquad \int \| \partial_d v_d(t) \|_{L^{\infty}} \,\ud t
\end{equation} respectively. For this purpose, we recall the notations introduced 
in Section~\ref{sec_spec}. From now on, we use the notations 
\begin{equation*}
	R_h f = \nabla_h \Lambda^{-1} f = -\sum_{|\tilde{n}| \neq 0} \frac {i\tilde{n}}{|\eta|} \mathscr{F}_bf(\eta) \mathscr{B}_\eta(x), \qquad R_h g = \nabla_h \Lambda^{-1} g = -\sum_{|\tilde{n}| \neq 0} \frac {i\tilde{n}}{|\eta|} \mathscr{F}_cg(\eta) \mathscr{C}_{\eta}(x),
\end{equation*}
for $f \in X^m(\Omega)$ and $g \in Y^m(\Omega)$.

\subsection{Proof of Theorem~\ref{thm1}:Global-in-time existence part}
Here, we fix $\alpha = 1$. We show the two propositions that provide proper upper-bound of the key quantity. Then combining with Proposition~\ref{prop_eng}, we finish the proof.

\begin{proposition}\label{prop_v}
	Let $d \in \mathbb{N}$ with $d \geq 2$ and $m \in \mathbb{N}$ satisfying $m > 1+\frac d2$. Assume that $(v, \theta)$ be a smooth global solution to \eqref{EQ} with $\alpha = 1$. Then there exists a constant $C > 0$ such that
	\begin{equation}\label{v_L1_est}
		\begin{gathered}
			\sum_{\eta \in I} \int_0^T |\eta| |\mathscr{F}_cv_h(t) | \,\mathrm{d}t + \sum_{\eta \in J} \int_0^T |\eta|^2 |\mathscr{F}_bv_d(t) | \,\mathrm{d}t \leq C \| v_0 \|_{H^m} \\
			+  C \sup_{t \in [0,T]} \| v(t) \|_{H^m} \left( \sum_{\eta \in I} \int_0^T |\eta| |\mathscr{F}_cv_h(t) | \,\mathrm{d}t + \sum_{\eta \in J} \int_0^T |\eta|^2 |\mathscr{F}_bv_d(t) | \,\mathrm{d}t \right) + \sum_{\eta \in J} \int_0^T \frac {|\tilde{n}|^2}{|\eta|^2} |\mathscr{F}_b \theta(t)| \,\mathrm{d}t
		\end{gathered}
	\end{equation}
	for all $T > 0$.
\end{proposition}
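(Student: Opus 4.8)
The plan is to run a direct Duhamel analysis of the scalar equations \eqref{df_vh} and \eqref{df_vd}, specialized to $\alpha=1$ (so that $|\eta|^{2\alpha}=|\eta|^{2}$). For $\eta\in I\setminus\{0\}$ one has
\begin{equation*}
\mathscr{F}_c v_h(t)=e^{-|\eta|^{2}t}\mathscr{F}_c v_h(0)-\int_{0}^{t}e^{-|\eta|^{2}(t-\tau)}\Big[\big(I-\tfrac{\tilde n\otimes\tilde n}{|\eta|^{2}}\big)\mathscr{F}_c[(v\cdot\nabla)v_h]+i\tfrac{\tilde q\tilde n}{|\eta|^{2}}\mathscr{F}_b[(v\cdot\nabla)v_d]-i\tfrac{\tilde q\tilde n}{|\eta|^{2}}\mathscr{F}_b\theta\Big](\tau)\,\ud\tau,
\end{equation*}
and the analogous representation for $\mathscr{F}_b v_d(t)$ on $\eta\in J$. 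I would multiply the first identity by $|\eta|$ and the second by $|\eta|^{2}$, take moduli, integrate over $[0,T]$, and sum in $\eta$. The $\eta=0$ mode of $\mathscr{F}_c v_h$ drops out of the left-hand side of \eqref{v_L1_est} because of the $|\eta|$ prefactor, so only $\eta\neq0$ needs the Duhamel formula. For the data parts one uses $\int_{0}^{T}|\eta|e^{-|\eta|^{2}t}\,\ud t\le|\eta|^{-1}$ and $\int_{0}^{T}|\eta|^{2}e^{-|\eta|^{2}t}\,\ud t\le1$; for the Duhamel integrals, Young's inequality in $t$ with the kernel $e^{-|\eta|^{2}\cdot}$ (of $L^{1}_{t}$-mass $|\eta|^{-2}$) gives $\int_{0}^{T}|\eta|^{k}\big|\int_{0}^{t}e^{-|\eta|^{2}(t-\tau)}G(\tau)\,\ud\tau\big|\,\ud t\le|\eta|^{k-2}\int_{0}^{T}|G|\,\ud\tau$.

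The data contribution is then $\sum_{\eta\neq0}|\eta|^{-1}|\mathscr{F}_c v_h(0)|+\sum_{\eta\in J}|\mathscr{F}_b v_d(0)|\le C\|v_0\|_{H^{m}}$ by Cauchy--Schwarz, since $\sum_{\eta}|\eta|^{-2m}<\infty$ whenever $2m>d$, which holds because $m>1+\tfrac d2$. All multipliers that appear are bounded: $|I-\tilde n\otimes\tilde n/|\eta|^{2}|\le C$, $|\tilde q\tilde n|/|\eta|^{2}\le\tfrac12$, $|1-\tilde q^{2}/|\eta|^{2}|\le1$, and $|\eta|^{k-2}\le C$ for $k\in\{1,2\}$ since $|\eta|\gtrsim1$ off the zero mode; so the quadratic part reduces to bounding $\sum_{\eta}|\mathscr{F}_c[(v\cdot\nabla)v_h]|$ and $\sum_{\eta\in J}|\mathscr{F}_b[(v\cdot\nabla)v_d]|$ (again no zero-mode issue, since $\operatorname{div}v=0$ forces $\mathscr{F}_c[(v\cdot\nabla)v_h](0)=0$, cf. the proof of Lemma~\ref{lem_obs}). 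Writing $(v\cdot\nabla)v_h=v_h\cdot\nabla_h v_h+v_d\,\partial_d v_h$ and $(v\cdot\nabla)v_d=v_h\cdot\nabla_h v_d+v_d\,\partial_d v_d$, applying the appropriate one of the three convolution inequalities in Proposition~\ref{cor_conv}, and using $\mathscr{F}[\nabla_h\,\cdot\,]=i\tilde n\,\mathscr{F}[\,\cdot\,]$ and $\mathscr{F}[\partial_d\,\cdot\,]=\pm\tilde q\,\mathscr{F}[\,\cdot\,]$, each bilinear term is dominated by a product $\big(\sum_{\eta}|\mathscr{F}w|\big)\big(\sum_{\eta}|\eta||\mathscr{F}w'|\big)$ with $w,w'\in\{v_h,v_d\}$; the derivative-free factor is $\le C\|v\|_{H^{m}}$ (Cauchy--Schwarz and $2m>d$, the zero mode being $\le C\|v\|_{L^2}$), and the other factor is $\le\sum_{\eta}|\eta||\mathscr{F}_c v_h|+\sum_{\eta\in J}|\eta|^{2}|\mathscr{F}_b v_d|$ using $|\tilde n|,|\tilde q|\le|\eta|$ and $|\eta|\le|\eta|^{2}$ on $J$. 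This produces the term $C\sup_{[0,T]}\|v(t)\|_{H^{m}}$ times the left-hand side of \eqref{v_L1_est}. Finally, the $\theta$-forcing of \eqref{df_vd} contributes, via the $k=2$ gain, exactly $\sum_{\eta\in J}\int_{0}^{T}\tfrac{|\tilde n|^{2}}{|\eta|^{2}}|\mathscr{F}_b\theta|\,\ud t$, while the $\theta$-forcing of \eqref{df_vh}, which is supported on $\tilde n\neq0$, contributes at most $\tfrac1{|\eta|}\cdot\tfrac{|\tilde q||\tilde n|}{|\eta|^{2}}\le\tfrac{|\tilde n|}{|\eta|^{2}}\le\tfrac{|\tilde n|^{2}}{|\eta|^{2}}$ per mode (using $|\tilde n|\ge1$ there), so it is absorbed into the same term. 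Collecting the three pieces yields \eqref{v_L1_est}.

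The computation is essentially bookkeeping, and the points needing care are: clearing the $\eta=0$ mode (it disappears from the left side by the $|\eta|$ prefactor, from the data side because Lemma~\ref{lem_obs} gives $\int_{\Omega}v_h=\int_{\Omega}v_h(0)$ for $\alpha=1$, and from the nonlinear side because $\mathscr{F}_c[(v\cdot\nabla)v_h](0)=0$); selecting, for each bilinear term, the member of Proposition~\ref{cor_conv} that keeps the derivative on the factor retained on the left while the derivative-free factor is the one absorbed into $\|v\|_{H^{m}}$; and the observation that the $\theta$-forcing enters $\mathscr{F}_c v_h$ and $\mathscr{F}_b v_d$ only through a multiplier dominated by $|\tilde n|^{2}/|\eta|^{2}$ (the pressure projection $\mathbb P$ being harmless here), so that this contribution can be carried unchanged to the right-hand side and controlled later through the decay of $\theta$. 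If one wants the sharp constant $1$ in front of that term, one can instead split $\mathscr{F}_c v_h$ into components parallel and orthogonal to $\tilde n$: the $\theta$-forcing in \eqref{df_vh} is parallel to $\tilde n$, the parallel component of $\mathscr{F}_c v_h$ is pinned down by $\mathscr{F}_b v_d$ through $\operatorname{div}v=0$, and the orthogonal component satisfies a damped equation with no $\theta$-forcing at all.
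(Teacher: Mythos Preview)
Your proposal is correct and follows essentially the same strategy as the paper: Duhamel for \eqref{df_vh}--\eqref{df_vd}, Fubini/Young in $t$ against the heat kernel $e^{-|\eta|^2\cdot}$, and Proposition~\ref{cor_conv} for the bilinear terms. The only organizational difference is that the paper first invokes the divergence--free relation \eqref{vd_v} to replace $|\eta|^{2}|\mathscr{F}_b v_d|$ by $C|\tilde n||\eta|\big(|\mathscr{F}_c v_h|+|\mathscr{F}_b v_d|\big)$ and then runs Duhamel with the single weight $|\tilde n||\eta|$, whereas you treat the two sums directly with weights $|\eta|$ and $|\eta|^{2}$ and absorb the $\theta$--forcing of \eqref{df_vh} into the $|\tilde n|^{2}/|\eta|^{2}$ term via $|\tilde n|\ge 2\pi$; both routes produce the same three contributions on the right of \eqref{v_L1_est}, up to harmless constants.
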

\begin{proof}
	We first note that the divergence-free condition implies
	\begin{equation}\label{vd_v}
		|\eta||\mathscr{F}_b v_d(\eta)| \leq C |\tilde{n}| |\mathscr{F}_cv_h(\eta)| + C |\tilde{n}| |\mathscr{F}_bv_d(\eta)|.
	\end{equation}
	Thus, it suffices to show
	\begin{gather*}
		\sum_{\eta \in J} \int_0^T |\tilde{n}||\eta| |\mathscr{F}_cv_h(t) | \,\ud t + \sum_{\eta \in J} \int_0^T |\tilde{n}||\eta| |\mathscr{F}_bv_d(\eta)| \,\mathrm{d}t \leq C \| v_0 \|_{H^m} \\
		+  C \sup_{t \in [0,T]} \| v(t) \|_{H^m} \left( \sum_{\eta \in I} \int_0^T |\eta| |\mathscr{F}_cv_h(t) | \,\mathrm{d}t + \sum_{\eta \in J} \int_0^T |\eta|^2 |\mathscr{F}_bv_d(t) | \,\mathrm{d}t \right)  + \sum_{\eta \in J} \int_0^T \frac {|\tilde{n}|^2}{|\eta|^2} |\mathscr{F}_b \theta(t)| \,\mathrm{d}t.
	\end{gather*}
	We only estimate the first term on the left-hand side because the other can be treated similarly. Applying Duhamel's principle to \eqref{df_vh}, we can have
	\begin{equation*}
		\sum_{\eta \in J} \int_0^T |\tilde{n}||\eta| |\mathscr{F}_cv_h(t) | \,\mathrm{d}t \leq I_1 + I_2 + I_3 + I_4,
	\end{equation*}
	where
	\begin{align*}
		I_1 &:= \sum_{\eta \in J}  \int_0^T |\tilde{n}||\eta| e^{-|\eta|^2t} |\mathscr{F} v_0| \,\mathrm{d}t, \\
		I_2 &:= \sum_{\eta \in J} \int_0^T \int_0^t |\tilde{n}||\eta|e^{-|\eta|^2(t-\tau)} |\mathscr{F}_c[(v \cdot \nabla)v_h](\tau)| \,\ud \tau \mathrm{d}t, \\
		I_3 &:= \sum_{\eta \in J} \int_0^T \int_0^t |\tilde{n}||\eta|e^{-|\eta|^2(t-\tau)} |\mathscr{F}_b[(v \cdot \nabla)v_d](\tau)| \,\ud \tau \mathrm{d}t, \\
		I_4 &:= \sum_{\eta \in J} \int_0^T \int_0^t |\tilde{n}||\eta| e^{-|\eta|^2(t - \tau)} \frac {|\tilde{n}|}{|\eta|} |\mathscr{F}_b \theta(\tau)| \,\mathrm{d}\tau \mathrm{d}t.
	\end{align*}
	We have used that $|\mathscr{F} \mathbb{P} f| \leq |\mathscr{F} f|$ for $I_2$ and $I_3$. It is clear that
	\begin{equation*}
		I_1 \leq \sum_{\eta \in J} \int_0^T |\eta|^2 e^{-|\eta|^2t} |\mathscr{F}v_0| \,\mathrm{d}t \leq \sum_{\eta \in I} |\mathscr{F}v_0| \leq C\| v_0 \|_{H^m}.
	\end{equation*}
	We can see by Fubini's theorem that
	\begin{equation*}
		I_4 \leq \sum_{\eta \in J} \int_0^T \frac {|\tilde{n}|^2}{|\eta|^2} |\mathscr{F}_b \theta(t)| \, \mathrm{d}t.
	\end{equation*}
	Similarly,
	\begin{gather*}
		I_2 \leq C \sum_{\eta \in I} \int_0^T |\mathscr{F}_c[(v \cdot \nabla)v_h](t)| \,\ud t.
	\end{gather*}
	Using $(v \cdot \nabla)v_h = (v_h \cdot \nabla_h)v_h + v_d \partial_d v_h$ and Proposition~\ref{cor_conv}, we have
	\begin{gather*}
		I_2 \leq \int_0^T \left( \sum_{\eta \in I} |\mathscr{F}_c v_h(t)| \right) \left( \sum_{\eta \in I} | \mathscr{F}_c \nabla_h v_h(t)| \right) \,\ud t + \int_0^T \left( \sum_{\eta \in J} |\mathscr{F}_b v_d(t)| \right) \left( \sum_{\eta \in J} | \mathscr{F}_b \partial_d v_h(t)| \right) \,\ud t \\
		\leq C \sup_{t \in [0,T]} \| v(t) \|_{H^m} \left( \sum_{\eta \in I} \int_0^T |\eta| |\mathscr{F}_cv_h(t) | \,\mathrm{d}t + \sum_{\eta \in J} \int_0^T |\eta|^2 |\mathscr{F}_bv_d(t) | \,\mathrm{d}t \right).
	\end{gather*}
	In a similar way with the above, we can have the same upper-bound for $I_3$. Collecting the estimates for $I_1$, $I_2$, $I_3$, and $I_4$, we obtain the claim. 
	This completes the proof.
\end{proof}

\begin{proposition}\label{prop_tht}
	Let $d \in \mathbb{N}$ with $d \geq 2$ and $m \in \mathbb{N}$ satisfying $m > 3+\frac d2$. Assume that $(v, \theta)$ be a smooth global solution to \eqref{EQ} with $\alpha = 1$, and $\int_{\Omega} v_0 \,\ud x$ be satisfied. Then there exists a constant $C > 0$ such that
	\begin{equation}\label{tht_L1_est}
		\begin{gathered}
			\sum_{\eta \in J} \int_0^T \frac {|\tilde{n}|^2}{|\eta|^2} |\mathscr{F}_b \theta(t)| \,\mathrm{d}t \leq C \| (v_0,\tht_0) \|_{H^{m}} + C\int_0^T \| \nabla v(t) \|_{H^{m}}^2 \,\ud t + C\int_0^T \| \nabla_h \tht(t) \|_{H^{m-2}}^2 \,\ud t \\
			+ C \sup_{t \in [0,T]} (\| v(t) \|_{H^m} + \| \tht (t) \|_{H^m}) \left( \int_0^T \sum_{\eta \in I} |\eta| |\mathscr{F}_cv_h(t) | \,\mathrm{d}t + \int_0^T \sum_{\eta \in J} |\eta|^2 |\mathscr{F}_bv_d(t) | \,\mathrm{d}t \right).
		\end{gathered}
	\end{equation}
	for all $T>0$.
\end{proposition}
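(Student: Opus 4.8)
The plan is to run the same spectral/Duhamel bookkeeping used in Proposition~\ref{prop_v}, but now for the weighted quantity $\frac{|\tilde{n}|^2}{|\eta|^2}|\mathscr{F}_b\theta|$. First I would take the representation \eqref{df_tht_1} of $\mathscr{F}_b\theta(t)$, multiply by $\frac{|\tilde{n}|^2}{|\eta|^2}$, sum over $\eta\in J$, and integrate over $t\in[0,T]$, splitting the result into the two linear contributions (from $\mathscr{F}_b\bfu_0$ and from $e^{-\lambda_+ t}\mathscr{F}_b\theta_0$) and the two nonlinear contributions (the Duhamel integral of $N(v,\theta)$ carrying the eigenvector/eigenvalue factors, and $-\int_0^t e^{-\lambda_+(t-\tau)}\mathscr{F}_b[(v\cdot\nabla)\theta]\,\ud\tau$). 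The elementary tools are the time-convolution bound $\int_0^T\!\int_0^t e^{-c(t-\tau)}f(\tau)\,\ud\tau\,\ud t\le c^{-1}\!\int_0^T f$, the kernel estimates \eqref{ker_est}--\eqref{sing_est}, the convolution inequalities of Proposition~\ref{cor_conv} together with $|\eta|^k\lesssim\sum_{j=0}^{k}|\eta'|^{j}|\eta''|^{k-j}$ under $\tilde{n}=\tilde{n}'+\tilde{n}''$, $\tilde{q}=\tilde{q}'\pm\tilde{q}''$, the Wiener-type embedding $\sum_\eta|\eta|^k|\mathscr{F}f(\eta)|\le C\|f\|_{H^{k+\frac d2+\eps}}$, and — this is what makes the bound $T$-independent — the Poincar\'e inequality $\|v\|_{H^s}\le C\|\nabla v\|_{H^{s-1}}$, valid because the hypothesis $\int_\Omega v_0\,\ud x=0$ and Lemma~\ref{lem_obs} kill the zero Fourier mode of $v$ for all time. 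I would also record that every $\eta\in J$ has $|\eta|\ge\pi/2>1$.

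The two linear terms are routine: by \eqref{sing_est}/\eqref{ker_est} the relevant kernels are $\le Ce^{-|\eta|^2 t/4}$ on $D_1\cup D_2$ and $\le Ce^{-(|\tilde{n}|^2/|\eta|^4)t}$ on $D_3$ (recall $\alpha=1$), so integrating in $t$ produces a factor $C|\eta|^{-2}$, resp. $C|\eta|^4|\tilde{n}|^{-2}$; after multiplying by the weight $\tfrac{|\tilde{n}|^2}{|\eta|^2}$ this is bounded, resp. $\le C|\eta|^2$, and one obtains $\le C\sum_\eta(1+|\eta|^2)|\mathscr{F}_b\bfu_0|+C\sum_\eta|\mathscr{F}_b\theta_0|\le C\|(v_0,\theta_0)\|_{H^m}$, which only costs $m>2+\tfrac d2$.

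The nonlinear terms are the substance. After using \eqref{sing_est}/\eqref{ker_est} to absorb the eigenvector factors and the time-convolution bound, I expect the contributions of $D_1\cup D_2$ and of the $e^{-\lambda_+}$-part to collapse to $C\int_0^T\sum_\eta\big(|\mathscr{F}_b[(v\cdot\nabla)\theta]|+|\mathscr{F}(v\cdot\nabla)v|\big)\,\ud t$, while on $D_3$ the slow rate $|\tilde{n}|^2/|\eta|^4$ inverts to $|\eta|^4/|\tilde{n}|^2$, which against the weight $\tfrac{|\tilde{n}|^2}{|\eta|^2}$ leaves an extra factor $|\eta|^2$, i.e. $C\int_0^T\sum_{\eta\in D_3}|\eta|^2(\,\cdots\,)\,\ud t$. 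I would then estimate: (i) $\sum_\eta|\eta|^2|\mathscr{F}(v\cdot\nabla)v|\le C\|(v\cdot\nabla)v\|_{H^{2+\frac d2+\eps}}\le C\|v\|_{H^{m-1}}\|\nabla v\|_{H^{m-1}}\le C\|\nabla v\|_{H^{m-1}}^2$ by Lemma~\ref{cal_lem} and Poincar\'e — this is where $m>3+\tfrac d2$ enters, so that $2+\tfrac d2+\eps\le m-1$ — and integrate it into $\int_0^T\|\nabla v\|_{H^m}^2\,\ud t$; (ii) for $(v\cdot\nabla)\theta=v_h\cdot\nabla_h\theta+v_d\partial_d\theta$, apply Proposition~\ref{cor_conv} with the binomial split of $|\eta|^2$: the pieces in which the top derivative falls on $\theta$ give $\sum_\eta|\tilde{n}||\mathscr{F}_b\theta|\le C\|\nabla_h\theta\|_{H^{m-2}}$ against $\sum_\eta|\eta|^2|\mathscr{F}_c v_h|\le C\|v_h\|_{H^{m-1}}\le C\|\nabla v\|_{H^{m-1}}$ (Poincar\'e), both in $L^2_t$, hence absorbed into $\int_0^T(\|\nabla v\|_{H^m}^2+\|\nabla_h\theta\|_{H^{m-2}}^2)\,\ud t$; the pieces in which the derivatives fall on the velocity are estimated by the weighted $\ell^1$-norms $\sum_\eta|\eta||\mathscr{F}_c v_h|$ and $\sum_\eta|\eta|^2|\mathscr{F}_b v_d|$, whose $L^1_t$-norms are exactly the key quantity \eqref{key_quan} (using $1\le|\eta|^2$ on $J$), paired with the $L^\infty_t$-bounds $\sum_\eta|\eta|^k|\mathscr{F}_b\theta|\le C\|\theta\|_{H^m}$ for $k\le 3$. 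The term $v_d\partial_d\theta$ is handled entirely in this last way — $\partial_d\theta$ never decays, but $v_d$ furnishes the key quantity — producing the term $C\sup_t\|\theta\|_{H^m}\sum_\eta\int_0^T|\eta|^2|\mathscr{F}_b v_d|\,\ud t$; collecting everything gives \eqref{tht_L1_est}.

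The main obstacle, and the reason this is not mere bookkeeping, is that $\theta$ itself does not decay in time (only $\theta-\sigma$ does, and $\partial_d\theta$ does not decay at all), so in $L^1_t([0,T])$ only $\nabla v$ and $\nabla_h\theta$ are square-integrable; every product must therefore be organized either as something quadratic in $\nabla v$ or $\nabla_h\theta$ (hence in $L^1_t$ by Cauchy--Schwarz) or as (a weighted $\ell^1$ velocity norm $=$ key quantity) $\times$ ($L^\infty_t$ $H^m$-norm of $\theta$). A careless Cauchy--Schwarz leaving $\int_0^T\|\nabla v\|_{H^{m-1}}\,\ud t$ standing would introduce a forbidden $T^{1/2}$. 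The Poincar\'e inequality, available precisely because of $\int_\Omega v_0\,\ud x=0$, is what lets the momentum nonlinearity and the ``good'' temperature pieces close against the dissipation, and the resonant region $D_3$ (where $\lambda_+$ and $\lambda_-$ nearly collide) is what produces the extra $|\eta|^2$ weight and hence forces the threshold $m>3+\tfrac d2$.
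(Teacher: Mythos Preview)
Your proposal is correct and follows essentially the same route as the paper: the Duhamel representation \eqref{df_tht_1}, the same four-term splitting, the same case analysis $D_2$ versus $D_3$ via \eqref{sing_est} (note that $D_1=\emptyset$ when $\alpha=1$), and the same product-splitting of $(v\cdot\nabla)\theta$ into an $L^2_t\times L^2_t$ piece and a (key quantity)$\times L^\infty_t H^m$ piece. The only cosmetic difference is that for $\sum_\eta|\eta|^2|\mathscr{F}(v\cdot\nabla)v|$ you invoke the Wiener embedding plus the algebra property of $H^{m-1}$, whereas the paper runs Proposition~\ref{cor_conv} with the binomial split of $|\eta|^2$ directly; both land on $C\|\nabla v\|_{H^m}^2$ under the same threshold $m>3+\tfrac d2$.
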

\begin{proof}
	We recall \eqref{df_tht_1} and have
	\begin{equation*}
		\sum_{\eta \in J}\int_0^T \frac {|\tilde{n}|^2}{|\eta|^2} |\mathscr{F}_b \theta(t)| \,\mathrm{d}t \leq I_5 + I_6 + I_7 + I_8,
	\end{equation*}
	where
	\begin{align*}
		I_5 &:= \sum_{\eta \in J} \int_0^T \frac {|\tilde{n}|^2}{|\eta|^2} (e^{-\lambda_- t} - e^{-\lambda_+ t}) |\langle \mathscr{F}_b\bold{u}_0,\bold{a}_- \rangle| |\langle \bold{b}_-,e_{2} \rangle| \,\mathrm{d}t, \\
		I_6 &:= \sum_{\eta \in J} \int_0^T e^{-\lambda_+ t} |\mathscr{F}_b \tht_0|  \, \mathrm{d}t, \\
		I_7 &:= \sum_{\eta \in J} \int_0^T \int_0^t \frac {|\tilde{n}|^2}{|\eta|^2} (e^{-\lambda_- (t - \tau)} - e^{-\lambda_+ (t - \tau)}) |\langle N(v,\tht)(\tau),\bold{a}_- \rangle| |\langle \bold{b}_-,e_{2} \rangle| \,\mathrm{d}\tau\mathrm{d}t,\\
		I_8 &:= \sum_{\eta \in J} \int_0^T \int_0^t e^{-\lambda_+ (t - \tau)} |\mathscr{F}_b [(v \cdot \nabla)\tht](\tau)| \,\mathrm{d}\tau\mathrm{d}t.
	\end{align*}
	We estimate $I_6$ and $I_8$ first. By \eqref{ker_est} we have
	\begin{equation*}
		I_6 \leq \sum_{\eta \in J} \int_0^T |\eta|^2 e^{-|\eta|^2 \frac t2} |\mathscr{F}_b \tht_0|  \, \mathrm{d}t \leq C\| \tht_0 \|_{H^m}.
	\end{equation*}
	With Fubini's theorem, we also have
	\begin{align*}
		I_8 &\leq C \sum_{\eta \in J} \int_0^T |\mathscr{F}_b [(v \cdot \nabla)\tht](t)| \,\mathrm{d}t.
	\end{align*}
	Due to \eqref{avg_vd} and \eqref{avg_vh}, Poincar\'{e} inequality implies
	\begin{align*}
		I_8 &\leq C\int_0^T \left( \sum_{\eta \in I} |\mathscr{F} (v-\int_{\Omega} v \,\ud x )(t)| \right) \left( \sum_{\eta \in J} |\eta| |\mathscr{F}_b \tht(t)| \right) \,\ud t \\
		&\leq C \sup_{t \in [0,T]} \| \theta(t) \|_{H^m} \left( \int_0^T \sum_{\eta \in I} |\eta| |\mathscr{F}_cv_h(t) | \,\mathrm{d}t + \int_0^T \sum_{\eta \in J} |\eta|^2 |\mathscr{F}_bv_d(t) | \,\mathrm{d}t \right).
	\end{align*}
	Now, we estimate $I_5$ and $I_7$. To apply \eqref{sing_est}, we consider $\eta \in D_2$ and $\eta \in D_3$ separately. Note that $D_1 = \emptyset$ when $\alpha = 1$. In the former case, as the previous estimates, we have
	\begin{align*}
		\sum_{\eta \in D_2} \int_0^T \frac {|\tilde{n}|^2}{|\eta|^2} (e^{-\lambda_- t} - e^{-\lambda_+ t}) |\langle \mathscr{F}_b\bold{u}_0,\bold{a}_- \rangle| |\langle \bold{b}_-,e_{2} \rangle| \,\mathrm{d}t &\leq C\sum_{\eta \in J} \int_0^T e^{-|\eta|^2 \frac t4} |\mathscr{F}_b \mathbf{u}_0|  \, \mathrm{d}t \leq C\| \mathbf{u}_0 \|_{H^m}
	\end{align*}
	and
	\begin{align*}
		&\sum_{\eta \in D_2}\int_0^T \int_0^t \frac {|\tilde{n}|^2}{|\eta|^2} (e^{-\lambda_- (t - \tau)} - e^{-\lambda_+ (t - \tau)}) |\langle N(v,\tht)(\tau),\bold{a}_- \rangle| |\langle \bold{b}_-,e_{2} \rangle| \,\mathrm{d}\tau\mathrm{d}t \\
		&\hphantom{\qquad\qquad}\leq C\sum_{\eta \in J} \int_0^T |N(v,\tht)(t)|  \, \mathrm{d}t \\
		&\hphantom{\qquad\qquad}\leq C\sum_{\eta \in J} \int_0^T (|\mathscr{F}[(v \cdot \nabla)v](t)| + |\mathscr{F}_b[((v-\int_{\Omega} v \,\ud x) \cdot \nabla)\tht](t)|)  \, \mathrm{d}t \\
		&\hphantom{\qquad\qquad}\leq C\int_0^T \left\{ \left(\sum_{\eta \in I} |\mathscr{F}v(t)| \right) \left( \sum_{\eta \in I} |\eta| |\mathscr{F}v(t)| \right) + \left( \sum_{\eta \in I} |\eta| |\mathscr{F}v(t)| \right) \left( \sum_{\eta \in J} |\eta| |\mathscr{F}_b \tht(t)|\right) \right\}  \, \mathrm{d}t \\
		&\hphantom{\qquad\qquad}\leq C \sup_{t \in [0,T]} (\| v(t) \|_{H^m} + \| \tht(t) \|_{H^m}) \left( \int_0^T \sum_{\eta \in I} |\eta| |\mathscr{F}_cv_h(t) | \,\mathrm{d}t + \int_0^T \sum_{\eta \in J} |\eta|^2 |\mathscr{F}_bv_d(t) | \,\mathrm{d}t \right).
	\end{align*}
	In the latter case,
	\begin{align*}
		\sum_{\eta \in D_3} \int_0^T \frac {|\tilde{n}|^2}{|\eta|^2} (e^{-\lambda_- t} - e^{-\lambda_+ t}) |\langle \mathscr{F}_b\bold{u}_0,\bold{a}_- \rangle| |\langle \bold{b}_-,e_{2} \rangle| \,\mathrm{d}t &\leq C\sum_{\eta \in J} \int_0^T \frac {|\tilde{n}|^2}{|\eta|^2} e^{-\frac {|\tilde{n}|^2}{|\eta|^4} t} |\mathscr{F}_b \bold{u}_0|  \, \mathrm{d}t \\
		&\leq C\sum_{\eta \in J} |\eta|^2 |\mathscr{F}_b \bold{u}_0| \\
		&\leq C \| \bfu_0 \|_{H^m}.
	\end{align*}
	On the other hand, we similarly have
	\begin{align*}
		&\sum_{\eta \in D_3}\int_0^T \int_0^t \frac {|\tilde{n}|^2}{|\eta|^2} (e^{-\lambda_- (t - \tau)} - e^{-\lambda_+ (t - \tau)}) |\langle N(v,\tht)(\tau),\bold{a}_- \rangle| |\langle \bold{b}_-,e_{2} \rangle| \,\mathrm{d}\tau\mathrm{d}t \\
		&\hphantom{\qquad\qquad}\leq \sum_{\eta \in J} \int_0^T |\eta|^2 |N(v,\tht)(t)|  \, \mathrm{d}t \\
		&\hphantom{\qquad\qquad}\leq \sum_{\eta \in J} \int_0^T |\eta|^2 |\mathscr{F}(v \cdot \nabla)v(t)| \, \mathrm{d}t + \sum_{\eta \in J} \int_0^T |\eta|^2 |\mathscr{F}_b(v \cdot \nabla)\tht(t)|  \, \mathrm{d}t.
	\end{align*}
	Let $\tilde{n}' + \tilde{n}'' = \tilde{n}$ and $\tilde{q}' + \tilde{q}'' = \tilde{q}$. Then, it holds $$|\eta|^2 = |\tilde{n}|^2 + |\tilde{q}|^2 \leq 2|\tilde{n}'|^2 + 2|\tilde{q}'|^2 + 2|\tilde{n}''|^2 + 2|\tilde{q}''|^2 = 2|\eta'|^2 + 2|\eta''|^2.$$ Similarly, for $\tilde{n}' + \tilde{n}'' = \tilde{n}$ and $|\tilde{q}' - \tilde{q}''| = \tilde{q}$, we can see $$|\eta|^2 \leq 2|\eta'|^2 + 2|\eta''|^2.$$ They imply
	\begin{gather*}
		\sum_{\eta \in J} \int_0^T |\eta|^2 (|\mathscr{F}(v \cdot \nabla)v(t)| \, \mathrm{d}t \\
		\leq C\int_0^T \left\{ \left(\sum_{\eta \in I} |\eta|^2 |\mathscr{F}v(t)| \right) \left( \sum_{\eta \in I} |\eta| |\mathscr{F}v(t)| \right) + \left( \sum_{\eta \in I} |\mathscr{F}v(t)| \right) \left( \sum_{\eta \in I} |\eta|^3 |\mathscr{F}v(t)| \right) \right\}  \, \mathrm{d}t \\
		\leq C \int_0^T \| \nabla v \|_{H^m}^2 \,\ud t.
	\end{gather*}	
	On the other hand, with $\mathscr{F}_b(v \cdot \nabla)\tht = \mathscr{F}_b(v_h \cdot \nabla_h)\tht + \mathscr{F}_b[v_d \partial_d \tht]$ it follows for $m > 3+d/2$
	\begin{gather*}
		\sum_{\eta \in J} \int_0^T |\eta|^2 |\mathscr{F}_b(v \cdot \nabla)\tht(t)|  \, \mathrm{d}t \\
		\leq \sum_{\eta \in J} \int_0^T |\eta|^2 |\mathscr{F}_b(v_h \cdot \nabla_h)\tht(t)|  \, \mathrm{d}t + \sum_{\eta \in J} \int_0^T |\eta|^2 |\mathscr{F}_b[v_d \partial_d \tht](t)|  \, \mathrm{d}t \\
		\leq C\int_0^T \left\{ \left( \sum_{\eta \in I} |\eta|^2 |\mathscr{F}_cv_h(t)| \right) \left( \sum_{\eta \in J} |\tilde{n}| |\mathscr{F}_b\tht(t)| \right) + \left( \sum_{\eta \in I} |\mathscr{F}_cv_h(t)| \right) \left( \sum_{\eta \in J} |\eta|^2 |\tilde{n}| |\mathscr{F}_b \tht(t)|\right) \right\}  \, \mathrm{d}t \\
		+ C\int_0^T \left\{ \left(\sum_{\eta \in J} |\eta|^2 |\mathscr{F}_bv_d(t)| \right) \left( \sum_{\eta \in J} |\eta| |\mathscr{F}_b\tht (t)| \right) + \left( \sum_{\eta \in J} |\mathscr{F}_bv_d(t)| \right) \left( \sum _{\eta \in J} |\eta|^3 |\mathscr{F}_b\tht(t)| \right) \right\}  \, \mathrm{d}t \\
		\leq C\int_0^T \left( \sum_{\eta \in I} |\eta|^2 |\mathscr{F}_cv_h(t)| \right) \left( \sum_{\eta \in J} |\tilde{n}| |\mathscr{F}_b\tht(t)| \right) \, \mathrm{d}t \\
		+C \sup_{t \in [0,T]} \| \tht(t) \|_{H^m} \left( \int_0^T \sum_{\eta \in I} |\eta| |\mathscr{F}_cv_h(t) | \,\mathrm{d}t + \int_0^T \sum_{\eta \in J} |\eta|^2 |\mathscr{F}_bv_d(t) | \,\mathrm{d}t \right).
	\end{gather*}
	Since $$\sum_{\eta \in J} |\tilde{n}| |\mathscr{F}_b\tht(t)| = \sum_{\eta \in J} |\mathscr{F}_b \nabla_h \tht(t)| \leq C \| \nabla_h \tht(t) \|_{H^{m-2}}$$ and $$\sum_{\eta \in I} |\eta|^2 |\mathscr{F}_cv_h(t)| \leq C \| \nabla v (t) \|_{H^{m}},
	$$ we have
	\begin{gather*}
		C\int_0^T \left( \sum_{\eta \in I} |\eta|^2 |\mathscr{F}_cv_h(t)| \right) \left( \sum_{\eta \in J} |\tilde{n}| |\mathscr{F}_b\tht(t)| \right) \, \mathrm{d}t \leq C\int_0^T \| \nabla v(t) \|_{H^{m}}^2 \,\ud t + C\int_0^T \| \nabla_h \tht(t) \|_{H^{m-2}}^2 \,\ud t.
	\end{gather*}
	Collecting the estimates for $I_5$, $I_6$, $I_7$, and $I_8$, we complete the proof.
\end{proof}

Now, we are ready to prove the global existence part of Theorem~\ref{thm1}. Let $T^*>0$ and $(v,\tht)$ be the maximal time of existence and the local solution given in Proposition~\ref{prop_loc} respectively. We define
\begin{equation}\label{Def_B}
	B_m(T) := \left( \sup_{t \in [0,T]} E_m(t)^2 + \int_0^T \| \Lambda^{\alpha} v(t) \|_{H^m}^2 \,\mathrm{d}t+ \int_0^T \| \nabla_h \theta(t) \|_{H^{m-1-\alpha}}^2 \,\mathrm{d}t \right)^{\frac 12}.
\end{equation}
Then, from \eqref{v_L1_est} and \eqref{tht_L1_est}, we have 
\begin{gather*}
	\sum_{\eta \in I} \int_0^T |\eta| |\mathscr{F}_cv_h(t) | \,\mathrm{d}t + \sum_{\eta \in J} \int_0^T |\eta|^2 |\mathscr{F}_bv_d(t) | \,\mathrm{d}t \leq C \| (v_0,\tht_0) \|_{H^m} \\
	+ C_1B_m(T) \left( \sum_{\eta \in I} \int_0^T |\eta| |\mathscr{F}_cv_h(t) | \,\mathrm{d}t + \sum_{\eta \in J} \int_0^T |\eta|^2 |\mathscr{F}_bv_d(t) | \,\mathrm{d}t \right) + C_1B_m(T)^2
\end{gather*}
for some $C_1>0$. For a while, we assume that $C_1B_m(T) \leq \frac 12$ for all $T \in (0,T^*)$. Then, we have
\begin{gather*}
	\sum_{\eta \in I} \int_0^T |\eta| |\mathscr{F}_cv_h(t) | \,\mathrm{d}t + \sum_{\eta \in J} \int_0^T |\eta|^2 |\mathscr{F}_bv_d(t) | \,\mathrm{d}t \leq C \| (v_0,\tht_0) \|_{H^m} + B_m(T).
\end{gather*}
On the other hand, we recall \eqref{eng_est} and integrate it on the interval $[0,T]$. Then by \eqref{A}, we have
\begin{equation*}
	\frac 12 B_m(T)^2 \leq \frac 32 \| (v_0,\tht_0) \|_{H^m}^2 + CB_m(T)^3 + CB_m(T)^2 \int_0^T \| \nabla v(t) \|_{L^{\infty}} \,\ud t.
\end{equation*}
Since $$\| \nabla v \|_{L^{\infty}} \leq \sum_{\eta \in I} |\eta||\mathscr{F}_c v_h| + \sum_{\eta \in J} |\eta|^2|\mathscr{F}_bv_d|,$$ it holds
\begin{align*}
	\frac 12 B_m(T)^2 &\leq \frac 32\| (v_0,\tht_0) \|_{H^m}^2 + CB_m(T)^3 + CB_m(T)^2 \left( \| (v_0,\tht_0) \|_{H^m} + B_m(T) \right) \\
	&\leq \frac 32 \| (v_0,\tht_0) \|_{H^m}^2 + C_2 \| (v_0,\tht_0) \|_{H^m} B_m(T)^2 + C_2B_m(T)^3
\end{align*}
for some $C_2>0$. If we assume $C_2 \|  (v_0,\tht_0) \|_{H^m} \leq \frac 1{16}$ and $C_2 B_m(T) \leq \frac 1{16}$, then 
\begin{equation}\label{Bm_bdd}
	B_m(T)^2 \leq 4 \|  (v_0,\tht_0) \|_{H^m}^2 \leq 4\delta^2.
\end{equation}
Here, we take $\delta>0$ such that $C_1 (2\delta) < \frac 12$ and $C_2 (2\delta) < \frac 1{16}$. By the above estimates, we can deduce that \eqref{Bm_bdd} holds for all $T \in (0,T^*)$, hence, $T^* = \infty$. Thus, \eqref{sol_bdd_1} is obtained. This completes the proof.

\subsection{Proof of Theorem~\ref{thm2}:Global-in-time existence part}
In this subsection, we fix $\alpha = 0$. We only provide two propositions counterparts of Proposition~\ref{prop_v} and \ref{prop_tht}, because the rest of the proof is similar with that of theorem~\ref{thm1}.

\begin{proposition}
	Let $d \in \mathbb{N}$ with $d \geq 2$ and $m \in \mathbb{N}$ satisfying $m > 2+\frac d2$. Assume that $(v, \theta)$ be a smooth global solution to \eqref{EQ} with $\alpha = 0$. Then there exists a constant $C > 0$ such that
	\begin{equation}\label{v_L1_est2}
		\begin{gathered}
			\sum_{\eta \in J} \int_0^T |\eta| |\mathscr{F}_b v_d(t) | \,\mathrm{d}t \leq C \| v_0 \|_{H^m} +  C \int_0^T \| v(t) \|_{H^m}^2 \,\ud t + \sum_{\eta \in J} \int_0^T \frac {|\tilde{n}|^2}{|\eta|} |\mathscr{F}_b \theta(t)| \,\mathrm{d}t
		\end{gathered}
	\end{equation}
	for all $T > 0$.
\end{proposition}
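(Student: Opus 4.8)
The plan is to follow the scheme of Proposition~\ref{prop_v}, but to work directly with the $v_d$-equation \eqref{df_vd} rather than passing through $v_h$, since here we only need the first-order quantity $\sum_{\eta\in J}\int_0^T|\eta|\,|\mathscr{F}_b v_d|\,\ud t$ (it is this sum that controls $\|\partial_d v_d\|_{L^\infty}$ appearing in Proposition~\ref{prop_eng2}). With $\alpha=0$ the damping coefficient in \eqref{df_vd} is $|\eta|^{2\alpha}=1$, so Duhamel's principle, together with the elementary bounds $|1-\tilde q^2/|\eta|^2| = |\tilde n|^2/|\eta|^2\le 1$ and $\tilde q|\tilde n|/|\eta|^2\le\tfrac12$ for the coefficients of the two nonlinear terms, yields
\begin{align*}
|\eta|\,|\mathscr{F}_b v_d(t)| &\le e^{-t}|\eta|\,|\mathscr{F}_b v_{d,0}| \\
&\quad + \int_0^t e^{-(t-\tau)}\Big( |\eta|\,|\mathscr{F}_b[(v\cdot\nabla)v_d](\tau)| + |\eta|\,|\mathscr{F}_c[(v\cdot\nabla)v_h](\tau)| + \tfrac{|\tilde n|^2}{|\eta|}\,|\mathscr{F}_b\theta(\tau)| \Big)\,\ud\tau .
\end{align*}
Summing over $\eta\in J$, integrating in $t$ over $[0,T]$, and applying Fubini's theorem exactly as in the treatment of $I_2,I_3,I_4$ (using $\int_\tau^T e^{-(t-\tau)}\,\ud t\le 1$ each time), the problem reduces to bounding four pieces: the data term $\sum_{\eta\in J}|\eta|\,|\mathscr{F}_b v_{d,0}|$, the two nonlinear sums $\sum_{\eta\in J}\int_0^T|\eta|\big(|\mathscr{F}_b[(v\cdot\nabla)v_d]|+|\mathscr{F}_c[(v\cdot\nabla)v_h]|\big)\,\ud t$, and the forcing term $\sum_{\eta\in J}\int_0^T\tfrac{|\tilde n|^2}{|\eta|}|\mathscr{F}_b\theta|\,\ud t$, the last of which is already the final term on the right-hand side of \eqref{v_L1_est2}.

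For the data term, Cauchy--Schwarz gives $\sum_{\eta\in J}|\eta|\,|\mathscr{F}_b v_{d,0}|\le C\|v_0\|_{H^m}$, since $\sum_{\eta\in J}|\eta|^{-2(m-1)}<\infty$ whenever $m>1+\tfrac d2$. For the nonlinear sums I would split $(v\cdot\nabla)v_d=(v_h\cdot\nabla_h)v_d+v_d\partial_d v_d$ and $(v\cdot\nabla)v_h=(v_h\cdot\nabla_h)v_h+v_d\partial_d v_h$, distribute the weight $|\eta|$ through the convolution via $|\eta|\le|\eta'|+|\eta''|$, and apply the appropriate inequality of Proposition~\ref{cor_conv}. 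The one thing to watch is which orthonormal family each factor belongs to: e.g.\ $v_h\cdot\nabla_h v_d$ is a product of a $Y^m$-factor and an $X^m$-factor (hence expanded via $\mathscr{F}_b$, controlled by the first inequality of Proposition~\ref{cor_conv}), $v_d\partial_d v_h$ is a product of two $X^m$-type factors (hence via $\mathscr{F}_c$, second inequality), and $v_h\cdot\nabla_h v_h$ a product of two $Y^m$-factors (third inequality); likewise $v_d\partial_d v_d$ pairs an $X^m$-factor with a $Y^m$-factor. In each case one gets a product of two sums, one bounded by $\sum_\eta|\eta|^2|\mathscr{F}v|\le C\|v\|_{H^m}$ and the other by $\sum_\eta|\mathscr{F}v|\le C\|v\|_{H^m}$ or $\sum_\eta|\eta|\,|\mathscr{F}v|\le C\|v\|_{H^m}$; the former is the borderline estimate, which is precisely why the hypothesis is $m>2+\tfrac d2$. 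Hence both nonlinear sums are $\le C\int_0^T\|v(t)\|_{H^m}^2\,\ud t$, and collecting the four pieces gives \eqref{v_L1_est2}.

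I do not expect a genuine obstacle: the statement is strictly easier than Proposition~\ref{prop_v} because the target quantity is one derivative lower, so the nonlinear contributions close immediately into $\int_0^T\|v\|_{H^m}^2\,\ud t$ and no absorption/bootstrap structure (no $B_m(T)$ prefactor) is needed, unlike in the $\alpha=1$ case. The only points requiring care are the bookkeeping of the bases $\mathscr{B}_\eta,\mathscr{C}_\eta$ when invoking Proposition~\ref{cor_conv} and verifying that the summability thresholds used ($m>1+\tfrac d2$ for the data term, $m>2+\tfrac d2$ for the nonlinear terms) are consistent with the stated hypothesis.
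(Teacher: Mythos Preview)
Your proposal is correct and follows essentially the same argument as the paper: apply Duhamel's principle to \eqref{df_vd} with $\alpha=0$, split into the four pieces $J_1,\dots,J_4$, and handle the data term by Cauchy--Schwarz, the forcing term by Fubini, and the two nonlinear terms by Fubini together with the convolution estimates of Proposition~\ref{cor_conv}. The paper's proof is in fact terser than yours---it does not spell out the weight distribution $|\eta|\le|\eta'|+|\eta''|$ or the basis bookkeeping---but the route is identical.
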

\begin{proof}
	From \eqref{df_vd} and Duhamel's principle, we can have
	\begin{equation*}
		\sum_{\eta \in J} \int_0^T |\eta| |\mathscr{F}_bv_d(t) | \,\mathrm{d}t \leq J_1 + J_2 + J_3 + J_4,
	\end{equation*}
	where
	\begin{align*}
		J_1 &:= \sum_{\eta \in J}  \int_0^T |\eta| e^{-t} |\mathscr{F}_bv_0| \,\mathrm{d}t, \\
		J_2 &:= \sum_{\eta \in J} \int_0^T \int_0^t |\eta|e^{-(t-\tau)} |\mathscr{F}_b[(v \cdot \nabla)v_d](\tau)| \,\ud \tau \mathrm{d}t, \\
		J_3 &:= \sum_{\eta \in J} \int_0^T \int_0^t |\eta|e^{-(t-\tau)} |\mathscr{F}_c[(v \cdot \nabla)v_h](\tau)| \,\mathrm{d}\tau \mathrm{d}t, \\
		J_4 &:= \sum_{\eta \in J} \int_0^T \int_0^t e^{-(t - \tau)} \frac {|\tilde{n}|^2}{|\eta|} |\mathscr{F}_b \theta(\tau)| \,\mathrm{d}\tau \mathrm{d}t.
	\end{align*}
	We can easily show $$J_1 \leq \sum_{\eta \in J} |\eta| |\mathscr{F}_bv_0| \leq C \| v_0 \|_{H^m}$$ and $$J_4 \leq \sum_{\eta \in J} \int_0^T \frac {|\tilde{n}|^2}{|\eta|} |\mathscr{F}_b \theta(t)| \, \mathrm{d}t.$$ Fubini's theorem and Propostion~\ref{cor_conv} gives $$J_2 \leq \sum_{\eta \in J} \int_0^T |\eta| |\mathscr{F}_b[(v \cdot \nabla)v_d](t)| \,\mathrm{d}t \leq C\int_0^T \| v(t) \|_{H^m}^2 \,\ud t.$$ Similarly, we can estimate $J_3$ and have $$J_3 \leq C\int_0^T \| v(t) \|_{H^m}^2 \,\ud t.$$ From the estimates for $J_1$, $J_2$, $J_3$, and $J_4$, we deduce \eqref{v_L1_est2}. This completes the proof.
\end{proof}

\begin{proposition}
	Let $d \in \mathbb{N}$ with $d \geq 2$ and $m \in \mathbb{N}$ satisfying $m > 2+\frac d2$. Assume that $(v, \theta)$ be a smooth global solution to \eqref{EQ} with $\alpha = 1$. Then there exists a constant $C > 0$ such that
	\begin{equation}\label{tht_L1_est2}
		\begin{gathered}
			\sum_{\eta \in J} \int_0^T \frac {|\tilde{n}|^2}{|\eta|} |\mathscr{F}_b \theta(t)| \,\mathrm{d}t \leq C \| \bold{u}_0 \|_{H^{m}} +  C \int_0^T \| v(t) \|_{H^m}^2 \,\ud t + C \int_0^T \| \nabla_h \theta(t) \|_{H^{m-1}}^2 \,\mathrm{d}t \\
			+ C \sup_{t \in [0,T]} \| \tht(t) \|_{H^m} \sum_{\eta \in J} \int_0^T |\eta| |\mathscr{F}_bv_d(t) | \,\mathrm{d}t .
		\end{gathered}
	\end{equation}
	for all $T>0$.
\end{proposition}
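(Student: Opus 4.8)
The plan is to run the $\alpha=0$ counterpart of the proof of Proposition~\ref{prop_tht}. (The hypothesis of the statement should read $\alpha=0$: it belongs to the proof of Theorem~\ref{thm2}, and the dissipation $\|\nabla_h\tht\|_{H^{m-1}}$ and the threshold $m>2+\tfrac d2$ are those of that case.) I would start from the representation \eqref{df_tht_1} of $\mathscr{F}_b\tht$, multiply it by $|\tilde n|^2/|\eta|$, sum over $\eta\in J$ and integrate over $[0,T]$, and split
$$\sum_{\eta\in J}\int_0^T\frac{|\tilde n|^2}{|\eta|}|\mathscr{F}_b\tht(t)|\,\ud t\le I_5+I_6+I_7+I_8,$$
where $I_5,\dots,I_8$ are the contributions of the four terms of \eqref{df_tht_1}, in the same order as for $\alpha=1$. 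The one structural change is that now $|\eta|^{2\alpha}=1$: by \eqref{ker_est} the factor $e^{-\lambda_+ t}$ contributes only the uniform decay $e^{-t/2}$, with no gain of derivatives (unlike $\alpha=1$), so the weight $|\tilde n|^2/|\eta|$ — which is $\le|\eta|$ but not $\le1$ — must be carried through all four terms, and the regularity threshold drops to $m>2+\tfrac d2$.

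For the initial-data terms: in $I_6$ I would use $|e^{-\lambda_+ t}|\le e^{-t/2}$ and $|\tilde n|^2/|\eta|\le|\eta|$, integrate in $t$, and sum, giving $I_6\le C\sum_{\eta\in J}|\eta||\mathscr{F}_b\tht_0|\le C\|\tht_0\|_{H^m}$ since $m>1+\tfrac d2$. For $I_5$ I would apply the singular-kernel bound \eqref{sing_est}: on $D_1\cup D_2$ the kernel is $\le Ce^{-t/4}$ and the estimate closes as for $I_6$; on $D_3$ it is $\le Ce^{-(|\tilde n|^2/|\eta|^2)t}$, and since the modes with $\tilde n=0$ carry zero weight, the $t$-integration of the surviving terms produces the factor $|\eta|^2/|\tilde n|^2$, which exactly cancels $|\tilde n|^2/|\eta|$, again leaving $C\sum_{\eta\in J}|\eta||\mathscr{F}_b\mathbf{u}_0|\le C\|\mathbf{u}_0\|_{H^m}$. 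In contrast to $\alpha=1$, the set $D_1$ is now nonempty, but \eqref{sing_est} already covers it, so no new difficulty arises there.

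For the nonlinear terms, Fubini in the time variable — each inner integral being $O(1)$, or, on $D_3$, again producing the factor $|\eta|^2/|\tilde n|^2$ that cancels the weight — reduces both $I_7$ and $I_8$ to controlling $\sum_{\eta\in J}\int_0^T|\eta|\big(|\mathscr{F}(v\cdot\nabla)v(t)|+|\mathscr{F}_b[(v\cdot\nabla)\tht](t)|\big)\,\ud t$, after using $|\mathbb{P}\mathscr{F}(v\cdot\nabla)v|\le|\mathscr{F}(v\cdot\nabla)v|$ for the $e_1$-component of $N(v,\tht)$. Proposition~\ref{cor_conv} together with $|\eta|\aleq|\eta'|+|\eta''|$ gives $\sum_\eta|\eta||\mathscr{F}(v\cdot\nabla)v|\aleq\|v\|_{H^m}^2$ (here $m>2+\tfrac d2$ is used to absorb $\sum_\eta|\eta|^2|\mathscr{F}v|\le C\|v\|_{H^m}$), whose time integral is on the right of \eqref{tht_L1_est2}. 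For $(v\cdot\nabla)\tht=(v_h\cdot\nabla_h)\tht+v_d\partial_d\tht$ the horizontal part satisfies $\sum_\eta|\eta||\mathscr{F}_b[(v_h\cdot\nabla_h)\tht]|\aleq\|v\|_{H^m}\|\nabla_h\tht\|_{H^{m-1}}$ (again using $m>2+\tfrac d2$), which Young's inequality turns into $C\|v\|_{H^m}^2+C\|\nabla_h\tht\|_{H^{m-1}}^2$; and for the vertical part, distributing $\partial_d$ via $\mathscr{F}_c\partial_d\tht=\tilde q\,\mathscr{F}_b\tht$ yields
$$\sum_\eta|\eta||\mathscr{F}_b[v_d\partial_d\tht]|\aleq\Big(\sum_\eta|\eta||\mathscr{F}_bv_d|\Big)\Big(\sum_\eta|\eta|^2|\mathscr{F}_b\tht|\Big)\aleq\|\tht\|_{H^m}\sum_\eta|\eta||\mathscr{F}_bv_d|,$$
whose time integral is $\le C\sup_{t\in[0,T]}\|\tht(t)\|_{H^m}\sum_{\eta\in J}\int_0^T|\eta||\mathscr{F}_bv_d(t)|\,\ud t$, the final term of \eqref{tht_L1_est2}. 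Collecting the four bounds proves the claim.

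The hard part is precisely the vertical transport term $v_d\partial_d\tht$: $\partial_d\tht$ does not inherit the horizontal damping enjoyed by $\nabla_h\tht$ and is only uniformly bounded in $H^{m-1}$, so it cannot be absorbed into the dissipation and instead must be paired with the decaying quantity $\sum_\eta|\eta||\mathscr{F}_bv_d|$ — which is the left-hand side of the companion bound \eqref{v_L1_est2}, and which decays ultimately because $\int_{\bbT^{d-1}}v_d\,\ud x_h=0$ by Lemma~\ref{lem_obs}. Everything else is a routine rerun of the proof of Proposition~\ref{prop_tht} with the powers of $|\eta|$ in the weights rebalanced for $\alpha=0$.
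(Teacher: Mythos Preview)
Your proposal is correct and follows essentially the same route as the paper's proof: start from the Duhamel representation \eqref{df_tht_1}, split into the four pieces $J_5,\dots,J_8$, handle the $\lambda_+$ pieces via $|e^{-\lambda_+t}|\le e^{-t/2}$ and the $(e^{-\lambda_-t}-e^{-\lambda_+t})$ pieces via \eqref{sing_est} on $D_1\cup D_2$ versus $D_3$, Fubini in time, and close with the convolution bounds of Proposition~\ref{cor_conv}; you also correctly isolate $v_d\partial_d\tht$ as the term that forces the appearance of $\sum_\eta|\eta||\mathscr{F}_bv_d|$ on the right. The only cosmetic difference is that the paper bounds the weight by $|\tilde n|^2/|\eta|\le|\tilde n|$ on the $\lambda_+$ and $D_1\cup D_2$ pieces (so $J_6,J_8$ carry $|\tilde n|$ rather than $|\eta|$), whereas you uniformly use $|\tilde n|^2/|\eta|\le|\eta|$; since the $D_3$ piece already produces the larger weight $|\eta|$ in both arguments, your simplification costs nothing and the final estimates coincide.
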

\begin{proof}
	We recall \eqref{df_tht_1} and have
	\begin{equation*}
		\sum_{\eta \in J}\int_0^T \frac {|\tilde{n}|^2}{|\eta|} |\mathscr{F}_b \theta(t)| \,\mathrm{d}t \leq J_5 + J_6 + J_7 + J_8,
	\end{equation*}
	where
	\begin{align*}
		J_5 &:= \sum_{\eta \in J} \int_0^T \frac {|\tilde{n}|^2}{|\eta|} (e^{-\lambda_- t} - e^{-\lambda_+ t}) |\langle \mathscr{F}_b\bold{u}_0,\bold{a}_- \rangle| |\langle \bold{b}_-,e_{2} \rangle| \,\mathrm{d}t, \\
		J_6 &:= \sum_{\eta \in J} \int_0^T |\tilde{n}| e^{-\lambda_+ t} |\mathscr{F}_b \tht_0|  \, \mathrm{d}t, \\
		J_7 &:= \sum_{\eta \in J} \int_0^T \int_0^t \frac {|\tilde{n}|^2}{|\eta|} (e^{-\lambda_- (t - \tau)} - e^{-\lambda_+ (t - \tau)}) |\langle N(v,\tht)(\tau),\bold{a}_- \rangle| |\langle \bold{b}_-,e_{2} \rangle| \,\mathrm{d}\tau\mathrm{d}t,\\
		J_8 &:= \sum_{\eta \in J} \int_0^T \int_0^t |\tilde{n}| e^{-\lambda_+ (t - \tau)} |\mathscr{F}_b [(v \cdot \nabla)\tht](\tau)| \,\mathrm{d}\tau\mathrm{d}t.
	\end{align*}
	It is clear by \eqref{ker_est}
	\begin{equation*}
		J_6 \leq \sum_{\eta \in J} \int_0^T e^{-\frac t2} |\mathscr{F}_b \nabla_h \tht_0|  \, \mathrm{d}t \leq C\| \tht_0 \|_{H^m}.
	\end{equation*}
	Similarly, we have with Proposition~\ref{cor_conv} that
	\begin{align*}
		J_8 &\leq C \sum_{\eta \in J} \int_0^T |\mathscr{F}_b [\nabla_h (v \cdot \nabla)\tht]| \,\mathrm{d}t \\
		&\leq C \int_0^T \left\{ \left( \sum_{\eta \in I} |\eta| |\mathscr{F}_c v_h| \right) \left( \sum_{\eta \in J} |\eta| |\mathscr{F}_b \nabla_h \tht| \right) + \left( \sum_{\eta \in J} |\eta| |\mathscr{F}_b v_d| \right) \left( \sum_{\eta \in J} |\eta| |\mathscr{F}_b \partial_d \tht| \right) \right\} \,\ud t \\
		&\leq C \int_0^T \| v \|_{H^m}^2 \,\ud t + C \int_0^T \| \nabla_h \tht \|_{H^{m-1}}^2 \,\ud t + C \sup_{t \in [0,T]} \| \tht(t) \|_{H^m} \sum_{\eta \in J} \int_0^T |\eta| |\mathscr{F}_bv_d | \,\mathrm{d}t .
	\end{align*}
	To estimate $J_5$ and $J_7$ with \eqref{sing_est}, we consider $\eta \in D_1 \cup D_2$ and $\eta \in D_3$ separately. We note that
	\begin{align*}
		\sum_{\eta \in D_1 \cup D_2} \int_0^T \frac {|\tilde{n}|^2}{|\eta|} (e^{-\lambda_- t} - e^{-\lambda_+ t}) |\langle \mathscr{F}_b\bold{u}_0,\bold{a}_- \rangle| |\langle \bold{b}_-,e_{2} \rangle| \,\mathrm{d}t &\leq C\sum_{\eta \in J} \int_0^T |\tilde{n}| e^{- \frac t4} |\mathscr{F}_b \mathbf{u}_0|  \, \mathrm{d}t \leq C\| \mathbf{u}_0 \|_{H^m}
	\end{align*}
	and
	\begin{align*}
		&\sum_{\eta \in D_1 \cup D_2}\int_0^T \int_0^t \frac {|\tilde{n}|^2}{|\eta|} (e^{-\lambda_- (t - \tau)} - e^{-\lambda_+ (t - \tau)}) |\langle N(v,\tht)(\tau),\bold{a}_- \rangle| |\langle \bold{b}_-,e_{2} \rangle| \,\mathrm{d}\tau\mathrm{d}t \\
		&\hphantom{\qquad\qquad}\leq C\sum_{\eta \in J} \int_0^T |\tilde{n}| |N(v,\tht)(t)|  \, \mathrm{d}t \\
		&\hphantom{\qquad\qquad}\leq C\sum_{\eta \in J} \int_0^T (|\mathscr{F}[\nabla_h(v \cdot \nabla)v](t)| + |\mathscr{F}_b[\nabla_h(v \cdot \nabla)\tht](t)|)  \, \mathrm{d}t \\
		&\hphantom{\qquad\qquad}\leq C \int_0^T \| v \|_{H^m}^2 \,\ud t + C \int_0^T \| \nabla_h \tht \|_{H^{m-1}}^2 \,\ud t + C \sup_{t \in [0,T]} \| \tht(t) \|_{H^m} \sum_{\eta \in J} \int_0^T |\eta| |\mathscr{F}_bv_d | \,\mathrm{d}t .
	\end{align*}
	On the other hand,
	\begin{align*}
		\sum_{\eta \in D_3} \int_0^T \frac {|\tilde{n}|^2}{|\eta|} (e^{-\lambda_- t} - e^{-\lambda_+ t}) |\langle \mathscr{F}_b\bold{u}_0,\bold{a}_- \rangle| |\langle \bold{b}_-,e_{2} \rangle| \,\mathrm{d}t &\leq C\sum_{\eta \in J} \int_0^T \frac {|\tilde{n}|^2}{|\eta|} e^{-\frac {|\tilde{n}|^2}{|\eta|^2} t} |\mathscr{F}_b \bold{u}_0|  \, \mathrm{d}t \\
		&\leq C\sum_{\eta \in J} |\eta| |\mathscr{F}_b \bold{u}_0| \\
		&\leq C \| (v_0,\tht_0) \|_{H^m}.
	\end{align*}
	We can see
	\begin{align*}
		&\sum_{\eta \in D_3}\int_0^T \int_0^t \frac {|\tilde{n}|^2}{|\eta|} (e^{-\lambda_- (t - \tau)} - e^{-\lambda_+ (t - \tau)}) |\langle N(v,\tht)(\tau),\bold{a}_- \rangle| |\langle \bold{b}_-,e_{2} \rangle| \,\mathrm{d}\tau\mathrm{d}t \\
		&\hphantom{\qquad\qquad}\leq \sum_{\eta \in J} \int_0^T |\eta| |N(v,\tht)(t)|  \, \mathrm{d}t \\
		&\hphantom{\qquad\qquad}\leq \sum_{\eta \in J} \int_0^T |\eta| |\mathscr{F}(v \cdot \nabla)v(t)| \, \mathrm{d}t + \sum_{\eta \in J} \int_0^T |\eta| |\mathscr{F}_b(v \cdot \nabla)\tht(t)|  \, \mathrm{d}t.
	\end{align*}
	As estimating $I_7$ on the set $D_3$, we can deduce
	\begin{gather*}
		\sum_{\eta \in J} \int_0^T |\eta| (|\mathscr{F}(v \cdot \nabla)v(t)| \, \mathrm{d}t \\
		\leq C\int_0^T \left\{ \left(\sum_{\eta \in I} |\eta| |\mathscr{F}v(t)| \right)^2 + \left( \sum_{\eta \in I} |\mathscr{F}v(t)| \right) \left( \sum_{\eta \in I} |\eta|^2 |\mathscr{F}v(t)| \right) \right\}  \, \mathrm{d}t \\
		\leq C \int_0^T \| v(t) \|_{H^m}^2 \,\ud t
	\end{gather*}	
	and
	\begin{gather*}
		\sum_{\eta \in J} \int_0^T |\eta| |\mathscr{F}_b(v \cdot \nabla)\tht(t)|  \, \mathrm{d}t \\
		\leq \sum_{\eta \in J} \int_0^T |\eta| |\mathscr{F}_b(v_h \cdot \nabla_h)\tht(t)|  \, \mathrm{d}t + \sum_{\eta \in J} \int_0^T |\eta| |\mathscr{F}_b[v_d \partial_d \tht](t)|  \, \mathrm{d}t \\
		\leq C\int_0^T \left\{ \left( \sum_{\eta \in I} |\eta| |\mathscr{F}_cv_h(t)| \right) \left( \sum_{\eta \in J} |\tilde{n}| |\mathscr{F}_b\tht(t)| \right) + \left( \sum_{\eta \in I} |\mathscr{F}_cv_h(t)| \right) \left( \sum_{\eta \in J} |\eta| |\tilde{n}| |\mathscr{F}_b \tht(t)|\right) \right\}  \, \mathrm{d}t \\
		+ C\int_0^T \left\{ \left(\sum_{\eta \in J} |\eta| |\mathscr{F}_bv_d(t)| \right) \left( \sum_{\eta \in J} |\eta| |\mathscr{F}_b\tht (t)| \right) + \left( \sum_{\eta \in J} |\mathscr{F}_bv_d(t)| \right) \left( \sum _{\eta \in J} |\eta|^2 |\mathscr{F}_b\tht(t)| \right) \right\}  \, \mathrm{d}t \\
		\leq C \int_0^T \| v \|_{H^m}^2 \,\ud t + C \int_0^T \| \nabla_h \tht \|_{H^{m-1}}^2 \,\ud t + C \sup_{t \in [0,T]} \| \tht(t) \|_{H^m} \sum_{\eta \in J} \int_0^T |\eta| |\mathscr{F}_bv_d | \,\mathrm{d}t
	\end{gather*}
	for $m > 2+d/2$. Collecting the estimates for $J_5$, $J_6$, $J_7$, and $J_8$, we obtain \eqref{tht_L1_est2}. This completes the proof.
\end{proof}

\section{Proof of temporal decay estimates}\label{sec6}

In this section, let  $(v,\tht)$ be a smooth global-in-time solution to \eqref{EQ}. In addition, we assume that \eqref{sol_bdd_0} or \eqref{sol_bdd_1} holds in each case with
\begin{equation}\label{sm_ass}
	\|  (v_0,\tht_0) \|_{H^m} \leq \delta
\end{equation}
for sufficiently small $\delta>0$. The next three propositions are for the temporal decay estimates of $\| \bar{\tht}(t) \|_{L^2}$, $\| v(t) \|_{L^2}$, and $\| v_d(t) \|_{L^2}$ in both cases $\alpha=0$ and $\alpha = 1$. After that, we prove \eqref{tem_0} and \eqref{tem_1} combining with the temporal decay estimates for $\| v(t) \|_{\dot{H}^m}$ and $\| v_d(t) \|_{\dot{H}^m}$.

\begin{proposition}
	Let $d \in \mathbb{N}$ with $d \geq 2$ and $\alpha \in \{0,1\}$. Let $m \in \mathbb{N}$ with $m > 1+\frac d2 +\alpha$ and $(v, \theta)$ be a smooth global solution to \eqref{EQ} with \eqref{sol_bdd_1} or \eqref{sol_bdd_0}. Suppose that \eqref{sm_ass} be satisfied. Then, there exists a constant $C > 0$ such that
	\begin{equation}\label{tht_L2}
		\| \bar{\tht}(t) \|_{L^2}^2 \leq C (1+t)^{-\frac {m}{1+\alpha}}.
	\end{equation}
\end{proposition}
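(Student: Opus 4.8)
The plan is to use the representation formula \eqref{df_tht_1} for $\mathscr{F}_b\tht(\eta)$ together with the linear kernel bounds \eqref{ker_est}, \eqref{sing_est} and the already-established global bounds \eqref{sol_bdd_1}/\eqref{sol_bdd_0}, and to run Elgindi's frequency-splitting argument referred to in the introduction. Since $\bar\tht$ only involves modes with $\tilde n\neq 0$, every term in \eqref{df_tht_1} carries a genuine decaying kernel. Concretely, I would split $J\cap\{\tilde n\neq 0\}$ at a scale $R=R(t)$ to be optimized: on low frequencies $|\eta|\le R$ one exploits the exponential-in-time kernel decay together with the uniform $L^2$ bound $\sum|\mathscr{F}_b\bfu_0|^2\lesssim\delta^2$ (and the nonlinear analogue), while on high frequencies $|\eta|>R$ one trades derivatives: $|\eta|^{-2m}$ times the uniformly bounded $H^m$ norm gives a factor $R^{-2m}$. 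Optimizing $R$ against the low-frequency time factor will produce the polynomial rate $(1+t)^{-m/(1+\alpha)}$, matching the linear rate in Lemma~\ref{lem_lin} with $s=0$.

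The four terms to control are the two linear contributions (the $\bfu_0$ terms, call them the analogues of $I_5,I_6$ in Proposition~\ref{prop_tht} but without the extra $|\tilde n|^2/|\eta|^2$ weight) and the two Duhamel/nonlinear terms (analogues of $I_7,I_8$). The linear terms are handled exactly as in the proof of Lemma~\ref{lem_lin} with $s=0$: the $e^{-\lambda_+t}$ piece decays like $e^{-t/2}$ times $\|\bfu_0\|_{L^2}$, and the singular piece is dominated using \eqref{sing_est}, where on $D_3$ one writes $e^{-c|\tilde n|^2|\eta|^{-(2+2\alpha)}t}|\mathscr{F}_b\bfu_0|^2 \le (c|\tilde n|^2|\eta|^{-(2+2\alpha)}t)^{-m/(1+\alpha)}|\eta|^{2m}|\mathscr{F}_b\bfu_0|^2 \le Ct^{-m/(1+\alpha)}|\eta|^{2m}|\mathscr{F}_b\bfu_0|^2$, which after summation gives $C(1+t)^{-m/(1+\alpha)}\|\bfu_0\|_{\dot H^m}^2$. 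For the nonlinear terms, the source is $N(v,\tht)=\langle\mathbb{P}\,\mathscr{F}(v\cdot\nabla)v,e_d\rangle e_1+\mathscr{F}_b[(v\cdot\nabla)\tht]e_2$; using the convolution estimates of Proposition~\ref{cor_conv}, the condition $m>1+\tfrac d2+\alpha$, Poincaré (via \eqref{avg_vd}, \eqref{avg_vh}/\eqref{avg_vh2}) to handle the transport term, and the uniform bounds, one bounds $\int_0^t (\text{kernel})\,|N(v,\tht)(\tau)|\,d\tau$. The key point is that one needs an \emph{a priori} decay input to close the nonlinear estimate — so the argument is really a bootstrap: assume $\|\bar\tht(\tau)\|_{L^2}\le C_0(1+\tau)^{-m/(2(1+\alpha))}$ and $\|v(\tau)\|_{L^2},\|v_d(\tau)\|_{L^2}$ decay at their claimed rates on $[0,t]$, plug into the Duhamel integrals, and recover the same bound with a smaller constant for $\delta$ small. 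Alternatively (and this is presumably how the paper proceeds, given the ordering of the propositions), one proves the three $L^2$ decay statements \emph{simultaneously} by a coupled induction on the splitting scale.

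The main obstacle will be the Duhamel term involving $(v\cdot\nabla)\tht$: because $\tht$ itself does not decay (only $\bar\tht$ does), one must be careful to peel off the nondecaying horizontal average — writing $v\cdot\nabla\tht = v\cdot\nabla\bar\tht + v_d\,\partial_d\langle\tht\rangle_h$ and noting $v_d$ has zero horizontal mean by \eqref{avg_vd} — and to verify that the frequency-truncation of the nonlinear integral is compatible with the time weights coming from the \emph{other} two (not-yet-proven) decay rates. Getting the powers to match requires that the slowest input ($\bar\tht\sim t^{-m/(2(1+\alpha))}$ in $L^2$) combined with the velocity decay still beats $t^{-m/(2(1+\alpha))}$ in the convolution, which is where the gain $|\tilde n|^2/|\eta|^2$ (absent here but present in the damping of $\tht$) and the extra decay of $v_d$ are used; tracking this bookkeeping carefully, rather than any single hard estimate, is the crux. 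Once the low/high splitting scale $R(t)\sim t^{-1/(2(1+\alpha))}$ (in the $\alpha=1$ case) is chosen to balance the exponential low-frequency factor against the $R^{-2m}$ high-frequency tail, \eqref{tht_L2} follows.
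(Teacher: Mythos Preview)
Your Duhamel-based strategy is coherent, but it is \emph{not} the route the paper takes, and the difference is worth noting.

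The paper does \emph{not} use the representation formula \eqref{df_tht_1} for this proposition. Instead it runs a direct energy argument on the modified functional
\[
\mathcal{E}(t)\;:=\;\|v\|_{L^2}^2+\|\bar\tht\|_{L^2}^2-\int_\Omega v_d\,\Lambda^{-2\alpha}\tht\,\ud x,
\]
obtains the differential inequality
\[
\tfrac12\tfrac{\ud}{\ud t}\mathcal{E}(t)\;\le\;-\tfrac18\bigl(\|\Lambda^\alpha v\|_{L^2}^2+\|R_h\Lambda^{-\alpha}\tht\|_{L^2}^2\bigr)+C\|v\|_{\dot H^m}^2\|\bar\tht\|_{L^2}^2,
\]
and then applies the frequency splitting \emph{inside the dissipation term}: for any $M\ge 1$,
\[
\tfrac1M\|\bar\tht\|_{L^2}^2-\|R_h\Lambda^{-\alpha}\tht\|_{L^2}^2\;\le\;M^{-m/(1+\alpha)}\|R_h\tht\|_{\dot H^{m-\alpha}}^2.
\]
Taking $M=M(t)\sim 1+ct$, multiplying through by $M^{m/(1+\alpha)}$, and applying Gr\"onwall using only the already-established $L^2_t$ bounds $\int_0^\infty\bigl(\|v\|_{\dot H^m}^2+\|R_h\tht\|_{\dot H^{m-\alpha}}^2\bigr)\,\ud t<\infty$ from \eqref{sol_bdd_1}/\eqref{sol_bdd_0} closes the argument. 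Crucially, \emph{no bootstrap and no input from the $v$ or $v_d$ decay is required}: the $\bar\tht$ estimate is proved first and stands alone, and the subsequent propositions then feed on it.

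Your approach could in principle be made to work, but as you yourself flag, the nonlinear Duhamel term forces either a bootstrap or a simultaneous treatment of all three $L^2$ decay rates; moreover, controlling $\int_0^t e^{-\lambda_-(t-\tau)}|N(v,\tht)(\tau)|\,\ud\tau$ in $\ell^2_\eta$ on $D_3$ with only the global bounds (and without decay input) is awkward because the kernel rate $|\tilde n|^2/|\eta|^{2+2\alpha}$ degenerates at high $|\eta|$ while $\|N(v,\tht)\|_{H^m}$ is not uniformly controlled. The paper's energy method sidesteps all of this and decouples the three decay statements into a clean linear order.
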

\begin{proof}
From the $v$ equations in \eqref{EQ}, we have
\begin{align*}
\frac 12 \frac {\ud}{\ud t} \int_{\Omega} |v|^2 \,\ud x &= -\|\Lambda^{\alpha} v\|_{L^2}^2 + \int_{\Omega} v_d \tht \,\ud x.
\end{align*}
On the other hand, we have from \eqref{avg_vd} and the $\tht$ equation in \eqref{EQ} that
\begin{align*}
\frac 12 \frac {\ud}{\ud t} \int_{\Omega} |\bar{\tht}|^2 \,\ud x &= - \int_{\Omega} (v \cdot \nabla) (\widetilde{\tht} +\bar{\tht}) \cdot \bar{\tht} \,\ud x -\int_{\Omega} v_d \bar{\tht} \,\ud x \\
&= - \int_{\Omega} v_d \partial_d \widetilde{\tht} \cdot \bar{\tht} \,\ud x -\int_{\Omega} v_d \tht \,\ud x,
\end{align*}
where $$\widetilde{\tht} := \int_{\bbT^{d-1}} \tht(x) \,\ud x_h.$$ Hence,
\begin{gather*}
\frac 12 \frac {\ud}{\ud t} (\| v \|_{L^2}^2 + \| \bar{\tht} \|_{L^2}^2) = -\| \Lambda^{\alpha} v \|_{L^2}^2 - \int_{\Omega} v_d \partial_d \widetilde{\tht} \cdot \bar{\tht} \,\ud x.
\end{gather*}
We can deduce from \eqref{df_vd}, \eqref{df_tht}, and \eqref{avg_vd} that
\begin{gather*}
-\frac {\ud}{\ud t} \int_{\Omega} v_d \Lambda^{-2\alpha}\tht \,\ud x = -\int_{\Omega} \partial_tv_d \Lambda^{-2\alpha} \bar{\tht} \,\ud x - \int_{\Omega} \partial_t\tht \Lambda^{-2\alpha}v_d \,\ud x \\
\leq \|(v \cdot \nabla)v\|_{L^2} \| \Lambda^{-2\alpha} \bar{\tht} \|_{L^2} + \| (v \cdot \nabla)\tht \|_{L^2} \|\Lambda^{-2\alpha} v_d \|_{L^2} - \frac 12 \| R_h \Lambda^{-\alpha} \tht \|_{L^2}^2 + \frac 32 \| \Lambda^{\alpha} v_d \|_{L^2}^2 .
\end{gather*}
Combining the above, we have
\begin{gather*}
\frac 12 \frac {\ud}{\ud t} \left(\| v \|_{L^2}^2 + \| \bar{\tht} \|_{L^2}^2 - \int_{\Omega} v_d \Lambda^{-2\alpha}\tht \,\ud x \right) \leq -\frac 14 (\| \Lambda^{\alpha} v \|_{L^2}^2 + \| R_h \Lambda^{-\alpha} \tht \|_{L^2}^2) \\
+ C\|v\|_{L^2} \| \nabla \tht \|_{L^{\infty}} \| \Lambda^{-2\alpha} v_d \|_{L^2} + C \| v \|_{L^2} \| v \|_{\dot{H}^m} \| \bar{\tht} \|_{L^2} - \int_{\Omega} v_d \partial_d \widetilde{\tht} \cdot \bar{\tht} \,\ud x.
\end{gather*}
To estimate the integral on the right-hand side, we note
\begin{gather*}
\left| - \int_{\Omega} v_d \partial_d \widetilde{\tht} \cdot \bar{\tht} \,\ud x \right| \leq \left| - \int_{\Omega} v_d \partial_d \widetilde{\tht} \cdot -\Delta_h (-\Delta)^{-1} \bar{\tht} \,\ud x \right| + \left| - \int_{\Omega} v_d \partial_d \widetilde{\tht} \cdot -\partial_d^2 (-\Delta)^{-1} \bar{\tht} \,\ud x \right|.
\end{gather*}
We consider $\alpha = 0$ case first. The right-hand side is bounded by
\begin{gather*}
\| v_d \|_{L^2} \| \partial_d \tht \|_{L^{\infty}} \| R_h^2 \tht \|_{L^2} + \left| -\int_{\Omega} (\nabla_h \cdot v_h) \partial_d \widetilde{\tht} \cdot \partial_d (-\Delta)^{-1} \bar{\tht} \,\ud x - \int_{\Omega} v_d \partial_d^2 \widetilde{\tht} \cdot \partial_d (-\Delta)^{-1} \bar{\tht} \,\ud x \right| \\
\leq \| v_d \|_{L^2} \| \partial_d \tht \|_{L^{\infty}} \| R_h^2 \tht \|_{L^2}  + \| v_h \|_{L^2} \| \partial_d \widetilde{\tht} \|_{L^{\infty}} \| R_h \tht \|_{L^2} + \left| - \int_{\Omega} v_d \partial_d^2 \widetilde{\tht} \cdot \partial_d (-\Delta)^{-1} \bar{\tht} \,\ud x \right|.
\end{gather*}
Using $v_d = -(\Delta_h)(-\Delta)^{-1} v_d + \partial_d \nabla_h (-\Delta)^{-1} v_h$, we have
\begin{gather*}
\left| - \int_{\Omega} v_d \partial_d^2 \widetilde{\tht} \cdot \partial_d (-\Delta)^{-1} \bar{\tht} \,\ud x \right| \\
\leq \left| - \int_{\Omega} \nabla_h (-\Delta)^{-1} v_d \partial_d^2 \widetilde{\tht} \cdot \nabla_h \partial_d (-\Delta)^{-1} \bar{\tht} \,\ud x \right| + \left| \int_{\Omega} \partial_d (-\Delta)^{-1}v_h \partial_d^2 \widetilde{\tht} \cdot \nabla_h \partial_d (-\Delta)^{-1} \bar{\tht} \,\ud x \right| \\
\leq \| \nabla (-\Delta)^{-1} v \|_{L^2_{x_h}L^{\infty}_{x_d}} \| \partial_d^2 \widetilde{\tht} \|_{L^{2}} \| R_h \tht \|_{L^2} \\
\leq \| v \|_{L^2} \| \tht \|_{H^m} \| R_h \tht \|_{L^2}.
\end{gather*}
For $\alpha = 1$, we can see
\begin{gather*}
\left| - \int_{\Omega} v_d \partial_d \widetilde{\tht} \cdot -\Delta_h (-\Delta)^{-1} \bar{\tht} \,\ud x \right| = \left| - \int_{\Omega} \nabla_h v_d \partial_d \widetilde{\tht} \cdot \nabla_h (-\Delta)^{-1} \bar{\tht} \,\ud x \right| \\
\leq \| \nabla_h v_d \|_{L^2} \| \partial_d \tht \|_{L^{\infty}} \| \Lambda^{-1} R_h \tht \|_{L^2}
\end{gather*}
and
\begin{gather*}
\left| - \int_{\Omega} v_d \partial_d \widetilde{\tht} \cdot -\partial_d^2 (-\Delta)^{-1} \bar{\tht} \,\ud x \right| \\
\leq \left| \int_{\Omega} (\nabla_h \cdot \partial_d v_h) \partial_d \widetilde{\tht} \cdot (-\Delta)^{-1} \bar{\tht} \,\ud x \right| + 2\left| \int_{\Omega} (\nabla_h \cdot v_h) \partial_d^2 \widetilde{\tht} \cdot (-\Delta)^{-1} \bar{\tht} \,\ud x \right| + \left| \int_{\Omega} v_d \partial_d^3 \widetilde{\tht} \cdot (-\Delta)^{-1} \bar{\tht} \,\ud x \right| \\
\leq \| \partial_d v_h \|_{L^2} \| \partial_d \tht \|_{L^{\infty}} \| \Lambda^{-1} R_h \tht \|_{L^2} + 2\| v_h \|_{L^2} \| \partial_d^2 \tht \|_{L^{\infty}} \| \Lambda^{-1} R_h \tht \|_{L^2} \\
+ \| \nabla (-\Delta)^{-1} v \|_{L^2_{x_h}L^{\infty}_{x_d}} \| \partial_d^3 \widetilde{\tht} \|_{L^{2}} \| (-\Delta)^{-1} \tht \|_{L^2},
\end{gather*}
where $v_d = -(\Delta_h)(-\Delta)^{-1} v_d + \partial_d \nabla_h (-\Delta)^{-1} v_h$ also used here. Hence, we can deduce
\begin{gather*}
\left| - \int_{\Omega} v_d \partial_d \widetilde{\tht} \cdot \bar{\tht} \,\ud x \right| \leq C \| \Lambda^{\alpha} v \|_{L^2} \| \tht \|_{H^m} \| \Lambda^{-\alpha} R_h \tht \|_{L^2}
\end{gather*}
in both cases. Combining the above and using \eqref{sm_ass}, we can have
\begin{gather*}
\frac 12 \frac {\ud}{\ud t} \left(\| v \|_{L^2}^2 + \| \bar{\tht} \|_{L^2}^2 - \int_{\Omega} v_d \Lambda^{-2\alpha}\tht \,\ud x \right) \\
\leq -(\frac 14 - C(\| v \|_{H^m}^2 + \| \tht \|_{H^m}^2)) (\| \Lambda^{\alpha} v \|_{L^2}^2 + \| R_h \Lambda^{-\alpha} \tht \|_{L^2}^2) + C \| v \|_{L^2} \| v \|_{\dot{H}^m} \| \bar{\tht} \|_{L^2} \\
\leq -\frac 18 (\| v \|_{L^2}^2 + \| R_h \Lambda^{-\alpha} \tht \|_{L^2}^2) + C \| v \|_{\dot{H}^m}^2 \| \bar{\tht} \|_{L^2}^2.
\end{gather*}
Let $M\geq 1$ which will be specified later. Since
\begin{equation*}
\begin{aligned}
\frac 1M \| \bar{\tht} \|_{L^2}^2 - \| R_h \Lambda^{-\alpha} \tht \|_{L^2}^2 &= \sum_{|\tilde{n}| \neq 0} \left( \frac 1M - \frac {|\tilde{n}|^2}{|\eta|^{2(1+\alpha)}} \right) |\mathscr{F} \tht(\eta)|^2 \\
&\leq \frac 1M \sum_{\frac {|\tilde{n}|^2}{|\eta|^{2(1+\alpha)}} \leq \frac 1M, |\tilde{n}| \neq 0} |\mathscr{F} \tht(\eta)|^2 \\
&\leq \frac 1{M^{1+\frac {m-1-\alpha}{1+\alpha}}} \| \bar{\tht} \|_{\dot{H}^{m-1-\alpha}}^2 \\
&\leq \frac 1{M^{\frac {m}{1+\alpha}}} \| R_h \tht \|_{\dot{H}^{m-\alpha}}^2
\end{aligned}
\end{equation*}
and
\begin{equation}\label{smp_ineq_3}
\left| \int_{\Omega} v_d \Lambda^{-2\alpha} \tht \,\ud x \right| \leq \| v_d \|_{L^2} \| \Lambda^{-2\alpha} \bar{\tht} \|_{L^2} \leq \frac 12 \| v \|_{L^2}^2 + \frac 12 \| \bar{\tht} \|_{L^2}^2,
\end{equation}
it holds
\begin{equation}\label{M_est}
\begin{gathered}
-\frac 18 (\| v \|_{L^2}^2 + \| R_h \Lambda^{-\alpha} \tht \|_{L^2}^2) \leq -\frac 1{8M} \left(\| v \|_{L^2}^2 + \| \bar{\tht} \|_{L^2}^2 -\frac 12 \int_{\Omega} v_d \Lambda^{-2\alpha} \tht \,\ud x\right) \\
+ \frac 1{16M} \int_{\Omega} v_d \Lambda^{-2\alpha} \tht \,\ud x + \frac 18 \left( \frac 1{M} \| \bar{\tht} \|_{L^2}^2 - \| R_h \Lambda^{-\alpha} \tht \|_{L^2}^2 \right) \\
\leq -\frac 1{16M} \left( \| v \|_{L^2}^2 + \| \bar{\tht} \|_{L^2}^2 -\int_{\Omega} v_d \Lambda^{-2\alpha} \tht \,\ud x \right) + \frac 1{8M^{\frac {m}{1+\alpha}}} \| R_h \tht \|_{\dot{H}^{m-\alpha}}^2.
\end{gathered}
\end{equation}
Thus,
\begin{gather*}
\frac 12 \frac {\ud}{\ud t} \left(\| v \|_{L^2}^2 + \| \bar{\tht} \|_{L^2}^2 - \int_{\Omega} v_d \Lambda^{-2\alpha}\tht \,\ud x \right) \\
\leq -\frac 1{16M} \left( \| v \|_{L^2}^2 + \| \bar{\tht} \|_{L^2}^2 -\int_{\Omega} v_d \Lambda^{-2\alpha} \tht \,\ud x \right) + \frac 1{8M^{\frac {m}{1+\alpha}}} \| R_h \tht \|_{\dot{H}^{m-\alpha}}^2 + C \| v \|_{\dot{H}^m}^2 \| \bar{\tht} \|_{L^2}^2.
\end{gather*}
Taking $M = 1+\frac t{8\frac {m}{1+\alpha}}$ and multiplying both terms by $2M^{\frac {m}{1+\alpha}}$, we obtain by \eqref{smp_ineq_3} that
\begin{gather*}
\frac {\ud}{\ud t} \left( (1+\frac t{8\frac {m}{1+\alpha}})^{\frac {m}{1+\alpha}} \left(\| v \|_{L^2}^2 + \| \bar{\tht} \|_{L^2}^2 - \int_{\Omega} v_d \Lambda^{-2\alpha}\tht \,\ud x \right) \right) \\
\leq  C \| R_h \tht \|_{\dot{H}^{m-\alpha}}^2 + C \| v \|_{\dot{H}^m}^2 (1+\frac t{8\frac {m}{1+\alpha}})^{\frac {m}{1+\alpha}} \left(\| v \|_{L^2}^2 + \| \bar{\tht} \|_{L^2}^2 - \int_{\Omega} v_d \Lambda^{-2\alpha}\tht \,\ud x \right).
\end{gather*}
Using Gr\"onwall's inequality, we obtain \eqref{tht_L2}. This completes the proof.
\end{proof}

\begin{proposition}
	Let $d \in \mathbb{N}$ with $d \geq 2$ and $\alpha \in \{0,1\}$. Let $m \in \mathbb{N}$ with $m > 1+\frac d2 +\alpha$ and $(v, \theta)$ be a smooth global solution to \eqref{EQ} with \eqref{sol_bdd_1} or \eqref{sol_bdd_0}. Suppose that \eqref{sm_ass} be satisfied. Then, there exists a constant $C > 0$ such that
	\begin{equation}\label{v_L2}
		\| v(t) \|_{L^2}^2 + \| R_h \Lambda^{-\alpha} \tht(t) \|_{L^2}^2 \leq C (1+t)^{-(1+\frac m{1+\alpha})}.
	\end{equation}
\end{proposition}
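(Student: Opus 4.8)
The plan is to push the scheme behind \eqref{tht_L2} one step further in the dissipative hierarchy: $v$ and $R_h\Lambda^{-\alpha}\tht$ sit one level below $\bar\tht$, since the multiplier $|\tilde n|^2/|\eta|^{2(1+\alpha)}$ defining $\|R_h\Lambda^{-\alpha}\tht\|_{L^2}^2$ is comparable to the slow eigenvalue $\lambda_-(\eta)$ on the bad set $D_3$ (cf.\ \eqref{ker_est}, \eqref{sing_est}), which is exactly the source of one extra power of $(1+t)^{-1}$ in Lemma~\ref{lem_lin}. As in the proof of \eqref{tht_L2}, I would treat $\alpha\in\{0,1\}$ simultaneously, work from the spectral formulation \eqref{df_vh}--\eqref{df_tht}, use Proposition~\ref{cor_conv} and Lemma~\ref{cal_lem} for the nonlinear terms, the uniform bounds \eqref{sol_bdd_1}/\eqref{sol_bdd_0}, and the already-proven slow decay \eqref{tht_L2} as an external input.

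Concretely, I would test the $v$-equation against $v$ and the $\tht$-equation against $-R_h^2\Lambda^{-2\alpha}\tht$, producing the dissipation $\|\Lambda^\alpha v\|_{L^2}^2$, the ``new'' zeroth-order quantity $\|R_h\Lambda^{-\alpha}\tht\|_{L^2}^2$, and the buoyancy pairing $\int_\Omega v_d\tht\,\ud x$, and then add cross terms built from $\int_\Omega v_d\Lambda^{-2\alpha}(\,\cdot\,)\tht\,\ud x$ (in the spirit of the term $-\int_\Omega v_d\Lambda^{-2\alpha}\tht\,\ud x$ in \eqref{tht_L2}), designed so that the resulting modified energy $\Phi_1(t)$ stays comparable to $\|v(t)\|_{L^2}^2+\|R_h\Lambda^{-\alpha}\tht(t)\|_{L^2}^2$ — here one uses $|\tilde n|\ge 2\pi$ and $|\eta|\gtrsim 1$ on the active modes, e.g.\ $\|\Lambda^{-2\alpha}\bar\tht\|_{L^2}\lesssim \|R_h\Lambda^{-\alpha}\tht\|_{L^2}$, to keep $\Phi_1$ coercive — while extracting a genuinely stronger dissipative contribution, one whose Fourier weight is smaller than the $R_h\Lambda^{-\alpha}$-weight by a factor $\sim|\tilde n|^2/|\eta|^2$ (for instance $\|R_h^2\Lambda^{-\alpha}\tht\|_{L^2}^2$). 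The nonlinear terms are handled by Lemma~\ref{cal_lem}/Proposition~\ref{cor_conv} and \eqref{sm_ass}, except for the transport pieces created by the non-decaying mean $\widetilde\tht=\int_{\bbT^{d-1}}\tht\,\ud x_h$, which are treated exactly as the term $\int_\Omega v_d\partial_d\widetilde\tht\cdot\bar\tht\,\ud x$ in the proof of \eqref{tht_L2} (splitting $v_d=-\Delta_h(-\Delta)^{-1}v_d+\partial_d\nabla_h(-\Delta)^{-1}v_h$ and using $v_d\in X^{m+1}(\Omega)$, $\tht\in X^m(\Omega)$), and the buoyancy coupling, discussed below. The outcome should be a differential inequality of (schematically) the form
\begin{equation*}
	\ddt\Phi_1 + c_1\big(\|\Lambda^\alpha v\|_{L^2}^2 + \|R_h^2\Lambda^{-\alpha}\tht\|_{L^2}^2\big) \le C\|v\|_{\dot{H}^m}^2\,\Phi_1 + C\|v\|_{\dot{H}^m}^2\,\|\bar\tht\|_{L^2}^2 .
\end{equation*}

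The final step mirrors \eqref{tht_L2} with exponent $p':=1+\tfrac{m}{1+\alpha}$. On the modes with $|\tilde n|^2/|\eta|^2\ge 1/M$ the dissipation already dominates $\tfrac1M\|R_h\Lambda^{-\alpha}\tht\|_{L^2}^2$; on the complementary modes $|\eta|$ is so large (indeed $|\eta|^2>4\pi^2M$ there) that $\tfrac1M\|R_h\Lambda^{-\alpha}\tht\|_{L^2}^2-\|R_h^2\Lambda^{-\alpha}\tht\|_{L^2}^2\le M^{-p'}\|R_h\tht\|_{\dot{H}^{m-\alpha}}^2$, and combined with the trivial bound $\tfrac1M\|v\|_{L^2}^2\le C\|\Lambda^\alpha v\|_{L^2}^2$ (Poincar\'e if $\alpha=1$, an identity if $\alpha=0$) and the smallness of the cross terms, one gets $\tfrac{1}{16M}\Phi_1-c_1(\text{dissipation})\le M^{-p'}C\|R_h\tht\|_{\dot{H}^{m-\alpha}}^2$. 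Taking $M=1+ct$, multiplying by $M^{p'}$ and applying Gr\"onwall, using $\int_0^\infty\|v\|_{\dot{H}^m}^2\,\ud t<\infty$ and $\int_0^\infty\|\nabla_h\tht\|_{\dot{H}^{m-1-\alpha}}^2\,\ud t<\infty$ (from \eqref{sol_bdd_1}/\eqref{sol_bdd_0}) together with $\sup_{t}M^{p'}\|\bar\tht(t)\|_{L^2}^2<\infty$ (from \eqref{tht_L2}), yields $\Phi_1(t)\lesssim(1+t)^{-p'}$, which is \eqref{v_L2}.

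The main obstacle is the buoyancy coupling $\int_\Omega v_d\tht\,\ud x$. Using $\int_{\bbT^{d-1}}v_d\,\ud x_h=0$ and the $\tht$-equation, $\int_\Omega v_d\tht\,\ud x=-\tfrac12\ddt\|\bar\tht\|_{L^2}^2-\int_\Omega v_d\partial_d\widetilde\tht\cdot\bar\tht\,\ud x$, so in \eqref{tht_L2} this term was simply neutralized by carrying $\|\bar\tht\|_{L^2}^2$ in the functional; here that is not permissible, since $\|\bar\tht\|_{L^2}^2$ only decays at the rate $(1+t)^{-m/(1+\alpha)}$ and would therefore cap $\Phi_1$. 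It must instead be removed by a unit-coefficient cross term, and the delicate point is that forcing this exact cancellation is precisely what pins down which genuinely-stronger dissipative quantity survives (a weight smaller by a factor $|\tilde n|^2/|\eta|^2$, exactly the gain the frequency split needs to promote $m/(1+\alpha)$ to $1+m/(1+\alpha)$), while still leaving $\Phi_1$ coercive and all nonlinear remainders either absorbable into the dissipation or, after multiplication by the weight $M^{p'}$, integrable in time; carrying this bookkeeping out consistently for both $\alpha=0$ and $\alpha=1$ is the technical heart of the argument. (An alternative, equivalent route would be to read off the linear decay of $v_h$ and of $R_h\Lambda^{-\alpha}\tht$ from the spectral formulas \eqref{df_vh} and \eqref{df_tht_1} as in Lemma~\ref{lem_lin}, and then close a Duhamel bootstrap using \eqref{sol_bdd_1}/\eqref{sol_bdd_0} and \eqref{tht_L2} to control the quadratic forcing.)
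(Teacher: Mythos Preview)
Your overall strategy is right—push the $\bar\tht$ argument down one level with the frequency-splitting/$M$-trick at exponent $p'=1+\tfrac{m}{1+\alpha}$—and you correctly isolate the buoyancy term $\int_\Omega v_d\tht\,\ud x$ as the crux. But the closure you sketch has a genuine gap. Your schematic inequality carries the remainder $C\|v\|_{\dot H^m}^2\|\bar\tht\|_{L^2}^2$, and you assert $\sup_t M^{p'}\|\bar\tht(t)\|_{L^2}^2<\infty$ ``from \eqref{tht_L2}''. That is false: \eqref{tht_L2} gives only $\|\bar\tht\|_{L^2}^2\lesssim(1+t)^{-m/(1+\alpha)}$, so $M^{p'}\|\bar\tht\|_{L^2}^2\sim(1+t)$, and after multiplying by the weight your remainder becomes $\sim(1+t)\|v\|_{\dot H^m}^2$, which is not known to be integrable from \eqref{sol_bdd_1}/\eqref{sol_bdd_0} alone. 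More fundamentally, the ``unit-coefficient cross term'' meant to kill $\int v_d\tht$ is never written down, and any such cross term that exactly cancels the buoyancy while keeping $\Phi_1\sim\|v\|_{L^2}^2+\|R_h\Lambda^{-\alpha}\tht\|_{L^2}^2$ coercive tends to reintroduce a $\|\bar\tht\|_{L^2}$-type contribution on the right, which is exactly what breaks the bookkeeping above.

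The paper avoids this entirely by \emph{decoupling} the two quantities rather than packaging them into one functional. First, from the full $v$-equation it uses the divergence-free relation \eqref{vd_v} to write $\int_\Omega v_d\tht\,\ud x\le C\|R_h\Lambda^{-\alpha}\tht\|_{L^2}\|\Lambda^\alpha v\|_{L^2}$, absorbs half the dissipation, and applies Duhamel to obtain the a priori reduction \eqref{v_L2_est}: $\sup_\tau(1+\tau)^{p'}\|v(\tau)\|_{L^2}^2\lesssim 1+\sup_\tau(1+\tau)^{p'}\|R_h\Lambda^{-\alpha}\tht(\tau)\|_{L^2}^2$. Second, it runs the modified-energy/$M$-trick not on $(\|v\|_{L^2}^2,\|R_h\Lambda^{-\alpha}\tht\|_{L^2}^2)$ but on the smaller pair $(\|\Lambda^{-\alpha}v_d\|_{L^2}^2,\|R_h\Lambda^{-\alpha}\tht\|_{L^2}^2)$, with cross term $-\int v_d R_h^2\Lambda^{-4\alpha}\tht$. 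Working at the $v_d$-level the buoyancy contribution from each equation is the same sum $\sum_\eta\frac{|\tilde n|^2}{|\eta|^{2(1+\alpha)}}\mathscr F_b\tht\,\mathscr F_bv_d$ and cancels exactly, so no $\|\bar\tht\|_{L^2}$ appears at all; the only nonlinear remainder is $C\|v\|_{L^2}^2(\|v\|_{\dot H^m}^2+\|R_h\tht\|_{\dot H^{m-\alpha}}^2)$, which after multiplying by $M^{p'}$ and invoking the first step becomes $(\text{bounded quantity})\times(\text{$L^1$ in time})$, and Gr\"onwall closes. This two-step structure is the missing idea.
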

\begin{proof}
From the $v$ equations in \eqref{EQ}, we have
\begin{align*}
\frac 12 \frac {\ud}{\ud t} \int_{\Omega} |v|^2 \,\ud x &\leq -\|\Lambda^{\alpha} v\|_{L^2}^2 + C\left( \sum_{|\tilde{n}| \neq 0} \frac{|\tilde{n}|^2}{|\eta|^{2(1+\alpha)}} |\mathscr{F}_b \tht(\eta)|^{2} \right)^{\frac 12}  \left( \sum_{|\tilde{n}| \neq 0} |\eta|^{2\alpha} |\mathscr{F} v(\eta)|^2 \right)^{\frac 12} \\
&\leq -\frac 12 \| v \|_{L^2}^2 + C \sum_{|\tilde{n}| \neq 0} \frac{|\tilde{n}|^{2}}{|\eta|^{2(1+\alpha)}} |\mathscr{F}_b \tht(\eta)|^2.
\end{align*}
Since Duhamel's principle implies
\begin{align*}
\| v(t) \|_{L^2}^2 &\leq e^{-t} \| v_0 \|_{L^2}^2 + C\int_0^t e^{-(t-\tau)} \sum_{|\tilde{n}| \neq 0} \frac{|\tilde{n}|^{2}}{|\eta|^{2(1+\alpha)}} |\mathscr{F}_b \tht(\eta)|^2 \,\ud \tau \\
&\leq e^{-t} \| v_0 \|_{L^2}^2 + C\sup_{\tau \in [0,t]} (1+\tau)^{1+\frac m{1+\alpha}} \| R_h\Lambda^{-\alpha} \tht(\tau) \|_{L^2}^2 \int_0^t e^{-(t-\tau)} (1+\tau)^{-(1+\frac m{1+\alpha})} \,\ud \tau,
\end{align*}
we have
\begin{equation}\label{v_L2_est}
\sup_{\tau \in [0,t]} (1+\tau)^{1+\frac m{1+\alpha}} \| v(\tau) \|_{L^2}^2 \leq C\left( \| v_0 \|_{L^2}^2 + \sup_{\tau \in [0,t]} (1+\tau)^{1+\frac m{1+\alpha}} \| R_h\Lambda^{-\alpha} \tht(\tau) \|_{L^2}^2 \right).
\end{equation}
On the other hand, from \eqref{df_vd} and \eqref{df_tht}, we have
\begin{gather*}
\frac 12 \frac {\ud}{\ud t} \sum_{\eta \in J} |\mathscr{F}_b \Lambda^{-\alpha} v_d(\eta)|^2 \leq -\| v_d\|_{L^2}^2 + \| (v \cdot \nabla)v\|_{L^2} \| \Lambda^{-2\alpha} v_d \|_{L^2} + \sum_{\eta \in J} \frac{|\tilde{n}|^{2}}{|\eta|^{2(1+\alpha)}} \mathscr{F}_b \tht(\eta)\mathscr{F}_b v_d(\eta)
\end{gather*}
and
\begin{gather*}
\frac 12 \frac {\ud}{\ud t} \sum_{\eta \in J} |\mathscr{F}_b R_h \Lambda^{-\alpha} \tht(\eta)|^2 \leq -\int_{\Omega} (v \cdot \nabla)\tht R_h^{2} \Lambda^{-2\alpha}\tht \,\ud x - \sum_{\eta \in J} \frac{|\tilde{n}|^{2}}{|\eta|^{2(1+\alpha)}} \mathscr{F}_b \tht(\eta)\mathscr{F}_b v_d(\eta)
\end{gather*}
respectively. Thus,
\begin{gather*}
\frac 12 \frac {\ud}{\ud t} (\|\Lambda^{-\alpha} v_d\|_{L^2}^2 + \| R_h\Lambda^{-\alpha} \tht \|_{L^2}^2) \leq -\| v_d\|_{L^2}^2 + \|(v \cdot \nabla)v\|_{L^2} \| v_d \|_{L^2} -\int_{\Omega} (v \cdot \nabla)\tht R_h^{2} \Lambda^{-2\alpha} \tht \,\ud x.
\end{gather*}
Moreover, we can deduce from \eqref{df_vd} and \eqref{df_tht} that
\begin{gather*}
-\frac {\ud}{\ud t} \int_{\Omega} v_d R_h^{2} \Lambda^{-4\alpha}\tht \,\ud x = -\int_{\Omega} \partial_tv_d R_h^{2} \Lambda^{-4\alpha}\tht \,\ud x - \int_{\Omega} \partial_t\tht R_h^{2} \Lambda^{-4\alpha}v_d \,\ud x \\
\leq \|(v \cdot \nabla)v\|_{L^2} \| R_h^{2} \Lambda^{-4\alpha} \tht \|_{L^2} + \int_{\Omega} (v \cdot \nabla)\tht R_h^{2} \Lambda^{-4\alpha} v_d \,\ud x - \frac 12 \| R_h^{2} \Lambda^{-2\alpha} \tht \|_{L^2}^2 + \frac 32 \|  v_d \|_{L^2}^2 .
\end{gather*}
Combining the above, we have
\begin{gather*}
\frac 12 \frac {\ud}{\ud t} \left(\|\Lambda^{-\alpha}v_d\|_{L^2}^2 + \| R_h \Lambda^{-\alpha} \tht \|_{L^2}^2 - \int_{\Omega} v_d R_h^{2} \Lambda^{-4\alpha}\tht \,\ud x \right) \leq -\frac 14 (\| v_d\|_{L^2}^2 + \| R_h^{2}\Lambda^{-2\alpha} \tht \|_{L^2}^2) \\
+ C\|v\|_{L^2} \| \nabla v \|_{L^\infty} (\| v_d \|_{L^2} + \| R_h^{2} \Lambda^{-2\alpha}\tht \|_{L^2}) -\int_{\Omega} (v \cdot \nabla)\tht (R_h^{2} \Lambda^{-2\alpha}\tht - \frac 12 R_h^{2} \Lambda^{-2\alpha}v_d) \,\ud x.
\end{gather*}
By $(v \cdot \nabla)\tht = (v_h \cdot \nabla_h) \tht + v_d \partial_d \tht$, we deduce
\begin{gather*}
\left| -\int_{\Omega} (v \cdot \nabla)\tht (R_h^{2} \Lambda^{-2\alpha}\tht - \frac 12 R_h^{2} \Lambda^{-2\alpha}v_d) \,\ud x \right| \\
\leq \| v_h \|_{L^2} \| \nabla_h \tht \|_{L^{\infty}} (\| v_d \|_{L^2} + \| R_h^{2} \Lambda^{-2\alpha} \tht \|_{L^2}) + C(\| v_d \|_{L^2}^2 + \| R_h^2 \Lambda^{-2\alpha} \tht \|_{L^2}^2) \| \partial_d \tht \|_{L^{\infty}}.
\end{gather*}
Thus, by $W^{1,\infty}(\Omega) \hookrightarrow H^{m-\alpha}(\Omega)$, \eqref{sm_ass}, and Young's inequality, we have
\begin{gather*}
\frac 12 \frac {\ud}{\ud t} \left(\|\Lambda^{-\alpha}v_d\|_{L^2}^2 + \| R_h \Lambda^{-\alpha} \tht \|_{L^2}^2 - \int_{\Omega} v_d R_h^{2} \Lambda^{-4\alpha}\tht \,\ud x \right) \\
\leq -(\frac 14 - C \| \tht \|_{H^m})  ( \| R_h^{2}\Lambda^{-2\alpha} \tht \|_{L^2}^2 + \| v_d \|_{L^2}^2) + C\|v\|_{L^2} (\| \nabla v \|_{L^\infty} + \| \nabla_h \tht \|_{L^{\infty}}) (\| v_d \|_{L^2} + \| R_h^{2} \Lambda^{-2\alpha}\tht \|_{L^2})  \\
\leq -\frac 18 ( \| R_h^{2} \Lambda^{-2\alpha} \tht \|_{L^2}^2 + \| v_d \|_{L^2}^2)+ C \| v \|_{L^2}^2 (\| v \|_{\dot{H}^m}^2 + \| R_h \tht \|_{\dot{H}^{m-\alpha}}^2).
\end{gather*}
Let $M\geq 1$ which will be specified later. Since
\begin{equation*}
\begin{aligned}
\frac 1M \| R_h \Lambda^{-\alpha} \tht \|_{L^2}^2 - \| R_h^{2} \Lambda^{-2\alpha} \tht \|_{L^2}^2 &= \sum_{\eta \in J} \left( \frac 1M - \frac {|\tilde{n}|^2}{|\eta|^{2(1+\alpha)}} \right) |\mathscr{F} R_h \Lambda^{-\alpha}  \tht(\eta)|^2 \\
&\leq \frac 1M \sum_{\frac {|\tilde{n}|^2}{|\eta|^{2(1+\alpha)}} \leq \frac 1M, |\tilde{n}| \neq 0} |\mathscr{F} R_h \Lambda^{-\alpha} \tht(\eta)|^2 \\
&\leq \frac 1{M^{1+\frac {m}{1+\alpha}}} \| R_h \tht \|_{\dot{H}^{m-\alpha}}^2,
\end{aligned}
\end{equation*}
together with
\begin{equation}\label{smp_ineq_2}
\left| -\int_{\Omega} v_d R_h^2 \Lambda^{-4\alpha} \tht \,\ud x \right| \leq \| \Lambda^{-2\alpha} v_d \|_{L^2} \| R_h^2 \Lambda^{-2\alpha} \tht \|_{L^2} \leq \frac 12 \| v_d \|_{L^2}^2 + \frac 12 \| R_h \Lambda^{-2\alpha} \tht \|_{L^2}^2,
\end{equation}
we can have as estimating \eqref{M_est} that
\begin{gather*}
-\frac 18 \left( \| R_h^{2} \Lambda^{-2\alpha} \tht \|_{L^2}^2 + \| v_d \|_{L^2}^2 \right) \\
\leq -\frac 1{16M} \left(\| R_h \Lambda^{-\alpha} \tht \|_{L^2}^2 + \| v_d \|_{L^2}^2 -\int_{\Omega} v_d R_h^2 \Lambda^{-4\alpha} \tht \,\ud x \right) + \frac 1{8M^{1+\frac {m}{1+\alpha}}} \| R_h \tht \|_{\dot{H}^{m-\alpha}}^2.
\end{gather*}
Thus,
\begin{gather*}
\frac 12 \frac {\ud}{\ud t} \left(\|\Lambda^{-\alpha}v_d\|_{L^2}^2 + \| R_h \Lambda^{-\alpha} \tht \|_{L^2}^2 - \int_{\Omega} v_d R_h^{2} \Lambda^{-4\alpha}\tht \,\ud x \right) \\
\leq -\frac 1{16M} \left(\| R_h \Lambda^{-\alpha} \tht \|_{L^2}^2 + \| \Lambda^{-\alpha} v_d \|_{L^2}^2 -\int_{\Omega} v_d R_h^2 \Lambda^{-4\alpha} \tht \,\ud x \right) \\
+ \frac 1{8M^{1+\frac {m}{1+\alpha}}} \| R_h \tht \|_{\dot{H}^{m-\alpha}}^2 + C \| v \|_{L^2}^2 (\| v \|_{\dot{H}^m}^2 + \| R_h \tht \|_{\dot{H}^{m-\alpha}}^2).
\end{gather*}
We take $M = 1+\frac t{8(1+\frac m{1+\alpha})}$. Then, we can have with \eqref{v_L2_est} that
\begin{gather*}
\frac {\ud}{\ud t} \left( (1+\frac t{8(1+\frac m{1+\alpha})})^{1+\frac m{1+\alpha}} \left( \| \Lambda^{-\alpha} v_d\|_{L^2}^2 + \| R_h \Lambda^{-\alpha}  \tht \|_{L^2}^2 - \int_{\Omega} v_d R_h^2 \Lambda^{-4\alpha} \tht \,\ud x \right) \right) \\
\leq  C \sup_{\tau \in [0,t]} (1+\frac \tau{8(1+\frac m{1+\alpha})})^{1+\frac m{1+\alpha}} \left( \| \Lambda^{-\alpha} v_d(\tau) \|_{L^2}^2 + \| R_h \Lambda^{-\alpha} \tht(\tau) \|_{L^2}^2\right) (\| v\|_{\dot{H}^m}^2 + \| R_h \tht \|_{\dot{H}^{m-\alpha}}^2) \\
+ C (\| v \|_{\dot{H}^m}^2 + \| R_h \tht \|_{\dot{H}^{m-\alpha}}^2).
\end{gather*}
We integrate it over time and use \eqref{smp_ineq_2} with $$\int_0^{\infty} (\| v \|_{\dot{H}^{m}}^2 + \| R_h \tht \|_{\dot{H}^{m-\alpha}}^2) \,\ud t \leq C.$$ Then, for $$f(t) := \sup_{\tau \in [0,t]} (1+\frac \tau{8(1+\frac m{1+\alpha})})^{1+\frac m{1+\alpha}} \left( \| \Lambda^{-\alpha} v_d(\tau) \|_{L^2}^2 + \| R_h \Lambda^{-\alpha} \tht(\tau) \|_{L^2}^2\right),$$ it holds
\begin{gather*}
f(t) \leq C + \int_0^t f(\tau) (\| v \|_{\dot{H}^m}^2 +  \| R_h \tht \|_{\dot{H}^{m-\alpha}}^2 + \| \nabla v_d \|_{L^{\infty}}) \,\ud \tau.
\end{gather*}
Applying Gr\"onwall's ineqaulity, we obtain
\begin{equation*}
\sup_{\tau \in [0,t]} (1+\frac \tau{8(1+\frac m{1+\alpha})})^{1+\frac m{1+\alpha}} \left( \| \Lambda^{-\alpha} v_d(\tau) \|_{L^2}^2 + \| R_h \Lambda^{-\alpha} \tht(\tau) \|_{L^2}^2\right)  \leq C.
\end{equation*}
With \eqref{v_L2_est}, we deduce \eqref{v_L2}. This completes the proof.
\end{proof}

\begin{proposition}
	Let $d \in \mathbb{N}$ with $d \geq 2$ and $\alpha \in \{0,1\}$. Let $m \in \mathbb{N}$ with $m > 2+\frac d2 +\alpha$ and $(v, \theta)$ be a smooth global solution to \eqref{EQ} with \eqref{sol_bdd_1} or \eqref{sol_bdd_0}. Suppose that \eqref{sm_ass} be satisfied. Then, there exists a constant $C > 0$ such that
	\begin{equation}\label{vd_L2}
		\| v_d(t) \|_{L^2}^2 + \| R_h^{2} \Lambda^{-2\alpha} \tht(t) \|_{L^2}^2 \leq C (1+t)^{-(2 + \frac {m}{1+\alpha})}.
	\end{equation}
\end{proposition}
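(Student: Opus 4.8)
The plan is to follow the template of the two preceding propositions, descending one further level in the hierarchy: the roles played there by $\| v\|_{L^2}$ and $\| R_h\Lambda^{-\alpha}\tht\|_{L^2}$ are now played by $\| v_d\|_{L^2}$ and $\| R_h^2\Lambda^{-2\alpha}\tht\|_{L^2}$. First I would prove the decay of the temperature part, then upgrade to the velocity part by a Duhamel argument.

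For the temperature part I would work with a combined energy functional of the form
\[
\mathcal{E}(t) := \| R_h\Lambda^{-2\alpha}v_d(t)\|_{L^2}^2 + \| R_h^2\Lambda^{-2\alpha}\tht(t)\|_{L^2}^2 \;\pm\; \int_{\Omega} v_d(t)\, R_h^4\Lambda^{-8\alpha}\tht(t)\,\ud x ,
\]
where the weights on $v_d$ and $\tht$ are dictated by the requirement that the two linear coupling terms coming from \eqref{df_vd} and \eqref{df_tht} cancel, and the cross term is chosen so that $-\ddt$ of it, acting through the buoyancy contribution $\tfrac{|\tilde n|^2}{|\eta|^2}\mathscr{F}_b\tht$ in \eqref{df_vd}, produces a damping $-\| R_h^3\Lambda^{-4\alpha}\tht\|_{L^2}^2$ — one order of $R_h\Lambda^{-\alpha}$ stronger than the $\tht$ part of $\mathcal{E}$. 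As in \eqref{smp_ineq_2}--\eqref{smp_ineq_3}, the boundedness of the Riesz-type and $\Lambda^{-\alpha}$ multipliers makes $\mathcal{E}(t)$ comparable to $\| R_h\Lambda^{-2\alpha}v_d\|_{L^2}^2+\| R_h^2\Lambda^{-2\alpha}\tht\|_{L^2}^2$. Differentiating, absorbing the $\tht$-transport and $(v\cdot\nabla)v$ terms using \eqref{sm_ass}, the uniform bounds \eqref{sol_bdd_0}/\eqref{sol_bdd_1}, and the Sobolev embeddings available since $m>2+\tfrac d2+\alpha$, and estimating the genuinely lower-order nonlinear contributions by the rates \eqref{tht_L2} and \eqref{v_L2} already proved, one should reach a differential inequality
\[
\ddt\mathcal{E}(t) \leq -c\big(\| v_d\|_{L^2}^2 + \| R_h^3\Lambda^{-4\alpha}\tht\|_{L^2}^2\big) + C\big(\| v\|_{\dot{H}^m}^2 + \| R_h\tht\|_{\dot{H}^{m-\alpha}}^2 + \| \nabla v_d\|_{L^\infty}\big)\,\mathcal{E}(t) + (\text{fast-decaying remainder}).
\]
Then I would run the Elgindi splitting argument exactly as in \eqref{M_est}: comparing $\tfrac1M\| R_h^2\Lambda^{-2\alpha}\tht\|_{L^2}^2$ with the stronger damping, the frequencies on which the damping is weak satisfy $\tfrac{|\tilde n|^2}{|\eta|^{2(1+\alpha)}}\leq\tfrac1M$, and on those one gains, after extracting $(1+t)^{-(2+\frac m{1+\alpha})}$ worth of decay, a contribution controlled by $M^{-(2+\frac m{1+\alpha})}\| R_h\tht\|_{\dot{H}^{m-\alpha}}^2$. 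Choosing $M=1+\tfrac{t}{8(2+\frac m{1+\alpha})}$, multiplying by $M^{2+\frac m{1+\alpha}}$, integrating, and invoking Gr\"onwall together with the $L^1_t$ integrability of $\| v\|_{\dot{H}^m}^2$, $\| R_h\tht\|_{\dot{H}^{m-\alpha}}^2$ and $\| \nabla v_d\|_{L^\infty}$ (the last from the global estimates of Section~\ref{sec:global}) yields $\| R_h\Lambda^{-2\alpha}v_d(t)\|_{L^2}^2+\| R_h^2\Lambda^{-2\alpha}\tht(t)\|_{L^2}^2\leq C(1+t)^{-(2+\frac m{1+\alpha})}$.

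Finally I would upgrade the weighted velocity norm to $\| v_d(t)\|_{L^2}^2$. Testing \eqref{df_vd} against $v_d$ gives $\ddt\| v_d\|_{L^2}^2+c\| v_d\|_{L^2}^2\leq C\sum_{\eta\in J}\tfrac{|\tilde n|^2}{|\eta|^2}|\mathscr{F}_b\tht||\mathscr{F}_b v_d|+(\text{nonlinear})$, and the buoyancy coupling is bounded, via Young's inequality together with the divergence-free identity \eqref{vd_v}, by a small multiple of $\| \Lambda^\alpha v_d\|_{L^2}^2$ plus a term which, using the decay of $\| R_h^2\Lambda^{-2\alpha}\tht\|_{L^2}^2$ just obtained and interpolation against the uniformly bounded norm $\| \tht\|_{H^m}$, decays at the rate $(1+t)^{-(2+\frac m{1+\alpha})}$; the nonlinear terms are handled by the previous two propositions and the uniform bounds. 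An application of Duhamel's formula then gives \eqref{vd_L2}.

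The main obstacle is the weight bookkeeping in the velocity-diffusion case $\alpha=1$: there the coupling and cross terms only close \emph{modulo} spare powers of $\Lambda$ (for instance the Young bound of the buoyancy coupling in the last step naturally produces $\| R_h^2\Lambda^{-1}\tht\|_{L^2}^2$ rather than $\| R_h^2\Lambda^{-2}\tht\|_{L^2}^2$), and these gaps must be bridged by interpolating against the uniformly bounded high Sobolev norms carefully enough that no polynomial or logarithmic loss spoils the exact exponent $2+\frac m{1+\alpha}$. Getting the precise weights in $\mathcal{E}$ and the splitting scale $M$ to make all the Young absorptions and the frequency-splitting arithmetic fit together simultaneously is the delicate part; the $\alpha=0$ case, by contrast, should go through essentially verbatim along the above lines.
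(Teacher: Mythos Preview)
Your energy-functional-plus-splitting step for $\| R_h^2\Lambda^{-2\alpha}\tht\|_{L^2}$ is the same strategy the paper uses, though your weights are off: the cross term should carry $\Lambda^{-6\alpha}$, not $\Lambda^{-8\alpha}$, so that the induced damping is $\| R_h^3\Lambda^{-3\alpha}\tht\|_{L^2}^2$ and the splitting ratio comes out as $|\tilde n|^2/|\eta|^{2(1+\alpha)}$; likewise the dissipation on $v_d$ is $\| R_h\Lambda^{-\alpha}v_d\|_{L^2}^2$, not $\| v_d\|_{L^2}^2$. You flagged this bookkeeping as the hard part, and once corrected it goes through as in the paper.

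The genuine gap is your last step, upgrading to $\| v_d\|_{L^2}$ when $\alpha=1$. Testing \eqref{df_vd} against $v_d$ and absorbing the buoyancy coupling by Young against the available dissipation $\| \Lambda v_d\|_{L^2}^2$ leaves you with the forcing $\| R_h^2\Lambda^{-1}\tht\|_{L^2}^2$, \emph{not} $\| R_h^2\Lambda^{-2}\tht\|_{L^2}^2$. Interpolating the former between the latter (rate $(1+t)^{-(2+\frac m2)}$) and any uniformly bounded norm costs at least an $\epsilon$-power, so your Duhamel yields only $\| v_d\|_{L^2}^2\leq C_\epsilon(1+t)^{-(2+\frac m2)+\epsilon}$, never the sharp exponent. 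The paper avoids this loss altogether by a spectral identity rather than an energy bound: writing
\[
\mathscr{F}_b v_d \;=\; \frac{1}{\lambda_+}\,\frac{|\tilde n|^2}{|\eta|^2}\,\mathscr{F}_b\tht \;+\; \frac{1}{\lambda_+}\,\langle \mathscr{F}_b\mathbf{u},\mathbf{a}_+\rangle
\]
and using $|\lambda_+|^{-1}\le C|\eta|^{-2\alpha}$ converts the first piece \emph{exactly} into $C\| R_h^2\Lambda^{-2\alpha}\tht\|_{L^2}$, with no interpolation loss, while the second piece is the ``fast'' $\mathbf{a}_+$-component and decays like $e^{-t/2}$ after a Duhamel estimate on \eqref{df_u+}. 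This algebraic splitting of $v_d$ along the eigenbasis of $M$ is the missing idea; for $\alpha=0$ your direct Duhamel does work, since then the forcing is already $\| R_h^2\tht\|_{L^2}^2$.
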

\begin{proof}
Recalling the definition of $\bfb_{\pm}$, we can verify that $$\mathscr{F}_b v_d = \frac 1{\lambda_{+}} \frac {|\tilde{n}|^2}{|\eta|^2} \mathscr{F}_b \tht + \frac {|\tilde{n}|^2}{|\eta|^2} \left( \frac 1{\lambda_{-}} -\frac 1{\lambda_{+}} \right) \langle \mathscr{F}_b \mathbf{u}, \mathbf{a}_{+} \rangle \langle \mathbf{b}_{+}, e_2 \rangle, \qquad \eta \in J.$$ We note that $$\frac 1{|\lambda_{+}|} \leq \frac {|\eta|}{|\tilde{n}|} \leq \frac 2{|\eta|^{2\alpha}}, \qquad \eta \in D_1 \qquad \mbox{and} \qquad \frac 1{|\lambda_{+}|} \leq \frac 2{|\eta|^{2\alpha}}, \qquad \eta \not\in D_1.$$ Together with $$\frac {|\tilde{n}|^2}{|\eta|^2} \left( \frac 1{\lambda_{-}} -\frac 1{\lambda_{+}} \right) \langle \mathscr{F}_b \mathbf{u}, \mathbf{a}_{+} \rangle \langle \mathbf{b}_{+}, e_2 \rangle = \frac 1{\lambda_+} \langle \mathscr{F}_b \mathbf{u}, \mathbf{a}_{+} \rangle,$$ we have
\begin{equation}\label{vd_est_u}
\| v_d (t) \|_{L^2} \leq C \| R_h^2 \Lambda^{-2\alpha} \tht(t) \|_{L^2} + C \left( \sum_{\eta \in J} \frac {1}{|\eta|^{4\alpha}} | \langle \mathscr{F}_b \mathbf{u},\mathbf{a}_{+} \rangle |^2 \right)^{\frac 12}.
\end{equation}
We show
\begin{equation}\label{u+_est}
\left( \sum_{\eta \in J} \frac {1}{|\eta|^{4\alpha}} | \langle \mathscr{F}_b \mathbf{u},\mathbf{a}_{+} \rangle |^2 \right)^{\frac 12} \leq (1+t)^{-(1+\frac m{2(1+\alpha)})} (C + C \delta \sup_{\tau \in [0,t]} (1+\tau)^{1+\frac m{2(1+\alpha)}} \| v_d(\tau) \|_{L^2}),
\end{equation}
where $\delta$ is a small constant in \eqref{sm_ass}. Then by taking $\delta$ small enough, we obtain
\begin{equation}\label{vd_Rh_est}
	\sup_{\tau \in [0,t]} (1+\tau)^{1+\frac m{2(1+\alpha)}} \| v_d(\tau) \|_{L^2} \leq C + C \sup_{\tau \in [0,t]} (1+\tau)^{1+\frac m{2(1+\alpha)}} \| R_h^2 \Lambda^{-2\alpha} \tht(\tau) \|_{L^2}.
\end{equation}
We recall \eqref{df_u+} and have
\begin{equation*}
	\langle \mathscr{F}_b\bold{u}(t),\bold{a}_+ \rangle = e^{-\lambda_+ t} \langle \mathscr{F}_b \bold{u}_0,\bold{a}_+ \rangle - \int_0^t e^{-\lambda_+ (t - \tau)} \langle N(v,\tht)(\tau),\bold{a}_+ \rangle \,\mathrm{d}\tau .
\end{equation*}
Since $|e^{-\lambda_+ t}| \leq e^{-|\eta|^{2\alpha} \frac t2}$ for $\eta \in J$, it follows by the Minkowski inequality
\begin{gather*}
	\left( \sum_{\eta \in J} \frac {1}{|\eta|^{4\alpha}} | \langle \mathscr{F}_b \mathbf{u},\mathbf{a}_{+} \rangle |^2 \right)^{\frac 12} \\
	\leq \left( \sum_{\eta \in J} \frac {1}{|\eta|^{4\alpha}} e^{-|\eta|^{2\alpha}t} | \langle \mathscr{F}_b \mathbf{u}_0,\mathbf{a}_{+} \rangle |^2 \right)^{\frac 12} + \int_0^t \left( \sum_{\eta \in J} \frac {1}{|\eta|^{4\alpha}} e^{-|\eta|^{2\alpha}(t - \tau)} |\langle N(v,\tht)(\tau),\bold{a}_+ \rangle|^2 \right)^{\frac 12} \,\mathrm{d}\tau.
\end{gather*}
From the simple fact $|\mathbf{a}_{+}|^2 = |\lambda_{+}|^2 + \frac {|\tilde{n}|^4}{|\eta|^4} \leq C|\eta|^{4\alpha}$ with \eqref{sm_ass}, we have $$\left( \sum_{\eta \in J} \frac {1}{|\eta|^{4\alpha}} e^{-|\eta|^{2\alpha}t} | \langle \mathscr{F}_b \mathbf{u}_0,\mathbf{a}_{+} \rangle |^2 \right)^{\frac 12} \leq Ce^{-t} \| \mathbf{u}_0 \|_{L^2} \leq C (1+t)^{-(1+\frac m{2(1+\alpha)})}.$$ We note that
\begin{equation}\label{N_est}
|\langle N(v,\tht)(\tau),\bold{a}_+ \rangle| \leq |\mathscr{F} (v\cdot \nabla)v||\lambda_{+}| + |\mathscr{F}_b (v \cdot \nabla)\tht| \frac {|\tilde{n}|^2}{|\eta|^2}.
\end{equation}
Thus, it holds 
\begin{gather*}
	\int_0^t \left( \sum_{\eta \in J} \frac {1}{|\eta|^{4\alpha}}  e^{-|\eta|^{2}(t - \tau)} |\langle N(v,\tht)(\tau),\bold{a}_+ \rangle|^2 \right)^{\frac 12} \,\mathrm{d}\tau \\
	\leq \int_0^t \left( \sum_{\eta \in J} e^{-|\eta|^{2}(t - \tau)} | \mathscr{F}(v \cdot \nabla)v |^2 \right)^{\frac 12} \,\mathrm{d}\tau + \int_0^t \left( \sum_{\eta \in J} e^{-|\eta|^{2}(t - \tau)} \frac {1}{|\eta|^{4\alpha}}  | \mathscr{F}_b(v \cdot \nabla)\tht |^2 \right)^{\frac 12} \,\mathrm{d}\tau\\
	\leq \int_0^t e^{-(t - \tau)} (\| (v \cdot \nabla)v (\tau) \|_{L^2}  + \| (v_h \cdot \nabla_h)\bar{\tht} (\tau) \|_{L^2} + \| v_d \partial_d \tht (\tau) \|_{L^2}) \,\mathrm{d}\tau.
\end{gather*}
We have used $$(v \cdot \nabla)\tht = (v_h \cdot \nabla_h)\bar{\tht} + v_d \partial_d \tht$$ in the last inequality. We note by $H^{m-2-\alpha} \hookrightarrow L^{\infty}$ $$\| (v \cdot \nabla)v \|_{L^2} \leq \| v \|_{L^2} \| \nabla v \|_{L^{\infty}} \leq C \| v \|_{L^2}^{1+\frac {1+\alpha}m} \| v \|_{\dot{H}^m}^{1-\frac {1+\alpha}m},$$ $$\| (v_h \cdot \nabla_h) \bar{\tht} \|_{L^2} \leq \| v \|_{L^2} \| \nabla_h \bar{\tht} \|_{L^{\infty}} \leq C \| v \|_{L^2} \| \bar{\tht} \|_{L^2}^{\frac {1+\alpha}m} \| \bar{\tht} \|_{\dot{H}^{m}}^{1-\frac {1+\alpha}m},$$ and $$\| v_d \partial_d \tht \|_{L^2} \leq \| v_d \|_{L^2} \| \partial_d \tht \|_{L^{\infty}} \leq C \| v_d \|_{L^2} \| \tht \|_{H^m}.$$
Combining \eqref{v_L2}, \eqref{tht_L2} and our assumptions, we can see 
\begin{gather*}
	(1+\tau)^{1+\frac m{2(1+\alpha)}} (\| (v \cdot \nabla)v (\tau) \|_{L^2}  + \| (v_h \cdot \nabla_h)\bar{\tht}  (\tau) \|_{L^2} + \| v_d \partial_d \tht \|_{L^2}) \\
	\leq C + C\delta \sup_{\tau \in [0,t]} (1+\tau)^{1+\frac m{2(1+\alpha)}} \| v_d(\tau) \|_{L^2},
\end{gather*}
where $\delta$ is a small constant in \eqref{sm_ass}. Hence,
\begin{gather*}
\int_0^t e^{-(t - \tau)} (\| (v \cdot \nabla)v (\tau) \|_{L^2}  + \| (v_h \cdot \nabla_h)\bar{\tht}  (\tau) \|_{L^2} + \| v_d \partial_d \tht \|_{L^2}) \,\mathrm{d}\tau \\
 \leq C (1+t)^{-(1+\frac m{2(1+\alpha)})} (C + C\delta \sup_{\tau \in [0,t]} (1+\tau)^{1+\frac m{2(1+\alpha)}} \| v_d(\tau) \|_{L^2}).
 \end{gather*}
Collecting the above estimates, we obtain \eqref{u+_est} and \eqref{vd_Rh_est}.

Now, we show 
\begin{equation}\label{Rh_tht_est}
\| R_h^2 \Lambda^{-2\alpha} \tht(t) \|_{L^2} \leq C(1+t)^{-(1+\frac m{2(1+\alpha)})}.
\end{equation}
Since we have from \eqref{df_vd} and \eqref{df_tht},
\begin{gather*}
\frac 12 \frac {\ud}{\ud t} \sum_{\eta \in J} |\mathscr{F} R_h \Lambda^{-2\alpha} v_d(\eta)|^2 \\
\leq -\| R_h \Lambda^{-\alpha}v_d\|_{L^2}^2 + \| (v \cdot \nabla)v\|_{L^2} \| R_h^2\Lambda^{-4\alpha} v_d \|_{L^2} + \sum_{\eta \in J} \frac{|\tilde{n}|^{4}}{|\eta|^{4(1+\alpha)}} \mathscr{F}_b \tht(\eta)\mathscr{F}_b v_d(\eta)
\end{gather*}
and
\begin{gather*}
\frac 12 \frac {\ud}{\ud t} \sum_{\eta \in J} |\mathscr{F} R_h^2 \Lambda^{-2\alpha} \tht(\eta)|^2 \leq -\int_{\Omega} (v \cdot \nabla)\tht R_h^{4} \Lambda^{-4\alpha}\tht \,\ud x - \sum_{\eta \in J} \frac{|\tilde{n}|^{4}}{|\eta|^{4(1+\alpha)}} \mathscr{F}_b \tht(\eta)\mathscr{F}_b v_d(\eta)
\end{gather*}
respectively, it holds
\begin{gather*}
\frac 12 \frac {\ud}{\ud t} (\|R_h \Lambda^{-2\alpha} v_d\|_{L^2}^2 + \| R_h^2 \Lambda^{-2\alpha} \tht \|_{L^2}^2) \\
\leq -\| R_h \Lambda^{-\alpha} v_d\|_{L^2}^2 + \|(v \cdot \nabla)v\|_{L^2} \| R_h \Lambda^{-\alpha} v_d \|_{L^2} -\int_{\Omega} (v \cdot \nabla)\tht R_h^{4} \Lambda^{-4\alpha} \tht \,\ud x.
\end{gather*}
We can infer from \eqref{df_vd} and \eqref{df_tht} that
\begin{gather*}
-\frac {\ud}{\ud t} \int_{\Omega} v_d R_h^{4} \Lambda^{-6\alpha}\tht \,\ud x = -\int_{\Omega} \partial_tv_d R_h^{4} \Lambda^{-6\alpha}\tht \,\ud x - \int_{\Omega} \partial_t\tht R_h^{4} \Lambda^{-6\alpha}v_d \,\ud x \\
\leq \|(v \cdot \nabla)v\|_{L^2} \| R_h^{4} \Lambda^{-6\alpha} \tht \|_{L^2} + \int_{\Omega} (v \cdot \nabla)\tht R_h^{4} \Lambda^{-6\alpha} v_d \,\ud x - \frac 12 \| R_h^{3} \Lambda^{-3\alpha} \tht \|_{L^2}^2 + \frac 32 \| R_h \Lambda^{-\alpha} v_d \|_{L^2}^2.
\end{gather*}
Combining the above, we have
\begin{gather*}
\frac 12 \frac {\ud}{\ud t} \left(\|R_h \Lambda^{-2\alpha}v_d\|_{L^2}^2 + \| R_h^2 \Lambda^{-2\alpha} \tht \|_{L^2}^2 - \int_{\Omega} v_d R_h^{4} \Lambda^{-6\alpha}\tht \,\ud x \right) \leq -\frac 14 (\| R_h \Lambda^{-\alpha} v_d\|_{L^2}^2 + \| R_h^{3}\Lambda^{-3\alpha} \tht \|_{L^2}^2) \\
+ \| v \|_{L^2} \| \nabla v \|_{L^{\infty}} (\| R_h \Lambda^{-\alpha} v_d \|_{L^2} + \| R_h^{3} \Lambda^{-3\alpha}\tht \|_{L^2}) -\int_{\Omega} (v \cdot \nabla)\tht (R_h^{4} \Lambda^{-4\alpha}\tht - \frac 12 R_h^{4} \Lambda^{-6\alpha}v_d) \,\ud x.
\end{gather*}
We estimate the last integral with $(v \cdot \nabla)\tht = (v_h \cdot \nabla_h) \tht + v_d \partial_d \tht$. H\"older's inequality implies
\begin{gather*}
\left| -\int_{\Omega} (v_h \cdot \nabla_h)\tht (R_h^{4} \Lambda^{-4\alpha}\tht - \frac 12 R_h^{4} \Lambda^{-6\alpha}v_d) \,\ud x\right| \\
\leq C\| v \|_{L^2} \| \nabla_h \tht \|_{L^p}(\| R_h^4 \Lambda^{-6\alpha} v_d \|_{L^q} + \| R_h^{3} \Lambda^{-4\alpha}\tht \|_{L^q}),
\end{gather*}
where $\frac 1p + \frac 1q = \frac 12$. We take $\frac 1q = \frac 12 - \frac \alpha d$. Then for $\epsilon \in (0,1)$ with $m>2+ \frac d2 + \alpha+2\epsilon$, we can see
$$\| \nabla_h \tht \|_{L^p} \leq C\| R_h \tht \|_{\dot{H}^{1 + \frac d2 + \epsilon}} \leq C\| R_h \tht \|_{\dot{H}^{m-1-\epsilon}}, \qquad \alpha = 0,$$ and $$\| \nabla_h \tht \|_{L^p} \leq C \| R_h \tht \|_{\dot{H}^{\frac d2 + \epsilon}} \leq C\| R_h \tht \|_{\dot{H}^{m-3-\epsilon}}, \qquad \alpha=1,$$ together with $\| R_h^4 \Lambda^{-6\alpha} v_d \|_{L^q} + \| R_h^{3} \Lambda^{-4\alpha}\tht \|_{L^q} \leq C\| R_h \Lambda^{-\alpha} v_d \|_{L^2} + C\| R_h^{3} \Lambda^{-3\alpha}\tht \|_{L^2}$, we have
\begin{gather*}
\left| -\int_{\Omega} (v_h \cdot \nabla_h)\tht (R_h^{4} \Lambda^{-4\alpha}\tht - \frac 12 R_h^{4} \Lambda^{-6\alpha}v_d) \,\ud x\right| \\
\leq C\| v \|_{L^2} \| R_h \Lambda^{-\alpha} \tht \|_{\dot{H}^{m-1-\alpha - \epsilon}}(\| R_h \Lambda^{-\alpha} v_d \|_{L^2} + \| R_h^{3} \Lambda^{-3\alpha}\tht \|_{L^2}).
\end{gather*}
On the other hand, it holds
\begin{gather*}
-\int_{\Omega} v_d \partial_d\tht (R_h^{4} \Lambda^{-4\alpha}\tht - \frac 12 R_h^{4} \Lambda^{-6\alpha}v_d) \,\ud x = -\int_{\Omega} \nabla_h (v_d \partial_d \tht) \cdot \nabla_h(-\Delta)^{-1} (R_h^2 \Lambda^{-4\alpha} \tht - \frac 12 R_h^2 \Lambda^{-6\alpha} v_d) \,\ud x \\
= -\int_{\Omega} (\nabla_hv_d \partial_d \tht) \cdot \nabla_h(-\Delta)^{-1} (R_h^2 \Lambda^{-4\alpha} \tht - \frac 12 R_h^2 \Lambda^{-6\alpha} v_d) \,\ud x \\
- \int_{\Omega} (v_d \partial_d \nabla_h\tht) \cdot \nabla_h (-\Delta)^{-1} (R_h^2 \Lambda^{-4\alpha} \tht - \frac 12 R_h^2 \Lambda^{-6\alpha} v_d) \,\ud x.
\end{gather*}
The second integral on the right-hand side is bounded by 
\begin{gather*}
\left| - \int_{\Omega} (v_d \partial_d \nabla_h\tht) \cdot \nabla_h (-\Delta)^{-1} (R_h^2 \Lambda^{-4\alpha} \tht - \frac 12 R_h^2 \Lambda^{-6\alpha} v_d) \,\ud x \right| \\
\leq C \| v_d \|_{L^2} \| \partial_d \nabla_h \tht \|_{L^{\infty}} ( \| R_h^3\Lambda^{-3\alpha} \tht \|_{L^2} + \| R_h \Lambda^{-\alpha} v_d \|_{L^2}).
\end{gather*}
We note
\begin{gather*}
\left| -\int_{\Omega} (\nabla_hv_d \partial_d \tht) \cdot \nabla_h(-\Delta)^{-1} (R_h^2 \Lambda^{-4\alpha} \tht - \frac 12 R_h^2 \Lambda^{-6\alpha} v_d) \,\ud x \right| \\
= \left| \int_{\Omega} (\nabla_h \Delta (-\Delta)^{-1} v_d \partial_d \tht) \cdot \nabla_h(-\Delta)^{-1} (R_h^2 \Lambda^{-4\alpha} \tht - \frac 12 R_h^2 \Lambda^{-6\alpha} v_d) \,\ud x \right|.
\end{gather*}
When $\alpha = 0$, with the integration by parts, it holds
\begin{gather*}
\left| \int_{\Omega} (\nabla_h \Delta (-\Delta)^{-1} v_d \partial_d \tht) \cdot \nabla_h(-\Delta)^{-1} (R_h^2 \tht - \frac 12 R_h^2 v_d) \,\ud x \right| \\
\leq \| R_h v_d \|_{L^2} (\| \partial_d \nabla \tht \|_{L^{\infty}} + \| \partial_d \tht \|_{L^{\infty}}) ( \| R_h^3  \tht \|_{L^2} + \| R_h v_d \|_{L^2}) \\
\leq C \| \tht \|_{H^m} ( \| R_h^3 \Lambda^{-3\alpha} \tht \|_{L^2}^2 + \| R_h \Lambda^{-\alpha} v_d \|_{L^2}^2).
\end{gather*}
For $\alpha = 1$, we have
\begin{gather*}
\left| \int_{\Omega} (\nabla_h \Delta (-\Delta)^{-1} v_d \partial_d \tht) \cdot \nabla_h(-\Delta)^{-1} (R_h^2 \Lambda^{-4} \tht - \frac 12 R_h^2 \Lambda^{-6} v_d) \,\ud x \right| \\
\leq C \| R_h \Lambda^{-1} v_d \|_{L^2} (\| \Delta \partial_d \tht \|_{L^{\infty}} + \| \nabla \partial_d \tht \|_{L^{\infty}}+ \| \partial_d \tht \|_{L^{\infty}}) ( \| R_h^3 \Lambda^{-3} \tht \|_{L^2} + \| R_h \Lambda^{-1} v_d \|_{L^2}) \\
\leq C \| \tht \|_{H^m} ( \| R_h^3 \Lambda^{-3\alpha} \tht \|_{L^2}^2 + \| R_h \Lambda^{-\alpha} v_d \|_{L^2}^2).
\end{gather*}
Therefore,
\begin{gather*}
\left| -\int_{\Omega} (v \cdot \nabla)\tht (R_h^{4} \Lambda^{-4\alpha}\tht - \frac 12 R_h^{4} \Lambda^{-6\alpha}v_d) \,\ud x \right| \\
\leq C\| v \|_{L^2} \| R_h \Lambda^{-\alpha} \tht \|_{\dot{H}^{m-1-\alpha - \epsilon}}(\| R_h \Lambda^{-\alpha} v_d \|_{L^2} + \| R_h^{3} \Lambda^{-3\alpha}\tht \|_{L^2}) \\
+ C \| v_d \|_{L^2} \| \partial_d \nabla_h \tht \|_{L^{\infty}} ( \| R_h^3\Lambda^{-3\alpha} \tht \|_{L^2} + \| R_h \Lambda^{-\alpha} v_d \|_{L^2}) + C \| \tht \|_{H^m} ( \| R_h^3 \Lambda^{-3\alpha} \tht \|_{L^2}^2 + \| R_h \Lambda^{-\alpha} v_d \|_{L^2}^2).
\end{gather*}
With Young's inequality and \eqref{sm_ass} we can have
\begin{gather*}
\frac 12 \frac {\ud}{\ud t} \left(\|R_h \Lambda^{-2\alpha}v_d\|_{L^2}^2 + \| R_h^2 \Lambda^{-2\alpha} \tht \|_{L^2}^2 - \int_{\Omega} v_d R_h^{4} \Lambda^{-6\alpha}\tht \,\ud x \right) \\
\leq -(\frac 14 - C\| \tht \|_{H^m}) (\| R_h \Lambda^{-\alpha} v_d\|_{L^2}^2 + \| R_h^{3}\Lambda^{-3\alpha} \tht \|_{L^2}^2) \\
+ C\| v \|_{L^2} (\| v \|_{\dot{H}^{m-1-\alpha-\epsilon}} + \| R_h \Lambda^{-\alpha} \tht \|_{\dot{H}^{m-1-\alpha - \epsilon}}) (\| R_h \Lambda^{-\alpha} v_d \|_{L^2} + \| R_h^{3} \Lambda^{-3\alpha}\tht \|_{L^2}) \\
+ C \| v_d \|_{L^2} \| \partial_d \nabla_h \tht \|_{L^{\infty}} ( \| R_h^3\Lambda^{-3\alpha} \tht \|_{L^2} + \| R_h \Lambda^{-\alpha} v_d \|_{L^2}) \\
\leq -\frac 18 (\| R_h \Lambda^{-\alpha} v_d\|_{L^2}^2 + \| R_h^{3}\Lambda^{-3\alpha} \tht \|_{L^2}^2) + C\| v \|_{L^2}^2 (\| v \|_{\dot{H}^{m-1-\alpha-\epsilon}}^2 + \| R_h \Lambda^{-\alpha} \tht \|_{\dot{H}^{m-1-\alpha- \epsilon}}^2) \\
+ C \| v_d \|_{L^2}^2 \| R_h \tht \|_{\dot{H}^{m-\alpha}}^2.
\end{gather*}
Let $M\geq 1$ which will be specified later. Since
\begin{equation*}
\begin{aligned}
\frac 1M \| R_h^2 \Lambda^{-2\alpha} \tht \|_{L^2}^2 - \| R_h^{3} \Lambda^{-3\alpha} \tht \|_{L^2}^2 &= \sum_{|\tilde{n}| \neq 0} \left( \frac 1M - \frac {|\tilde{n}|^2}{|\eta|^{2(1+\alpha)}} \right) |\mathscr{F} R_h^2 \Lambda^{-2\alpha}  \tht(\eta)|^2 \\
&\leq \frac 1M \sum_{\frac {|\tilde{n}|^2}{|\eta|^{2(1+\alpha)}} \leq \frac 1M, |\tilde{n}| \neq 0} |\mathscr{F} R_h^2 \Lambda^{-2\alpha} \tht(\eta)|^2 \\
&\leq \frac 1{M^{2+\frac {m}{1+\alpha}}} \| R_h \tht \|_{\dot{H}^{m-\alpha}}^2
\end{aligned}
\end{equation*}
and
\begin{equation}\label{smp_ineq_3}
\left| -\int_{\Omega} v_d R_h^4 \Lambda^{-6\alpha} \tht \,\ud x \right| \leq \| R_h\Lambda^{-3\alpha} v_d \|_{L^2} \| R_h^3 \Lambda^{-3\alpha} \tht \|_{L^2} \leq \frac 12 \| R_h \Lambda^{-\alpha} v_d \|_{L^2}^2 + \frac 12 \| R_h^3 \Lambda^{-3\alpha} \tht \|_{L^2}^2,
\end{equation}
we have
\begin{gather*}
-\frac 18 \left( \| R_h^{3} \Lambda^{-3\alpha} \tht \|_{L^2}^2 + \| R_h \Lambda^{-\alpha} v_d \|_{L^2}^2 \right) \\
\leq -\frac 1{16M} \left(\| R_h^2 \Lambda^{-2\alpha} \tht \|_{L^2}^2 + \| R_h \Lambda^{-\alpha} v_d \|_{L^2}^2 -\int_{\Omega} v_d R_h^4 \Lambda^{-6\alpha} \tht \,\ud x \right) + \frac 1{8M^{2+\frac {m}{1+\alpha}}} \| R_h \tht \|_{\dot{H}^{m-\alpha}}^2.
\end{gather*}
Thus,
\begin{gather*}
\frac 12 \frac {\ud}{\ud t} \left(\|R_h \Lambda^{-2\alpha}v_d\|_{L^2}^2 + \| R_h^2 \Lambda^{-2\alpha} \tht \|_{L^2}^2 - \int_{\Omega} v_d R_h^{4} \Lambda^{-6\alpha}\tht \,\ud x \right) \\
\leq -\frac 1{16M} \left(\| R_h^2 \Lambda^{-2\alpha} \tht \|_{L^2}^2 + \| R_h \Lambda^{-2\alpha} v_d \|_{L^2}^2 -\int_{\Omega} v_d R_h^4 \Lambda^{-6\alpha} \tht \,\ud x \right) + \frac 1{8M^{2+\frac {m}{1+\alpha}}} \| R_h \tht \|_{\dot{H}^{m-\alpha}}^2.
 \\
 + C\| v \|_{L^2}^2 (\| v \|_{\dot{H}^{m-1-\alpha- \epsilon}}^2 + \| R_h \Lambda^{-\alpha} \tht \|_{\dot{H}^{m-1-\alpha- \epsilon}}^2) + C \| v_d \|_{L^2}^2 \| R_h \tht \|_{\dot{H}^{m-\alpha}}^2.
\end{gather*}
We take $M = 1+\frac t{8(2+\frac{m}{1+\alpha})}$ and multiply $2M^{2+\frac {m}{1+\alpha}}$ both sides. Then,
\begin{gather*}
\frac {\ud}{\ud t} \left( (1+\frac t{8(2+\frac{m}{1+\alpha})})^{2+\frac {m}{1+\alpha}} (\|R_h \Lambda^{-2\alpha}v_d\|_{L^2}^2 + \| R_h^2 \Lambda^{-2\alpha} \tht \|_{L^2}^2 - \int_{\Omega} v_d R_h^{4} \Lambda^{-6\alpha}\tht \,\ud x) \right) \leq C \| R_h \tht \|_{\dot{H}^{m-\alpha}}^2 \\
+ C(1+\frac t{8(2+\frac{m}{1+\alpha})})^{1+\frac {m}{1+\alpha}} \| v \|_{L^2}^2 (1+\frac t{8(2+\frac{m}{1+\alpha})}) (\| v \|_{\dot{H}^{m-1-\alpha- \epsilon}}^2 + \| R_h \Lambda^{-\alpha} \tht \|_{\dot{H}^{m-1-\alpha- \epsilon}}^2) \\
+ C (1+\frac t{8(2+\frac{m}{1+\alpha})})^{2+\frac {m}{1+\alpha}} \| v_d \|_{L^2}^2 \| R_h \tht \|_{\dot{H}^{m-\alpha}}^2.
\end{gather*}
Since the interpolation inequality implies $$\| v \|_{\dot{H}^{m-1-\alpha- \epsilon}} + \| R_h \Lambda^{-\alpha} \tht \|_{\dot{H}^{m-1-\alpha- \epsilon}} \leq \| v \|_{L^2}^{\frac {1+\alpha+\epsilon}m} \| v \|_{\dot{H}^m}^{1-\frac {1+\alpha+\epsilon}m} + \| R_h \Lambda^{-\alpha} \tht \|_{L^2}^{\frac{1+\alpha+\epsilon}m} \| R_h \Lambda^{-\alpha} \tht \|_{\dot{H}^m}^{1-\frac {1+\alpha+\epsilon}m},$$ we have from \eqref{v_L2}
\begin{gather*}
C(1+\frac t{8(2+\frac{m}{1+\alpha})})^{1+\frac {m}{1+\alpha}} \| v \|_{L^2}^2 (1+\frac t{8(2+\frac{m}{1+\alpha})}) (\| v \|_{\dot{H}^{m-1-\alpha- \epsilon}}^2 + \| R_h \Lambda^{-\alpha} \tht \|_{\dot{H}^{m-1-\alpha- \epsilon}}^2) \\
\leq C (1+t)^{1-\frac{1+\alpha+\epsilon}m (1+\frac m{1+\alpha})} (\| v \|_{\dot{H}^{m}}^2 + \| R_h \Lambda^{-\alpha} \tht \|_{\dot{H}^{m-\alpha}}^2)^{1-\frac {1+\alpha+\epsilon}m} \\
\leq C(1+t)^{-\frac{1+\alpha+\epsilon}m - \frac \epsilon{1+\alpha}} (\| v \|_{\dot{H}^{m}}^2 + \| R_h \Lambda^{-\alpha} \tht \|_{\dot{H}^{m-\alpha}}^2)^{1-\frac {1+\alpha+\epsilon}m} .
\end{gather*}
By \eqref{vd_Rh_est}, it holds $$(1+\frac t{8(2+\frac{m}{1+\alpha})})^{2+\frac {m}{1+\alpha}} \| v_d \|_{L^2}^2 \leq C + C \sup_{\tau \in [0,t]} (1+\frac \tau{8(2+\frac{m}{1+\alpha})})^{2+\frac {m}{1+\alpha}} \| R_h^2 \Lambda^{-2\alpha} \tht(\tau) \|_{L^2}^2.$$
Then, we deduce that
\begin{gather*}
\frac {\ud}{\ud t} \left( (1+\frac t{8(2+\frac{m}{1+\alpha})})^{2+\frac {m}{1+\alpha}} (\|R_h \Lambda^{-2\alpha}v_d\|_{L^2}^2 + \| R_h^2 \Lambda^{-2\alpha} \tht \|_{L^2}^2 - \int_{\Omega} v_d R_h^{4} \Lambda^{-6\alpha}\tht \,\ud x) \right) \\
\leq C \| R_h \tht \|_{\dot{H}^{m-\alpha}}^2 + C(1+t)^{-\frac{1+\alpha+\epsilon}m - \frac \epsilon{1+\alpha}} (\| v \|_{\dot{H}^{m}}^2 + \| R_h \Lambda^{-\alpha} \tht \|_{\dot{H}^{m-\alpha}}^2)^{1-\frac {1+\alpha+\epsilon}m} \\
+C\left( \sup_{\tau \in [0,t]} (1+\frac \tau{8(2+\frac{m}{1+\alpha})})^{2+\frac {m}{1+\alpha}} (\|R_h \Lambda^{-2\alpha}v_d\|_{L^2}^2 + \| R_h^2 \Lambda^{-2\alpha} \tht \|_{L^2}^2) \right) \| R_h \tht \|_{\dot{H}^{m-\alpha}}^2.
\end{gather*}
Since we can verify $$\int_0^\infty \left( C \| R_h \tht \|_{\dot{H}^{m-\alpha}}^2 + C(1+t)^{-\frac{1+\alpha+\epsilon}m - \frac \epsilon{1+\alpha}} (\| v \|_{\dot{H}^{m}}^2 + \| R_h \Lambda^{-\alpha} \tht \|_{\dot{H}^{m-\alpha}}^2)^{1-\frac {1+\alpha+\epsilon}m} \right) \,\ud t \leq C,$$ using Gr\"onwall's inequality and \eqref{smp_ineq_3}, \eqref{Rh_tht_est} is obtained. Then, \eqref{vd_L2} follows from \eqref{vd_Rh_est}. This completes the proof.
\end{proof}

\subsection{Proof of Theorem~\ref{thm2}:Temporal decay part}
In this section, we completes the proof of Theorem~\ref{thm2}. For this purpose, we suppose \eqref{v_Hs} and \eqref{vd_Hs} hold true, which will be proved in the following proposition. From \eqref{v_L2} and \eqref{v_Hs}, we obtain $$(1+t)^{1+m-s} \| v(t) \|_{H^s}^2 \leq C.$$ On the other hand, \eqref{vd_L2} and \eqref{vd_Hs} imply $$(1+t)^{2+m-s} \| v_d(t) \|_{H^s}^2 \leq C.$$ Hence, it suffices to prove $$(1+t)^{m} \| \tht(t) - \sigma \|_{L^2}^2 \leq C$$ because of \eqref{sol_bdd_0}.  We recall \eqref{df_sgm} and use \eqref{tht_L2} to have
\begin{align*}
	\| \tht - \sigma \|_{L^2} &\leq \| \bar{\tht} \|_{L^2} + \left\| \int_{\bbT^{d-1}} \int_t^\infty \left( (v \cdot \nabla)\tht + v_d \right) \,\ud \tau \ud x_h
 \right\|_{L^2} \\
	&\leq C(1+t)^{-\frac m2} +\int_t^\infty \| (v \cdot \nabla)\tht + v_d \|_{L^2} \,\ud \tau.
\end{align*}
Since 
\begin{align*}
\| (v \cdot \nabla)\tht + v_d \|_{L^2} &\leq \| (v_h \cdot \nabla_h)\tht \|_{L^2} + \| v_d \partial_d \tht \|_{L^2} + \| v_d \|_{L^2} \\
&\leq C \| v \|_{L^2} \| R_h \tht \|_{H^{m}} + C\| v_d \|_{L^2} \| \tht \|_{H^m} + \| v_d \|_{L^2} \\
&\leq C (1+\tau)^{-(\frac 12 +\frac {m}{2})} \| R_h \tht \|_{H^{m}} + C (1+\tau)^{-(1+\frac m{2})}
\end{align*}
by \eqref{v_L2} and \eqref{vd_L2}, we can deduce $$\int_t^\infty \| (v \cdot \nabla)\tht + v_d \|_{L^2} \,\ud \tau \leq C \left\| (1+\tau)^{-(\frac 12 +\frac {m}{2})} \right\|_{L^2(t,\infty)} + C (1+t)^{-\frac m{2}} \leq C (1+t)^{-\frac m{2}}.$$ This completes the proof.

\begin{proposition}
	Let $d \in \mathbb{N}$ with $d \geq 2$ and $\alpha = 0$. Let $m \in \mathbb{N}$ with $m > 2+\frac d2$ and $(v, \theta)$ be a smooth global solution to \eqref{EQ} with \eqref{sol_bdd_0}. Suppose that \eqref{sm_ass} be satisfied. Then, there exists a constant $C > 0$ such that
	\begin{equation}\label{v_Hs}
		\| v(t) \|_{\dot{H}^{m}}^2 + \| R_h \tht(t) \|_{\dot{H}^{m}}^2 \leq C (1+t)^{-1}
	\end{equation}
	and
	\begin{equation}\label{vd_Hs}
		\| v_d(t) \|_{\dot{H}^{m}}^2 + \| R_h v(t) \|_{\dot{H}^{m}}^2 + \| R_h^2 \tht(t) \|_{\dot{H}^{m}}^2 \leq C (1+t)^{-2}.
	\end{equation}
\end{proposition}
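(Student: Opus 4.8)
Here $\alpha = 0$ throughout, and the two estimates \eqref{v_Hs} and \eqref{vd_Hs} are exactly the $\dot H^m$--level counterparts of the low order decay estimates \eqref{v_L2} and \eqref{vd_L2}. The plan is to prove them by the same modified--energy scheme used there, now carried out at top regularity and driven by the global bound \eqref{sol_bdd_0} (which in particular gives $\int_0^\infty \|v(t)\|_{H^m}^2\,\ud t<\infty$ and, since $\|R_h\tht\|_{\dot H^m}^2=\|\nabla_h\tht\|_{\dot H^{m-1}}^2$, also $\int_0^\infty\|R_h\tht(t)\|_{\dot H^m}^2\,\ud t<\infty$). I would establish \eqref{v_Hs} first and bootstrap \eqref{vd_Hs} off it. For \eqref{v_Hs}, test the $v$--equation (the damping being $-v$) against $(-\Delta)^m v$ and the equation for $R_h\tht$ against $(-\Delta)^m R_h\tht$; as in Proposition~\ref{prop_eng2} the linear coupling terms between $v_d$ and $\tht$ cancel, leaving $\tfrac12\tfrac{\ud}{\ud t}(\|v\|_{\dot H^m}^2+\|R_h\tht\|_{\dot H^m}^2)+\|v\|_{\dot H^m}^2\le(\text{nonlinear})$. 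Adding the cross term $-\sum_{|\gamma|=m}\int_\Omega\partial^\gamma v_d\,R_h^2\partial^\gamma\tht\,\ud x$ (the $\dot H^m$ analogue of $-\int v_d R_h^2\Lambda^{-4\alpha}\tht$ from the proof of \eqref{v_L2}) and differentiating it via \eqref{df_vd}--\eqref{df_tht} produces the dissipation $\|R_h^2\tht\|_{\dot H^m}^2$ at the cost of a multiple of $\|v\|_{\dot H^m}^2$, absorbed by the damping. Since the cross term is bounded by $\|v\|_{\dot H^m}\|R_h\tht\|_{\dot H^m}$, the functional $F(t):=\|v\|_{\dot H^m}^2+\|R_h\tht\|_{\dot H^m}^2-\sum_{|\gamma|=m}\int_\Omega\partial^\gamma v_d\,R_h^2\partial^\gamma\tht\,\ud x$ is comparable to $\|v\|_{\dot H^m}^2+\|R_h\tht\|_{\dot H^m}^2$ and satisfies $\tfrac{\ud}{\ud t}F+c(\|v\|_{\dot H^m}^2+\|R_h^2\tht\|_{\dot H^m}^2)\le h(t)F+g(t)$, with $h,g$ collecting the nonlinear contributions.

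To close \eqref{v_Hs} I bound the nonlinear terms by Lemma~\ref{cal_lem}, by the uniform smallness $\sup_t E_m\le C\delta$ (to absorb the pieces proportional to $\|v\|_{\dot H^m}^2$ or $\|R_h^2\tht\|_{\dot H^m}^2$), and above all by the very fast low order decay \eqref{tht_L2} and \eqref{v_L2} together with interpolation, e.g. $\|\nabla v\|_{L^\infty}\le C\|v\|_{L^2}^{1-\theta}\|v\|_{\dot H^m}^{\theta}$ with $\theta=\tfrac{1+d/2+\epsilon}{m}<1$ and likewise for $\|\nabla_h\tht\|_{L^\infty}$; this makes $h\in L^1(0,\infty)$ even against a polynomial weight and $g(t)\lesssim(1+t)^{-\mu}$ for some large $\mu$. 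Next I use the elementary frequency split $\tfrac1M\|R_h\tht\|_{\dot H^m}^2-\|R_h^2\tht\|_{\dot H^m}^2\le\tfrac1{M^2}\|\tht\|_{\dot H^m}^2\le C\delta^2 M^{-2}$, valid for $M\ge1$ because on $\{|\tilde n|^2/|\eta|^2\le 1/M\}$ one has $|\tilde n|^2/|\eta|^2\le 1/M$, with $M$ proportional to $1+t$; this converts the dissipation $\|R_h^2\tht\|_{\dot H^m}^2$ into $\tfrac1M\|R_h\tht\|_{\dot H^m}^2$ up to an $O(M^{-2})$ error. Multiplying by the slightly supercritical weight $(1+t)^{1+\epsilon}$ and running Gr\"onwall's inequality — the weighted $h$ stays integrable, and the $O(M^{-2})$ and $g$ contributions integrate to $O((1+t)^{\epsilon})$ — yields $(1+t)^{1+\epsilon}F(t)\le C(1+t)^{\epsilon}$, which is \eqref{v_Hs}.

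For \eqref{vd_Hs} I repeat the scheme with the faster--decaying functional $\|v_d\|_{\dot H^m}^2+\|R_h^2\tht\|_{\dot H^m}^2$ and the cross term $-\sum_{|\gamma|=m}\int_\Omega\partial^\gamma v_d\,R_h^4\partial^\gamma\tht\,\ud x$ (the $\dot H^m$ analogue of the cross term in the proof of \eqref{vd_L2}), which yields the dissipation $\|R_h v_d\|_{\dot H^m}^2+\|R_h^3\tht\|_{\dot H^m}^2$; the frequency--split remainder $\tfrac1M\|R_h^2\tht\|_{\dot H^m}^2-\|R_h^3\tht\|_{\dot H^m}^2\le\tfrac1{M^2}\|R_h\tht\|_{\dot H^m}^2$ is now $O((1+t)^{-1}M^{-2})$ thanks to \eqref{v_Hs}, so with the weight $(1+t)^{2+\epsilon}$ the same Gr\"onwall argument gives $\|v_d\|_{\dot H^m}^2+\|R_h^2\tht\|_{\dot H^m}^2\le C(1+t)^{-2}$; the remaining term $\|R_h v\|_{\dot H^m}^2$ then follows from Duhamel's principle applied to \eqref{df_vh} and the $v_h$--equation, using the just obtained decay of $\|R_h^2\tht\|_{\dot H^m}$ as the forcing, exactly as $\|R_h v\|_{L^2}$ was handled through the representation \eqref{vd_est_u}--\eqref{df_u+} in the proof of \eqref{vd_L2}. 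The main obstacle is precisely the frequency--splitting/interpolation step: in contrast with the case $\alpha=1$, here there is no Sobolev room above $\dot H^m$ (the dissipation is only $\|v\|_{H^m}^2$, not $\|v\|_{H^{m+1}}^2$), so the split gives merely an $O(M^{-2})$ gain rather than the $O(M^{-(1+m/(1+\alpha))})$ gain available at the $L^2$ level, and extracting the \emph{sharp} rates $(1+t)^{-1}$, $(1+t)^{-2}$ without a logarithmic loss forces the use of the slightly supercritical time weight together with a careful check that every nonlinear error term — controlled by interpolating against the super--fast low order decay \eqref{tht_L2}, \eqref{v_L2}, \eqref{vd_L2} — is still integrable against that weight.
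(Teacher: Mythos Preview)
There is a genuine gap in your scheme for \eqref{v_Hs}: the linear coupling terms do \emph{not} cancel for the energy $\|v\|_{\dot H^m}^2+\|R_h\tht\|_{\dot H^m}^2$. Testing the full $v$--equation against $(-\Delta)^m v$ produces $+\sum_\eta|\eta|^{2m}\mathscr F_b\tht\,\overline{\mathscr F_b v_d}$, while testing the $\tht$--equation with the weight $\tfrac{|\tilde n|^2}{|\eta|^2}|\eta|^{2m}$ produces $-\sum_\eta\tfrac{|\tilde n|^2}{|\eta|^2}|\eta|^{2m}\mathscr F_b v_d\,\overline{\mathscr F_b\tht}$; the residual $\sum_\eta\tfrac{\tilde q^2}{|\eta|^2}|\eta|^{2m}\mathscr F_b\tht\,\overline{\mathscr F_b v_d}$ does not vanish. (The cancellation in Proposition~\ref{prop_eng2} works only because the energy there is $\|v\|_{H^m}^2+\|\tht\|_{H^m}^2$, with \emph{identical} weights on $v_d$ and $\tht$.) Using the divergence constraint this residual is at best $\lesssim\|R_h\tht\|_{\dot H^m}\|v\|_{\dot H^m}$; after Young's inequality you are left with $+C\|R_h\tht\|_{\dot H^m}^2$ on the right-hand side, which is comparable to $F$ itself and is \emph{not} the dissipated quantity $\|R_h^2\tht\|_{\dot H^m}^2$. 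The resulting differential inequality is essentially $\tfrac{\ud}{\ud t}F\le CF$, and no supercritical weight fixes that. The same mismatch (one missing factor of $\tfrac{|\tilde n|^2}{|\eta|^2}$ between the $v_d$ and the $\tht$ weights) afflicts your \eqref{vd_Hs} functional $\|v_d\|_{\dot H^m}^2+\|R_h^2\tht\|_{\dot H^m}^2$.

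The paper avoids this by reversing the logical order. First $\|v\|_{\dot H^m}^2$ and $\|v_d\|_{\dot H^m}^2+\|R_hv\|_{\dot H^m}^2$ are handled by a direct Duhamel argument on the damped $v$--equation, which reduces everything to bounds on $\|R_h\tht\|_{\dot H^m}^2$ and $\|R_h^2\tht\|_{\dot H^m}^2$ (see \eqref{v_Hs_est}--\eqref{vd_Hs_est}); together with the interpolation $\|R_h\tht\|_{\dot H^m}^2\le\|R_h^2\tht\|_{\dot H^m}\|\tht\|_{\dot H^m}$ a \emph{single} claim $\|R_h^2\tht\|_{\dot H^m}^2\le C(1+t)^{-2}$ yields both \eqref{v_Hs} and \eqref{vd_Hs}. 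That claim is proved with the modified energy $\|R_h v_d\|_{\dot H^m}^2+\|R_h^2\tht\|_{\dot H^m}^2-\int_\Omega v_d(-\Delta)^mR_h^4\tht\,\ud x$: here the Fourier weights on $v_d$ and $\tht$ differ by exactly one factor of $\tfrac{|\tilde n|^2}{|\eta|^2}$, which matches the coupling coefficient in \eqref{df_vd}, so the linear cross terms cancel exactly. Moreover the frequency--split error is now $M^{-2}\|R_h\tht\|_{\dot H^m}^2$, and after multiplying by $M^2$ this becomes precisely $\|\nabla_h\tht\|_{\dot H^{m-1}}^2\in L^1_t$ by \eqref{sol_bdd_0}; the logarithmic obstruction you anticipated never arises, and Gr\"onwall closes cleanly without any supercritical weight.
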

\begin{proof}
From the $v$ equations in \eqref{EQ}, it follows $$\frac 12 \frac {\ud}{\ud t} \| v \|_{\dot{H}^m}^2 + \| v \|_{\dot{H}^m}^2 \leq C\| \nabla v \|_{L^\infty} \| v \|_{\dot{H}^m}^2 - \sum_{\eta \in J} |\eta|^{2m} \mathscr{F}_b \tht(\eta)\mathscr{F}_b v_d(\eta).$$ Using \eqref{vd_v} gives
\begin{align*}
	\left| - \sum_{\eta \in J} |\eta|^{2m} \mathscr{F} \tht(\eta)\mathscr{F} v_d(\eta) \right| &\leq C\left( \sum_{\eta \in J} |\eta|^{2m} |\mathscr{F}_b R_h \tht(\eta)|^2 \right)^{\frac 12} \left( \sum_{\eta \in J} |\eta|^{2m} |\mathscr{F} v(\eta)|^2 \right)^{\frac 12} \\
	&\leq \frac 14 \| v \|_{\dot{H}^m}^2 + C \| R_h \tht \|_{\dot{H}^{m}}^2.
\end{align*}
By \eqref{sm_ass} with \eqref{sol_bdd_0}, we have 
\begin{align*}
\frac 12 \frac {\ud}{\ud t} \| v \|_{\dot{H}^m}^2 &\leq - (\frac 34 -C \| (v_0,\tht_0) \|_{H^m}^2) \| v \|_{\dot{H}^{m}}^2  + C\| R_h \tht \|_{\dot{H}^{m}}^2 \leq -\frac 12 \| v \|_{\dot{H}^m}^2  + C\| R_h \tht \|_{\dot{H}^{m}}^2.
\end{align*}
Then, applying Duhamel's principle shows
$$\| v(t) \|_{\dot{H}^m}^2 \leq e^{-t} \| v_0 \|_{\dot{H}^m}^2 + C \int_0^t e^{-(t-\tau)} \| R_h \tht \|_{\dot{H}^{m}}^2 \,\ud \tau,$$ thus,
\begin{equation}\label{v_Hs_est}
\| v(t) \|_{\dot{H}^m}^2 \leq C(1+t)^{-1} \left( \| v_0 \|_{\dot{H}^m}^2 + \sup_{\tau \in [0,t]} (1+\tau) \| R_h \tht(\tau) \|_{\dot{H}^{m}}^2 \right).
\end{equation}
Since \eqref{vd_v} implies $\| v_d (t) \|_{\dot{H}^m}^2 \leq C\| R_h v(t) \|_{\dot{H}^m}^2$, we can similarly obtain 
\begin{equation}\label{vd_Hs_est}
\| v_d (t) \|_{\dot{H}^m}^2 + \| R_h v(t) \|_{\dot{H}^m}^2 \leq C(1+t)^{-2} \left( \| v_0 \|_{\dot{H}^m}^2 + \sup_{\tau \in [0,t]} (1+\tau)^2 \| R_h^2 \tht(\tau) \|_{\dot{H}^{m}}^2 \right).
\end{equation}
We omit the details.

Now, we show that
\begin{equation}\label{claim}
\| R_h^2 \tht(t) \|_{\dot{H}^{m}}^2 \leq C(1+t)^{-2}.
\end{equation}
Since this implies  $$\| R_h \tht(t) \|_{\dot{H}^{m}}^2 \leq \| R_h^2 \tht(t) \|_{\dot{H}^{m}} \| \tht(t) \|_{\dot{H}^{m}} \leq C(1+t)^{-1},$$ \eqref{v_Hs} and \eqref{vd_Hs} follow by \eqref{v_Hs_est} and \eqref{vd_Hs_est} respectively. Since $H^m(\Omega)$ is a Banach algebra, we can show from \eqref{df_vd} that
\begin{gather*}
\frac 12 \frac {\ud}{\ud t} \| R_h v_d \|_{\dot{H}^m}^2 + \| R_h v_d \|_{\dot{H}^m}^2 \leq C\| R_h v \|_{\dot{H}^m} \| v \|_{\dot{H}^m} \| R_h v_d \|_{\dot{H}^m} + \sum_{|\tilde{n}| \neq 0} |\tilde{n}|^4|\eta|^{2m-4} \mathscr{F}_b \tht(\eta)\mathscr{F}_b v_d(\eta).
\end{gather*}
From \eqref{df_tht},
\begin{gather*}
\frac 12 \frac {\ud}{\ud t} \| R_h^2 \tht \|_{\dot{H}^m}^2 \leq -\sum_{|\gamma| = m-2} \int_{\Omega} \partial^\gamma \partial_h^2 (v \cdot \nabla) \tht \cdot \partial^\gamma\partial_h^2 \tht \,\ud x - \sum_{|\tilde{n}| \neq 0} |\tilde{n}|^4|\eta|^{2m-4} \mathscr{F}_b \tht(\eta)\mathscr{F}_b v_d(\eta).
\end{gather*}
Thus,
\begin{gather*}
\frac 12 \frac {\ud}{\ud t} \left( \| R_h v_d \|_{\dot{H}^m}^2 + \| R_h^2 \tht \|_{\dot{H}^m}^2 \right) + \| R_h v_d \|_{\dot{H}^{m}}^2 \\
\leq C\| R_h v \|_{\dot{H}^m} \| v \|_{\dot{H}^m} \| R_h v_d \|_{\dot{H}^m} -\sum_{|\gamma| = m-2} \int_{\Omega} \partial^\gamma \partial_h^2 (v \cdot \nabla) \tht \cdot \partial^\gamma\partial_h^2 \tht \,\ud x .
\end{gather*}

We note that
\begin{gather*}
\sum_{|\gamma| = m-2} \int_{\Omega} \partial^\gamma \partial_h^2 (v \cdot \nabla) \tht \cdot \partial^\gamma\partial_h^2 \tht \,\ud x = \sum_{|\gamma| = m-2} \left( K_1 + K_2 + K_3 \right),
\end{gather*}
where
\begin{align*}
K_1 &= \int_{\Omega} \partial^\gamma (\partial_h^2 v \cdot \nabla) \tht \cdot \partial^\gamma\partial_h^2 \tht \,\ud x, \\
K_2 &= \int_{\Omega} \partial^\gamma (\partial_h v \cdot \nabla) \partial_h \tht \cdot \partial^\gamma\partial_h^2 \tht \,\ud x, \\
K_3 &:= \int_{\Omega} \partial^\gamma (v \cdot \nabla) \partial_h^2 \tht \cdot \partial^\gamma\partial_h^2 \tht \,\ud x .
\end{align*}
The integration by parts and $(v \cdot \nabla) \tht = (v_h \cdot \nabla_h) \tht + v_d \partial_d \tht$ show $$K_1 + K_2 = - \int_{\Omega} \partial^\gamma (\partial_h v_h \cdot \nabla_h) \tht \cdot \partial^\gamma\partial_h^3 \tht \,\ud x - \int_{\Omega} \partial^\gamma (\partial_h v_d \partial_d \tht) \cdot \partial^\gamma\partial_h^3 \tht \,\ud x. $$ Again using the integration by parts with the continuous embedding $L^{\infty}(\Omega) \hookrightarrow H^{m-1}(\Omega)$, we obtain $$|K_1 + K_2| \leq C\| R_h v_h \|_{\dot{H}^m} \| R_h \tht \|_{\dot{H}^m} \| R_h^3 \tht \|_{\dot{H}^m} + C\| R_h v_d \|_{\dot{H}^m} \| \tht \|_{H^m} \| R_h^3 \tht \|_{\dot{H}^m}. $$  On the other hand, due to the cancellation property, we can have $$|K_3| \leq C\| v_h \|_{\dot{H}^m} \| R_h^3 \tht \|_{\dot{H}^m} \| R_h^2 \tht \|_{\dot{H}^m} + C \| \nabla v_d \|_{L^{\infty}} \| R_h^2 \tht \|_{\dot{H}^m}^2 + \int_{\Omega} \partial_d^{m-4} (\partial_d^2 v_d \partial_d \partial_h^2 \tht) \cdot \partial_d^{m-2} \partial_h^2 \tht \,\ud x.$$ The divergence-free condition and the integration by parts imply
\begin{align*}
\left| \int_{\Omega} \partial_d^{m-4} (\partial_d^2 v_d \partial_d \partial_h^2 \tht) \cdot \partial_d^{m-2} \partial_h^2 \tht \,\ud x \right| &= \left| \int_{\Omega} \partial_d^{m-4} (\partial_d \nabla_h \cdot v_h \partial_d \partial_h^2 \tht) \cdot \partial_d^{m-2} \partial_h^2 \tht \,\ud x \right| \\
&\leq C\| v \|_{\dot{H}^m} \| R_h^3 \tht \|_{\dot{H}^m} \| R_h^2 \tht \|_{\dot{H}^m}.
\end{align*}
Thus, $$|K_3| \leq C\| v \|_{\dot{H}^m} \| R_h^3 \tht \|_{\dot{H}^m} \| R_h^2 \tht \|_{\dot{H}^m} + C \| \nabla v_d \|_{L^{\infty}} \| R_h^2 \tht \|_{\dot{H}^m}^2.$$ From the above estimates, we deduce
\begin{gather*}
\frac 12 \frac {\ud}{\ud t} \left( \| R_h v_d \|_{\dot{H}^m}^2 + \| R_h^2 \tht \|_{\dot{H}^m}^2 \right) + \| R_h v_d \|_{\dot{H}^{m}}^2 \\
\leq C(\| R_h v \|_{\dot{H}^m} + \| R_h^2 \tht \|_{\dot{H}^m})(\| v \|_{\dot{H}^m} + \| R_h \tht \|_{\dot{H}^m}) (\| R_h v_d \|_{\dot{H}^m} + \| R_h^3 \tht \|_{\dot{H}^m}) \\
+ C\| R_h v_d \|_{\dot{H}^m} \| \tht \|_{H^m} \| R_h^3 \tht \|_{\dot{H}^m} + C \| \nabla v_d \|_{L^{\infty}} \| R_h^2 \tht \|_{\dot{H}^m}^2.
\end{gather*} 
On the other hand, using \eqref{df_vd} and 
\begin{equation}\label{Leray_est}
	-\Delta \langle \mathbb{P} (v \cdot \nabla)v , e_d \rangle = -\Delta (v \cdot \nabla)v_d + \partial_d \nabla \cdot (v \cdot \nabla)v,
\end{equation} we have
\begin{align*}
-\int_{\Omega} \partial_tv_d (-\Delta)^{m} R_h^4 \tht \,\ud x &= \int_{\Omega} (v \cdot \nabla)v_d (-\Delta)^{m} R_h^4 \tht \,\ud x - \int_{\Omega} \partial_d \nabla \cdot ((v \cdot \nabla)v) (-\Delta)^{m-1} R_h^4 \tht \,\ud x \\
&\hphantom{\qquad\qquad}- \int_{\Omega} v_d (-\Delta)^{m} R_h^4 \tht \,\ud x - \| R_h^3 \tht \|_{\dot{H}^{m}}^2 \\
&\leq \int_{\Omega} (v \cdot \nabla)v_d (-\Delta)^m R_h^4 \tht \,\ud x + C\| R_h v \|_{\dot{H}^m} \| v \|_{\dot{H}^m} \| R_h^3 \tht \|_{\dot{H}^m} \\
&\hphantom{\qquad\qquad}+ \frac 12 \| R_hv_d \|_{\dot{H}^{m}}^2 - \frac 12\| R_h^3 \tht \|_{\dot{H}^{m}}^2.
\end{align*}
Since \eqref{df_tht} yields
\begin{equation*}
-\int_{\Omega} \partial_t\tht (-\Delta)^{m} R_h^4 v_d \,\ud x \leq \int_{\Omega} (v \cdot \nabla)\tht (-\Delta)^{m} R_h^4 v_d \,\ud x + \| R_h^2 v_d \|_{\dot{H}^{m}}^2,
\end{equation*}
we have
\begin{gather*}
-\frac {\ud}{\ud t} \int_{\Omega} v_d (-\Delta)^{m} R_h^4 \tht \,\ud x \leq \int_{\Omega} (v \cdot \nabla)v_d (-\Delta)^{m} R_h^4 \tht \,\ud x + \int_{\Omega} (v \cdot \nabla)\tht (-\Delta)^{m} R_h^4 v_d \,\ud x \\
+ C\| R_h v \|_{\dot{H}^m} \| v \|_{\dot{H}^s} \| R_h^3 \tht \|_{\dot{H}^m} - \frac 12 \| R_h^3 \tht \|_{\dot{H}^{m}}^2+ \frac 32 \| R_h v_d \|_{\dot{H}^{m}}^2.
\end{gather*}
We note that $$\int_{\Omega} (v \cdot \nabla)v_d (-\Delta)^{m} R_h^4 \tht \,\ud x + \int_{\Omega} (v \cdot \nabla)\tht (-\Delta)^{m} R_h^4 v_d \,\ud x = \sum_{|\gamma| = m-2} \left( K_4 + K_5 + K_6 \right),$$ where
\begin{align*}
K_4 &:= \int_{\Omega} \partial^{\gamma}(\partial_h^2 v \cdot \nabla)v_d \partial^{\gamma} \partial_h^2 \tht \,\ud x + \int_{\Omega} \partial^{\gamma}( \partial_h^2 v \cdot \nabla)\tht \partial^{\gamma} \partial_h^2 v_d \,\ud x, \\
K_5 &:= \int_{\Omega} \partial^{\gamma}(\partial_h v \cdot \nabla) \partial_h v_d \partial^{\gamma} \partial_h^2 \tht \,\ud x + \int_{\Omega} \partial^{\gamma}( \partial_h v \cdot \nabla)\partial_h \tht \partial^{\gamma} \partial_h^2 v_d \,\ud x, \\
K_6 &:= \int_{\Omega} \partial^{\gamma}(v \cdot \nabla) \partial_h^2 v_d \partial^{\gamma} \partial_h^2 \tht \,\ud x + \int_{\Omega} \partial^{\gamma}(v \cdot \nabla)\partial_h^2 \tht \partial^{\gamma} \partial_h^2 v_d \,\ud x.
\end{align*}
We can see 
\begin{gather*}
|K_4| \leq C\| R_h^2 v_h \|_{\dot{H}^m} \| R_h v_d \|_{\dot{H}^m} \| R_h^2 \tht \|_{\dot{H}^m} + C\| R_h^2 v_d \|_{\dot{H}^m} \| v_d \|_{\dot{H}^m} \| R_h^2 \tht \|_{\dot{H}^m} \\
+ C \| R_h^2 v_h \|_{\dot{H}^m} \| R_h \tht \|_{\dot{H}^m} \| R_h^2 v_d \|_{\dot{H}^m} + C \| R_h^2 v_d \|_{\dot{H}^m}^2 \| \tht \|_{H^m}
\end{gather*}
and $$|K_5| \leq C\| R_h v \|_{\dot{H}^m} \| R_hv_d \|_{\dot{H}^m} \| R_h^2 \tht \|_{\dot{H}^m} + C \| R_h v \|_{\dot{H}^m} \| R_h \tht \|_{\dot{H}^m} \| R_h^2 v_d \|_{\dot{H}^m}.$$
Due to the cancellation property, we have
\begin{gather*}
|K_6| \leq C\| v \|_{\dot{H}^m} \| R_h^2 v_d \|_{\dot{H}^m} \| R_h^2 \tht \|_{\dot{H}^m}.
\end{gather*}
Collecting the above estimates gives
\begin{gather*}
-\frac {\ud}{\ud t} \int_{\Omega} v_d (-\Delta)^m R_h^4 \tht \,\ud x \leq - \frac 12 \| R_h^3 \tht \|_{\dot{H}^m}^2 + \frac 32 \| R_h v_d \|_{\dot{H}^{m}}^2 \\
+ C(\| R_h v \|_{\dot{H}^m} + \| R_h^2 \tht \|_{\dot{H}^m})(\| v \|_{\dot{H}^m} + \| R_h \tht \|_{\dot{H}^m})(\| R_h v_d \|_{\dot{H}^m} + \| R_h^3 \tht \|_{\dot{H}^m}) + C \| R_h v_d \|_{\dot{H}^m}^2 \| \tht \|_{H^m}.
\end{gather*}
Thus, we arrived at
\begin{gather*}
\frac 12 \frac {\ud}{\ud t} \left( \| R_h v_d \|_{\dot{H}^m}^2 + \| R_h^2 \tht \|_{\dot{H}^m}^2 - \int_{\Omega} v_d (-\Delta)^m R_h^4 \tht \,\ud x \right) \leq -(\frac 14 - C\| \tht \|_{H^m}) \left( \| R_h v_d \|_{\dot{H}^{m}}^2 + \| R_h^3 \tht \|_{\dot{H}^m}^2 \right) \\
+ C(\| R_h v \|_{\dot{H}^m} + \| R_h^2 \tht \|_{\dot{H}^m})(\| v \|_{\dot{H}^m} + \| R_h \tht \|_{\dot{H}^m}) (\| R_h v_d \|_{\dot{H}^m} + \| R_h^3 \tht \|_{\dot{H}^m}) + C \| \nabla v_d \|_{L^{\infty}} \| R_h^2 \tht \|_{\dot{H}^m}^2.
\end{gather*} 
By Young's inequality and \eqref{sm_ass}, it follows
\begin{gather*}
\frac 12 \frac {\ud}{\ud t} \left( \| R_h v_d \|_{\dot{H}^m}^2 + \| R_h^2 \tht \|_{\dot{H}^m}^2 - \int_{\Omega} v_d (-\Delta)^m R_h^4 \tht \,\ud x \right) \leq -\frac 18 \left( \|R_h v_d\|_{\dot{H}^m}^2 + \| R_h^3 \tht \|_{\dot{H}^m}^2 \right) \\
+ C(\| R_h v \|_{\dot{H}^m}^2 + \| R_h^2 \tht \|_{\dot{H}^m}^2)(\| v \|_{\dot{H}^m}^2 + \| R_h \tht \|_{\dot{H}^m}^2 + \| \nabla v_d \|_{L^{\infty}}).
\end{gather*}
We consider $M \geq 1$ which will be specified later. Since
\begin{equation*}
\begin{aligned}
\frac 1M \| R_h^2 \tht \|_{\dot{H}^m}^2 - \| R_h^3 \tht \|_{\dot{H}^m}^2 &= \sum_{|\tilde{n}| \neq 0} \left( \frac 1M - \frac {|\tilde{n}|^2}{|\eta|^2} \right) |\eta|^{2m} |\mathscr{F} R_h^2 \tht(\eta)|^2 \\
&\leq \frac 1M \sum_{\frac {|\tilde{n}|^2}{|\eta|^2} \leq \frac 1M, |\tilde{n}| \neq 0} |\eta|^{2m} |\mathscr{F} R_h^2 \tht(\eta)|^2 \\
&\leq \frac 1{M^2} \| R_h \tht \|_{\dot{H}^m}^2
\end{aligned}
\end{equation*}
and
\begin{equation}\label{smp_ineq'}
\left| \int_{\Omega} v_d (-\Delta)^m R_h^4 \tht \,\ud x \right| \leq \| R_h v_d \|_{\dot{H}^m} \| R_h^3 \tht \|_{\dot{H}^m} \leq \frac 12 \| R_h v_d \|_{\dot{H}^m}^2 + \frac 12 \| R_h^2 \tht \|_{\dot{H}^m}^2,
\end{equation}
it holds
\begin{gather*}
-\frac 18 \left( \|R_h v_d\|_{\dot{H}^m}^2 + \| R_h^3 \tht \|_{\dot{H}^m}^2 \right) \leq -\frac 1{8M} \left(\|R_h v_d\|_{\dot{H}^m}^2 + \| R_h^2 \tht \|_{\dot{H}^m}^2 \right) + \frac 1{16M} \int_{\Omega} v_d (-\Delta)^m R_h^4 \tht \,\ud x \\
+ \frac 1{8M^2} \| R_h \tht \|_{\dot{H}^m}^2 - \frac 1{16M} \int_{\Omega} v_d (-\Delta)^m R_h^4 \tht \,\ud x \\
\leq -\frac 1{16M} \left(\|R_h v_d\|_{\dot{H}^m}^2 + \| R_h^2 \tht \|_{\dot{H}^m}^2 - \int_{\Omega} v_d (-\Delta)^m R_h^4 \tht \,\ud x \right) + \frac 1{8M^2} \| R_h \tht \|_{\dot{H}^m}^2.
\end{gather*}
Hence,
\begin{gather*}
\frac 12 \frac {\ud}{\ud t} \left( \| R_h v_d \|_{\dot{H}^m}^2 + \| R_h^2 \tht \|_{\dot{H}^m}^2 - \int_{\Omega} v_d (-\Delta)^m R_h^4 \tht \,\ud x \right) \\
\leq -\frac 1{16M} \left(\|R_h v_d\|_{\dot{H}^m}^2 + \| R_h^2 \tht \|_{\dot{H}^m}^2 - \int_{\Omega} v_d (-\Delta)^m R_h^4 \tht \,\ud x \right) + \frac 1{8M^2} \| R_h \tht \|_{\dot{H}^m}^2 \\
 + C (\| R_h v \|_{\dot{H}^m}^2  + \| R_h^2 \tht \|_{\dot{H}^m}^2) (\| v \|_{\dot{H}^m}^2 +  \| R_h \tht \|_{\dot{H}^m}^2 + \| \nabla v_d \|_{L^{\infty}}).
\end{gather*}
Here, we take $M = 1+\frac t{16}$. Then multiplying the both sides by $2M^2$ and using \eqref{vd_Hs_est}, we have
\begin{gather*}
\frac {\ud}{\ud t} \left( (1+\frac t{16})^2 (\|R_h v_d\|_{\dot{H}^m}^2 + \| R_h^2 \tht \|_{\dot{H}^m}^2 - \int_{\Omega} v_d (-\Delta)^m R_h^4 \tht \,\ud x) \right) \\
\leq C\| R_h \tht \|_{\dot{H}^m}^2 + C \| v_0 \|_{\dot{H}^m}^2 (\| v \|_{\dot{H}^m}^2 +  \| R_h \tht \|_{\dot{H}^m}^2 + \| \nabla v_d \|_{L^{\infty}}) \\
+ C \sup_{\tau \in [0,t]} (1+\frac \tau{16})^2 \left( \|R_h v_d(\tau) \|_{\dot{H}^m}^2 + \| R_h^2 \tht(\tau) \|_{\dot{H}^m}^2 \right) (\| v \|_{\dot{H}^m}^2 +  \| R_h \tht \|_{\dot{H}^m}^2 + \| \nabla v_d \|_{L^{\infty}}).
\end{gather*}
We integrate it over time and use \eqref{smp_ineq'} and $$\int_0^{\infty} (\| v \|_{\dot{H}^m}^2 + \| R_h \tht \|_{\dot{H}^m}^2 + \| \nabla v_d \|_{L^{\infty}}) \,\ud t \leq C.$$ Then, for $$f(t) := \sup_{\tau \in [0,t]} (1+\frac \tau{16})^2 \left( \|R_h v_d(\tau) \|_{\dot{H}^m}^2 + \| R_h^2 \tht(\tau) \|_{\dot{H}^m}^2 \right),$$ it holds
\begin{gather*}
f(t) \leq C + \int_0^t f(\tau) (\| v \|_{\dot{H}^m}^2 +  \| R_h \tht \|_{\dot{H}^m}^2 + \| \nabla v_d \|_{L^{\infty}}) \,\ud \tau.
\end{gather*}
By Gr\"onwall's ineqaulity, we obtain \eqref{claim}. This completes the proof.
\end{proof}

\subsection{Proof of Theorem~\ref{thm1}:Temporal decay part}
Now, we finish the proof of Theorem~\ref{thm1} assuming \eqref{v_Hs_1} and \eqref{vd_Hs_1}, which is given in Proposition~\ref{prop_Hs_1}. We also provide Proposition~\ref{prop_rmk} for improved temporal estimates. From \eqref{v_L2} and \eqref{v_Hs_1}, we obtain $$(1+t)^{1+\frac {m-s}2} \| v(t) \|_{H^s}^2 \leq C.$$ On the other hand, \eqref{vd_L2} and \eqref{vd_Hs_1} imply $$(1+t)^{2+\frac{m-s}2} \| v_d(t) \|_{H^s}^2 \leq C.$$ It suffices to prove $$(1+t)^{\frac m2} \| \tht(t) - \sigma \|_{L^2}^2 \leq C$$ due to \eqref{sol_bdd_1}. Recalling \eqref{df_sgm}, we can estimate from \eqref{tht_L2} that 
\begin{align*}
	\| \tht - \sigma \|_{L^2} &\leq \| \bar{\tht} \|_{L^2} + \left\| \int_{\bbT^{d-1}} \int_t^\infty \left( (v \cdot \nabla)\tht + v_d \right) \,\ud \tau \ud x_h
 \right\|_{L^2} \\
	&\leq C(1+t)^{-\frac m2} +\int_t^\infty \| (v \cdot \nabla)\tht + v_d \|_{L^2} \,\ud \tau.
\end{align*}
Since
\begin{align*}
\| (v \cdot \nabla)\tht + v_d \|_{L^2} &\leq \| (v_h \cdot \nabla_h)\tht \|_{L^2} + \| v_d \partial_d \tht \|_{L^2} + \| v_d \|_{L^2} \\
&\leq C \| v \|_{L^2} \| R_h \tht \|_{H^{m-1}} + C\| v_d \|_{L^2} \| \tht \|_{H^m} + \| v_d \|_{L^2} \\
&\leq C (1+\tau)^{-(\frac 12 +\frac {m}{4})} \| R_h \tht \|_{H^{m-1}} + C (1+\tau)^{-(1+\frac m{4})}
\end{align*}
by \eqref{v_L2} and \eqref{vd_L2}, it holds $$\int_t^\infty \| (v \cdot \nabla)\tht + v_d \|_{L^2} \,\ud \tau \leq C \left\| (1+\tau)^{-(\frac 12 +\frac {m}{4})} \right\|_{L^2(t,\infty)} + C (1+t)^{-\frac m{4}} \leq C (1+t)^{-\frac m{4}}.$$ This completes the proof.
\begin{proposition}\label{prop_Hs_1}
	Let $d \in \mathbb{N}$ with $d \geq 2$ and $\alpha =1$. Let $m \in \mathbb{N}$ with $m > 3+\frac d2$ and $(v, \theta)$ be a smooth global solution to \eqref{EQ} with \eqref{sol_bdd_1}. Suppose that \eqref{sm_ass} be satisfied. Then, there exists a constant $C > 0$ such that
	\begin{equation}\label{v_Hs_1}
		\| v(t) \|_{\dot{H}^{m}}^2 + \| R_h \Lambda^{-1} \tht(t) \|_{\dot{H}^{m}}^2 \leq C (1+t)^{-1},
	\end{equation}
	and
	\begin{equation}\label{vd_Hs_1}
		\| v_d(t) \|_{\dot{H}^{m}}^2 + \| R_h \Lambda^{-1} v(t) \|_{\dot{H}^{m}}^2 + \| R_h^2 \Lambda^{-2}\tht(t) \|_{\dot{H}^{m}}^2 \leq C (1+t)^{-2}.
	\end{equation}
\end{proposition}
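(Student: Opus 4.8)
\emph{Proof idea (for Proposition~\ref{prop_Hs_1}, $\alpha=1$).} The plan is to mirror the argument used for the $\alpha=0$ counterpart \eqref{v_Hs}--\eqref{vd_Hs}, replacing every Riesz-type quantity $R_h^{k}\tht,R_h^{k}v$ by its diffusion-adapted analogue $R_h^{k}\Lambda^{-k}\tht,R_h^{k}\Lambda^{-k}v$ --- the weights that already occur in the general-$\alpha$ $L^2$ estimates of Section~\ref{sec6} --- and to use that the viscous term $\|\Lambda v\|_{\dot H^m}^2=\|v\|_{\dot H^{m+1}}^2$ is coercive: since $\int_\Omega v_0\,\ud x=0$ and the bases of $X^m,Y^m$ have frequencies $|\eta|\geq\pi/2$, one has $\|\Lambda v\|_{\dot H^m}\gtrsim\|v\|_{\dot H^m}$, and likewise with $v$ replaced by $v_d$ or by $R_h\Lambda^{-1}v$. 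Exactly as for $\alpha=0$, everything reduces to the single claim
\[
\|R_h^2\Lambda^{-2}\tht(t)\|_{\dot H^m}^2\leq C(1+t)^{-2},
\]
since this gives $\|R_h\Lambda^{-1}\tht\|_{\dot H^m}^2\leq\|R_h^2\Lambda^{-2}\tht\|_{\dot H^m}\|\tht\|_{\dot H^m}\leq C(1+t)^{-1}$ by \eqref{sol_bdd_1}, and, interpolating against $\|R_h^2\Lambda^{-2}\tht\|_{\dot H^{m+2}}=\|R_h^2\tht\|_{\dot H^m}\leq C$, also $\|R_h^2\Lambda^{-2}\tht\|_{\dot H^{m+1}}\leq C(1+t)^{-1}$.

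First I would run the $\dot H^m$ energy estimate for the $v$-equation with $\alpha=1$: the pressure drops by incompressibility, the transport term is bounded by $C\|\nb v\|_{L^\infty}\|v\|_{\dot H^m}^2$ via Lemma~\ref{cal_lem}, and the buoyancy term $\int_\Omega\tht\,(-\Delta)^m v_d\,\ud x$, expanded in the $\mathscr{B}_\eta$-frequencies and combined with the divergence-free identity \eqref{vd_v}, is absorbed as $\veps\|\Lambda v\|_{\dot H^m}^2+C_\veps\|R_h\Lambda^{-1}\tht\|_{\dot H^m}^2$. Using the coercivity above and the smallness \eqref{sm_ass}, one gets $\ddt\|v\|_{\dot H^m}^2\leq-c\|v\|_{\dot H^m}^2+C\|R_h\Lambda^{-1}\tht\|_{\dot H^m}^2$, hence by Duhamel
\[
\|v(t)\|_{\dot H^m}^2\leq C(1+t)^{-1}\Big(\|v_0\|_{\dot H^m}^2+\sup_{\tau\in[0,t]}(1+\tau)\|R_h\Lambda^{-1}\tht(\tau)\|_{\dot H^m}^2\Big).
\]
Testing the $v$-equation against $(-\Delta)^m R_h^2\Lambda^{-2}v$ instead (the coupling now produces $\|R_h^2\Lambda^{-2}\tht\|_{\dot H^m}^2$ via \eqref{vd_v}, while the dissipation $\|R_h v\|_{\dot H^m}^2$ dominates $\|R_h\Lambda^{-1}v\|_{\dot H^m}^2$) gives, by the same Duhamel argument,
\[
\|R_h\Lambda^{-1}v(t)\|_{\dot H^m}^2\leq C(1+t)^{-2}\Big(\|v_0\|_{\dot H^m}^2+\sup_{\tau\in[0,t]}(1+\tau)^2\|R_h^2\Lambda^{-2}\tht(\tau)\|_{\dot H^m}^2\Big);
\]
and testing the $v_d$-equation \eqref{df_vd} against $(-\Delta)^m v_d$ --- whose dissipation $\|v_d\|_{\dot H^{m+1}}^2$ absorbs the one-derivative-rougher buoyancy forcing $\tfrac{|\tilde n|^2}{|\eta|^2}\mathscr{F}_b\tht$, controlled through $\|R_h^2\Lambda^{-2}\tht\|_{\dot H^{m+1}}\leq C(1+t)^{-1}$ --- gives $\|v_d(t)\|_{\dot H^m}^2\leq C(1+t)^{-2}$ once the claim is available.

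For the claim I would work with the $\dot H^m$, $\Lambda$-weighted analogue of the modified energy used for $\alpha=0$ and in the $L^2$-estimate \eqref{Rh_tht_est}, namely
\[
\Phi_m(t):=\|R_h\Lambda^{-2}v_d\|_{\dot H^m}^2+\|R_h^2\Lambda^{-2}\tht\|_{\dot H^m}^2-\int_\Omega v_d\,(-\Delta)^m R_h^4\Lambda^{-6}\tht\,\ud x,
\]
which is comparable to $\|R_h\Lambda^{-2}v_d\|_{\dot H^m}^2+\|R_h^2\Lambda^{-2}\tht\|_{\dot H^m}^2$ by Cauchy--Schwarz on the cross term. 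Differentiating $\Phi_m$ via \eqref{df_vd}, \eqref{df_tht} and the Leray identity \eqref{Leray_est}, the linear part produces the coercive term $-c\big(\|R_h\Lambda^{-1}v_d\|_{\dot H^m}^2+\|R_h^3\Lambda^{-3}\tht\|_{\dot H^m}^2\big)$, while the nonlinearities split into six pieces $K_1,\dots,K_6$ as in the $\alpha=0$ proof, each bounded by a product of two quantities from $\{\|R_h\Lambda^{-1}v\|_{\dot H^m},\|R_h^2\Lambda^{-2}\tht\|_{\dot H^m}\}$ with one factor from $\{\|v\|_{\dot H^m},\|R_h\Lambda^{-1}\tht\|_{\dot H^m},\|\nb v_d\|_{L^\infty},\|\tht\|_{H^m}\}$ --- using the cancellation property for the top transport term and, for the $v_h\cdot\nb_h$ contribution, a H\"older split $L^p$--$L^q$ with $\tfrac1p+\tfrac1q=\tfrac12$, $\tfrac1q=\tfrac12-\tfrac1d$. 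The elementary frequency splitting $\tfrac1M\|R_h^2\Lambda^{-2}\tht\|_{\dot H^m}^2-\|R_h^3\Lambda^{-3}\tht\|_{\dot H^m}^2\leq\tfrac1{M^2}\|R_h\Lambda^{-1}\tht\|_{\dot H^m}^2$ then lets me take $M=1+ct$, multiply by $2M^2$, absorb the $M^2\|R_h\Lambda^{-2}v_d\|_{\dot H^m}^2$ terms by the conditional bound on $\|R_h\Lambda^{-1}v\|_{\dot H^m}^2$ (which dominates $\|R_h\Lambda^{-2}v_d\|_{\dot H^m}^2$), integrate in time using
\[
\int_0^\infty\big(\|v\|_{\dot H^m}^2+\|R_h\Lambda^{-1}\tht\|_{\dot H^m}^2+\|\nb v_d\|_{L^\infty}\big)\,\ud t\leq C
\]
(which follows from \eqref{sol_bdd_1}, noting $\|R_h\Lambda^{-1}\tht\|_{\dot H^m}=\|\nb_h\tht\|_{\dot H^{m-2}}$, and from the $L^1_t$-bound on $\|\nb v\|_{L^\infty}$ established in Section~\ref{sec:global}), and close by Gr\"onwall applied to $f(t):=\sup_{\tau\leq t}(1+c\tau)^2\big(\|R_h\Lambda^{-2}v_d(\tau)\|_{\dot H^m}^2+\|R_h^2\Lambda^{-2}\tht(\tau)\|_{\dot H^m}^2\big)$. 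Unwinding $f$ together with the conditional estimates of the previous paragraph then yields \eqref{v_Hs_1} and \eqref{vd_Hs_1}.

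The hard part is the bookkeeping in the nonlinear estimates of the $\Phi_m$ identity: in the top-order pieces involving $\rd_d^m\tht$ and $\rd_d^{m+1}v_d$ one must integrate by parts in $x_d$ using precisely the compatibility conditions of Section~\ref{sec:2.1} (so that all boundary integrals vanish, since $v_d\in X^{m+1}$ and $\tht\in X^m$ for a.e.\ $t$) while keeping exact track of the negative powers of $\Lambda$, so that every Riesz quantity produced on the right-hand side is either one of the three norms in \eqref{vd_Hs_1} or is already controlled by the conditional second-order bound --- a mismatch of even one $\Lambda$-weight would break the Gr\"onwall closure. A secondary delicate point is the $L^p$--$L^q$ split for the horizontal transport term, which is precisely what forces the regularity threshold $m>3+\tfrac d2$ here, rather than $m>2+\tfrac d2$ as for $\alpha=0$.
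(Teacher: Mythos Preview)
Your overall architecture matches the paper's: the modified energy $\Phi_m$ you write down is, after unwinding $\|R_h\Lambda^{-2}v_d\|_{\dot H^m}=\|R_h v_d\|_{\dot H^{m-2}}$ etc., exactly the paper's functional $\|R_h v_d\|_{\dot H^{m-2}}^2+\|R_h^2\tht\|_{\dot H^{m-2}}^2-\int v_d(-\Delta)^{m-3}R_h^4\tht$, and your frequency-splitting/Gr\"onwall closure of the claim $\|R_h^2\Lambda^{-2}\tht\|_{\dot H^m}^2\leq C(1+t)^{-2}$ is the same argument. The conditional Duhamel bounds for $\|v\|_{\dot H^m}$ and $\|R_h\Lambda^{-1}v\|_{\dot H^m}=\|R_h v\|_{\dot H^{m-1}}$ also agree with the paper.

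The one genuine gap is your treatment of $\|v_d\|_{\dot H^m}$. You propose a direct energy estimate with the buoyancy controlled through $\|R_h^2\Lambda^{-2}\tht\|_{\dot H^{m+1}}\leq C(1+t)^{-1}$, justified by ``interpolating against $\|R_h^2\Lambda^{-2}\tht\|_{\dot H^{m+2}}\leq C$''. But standard interpolation between $\|R_h^2\Lambda^{-2}\tht\|_{\dot H^m}\leq C(1+t)^{-1}$ and $\|R_h^2\Lambda^{-2}\tht\|_{\dot H^{m+2}}\leq C$ only gives $\|R_h^2\Lambda^{-2}\tht\|_{\dot H^{m+1}}\leq C(1+t)^{-1/2}$, whose square is $(1+t)^{-1}$; your Duhamel step would then yield only $\|v_d\|_{\dot H^m}^2\leq C(1+t)^{-1}$, not $(1+t)^{-2}$. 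The needed quantity is $\|R_h^2\Lambda^{-1}\tht\|_{\dot H^m}^2=\|R_h^2\tht\|_{\dot H^{m-1}}^2$, which is one derivative \emph{above} the claim you have established, and none of the available bounds interpolate to it at rate $(1+t)^{-2}$.

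The paper bypasses this by a spectral argument rather than an energy one: using the eigen-decomposition $\mathscr{F}_b v_d=\tfrac{1}{\lambda_+}\tfrac{|\tilde n|^2}{|\eta|^2}\mathscr{F}_b\tht+\tfrac{1}{\lambda_+}\langle\mathscr{F}_b\mathbf{u},\mathbf{a}_+\rangle$ (cf.\ \eqref{vd_est_u}), one bounds $\|v_d\|_{\dot H^m}\leq C\|R_h^2\Lambda^{-2}\tht\|_{\dot H^m}+C(\sum|\eta|^{2(m-2)}|\langle\mathscr{F}_b\mathbf{u},\mathbf{a}_+\rangle|^2)^{1/2}$ and then controls the $\mathbf{a}_+$-component directly from \eqref{df_u+} via Duhamel in frequency, exploiting the exponential kernel $|e^{-\lambda_+t}|\leq e^{-|\eta|^2t/2}$ together with the nonlinear bounds $\|(v\cdot\nabla)v\|_{\dot H^m}\leq C\|v\|_{\dot H^{m+1}}\|v\|_{\dot H^{m-2}}$ and $\|(v\cdot\nabla)\tht\|_{\dot H^{m-2}}\leq C\|v\|_{\dot H^{m-2}}\|\tht\|_{H^m}$. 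The point is that the energy method cannot separate the fast $\mathbf{a}_+$-mode from the slow $\mathbf{a}_-$-mode, and it is precisely the former that accounts for the extra factor of $(1+t)^{-1}$ in $v_d$ relative to $\tht$.
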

\begin{proof}
From the $v$ equations in \eqref{EQ}, it follows $$\frac 12 \frac {\ud}{\ud t} \| v \|_{\dot{H}^m}^2 + \| v \|_{\dot{H}^{m+1}}^2 \leq C\| \nabla v \|_{L^\infty} \| v \|_{\dot{H}^m}^2 - \sum_{|\tilde{n}| \neq 0} |\eta|^{2m} \mathscr{F}_b \tht(\eta)\mathscr{F}_b v_d(\eta).$$ Using \eqref{vd_v} gives
\begin{align*}
	\left| - \sum_{|\tilde{n}| \neq 0} |\eta|^{2m} \mathscr{F}_b \tht(\eta)\mathscr{F}_b v_d(\eta) \right| &\leq \left( \sum_{|\tilde{n}| \neq 0} |\eta|^{2(m-1)} |\mathscr{F}_b R_h \tht(\eta)|^2 \right)^{\frac 12} \left( \sum_{|\tilde{n}| \neq 0} |\eta|^{2(m+1)} |\mathscr{F} v(\eta)|^2 \right)^{\frac 12} \\
	&\leq \frac 14 \| v \|_{\dot{H}^{m+1}}^2 + \| R_h \Lambda^{-1} \tht \|_{\dot{H}^{m}}^2.
\end{align*}
By \eqref{sm_ass} and \eqref{sol_bdd_1}, we have 
\begin{align*}
\frac 12 \frac {\ud}{\ud t} \| v \|_{\dot{H}^m}^2 &\leq - (\frac 34 -C \| (v_0,\tht_0) \|_{H^m}^2) \| v \|_{\dot{H}^{m+1}}^2  + C\| R_h \tht \|_{\dot{H}^{m-1}}^2 \leq -\frac 12 \| v \|_{\dot{H}^m}^2  + C\| R_h \Lambda^{-1} \tht \|_{\dot{H}^{m}}^2.
\end{align*}
Then, applying Duhamel's principle shows
$$\| v(t) \|_{\dot{H}^m}^2 \leq e^{-t} \| v_0 \|_{\dot{H}^m}^2 + C \int_0^t e^{-(t-\tau)} \| R_h \tht \|_{\dot{H}^{m-1}}^2 \,\ud \tau.$$ Thus, from $$(1+\tau) \| R_h \Lambda^{-1} \tht(\tau) \|_{\dot{H}^{m}}^2 \leq (1+\tau) \| R_h^2 \Lambda^{-2} \tht(\tau) \|_{\dot{H}^{m}} \sup_{\tau \in [0,\infty)} \| \tht(\tau) \|_{\dot{H}^{m}},$$ it follows
\begin{equation}\label{v_Hs_est_1}
\| v(t) \|_{\dot{H}^m}^2 \leq C(1+t)^{-1} \left( C + \sup_{\tau \in [0,t]} (1+\tau)^2 \| R_h^2 \Lambda^{-2} \tht(\tau) \|_{\dot{H}^{m}}^2 \right).
\end{equation}

Recalling from the $v$ equations in \eqref{EQ} that
\begin{equation*}
\partial_t R_h v + (-\Delta) R_h v + R_h(v \cdot \nabla)v = R_h \mathbb{P} \tht e_d. 
\end{equation*}
and using \eqref{sm_ass}, we can see
\begin{align*}
\frac 12 \frac {\ud}{\ud t} \| R_h v \|_{\dot{H}^{m-1}}^2 &\leq - \| R_h v \|_{\dot{H}^m}^2 + C \| v \|_{\dot{H}^{m}} \| R_h v \|_{\dot{H}^{m}}^2 + \| R_h^2 \Lambda^{-2} \tht \|_{\dot{H}^{m}} \| R_h v \|_{\dot{H}^{m}} \\
&\leq -\frac 12 \| R_h v \|_{\dot{H}^m}^2 + \| R_h^2 \Lambda^{-2} \tht \|_{\dot{H}^{m}}^2.
\end{align*}
By Duhamel's principle,
\begin{equation}\label{Rhv_est_1}
\| R_hv(t) \|_{\dot{H}^{m-1}}^2 \leq \left( C + \sup_{\tau \in [0,t]} (1+\tau)^{2} \| R_h^2 \Lambda^{-2} \tht(\tau) \|_{\dot{H}^{m}}^2 \right).
\end{equation}

Now, we show that
\begin{equation}\label{claim_1}
\| R_h^2 \Lambda^{-2} \tht(t) \|_{\dot{H}^{m}}^2 \leq C(1+t)^{-2}.
\end{equation}
We can show from \eqref{df_vd} that
\begin{gather*}
\frac 12 \frac {\ud}{\ud t} \| R_h \Lambda^{-2} v_d \|_{\dot{H}^{m}}^2 + \| R_h \Lambda^{-1} v_d \|_{\dot{H}^{m}}^2 \\
= - \sum_{|\gamma| = m-3} \int_{\Omega} \nabla_h \partial^{\gamma} \langle \mathbb{P} (v \cdot \nabla)v,e_d \rangle \cdot \nabla_h \partial^{\gamma} v_d \,\ud x + \sum_{|\tilde{n}| \neq 0} |\tilde{n}|^4|\eta|^{2(m-4)} \mathscr{F}_b \tht(\eta)\mathscr{F}_b v_d(\eta).
\end{gather*}
From \eqref{df_tht},
\begin{gather*}
\frac 12 \frac {\ud}{\ud t} \| R_h^2 \Lambda^{-2} \tht \|_{\dot{H}^{m}}^2 = -\sum_{|\gamma| = m-4} \int_{\Omega} \partial^\gamma \partial_h^2 (v \cdot \nabla) \tht \cdot \partial^\gamma\partial_h^2 \tht \,\ud x - \sum_{|\tilde{n}| \neq 0} |\tilde{n}|^4|\eta|^{2(m-4)} \mathscr{F}_b \tht(\eta)\mathscr{F}_b v_d(\eta).
\end{gather*}
Thus,
\begin{gather*}
\frac 12 \frac {\ud}{\ud t} \left( \| R_h \Lambda^{-2} v_d \|_{\dot{H}^{m}}^2 + \| R_h^2 \Lambda^{-2} \tht \|_{\dot{H}^{m}}^2 \right) + \| R_h \Lambda^{-1} v_d \|_{\dot{H}^{m}}^2 \\
\leq - \sum_{|\gamma| = m-4} \int_{\Omega} \nabla_h \partial^{\gamma} \langle \mathbb{P} (v \cdot \nabla)v,e_d \rangle \cdot \nabla_h \partial^{\gamma} (-\Delta) v_d \,\ud x -\sum_{|\gamma| = m-4} \int_{\Omega} \partial^\gamma \partial_h^2 (v \cdot \nabla) \tht \cdot \partial^\gamma\partial_h^2 \tht \,\ud x.
\end{gather*}
Using integration by parts and H\"older's inequality, we have
\begin{gather*}
\left| - \sum_{|\gamma| = m-4} \int_{\Omega} \nabla_h \partial^{\gamma} \langle \mathbb{P} (v \cdot \nabla)v,e_d \rangle \cdot \nabla_h \partial^{\gamma} (-\Delta) v_d \,\ud x \right| \\
\leq C(\| (v_h \cdot \nabla_h) v \|_{\dot{H}^{m-3}} + \| \nabla_h (v_d \partial_d v) \|_{\dot{H}^{m-4}}) \| R_h v_d \|_{\dot{H}^{m-1}} \\
\leq C\| v \|_{\dot{H}^{m}} \| R_h v \|_{\dot{H}^{m-1}} \| R_h v_d \|_{\dot{H}^{m-1}} + C\| v \|_{H^m} \| R_h v_d \|_{\dot{H}^{m-1}}^2.
\end{gather*}
We note that
\begin{gather*}
\sum_{|\gamma| = m-4} \int_{\Omega} \partial^\gamma \partial_h^2 (v \cdot \nabla) \tht \cdot \partial^\gamma\partial_h^2 \tht \,\ud x = \sum_{|\gamma| = m-4} \left( K_7 + K_8 + K_9 \right),
\end{gather*}
where
\begin{align*}
K_7 &= \int_{\Omega} \partial^\gamma (\partial_h^2 v \cdot \nabla) \tht \cdot \partial^\gamma\partial_h^2 \tht \,\ud x, \\
K_8 &= \int_{\Omega} \partial^\gamma (\partial_h v \cdot \nabla) \partial_h \tht \cdot \partial^\gamma\partial_h^2 \tht \,\ud x, \\
K_9 &:= \int_{\Omega} \partial^\gamma (v \cdot \nabla) \partial_h^2 \tht \cdot \partial^\gamma\partial_h^2 \tht \,\ud x .
\end{align*}
The integration by parts and $(v \cdot \nabla) \tht = (v_h \cdot \nabla_h) \tht + v_d \partial_d \tht$ show $$K_7 + K_8 = - \int_{\Omega} \partial^\gamma (\partial_h v_h \cdot \nabla_h) \tht \cdot \partial^\gamma\partial_h R_h^2 (-\Delta) \tht \,\ud x - \int_{\Omega} \partial^\gamma (\partial_h v_d \partial_d \tht) \cdot \partial^\gamma\partial_h R_h^2 (-\Delta)  \tht \,\ud x. $$ By the use of the integration by parts, we can estimate the second integral on the right-hand side as 
\begin{align*}
\left| - \int_{\Omega} \partial^\gamma (\partial_h v_d \partial_d \tht) \cdot \partial^\gamma\partial_h R_h^2 (-\Delta)  \tht \,\ud x \right| &= \left|- \int_{\Omega} (-\Delta) \partial^{\gamma} (\partial_h v_d \partial_d \tht) \cdot \partial^{\gamma} \partial_h R_h^2 \tht \,\ud x \right| \\
&\leq C\| R_h v_d \|_{\dot{H}^{m-1}} \| \tht \|_{H^m} \| R_h^3 \tht \|_{\dot{H}^{m-3}}.
\end{align*}
Similarly, the first one is bounded by 
\begin{gather*}
\left| \int_{\Omega} (-\Delta) \partial^{\gamma} (\partial_h v_h \cdot \nabla_h) \tht \cdot \partial^{\gamma} \partial_h R_h^2 \tht \,\ud x \right| 
\leq C \| R_h v \|_{\dot{H}^{m-1}} \| R_h \tht \|_{\dot{H}^{m-1}} \| R_h^3 \tht \|_{\dot{H}^{m-3}}.
\end{gather*}
Thus,
\begin{gather*}
|K_7 + K_8| \leq C\| R_h v_d \|_{\dot{H}^{m-1}} \| \tht \|_{H^m} \| R_h^3 \tht \|_{\dot{H}^{m-3}} + C \| R_h v \|_{\dot{H}^{m-1}} \| R_h \tht \|_{\dot{H}^{m-1}} \| R_h^3 \tht \|_{\dot{H}^{m-3}}.
\end{gather*}
Due to the cancellation property, we have for $|\gamma'|=1$ that
\begin{align*}
K_9 &= \int_{\Omega} \partial^{\gamma-\gamma'} (\partial^{\gamma'} v_h \cdot \nabla_h) \partial_h^2 \tht \cdot \partial^\gamma\partial_h^2 \tht \,\ud x + \int_{\Omega} \partial^{\gamma-\gamma'} (\partial^{\gamma'} v_d \partial_d \partial_h^2 \tht) \cdot \partial^\gamma\partial_h^2 \tht \,\ud x.
\end{align*}
By integration by parts and the calculus inequality, it can be shown that 
\begin{gather*}
\left| \int_{\Omega} \partial^{\gamma-\gamma'} (\partial^{\gamma'} v_h \cdot \nabla_h) \partial_h^2 \tht \cdot \partial^\gamma\partial_h^2 \tht \,\ud x \right| \\
\leq \left| \int_{\Omega} \partial^{\gamma-\gamma'} (\partial^{\gamma'} \nabla_h \cdot v_h \partial_h^2 \tht) \cdot \partial^\gamma\partial_h^2 \tht \,\ud x \right| + \left| \int_{\Omega} \partial^{\gamma-\gamma'} (\partial^{\gamma'} v_h  \partial_h^2 \tht) \cdot \nabla_h \partial^\gamma\partial_h^2 \tht \,\ud x \right|.
\end{gather*}
The first term on the right-hand side is bounded by
\begin{align*}
\left| \int_{\Omega} \partial^{\gamma-\gamma'} (\partial^{\gamma'} \partial_d v_d \partial_h^2 \tht) \cdot \partial^\gamma\partial_h^2 \tht \,\ud x \right| &\leq C(\| v_d \|_{\dot{H}^{m-3}} \| \partial_h^2 \tht \|_{L^{\infty}} + \| \nabla \partial_d v_d \|_{L^p} \| \partial^{\gamma-\gamma'} \partial_h^2 \tht \|_{L^q}) \| R_h^2 \tht \|_{\dot{H}^{m-2}} \\
&\leq C \| R_h v_d \|_{\dot{H}^{m-1}} \| R_h \tht \|_{\dot{H}^{m-1}} \| R_h^2 \tht \|_{\dot{H}^{m-2}},
\end{align*}
where $\frac 1p + \frac 1q = \frac 12$ and $\frac 1p = \frac 2d + (\frac 12 - \frac {m-2}d)\frac 2{m-2}$.On the other hand, the integration by parts yields
\begin{gather*}
\left| \int_{\Omega} \partial^{\gamma-\gamma'} (\partial^{\gamma'} v_h  \partial_h^2 \tht) \cdot \nabla_h \partial^\gamma\partial_h^2 \tht \,\ud x \right| \\
= \left| \int_{\Omega} (-\Delta) \partial^{\gamma-\gamma'} (\partial^{\gamma'} v_h  \partial_h^2 \tht) \cdot \partial^\gamma\partial_h R_h^2 \tht \,\ud x \right| \\
\leq C ( \| \nabla v_h \|_{\dot{H}^{m-3}} \| \partial_h^2 \tht \|_{L^{\infty}} + \| \nabla v_h \|_{L^{\infty}} \| \partial_h^2 \tht \|_{\dot{H}^{m-3}}) \| R_h^3 \tht \|_{\dot{H}^{m-3}} \\
\leq C \| v \|_{\dot{H}^{m-2}} \| R_h \tht \|_{\dot{H}^{m-1}} \| R_h^3 \tht \|_{\dot{H}^{m-3}}.
\end{gather*}
Hence,
$$|K_9| \leq C \| R_h v_d \|_{\dot{H}^{m-1}} \| R_h \tht \|_{\dot{H}^{m-1}} \| R_h^2 \tht \|_{\dot{H}^{m-2}} + C \| v \|_{\dot{H}^{m-2}} \| R_h \tht \|_{\dot{H}^{m-1}} \| R_h^3 \tht \|_{\dot{H}^{m-3}}.$$
By the above estimates, we deduce
\begin{gather*}
\frac 12 \frac {\ud}{\ud t} \left( \| R_h v_d \|_{\dot{H}^{m-2}}^2 + \| R_h^2 \tht \|_{\dot{H}^{m-2}}^2 \right) + \| R_h v_d \|_{\dot{H}^{m-1}}^2 \\
\leq C(\| v \|_{H^m} + \| \tht \|_{H^m}) (\| R_h v_d \|_{\dot{H}^{m-1}}^2 + \| R_h^2 \tht \|_{\dot{H}^{m-2}}) \\
 + C (\| R_h v \|_{\dot{H}^{m-1}} + \| R_h^2 \tht \|_{\dot{H}^{m-2}} + \| v \|_{\dot{H}^{m-2}}) (\| v \|_{\dot{H}^{m}} + \| R_h \tht \|_{\dot{H}^{m-1}}) ( \| R_h v_d \|_{\dot{H}^{m-1}} + \| R_h^3 \tht \|_{\dot{H}^{m-3}}).
\end{gather*}
On the other hand, using \eqref{df_vd} and \eqref{Leray_est}, we have
\begin{align*}
-\int_{\Omega} \partial_tv_d (-\Delta)^{m-3} R_h^4 \tht \,\ud x &= \int_{\Omega} (v \cdot \nabla)v_d (-\Delta)^{m-3} R_h^4 \tht \,\ud x - \int_{\Omega} \partial_d \nabla \cdot ((v \cdot \nabla)v) (-\Delta)^{m-3} R_h^4 \tht \,\ud x \\
&\hphantom{\qquad\qquad}- \int_{\Omega} (-\Delta) v_d (-\Delta)^{m-3} R_h^4 \tht \,\ud x - \| R_h^3 \tht \|_{\dot{H}^{m-3}}^2 \\
&\leq \int_{\Omega} (v \cdot \nabla)v_d (-\Delta)^{m-3} R_h^4 \tht \,\ud x + C \| v \|_{\dot{H}^{m-2}}\| v \|_{\dot{H}^{m-1}} \| R_h^3 \tht \|_{\dot{H}^{m-3}} \\
&\hphantom{\qquad\qquad}+ \frac 12 \| R_hv_d \|_{\dot{H}^{m-1}}^2 - \frac 12\| R_h^3 \tht \|_{\dot{H}^{m-3}}^2.
\end{align*}
Since \eqref{df_tht} yields
\begin{equation*}
-\int_{\Omega} \partial_t\tht (-\Delta)^{m-3} R_h^4 v_d \,\ud x \leq \int_{\Omega} (v \cdot \nabla)\tht (-\Delta)^{m-3} R_h^4 v_d \,\ud x + \| R_h^2 v_d \|_{\dot{H}^{m-3}}^2,
\end{equation*}
we have
\begin{gather*}
-\frac {\ud}{\ud t} \int_{\Omega} v_d (-\Delta)^{m-3} R_h^4 \tht \,\ud x \leq \int_{\Omega} (v \cdot \nabla)v_d (-\Delta)^{m-3} R_h^4 \tht \,\ud x + \int_{\Omega} (v \cdot \nabla)\tht (-\Delta)^{m-3} R_h^4 v_d \,\ud x \\
+ C\| v \|_{\dot{H}^{m-2}} \| v \|_{\dot{H}^{m}} \| R_h^3 \tht \|_{\dot{H}^{m-3}} - \frac 12 \| R_h^3 \tht \|_{\dot{H}^{m-3}}^2+ \frac 32 \| R_h v_d \|_{\dot{H}^{m-1}}^2.
\end{gather*}
It is clear $$\left| \int_{\Omega} (v \cdot \nabla)v_d (-\Delta)^{m-3} R_h^4 \tht \,\ud x \right| \leq C \| v \|_{\dot{H}^{m-3}} \| v_d \|_{\dot{H}^{m-2}} \| R_h^3 \tht \|_{\dot{H}^{m-3}} \leq C \| v \|_{\dot{H}^{m}} \| R_h v_d \|_{\dot{H}^{m-1}} \| R_h^3 \tht \|_{\dot{H}^{m-3}}.$$ Since $H^{m-3}(\Omega)$ is banach algebra, we can see
\begin{gather*}
	\left| \int_{\Omega} (v \cdot \nabla)\tht (-\Delta)^{m-3} R_h^4 v_d \,\ud x \right| \\
	\leq \| (v_h \cdot \nabla_h)\tht \|_{\dot{H}^{m-3}}  \| R_h^4 v_d \|_{\dot{H}^{m-3}} + C\| v_d \partial_d \tht \|_{\dot{H}^{m-3}} \| R_h^4 v_d \|_{\dot{H}^{m-3}} \\
	\leq C\| v \|_{\dot{H}^{m-3}} \| \nabla_h \tht \|_{\dot{H}^{m-3}} \| R_h^4 v_d \|_{\dot{H}^{m-3}} + C\| v_d \|_{\dot{H}^{m-3}} \| \partial_d \tht \|_{\dot{H}^{m-3}} \| R_h^4 v_d \|_{\dot{H}^{m-3}} \\
	\leq C\| v \|_{\dot{H}^{m-2}} \| R_h \tht \|_{\dot{H}^{m-1}} \|R_h v_d \|_{\dot{H}^{m-1}} + C\| \tht \|_{\dot{H}^{m}} \| R_h v_d \|_{\dot{H}^{m-1}}^2.
\end{gather*}
Collecting the above estimates gives
\begin{gather*}
-\frac {\ud}{\ud t} \int_{\Omega} v_d (-\Delta)^{m-3} R_h^4 \tht \,\ud x \leq - \frac 12 \| R_h^3 \tht \|_{\dot{H}^{m-3}}^2 + \frac 32 \| R_h v_d \|_{\dot{H}^{m-1}}^2 + C\| v \|_{\dot{H}^{m-2}} \| v \|_{\dot{H}^{m}} \| R_h^3 \tht \|_{\dot{H}^{m-3}} \\
+ C \| v \|_{\dot{H}^{m}} \| R_h v_d \|_{\dot{H}^{m-1}} \| R_h^3 \tht \|_{\dot{H}^{m-3}} +C\| v \|_{\dot{H}^{m-2}} \| R_h \tht \|_{\dot{H}^{m-1}} \|R_h v_d \|_{\dot{H}^{m-1}} + C\| \tht \|_{\dot{H}^{m}} \| R_h v_d \|_{\dot{H}^{m-1}}^2.
\end{gather*}
Now, we arrived at
\begin{gather*}
\frac 12 \frac {\ud}{\ud t} \left( \| R_h v_d \|_{\dot{H}^{m-2}}^2 + \| R_h^2 \tht \|_{\dot{H}^{m-2}}^2 - \int_{\Omega} v_d (-\Delta)^{m-3} R_h^4 \tht \,\ud x \right) \\
\leq -(\frac 14 - C(\| v \|_{H^m} + \| \tht \|_{H^m})) \left( \| R_h v_d \|_{\dot{H}^{m-1}}^2 + \| R_h^3 \tht \|_{\dot{H}^{m-3}}^2 \right) \\
+ C (\| R_h v \|_{\dot{H}^{m-1}} + \| R_h^2 \tht \|_{\dot{H}^{m-2}} + \| v \|_{\dot{H}^{m-2}}) (\| v \|_{\dot{H}^{m}} + \| R_h \tht \|_{\dot{H}^{m-1}}) ( \| R_h v_d \|_{\dot{H}^{m-1}} + \| R_h^3 \tht \|_{\dot{H}^{m-3}}) \\
\leq -\frac 18 \left( \| R_h v_d \|_{\dot{H}^{m-1}}^2 + \| R_h^3 \tht \|_{\dot{H}^{m-3}}^2 \right) \\
+ C (\| R_h v \|_{\dot{H}^{m-1}}^2 + \| R_h^2 \tht \|_{\dot{H}^{m-2}}^2 + \| v \|_{\dot{H}^{m-2}}^2) (\| v \|_{\dot{H}^{m}}^2 + \| R_h \tht \|_{\dot{H}^{m-1}}^2).
\end{gather*}
We have used Young's inequality and \eqref{sm_ass} in the last inequality. We consider $M \geq 1$ which will be specified later. Since
\begin{equation*}
\begin{aligned}
\frac 1M \| R_h^2 \tht \|_{\dot{H}^{m-2}}^2 - \| R_h^3 \tht \|_{\dot{H}^{m-3}}^2 &= \sum_{|\tilde{n}| \neq 0} \left( \frac 1M - \frac {|\tilde{n}|^2}{|\eta|^4} \right) |\eta|^{2(m-2)} |\mathscr{F} R_h^2 \tht(\eta)|^2 \\
&\leq \frac 1M \sum_{\frac {|\tilde{n}|^2}{|\eta|^4} \leq \frac 1M, |\tilde{n}| \neq 0} |\eta|^{2(m-2)} |\mathscr{F} R_h^2 \tht(\eta)|^2 \\
&\leq \frac 1{M^2} \| R_h \tht \|_{\dot{H}^{m-1}}^2
\end{aligned}
\end{equation*}
and
\begin{equation}\label{smp_ineq}
\left| \int_{\Omega} v_d (-\Delta)^{m-3} R_h^4 \tht \,\ud x \right| \leq \| R_h v_d \|_{\dot{H}^{m-3}} \| R_h^3 \tht \|_{\dot{H}^{m-3}} \leq \frac 12 \| R_h v_d \|_{\dot{H}^{m-1}}^2 + \frac 12 \| R_h^3 \tht \|_{\dot{H}^{m-3}}^2,
\end{equation}
it holds
\begin{gather*}
-\frac 18 \left( \| R_h v_d \|_{\dot{H}^{m-1}}^2 + \| R_h^3 \tht \|_{\dot{H}^{m-3}}^2 \right) \leq -\frac 1{8M} \left( \| R_h v_d \|_{\dot{H}^{m-1}}^2 + \| R_h^2 \tht \|_{\dot{H}^{m-2}}^2 \right) + \frac 1{16M} \int_{\Omega} v_d (-\Delta)^{m-3} R_h^4 \tht \,\ud x \\
+ \frac 1{8M^2} \| R_h \tht \|_{\dot{H}^{m-1}}^2 - \frac 1{16M} \int_{\Omega} v_d (-\Delta)^{m-3} R_h^4 \tht \,\ud x \\
\leq -\frac 1{16M} \left(\|R_h v_d\|_{\dot{H}^{m-2}}^2 + \| R_h^2 \tht \|_{\dot{H}^{m-2}}^2 - \int_{\Omega} v_d (-\Delta)^{m-3} R_h^4 \tht \,\ud x \right) + \frac 1{8M^2} \| R_h \tht \|_{\dot{H}^{m-1}}^2.
\end{gather*}
Hence,
\begin{gather*}
\frac 12 \frac {\ud}{\ud t} \left( \| R_h v_d \|_{\dot{H}^{m-2}}^2 + \| R_h^2 \tht \|_{\dot{H}^{m-2}}^2 - \int_{\Omega} v_d (-\Delta)^{m-3} R_h^4 \tht \,\ud x \right) \\
\leq -\frac 1{16M} \left(\|R_h v_d\|_{\dot{H}^{m-2}}^2 + \| R_h^2 \tht \|_{\dot{H}^{m-2}}^2 - \int_{\Omega} v_d (-\Delta)^{m-3} R_h^4 \tht \,\ud x \right) + \frac 1{8M^2} \| R_h \tht \|_{\dot{H}^{m-1}}^2 \\
+ C (\| R_h v \|_{\dot{H}^{m-1}}^2 + \| R_h^2 \tht \|_{\dot{H}^{m-2}}^2 + \| v \|_{\dot{H}^{m-2}}^2) (\| v \|_{\dot{H}^{m}}^2 + \| R_h \tht \|_{\dot{H}^{m-1}}^2).
\end{gather*}
Here, we take $M = 1+\frac t{16}$. Then multiplying the both sides by $2M^2$ and using \eqref{vd_Hs_est}, we have
\begin{gather*}
\frac {\ud}{\ud t} \left( (1+\frac t{16})^2 (\| R_h v_d \|_{\dot{H}^{m-2}}^2 + \| R_h^2 \tht \|_{\dot{H}^{m-2}}^2 - \int_{\Omega} v_d (-\Delta)^{m-3} R_h^4 \tht \,\ud x) \right) \leq C \| R_h \tht \|_{\dot{H}^{m-1}}^2 \\
+ C (1+\frac t{16})^2 (\| R_h v \|_{\dot{H}^{m-1}}^2 + \| R_h^2 \tht \|_{\dot{H}^{m-2}}^2 + \| v \|_{\dot{H}^{m-2}}^2) (\| v \|_{\dot{H}^{m}}^2 + \| R_h \tht \|_{\dot{H}^{m-1}}^2).
\end{gather*}
From \eqref{Rhv_est_1}, it is clear $$C (1+\frac t{16})^2 (\| R_h v \|_{\dot{H}^{m-1}}^2 + \| R_h^2 \tht \|_{\dot{H}^{m-2}}^2) \leq C + C\sup_{\tau \in [0,t]} (1+\tau)^{2} \| R_h^2 \tht(\tau) \|_{\dot{H}^{m-2}}^2. $$ Using \eqref{v_L2}, \eqref{v_Hs_est_1} and the interpolation inequality gives 
\begin{align*}
(1+\frac t{16})^2 \| v \|_{\dot{H}^{m-2}}^2 &\leq (1+\frac t{16})^2 \| v \|_{L^2}^{\frac 4m} \| v \|_{\dot{H}^{m}}^{2-\frac 4m} \\
&\leq C(1+t)^{1-\frac 2m} \| v \|_{\dot{H}^{m}}^{2-\frac 4m} \\
&\leq C + C\sup_{\tau \in [0,t]} (1+\tau)^2 \| R_h^2 \tht(\tau) \|_{\dot{H}^{m-2}}^2.
\end{align*}
Therefore, 
\begin{gather*}
\frac {\ud}{\ud t} \left( (1+\frac t{16})^2 (\| R_h v_d \|_{\dot{H}^{m-2}}^2 + \| R_h^2 \tht \|_{\dot{H}^{m-2}}^2 - \int_{\Omega} v_d (-\Delta)^{m-3} R_h^4 \tht \,\ud x) \right) \\
\leq \left( C + C\sup_{\tau \in [0,t]} (1+\frac \tau{16})^2 (\| R_h v_d(\tau) \|_{\dot{H}^{m-2}}^2 + \| R_h^2 \tht(\tau) \|_{\dot{H}^{m-2}}^2) \right) (\| v \|_{\dot{H}^{m}}^2 + \| R_h \tht \|_{\dot{H}^{m-1}}^2).
\end{gather*}
We integrate it over time and use \eqref{smp_ineq} with $$\int_0^{\infty} (\| v \|_{\dot{H}^{m}}^2 + \| R_h \tht \|_{\dot{H}^{m-1}}^2) \,\ud t \leq C.$$ Then, for $$f(t) := \sup_{\tau \in [0,t]} (1+\frac \tau{16})^2 \left( \|R_h v_d(\tau) \|_{\dot{H}^{m-2}}^2 + \| R_h^2 \tht(\tau) \|_{\dot{H}^{m-2}}^2 \right),$$ it holds
\begin{gather*}
f(t) \leq C + \int_0^t f(\tau) (\| v \|_{\dot{H}^s}^2 +  \| R_h \tht \|_{\dot{H}^s}^2 + \| \nabla v_d \|_{L^{\infty}}) \,\ud \tau.
\end{gather*}
By Gr\"onwall's ineqaulity, we obtain \eqref{claim_1}. 

Now, we prove that 
\begin {equation}\label{vd_est_1}
\| v_d \|_{\dot{H}^m} \leq C(1+t)^{-1}.
\end{equation}
As showing \eqref{vd_est_u}, we can have
\begin{equation*}
\| v_d (t) \|_{\dot{H}^m} \leq C \| R_h^2 \Lambda^{-2} \tht(t) \|_{\dot{H}^{m}} + C \left( \sum_{\eta \in J} |\eta|^{2(m-2)} | \langle \mathscr{F} \mathbf{u},\mathbf{a}_{+} \rangle |^2 \right)^{\frac 12}.
\end{equation*}
Thus, it suffices to show
\begin{equation}\label{u+_est_1}
\left( \sum_{\eta \in J} |\eta|^{2(m-2)} | \langle \mathscr{F} \mathbf{u},\mathbf{a}_{+} \rangle |^2 \right)^{\frac 12} \leq C (1+t)^{-1}.
\end{equation}
We can see from \eqref{df_u+} that
\begin{equation*}
	\langle \mathscr{F}_b\bold{u}(t),\bold{a}_+ \rangle = e^{-\lambda_+ t} \langle \mathscr{F}\bold{u}_0,\bold{a}_+ \rangle - \int_0^t e^{-\lambda_+ (t - \tau)} \langle N(v,\tht)(\tau),\bold{a}_+ \rangle \,\mathrm{d}\tau .
\end{equation*}
Due to $|e^{-\lambda_+ t}| \leq e^{-\frac {|\eta|^{2}}2 t}$ for $\eta \in J$, it follows by the Minkowski inequality
\begin{gather*}
	\left( \sum_{\eta \in J} |\eta|^{2(m-2)} | \langle \mathscr{F} \mathbf{u},\mathbf{a}_{+} \rangle |^2 \right)^{\frac 12} \\
	\leq \left( \sum_{\eta \in J} |\eta|^{2(m-2)} e^{-|\eta|^{2}t} | \langle \mathscr{F} \mathbf{u}_0,\mathbf{a}_{+} \rangle |^2 \right)^{\frac 12} + \int_0^t \left( \sum_{\eta \in J} |\eta|^{2(m-2)}  e^{-|\eta|^{2}(t - \tau)} |\langle N(v,\tht)(\tau),\bold{a}_+ \rangle|^2 \right)^{\frac 12} \,\mathrm{d}\tau.
\end{gather*}
From the simple fact $|\mathbf{a}_{+}|^2 = |\lambda_{+}|^2 + \frac {|\tilde{n}|^4}{|\eta|^4} \leq C|\eta|^{4}$ with \eqref{sm_ass}, we have $$\left( \sum_{\eta \in J} |\eta|^{2(m-2)}  e^{-|\eta|^{2\alpha}t} | \langle \mathscr{F} \mathbf{u}_0,\mathbf{a}_{+} \rangle |^2 \right)^{\frac 12} \leq Ce^{-t} \| \mathbf{u}_0 \|_{H^m}.$$ By \eqref{N_est} we have
\begin{gather*}
	\int_0^t \left( \sum_{\eta \in J} |\eta|^{2(m-2)}  e^{-|\eta|^{2}(t - \tau)} |\langle N(v,\tht)(\tau),\bold{a}_+ \rangle|^2 \right)^{\frac 12} \,\mathrm{d}\tau \\
	\leq \int_0^t \left( \sum_{\eta \in J} e^{-|\eta|^{2}(t - \tau)} |\eta|^{2m} | \mathscr{F}(v \cdot \nabla)v |^2 \right)^{\frac 12} \,\mathrm{d}\tau + \int_0^t \left( \sum_{\eta \in J} e^{-|\eta|^{2}(t - \tau)} |\eta|^{2(m-2)} | \mathscr{F}_b(v \cdot \nabla)\tht |^2 \right)^{\frac 12} \,\mathrm{d}\tau\\
	\leq \int_0^t e^{-(t - \tau)} (\| (v \cdot \nabla)v (\tau) \|_{\dot{H}^{m}}  + \| (v \cdot \nabla)\tht (\tau) \|_{\dot{H}^{m-2}}) \,\mathrm{d}\tau.
\end{gather*}
We note $$\| (v \cdot \nabla)v \|_{\dot{H}^m} \leq C \| v \|_{\dot{H}^{m+1}} \| v \|_{\dot{H}^{m-2}} \leq C \| v \|_{\dot{H}^{m+1}} \| v \|_{\dot{H}^m}^{1-\frac {2}m} \| v \|_{L^2}^{\frac 2m}$$ and 
\begin{align*}
\| (v \cdot \nabla)\tht \|_{\dot{H}^{m-2}} &\leq \| v \|_{\dot{H}^{m-2}} \| \tht \|_{\dot{H}^{m-1}} \leq \| v \|_{\dot{H}^m}^{1-\frac {2}m} \| v \|_{L^2}^{\frac 2m} \| \tht \|_{H^m}.
\end{align*}
Combining \eqref{v_L2} and \eqref{v_Hs_1}, we can see 
\begin{gather*}
	(1+\tau) (\| (v \cdot \nabla)v(\tau) \|_{\dot{H}^m} + \| (v \cdot \nabla)\tht(\tau) \|_{\dot{H}^{m-2}}) \leq C (\| v \|_{\dot{H}^{m+1}} + \| \tht \|_{H^m}).
\end{gather*}
Thus,
\begin{gather*}
\int_0^t e^{-(t - \tau)} (\| (v \cdot \nabla)v (\tau) \|_{\dot{H}^{m}}  + \| (v \cdot \nabla)\tht (\tau) \|_{\dot{H}^{m-2}}) \,\mathrm{d}\tau \leq C (1+t)^{-1}.
 \end{gather*}
Collecting the above estimates, we obtain \eqref{u+_est_1}, which implies \eqref{vd_est_1}. This completes the proof.
\end{proof}

\begin{proposition}\label{prop_rmk}
	Let $d \in \mathbb{N}$ with $d \geq 2$ and $\alpha =1$. Let $m \in \mathbb{N}$ with $m > 3+\frac d2$ and $(v, \theta)$ be a smooth global solution to \eqref{EQ} with \eqref{sol_bdd_1}. Suppose that \eqref{sm_ass} be satisfied. Then, for any $\epsilon \in (0,1)$, there exists a constant $C > 0$ such that
	\begin{equation}\label{v_L2_1}
		\| \Lambda^{-\epsilon} v(t) \|_{L^2} \leq C (1+t)^{-(\frac 34 + \frac{m}4)}
	\end{equation}
	and
	\begin{equation}\label{v_Hs_1_1}
		\| \Lambda^{-\epsilon} v(t) \|_{\dot{H}^{m+1}} \leq C t^{-\frac 12}.
	\end{equation}
\end{proposition}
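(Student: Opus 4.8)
The idea is to prove the $L^{2}$-bound \eqref{v_L2_1} by splitting the velocity according to the horizontal frequency and estimating each piece through the decay rates already obtained in Theorem~\ref{thm1} and Proposition~\ref{prop_Hs_1}, and then to get \eqref{v_Hs_1_1} by running the same scheme at the $\dot H^{m+1}$–level, exploiting the extra derivative of dissipation available for $\alpha=1$. The intermediate bound $t^{3/4+(m-s)/4}\|v(t)\|_{H^{s-\epsilon}}\le C$ of the Remark after Theorem~\ref{thm1} then follows by interpolating \eqref{v_L2_1} and \eqref{v_Hs_1_1} (and absorbing the $\epsilon$'s).

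For \eqref{v_L2_1}, decompose mode by mode in $x_{h}$ as $v=v_{d}e_{d}+v_{h}^{\parallel}+v_{h}^{\perp}$, where $v_{h}^{\parallel},v_{h}^{\perp}$ are the parts of $v_{h}$ parallel and orthogonal to $\tilde n$; all relevant modes satisfy $|\tilde n|\ge 2\pi$ and $|\eta|\ge\pi/2$, so these are bounded projections commuting with $\Lambda$ and it suffices to estimate the three pieces. For $v_{d}$ we use the identity (from the proof of \eqref{vd_L2}, cf.\ \eqref{vd_est_u}) $\mathscr F_{b}v_{d}=\lambda_{+}^{-1}\tfrac{|\tilde n|^{2}}{|\eta|^{2}}\mathscr F_{b}\tht+\lambda_{+}^{-1}\langle\mathscr F_{b}\bfu,\bfa_{+}\rangle$ together with $|\lambda_{+}|\ge\tfrac12|\eta|^{2}$: the first summand contributes at most $C\|R_{h}^{2}\Lambda^{-2}\tht\|_{L^{2}}\lesssim(1+t)^{-(1+m/4)}$ by \eqref{vd_L2}, while for the second the Duhamel formula \eqref{df_u+} and the kernel bound \eqref{ker_est} reduce matters to the time–integral of $\langle N(v,\tht),\bfa_{+}\rangle$, which by \eqref{N_est} is controlled through $\|(v\cdot\nabla)v\|_{L^{2}}\le\|v\|_{L^{2}}\|\nabla v\|_{L^{\infty}}\lesssim(1+t)^{-(1+m/4)}$ (using \eqref{v_L2} and \eqref{v_Hs_1}) and, for the $(v\cdot\nabla)\tht$–part paired against the $\Lambda^{-2-\epsilon}$–weight, by writing $(v_{h}\cdot\nabla_{h})\tht=\nabla_{h}\!\cdot(v_{h}\bar\tht)+(\partial_{d}v_{d})\bar\tht$ and integrating by parts so that the slowly decaying factor $\nabla_{h}\bar\tht$ is traded for $\bar\tht$ itself (which decays by \eqref{tht_L2}). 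This gives $\|\Lambda^{-\epsilon}v_{d}(t)\|_{L^{2}}^{2}\lesssim(1+t)^{-(2+m/2)}$. For $v_{h}^{\perp}$ one notes that the horizontally divergence–free part of $v_{h}$ receives no forcing from $\mathbb P(\tht e_{d})$, hence solves $\partial_{t}v_{h}^{\perp}+(-\Delta)v_{h}^{\perp}+\bigl((v\cdot\nabla)v\bigr)_{h}^{\perp}=0$; Duhamel with the heat semigroup and $\|(v\cdot\nabla)v\|_{L^{2}}\lesssim(1+t)^{-(1+m/4)}$ give $\|\Lambda^{-\epsilon}v_{h}^{\perp}(t)\|_{L^{2}}^{2}\lesssim(1+t)^{-(2+m/2)}$.

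The decisive term is the parallel part $v_{h}^{\parallel}$, which by itself decays no faster than $v$ does in Theorem~\ref{thm1}; I expect this to be the main obstacle. Here one uses the divergence–free relation $\tilde n\cdot\mathscr F_{c}v_{h}=i\tilde q\,\mathscr F_{b}v_{d}$, which with $|\tilde q|\le|\eta|$ and $|\tilde n|\ge 2\pi$ yields $\|\Lambda^{-\epsilon}v_{h}^{\parallel}\|_{L^{2}}\le C\|\Lambda^{1-\epsilon}v_{d}\|_{L^{2}}$, that is, the parallel component is controlled by a \emph{positive}-order derivative of the fast-decaying $v_{d}$. Interpolating $\|v_{d}\|_{\dot H^{1-\epsilon}}\le\|v_{d}\|_{L^{2}}^{\,1-(1-\epsilon)/m}\|v_{d}\|_{\dot H^{m}}^{\,(1-\epsilon)/m}$ and inserting \eqref{vd_L2} and \eqref{vd_Hs_1} gives $\|\Lambda^{-\epsilon}v_{h}^{\parallel}(t)\|_{L^{2}}^{2}\lesssim(1+t)^{-(3/2+\epsilon/2+m/2)}$; this term governs the rate, and the gain of an extra half power over Theorem~\ref{thm1} is precisely what the interpolation of $v_{d}$ between $L^{2}$ and $\dot H^{m}$ produces. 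Adding the three pieces gives $\|\Lambda^{-\epsilon}v(t)\|_{L^{2}}^{2}\lesssim(1+t)^{-(3/2+m/2)}$, which is \eqref{v_L2_1}.

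For \eqref{v_Hs_1_1} the same scheme is carried out at the top level. Since $\alpha=1$ the $v$–equation is genuinely parabolic, and a $\sqrt t$–weighted energy estimate for $\|\Lambda^{-\epsilon}v\|_{\dot H^{m+1}}=\|v\|_{\dot H^{m+1-\epsilon}}$ — absorbing the top-order nonlinear term into the $\|\Lambda^{-\epsilon}v\|_{\dot H^{m+2}}^{2}$–dissipation by \eqref{sm_ass}, and bounding the $\tht$–forcing by the time–integrated bounds $\int_{0}^{\infty}\bigl(\|\nabla v\|_{H^{m}}^{2}+\|\nabla_{h}\tht\|_{H^{m-2}}^{2}\bigr)\,\ud t<\infty$ from \eqref{sol_bdd_1} together with the decay of $v,v_{d},\tht$ — gives $t^{1/2}\|v(t)\|_{\dot H^{m+1-\epsilon}}\le C$, and then the Remark follows by interpolation with \eqref{v_L2_1}. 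The hardest point throughout (and the reason the $\Lambda^{-\epsilon}$ is needed rather than a plain $L^{2}$ or $\dot H^{m+1}$ estimate) is again the $\tht$–forcing: the crude bound $\|\Lambda^{-k}((v\cdot\nabla)\tht)\|_{L^{2}}\lesssim\|(v\cdot\nabla)\tht\|_{L^{2}}$ decays only like $(1+t)^{-(1/2+m/4)}$, reproducing the Theorem~\ref{thm1} rate with no gain, so one must systematically use the negative-order weights via the integration-by-parts device above to replace $\nabla_{h}\bar\tht$ by $\bar\tht$.
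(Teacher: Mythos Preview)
Your argument for \eqref{v_L2_1} is correct and takes a genuinely different route from the paper's. The paper does not decompose $v_h$; instead it applies Duhamel directly to the $v_h$-equation \eqref{df_vh} (having noted that $\|v_d\|_{L^2}$ is already fast enough by \eqref{vd_L2}) and estimates the $\theta$-forcing $\int_0^t e^{-|\eta|^2(t-\tau)}\tfrac{|\tilde n|}{|\eta|^{1+\epsilon}}|\mathscr F_b\theta|\,d\tau$ by splitting the time integral at $t/2$ and using heat-kernel smoothing on $[t/2,t]$ to reduce to $\|R_h\Lambda^{-(2+\epsilon/2)}\theta\|_{L^2}$, which is then interpolated between \eqref{v_L2} and \eqref{vd_L2}. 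Your approach is more algebraic: the divergence-free identity $\tilde n\cdot\mathscr F_c v_h=i\tilde q\,\mathscr F_b v_d$ together with $|\tilde n|\ge 2\pi$ lets you bypass the linear forcing entirely for $v_h^\parallel$ and reduce to interpolating the already-established $v_d$ bounds \eqref{vd_L2} and \eqref{vd_Hs_1}. (Incidentally, your route seems to work even at $\epsilon=0$, whereas the paper's kernel estimate needs $\epsilon>0$ for integrability.) Note that for $v_d$ itself you do not need the machinery you describe; \eqref{vd_L2} already gives $\|v_d\|_{L^2}\le C(1+t)^{-(1+m/4)}$, which is more than enough.

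Your sketch for \eqref{v_Hs_1_1}, however, has a genuine gap. A $\sqrt t$-weighted energy estimate at the $\dot H^{m+1-\epsilon}$ level cannot close: after pairing the $\theta$-forcing with $v$ and using Young's inequality one is left with $\|\nabla_h\theta\|_{\dot H^{m-1-\epsilon}}^2$, whose best available decay is $(1+t)^{-\epsilon}$ (interpolating $\|\nabla_h\theta\|_{\dot H^{m-2}}\le C(1+t)^{-1/2}$ from \eqref{v_Hs_1} against $\|\theta\|_{H^m}\le C$), and $\int_0^T t(1+t)^{-\epsilon}\,dt\sim T^{2-\epsilon}$ diverges. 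The integration-by-parts device you mention for $(v\cdot\nabla)\theta$ does not help here since the obstruction is the \emph{linear} term $\mathbb P(\theta e_d)$. The paper instead applies Duhamel to the full $v$-equation and uses the parabolic smoothing $e^{-|\eta|^2(t-\tau)}|\eta|^{2-\epsilon}\le C(t-\tau)^{-(1-\epsilon/2)}e^{-(t-\tau)/2}$ to trade two derivatives (minus $\epsilon$) for time decay on both the initial data (which is only in $H^m$) and the $\theta$-term, reducing the latter to $\|R_h\theta\|_{\dot H^{m-1}}\le C(1+\tau)^{-1/2}$; the $\epsilon>0$ is precisely what makes the time kernel integrable at $\tau=t$. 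Your parallel/perpendicular decomposition does not help at this regularity either, since $\|v_h^\parallel\|_{\dot H^{m+1-\epsilon}}$ is controlled only by $\|v_d\|_{\dot H^{m+2-\epsilon}}$, which is unavailable.
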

\begin{proof}
Since \eqref{vd_L2} implies $\| \Lambda^{-\epsilon} v_d(t) \|_{L^2} \leq C t^{-(\frac 34 + \frac{m}4)}$, it suffices to show that $$\| \Lambda^{-\epsilon} v_h(t) \|_{L^2} \leq C t^{-(\frac 34 + \frac{m}4)}.$$ Applying Duhamel's principle to \eqref{df_vh} with \eqref{avg_vh}, we obtain
\begin{equation*}
	\begin{gathered}
		\left( \sum_{\eta \in I} |\eta|^{-2\epsilon} |\mathscr{F}_c v_h|^2 \right)^{\frac 12} \\
		\leq e^{- t} \| v_0 \|_{L^2} + \int_0^t e^{-(t-\tau)} \| (v \cdot \nabla)v \|_{L^2} \,\ud \tau + \left( \sum_{\eta \in I} \left| \int_0^t e^{-|\eta|^2 (t-\tau)} \frac {|\tilde{n}|}{|\eta|^{1+\epsilon}} |\mathscr{F}_b \tht (\eta)| \,\ud \tau \right|^2 \right)^{\frac 12}.
	\end{gathered}
\end{equation*}
for any $\epsilon \in (0,1)$. We clearly have by \eqref{v_L2} and \eqref{v_Hs_1} that $$\int_0^t e^{-(t-\tau)} \| (v \cdot \nabla)v \|_{L^2}^2 \,\ud \tau \leq \int_0^t e^{-(t-\tau)} \| v \|_{L^2} \| v \|_{H^m}^2 \,\ud \tau \leq C(1+t)^{-(2+\frac m2)}.$$ On the other hand, we can see 
\begin{gather*}
	\left| \int_0^t e^{-|\eta|^2 (t-\tau)} \frac {|\tilde{n}|}{|\eta|^{1+\epsilon}} |\mathscr{F}_b \tht (\eta)| \,\ud \tau \right|^2 \\
	\leq C \left| e^{-\frac t2} \int_0^{\frac t2} |\mathscr{F}_b \tht (\eta)| \,\ud \tau \right|^2 + C \left| \int_{\frac t2} ^t (t-\tau)^{-(1-\frac \epsilon4)} \frac {|\tilde{n}|}{|\eta|^{3+\frac \epsilon2}} |\mathscr{F}_b \tht (\eta)| \,\ud \tau \right|^2.
\end{gather*} Thus, we have
\begin{gather*}
	\left( \sum_{\eta \in I} \left| \int_0^t e^{-|\eta|^2 (t-\tau)} \frac {|\tilde{n}|}{|\eta|^{1+\epsilon}} |\mathscr{F}_b \tht (\eta)| \,\ud \tau \right|^2 \right)^{\frac 12} \\
	\leq C \left( \sum_{\eta \in I} \left| e^{-\frac t2} \int_0^{\frac t2} |\mathscr{F}_b \tht (\eta)| \,\ud \tau \right|^2 \right)^{\frac 12} + C \left( \sum_{\eta \in I} \left| \int_{\frac t2} ^t (t-\tau)^{-(1-\frac \epsilon4)} \frac {|\tilde{n}|}{|\eta|^{3+\frac \epsilon2}} |\mathscr{F}_b \tht (\eta)| \,\ud \tau \right|^2 \right)^{\frac 12} \\
	\leq C t e^{-\frac t2} + C \int_{\frac t2}^t (t-\tau)^{-(1-\frac \epsilon4)} \| R_h \Lambda^{-(2+\frac \epsilon2)} \tht \|_{L^2} \,\ud \tau.
\end{gather*}
We can infer from \eqref{v_L2} and \eqref{vd_L2} that $$\| R_h \Lambda^{-(2+\frac \epsilon2)} \tht \|_{L^2} \leq \| R_h^{\frac 32 + \frac \epsilon 4} \Lambda^{-(\frac 32 + \frac \epsilon 4)} \tht \|_{L^2} \leq C(1+t)^{-(\frac 32 + \frac \epsilon 4 +\frac m2)}.$$ Since this implies $$\int_{\frac t2}^t (t-\tau)^{-(1-\frac \epsilon4)} \| R_h \Lambda^{-(2+\frac \epsilon2)} \tht \|_{L^2} \,\ud \tau \leq C(1+t)^{-(\frac 32 + \frac m2)},$$ combining the above estimates gives \eqref{v_L2_1}.

Using \eqref{avg_vh} and \eqref{avg_vd}, we can infer from \eqref{df_vh} and \eqref{df_vd} that
\begin{equation*}
	\begin{gathered}
		\left( \sum_{\eta \in I} |\eta|^{2(m+1-\epsilon)} |\mathscr{F} v|^2 \right)^{\frac 12} \leq \left( \sum_{\eta \in I} e^{-|\eta|^2t} |\eta|^{2(m+1-\epsilon)} |\mathscr{F} v_0|^2 \right)^{\frac 12} \\
		+ \left( \sum_{\eta \in I} \left| \int_0^t e^{-|\eta|^2 (t-\tau)} |\eta|^{m+1-\epsilon} | \mathscr{F} (v \cdot \nabla)v | \,\ud \tau \right|^2 \right)^{\frac 12} + \left( \sum_{\eta \in I} \left| \int_0^t e^{-|\eta|^2 (t-\tau)} |\tilde{n}| |\eta|^{m-\epsilon} |\mathscr{F}_b \tht (\eta)| \,\ud \tau \right|^2 \right)^{\frac 12}
	\end{gathered}
\end{equation*}
for any $\epsilon \in (0,1)$. We can see $$\left( \sum_{\eta \in I} e^{-|\eta|^2t} |\eta|^{2(m+1-\epsilon)} |\mathscr{F} v_0|^2 \right)^{\frac 12} \leq Ct^{-\frac{1-\epsilon}2} e^{-\frac t2} \| v_0 \|_{H^m}.$$ Since
\begin{align*}
	\left( \sum_{\eta \in I} \left| \int_0^t e^{-|\eta|^2 (t-\tau)} |\eta|^{m+1-\epsilon} | \mathscr{F} (v \cdot \nabla)v | \,\ud \tau \right|^2 \right)^{\frac 12} &\leq C\int_0^t (t-\tau)^{-\frac {2-\epsilon}2} e^{- \frac {t-\tau}2} \| (v \cdot \nabla)v \|_{\dot{H}^{m-1}} \,\ud \tau \\
	&\leq C \int_0^t (t-\tau)^{-\frac {2-\epsilon}2} e^{- \frac {t-\tau}2} \| v \|_{\dot{H}^m}^2 \,\ud \tau
\end{align*}
and
\begin{align*}
	\left( \sum_{\eta \in I} \left| \int_0^t e^{-|\eta|^2 (t-\tau)} |\tilde{n}| |\eta|^{m-\epsilon} |\mathscr{F}_b \tht (\eta)| \,\ud \tau \right|^2 \right)^{\frac 12} &\leq C\int_0^t (t-\tau)^{-\frac {2-\epsilon}2} e^{- \frac {t-\tau}2} \| R_h \tht \|_{\dot{H}^{m-1}} \,\ud \tau,
\end{align*}
we have by \eqref{v_Hs_1} that
\begin{gather*}
	\left( \sum_{\eta \in I} \left| \int_0^t e^{-|\eta|^2 (t-\tau)} |\eta|^{m+1-\epsilon} | \mathscr{F} (v \cdot \nabla)v | \,\ud \tau \right|^2 \right)^{\frac 12} + \left( \sum_{\eta \in I} \left| \int_0^t e^{-|\eta|^2 (t-\tau)} |\tilde{n}| |\eta|^{m-\epsilon} |\mathscr{F}_b \tht (\eta)| \,\ud \tau \right|^2 \right)^{\frac 12} \\
	\leq Ct^{-\frac 12}.
\end{gather*}
Collecting the above estimates gives \eqref{v_Hs_1_1}. This completes the proof.
\end{proof}

\section{Sharpness of decay rates}\label{sec:sharp}
In this section, we prove that the decay rates in Theorem~\ref{thm1} and \ref{thm2} are sharp in the following sense. We recall the linearized system of \eqref{UEQ}:
\begin{equation}\label{LUEQ}
	\partial_t \mathscr{F}_b\bold{u} + M \mathscr{F}_b \bold{u} = 0, \qquad M := \begin{pmatrix}
		|\eta|^{2\alpha} & -\frac {|\tilde{n}|^2}{|\eta|^2} \\
		1 & 0
	\end{pmatrix},
\end{equation}
where $\mathbf{u} = (v_d,\tht)^{T}$. The eigenvalues and eigenvectors of the linear operator is previously given by
\begin{equation*}
	\lambda_{\pm}(\eta) = \frac {|\eta|^{2\alpha} \pm \sqrt{|\eta|^{4\alpha} - {4 |\tilde{n}|^2}/{|\eta|^2}}}{2}, \qquad \overline{\bold{a}_{\pm} (\eta)} = 
	\begin{pmatrix}
		\lambda_{\pm} \\
		-\frac {|\tilde{n}|^2}{|\eta|^2}
	\end{pmatrix} ,
\end{equation*}
and it holds
\begin{equation}\label{df_lu}
	\mathscr{F}_b \mathbf{u} = \sum_{j = \pm} \langle \mathscr{F}_b\bold{u}(t),\bold{a}_j \rangle \bold{b}_j = \sum_{j=\pm} e^{-\lambda_j t} \langle \mathscr{F}\bold{u}_0,\bold{a}_j \rangle \bold{b}_j,
\end{equation}
where
\begin{equation*}
	\begin{pmatrix}
		\bold{b}_+ \\
		\bold{b}_-
	\end{pmatrix} =
	\frac {1}{\lambda_{+}-\lambda_{-}} 
	\begin{pmatrix}
		1 & \frac {| \eta|^2}{|\tilde{n}|^2} \lambda_{-} \\
		-1 & -\frac {| \eta|^2}{|\tilde{n}|^2} \lambda_{+}
	\end{pmatrix}.
\end{equation*}
Note that If we consider $\mathbf{u}_0$ such that $\mathscr{F}_b \mathbf{u}_0 = 0$ for $\eta \not\in D_3$, then $|\mathbf{a}_{\pm}||\mathbf{b}_{\pm}| \leq C$ for some $C>0$ not depending on $\eta$. Now, we are ready to provide the sharpness of the decay rates.
\begin{proposition}
	Let $m \in \bbN$. Then for any $\epsilon > 0$, there exists an initial data $\mathbf{u}_0 \in X^m(\Omega)$ such that the solution $\mathbf{u}=(v_d,\tht)$ to \eqref{LUEQ} satisfies
	\begin{equation}\label{tht_lin_decay}
		\| \bar{\tht}(t) \|_{H^s} \geq Ct^{-\frac {m-s}{2(1+\alpha)} - \epsilon}
	\end{equation}
	and
	\begin{equation}\label{vd_lin_decay}
		\| v_d(t) \|_{H^s} \geq Ct^{-1-\frac {m-s}{2(1+\alpha)} - \epsilon}
	\end{equation}
	for any $s \in [0,m]$ and $t \geq C$.
\end{proposition}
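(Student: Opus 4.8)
The plan is to show that the linear decay rates $(1+t)^{-\frac{m-s}{2(1+\alpha)}}$ for $\bar\tht$ and $(1+t)^{-1-\frac{m-s}{2(1+\alpha)}}$ for $v_d$ established in Lemma~\ref{lem_lin} cannot be algebraically improved, by exhibiting a concentrating family of frequencies and choosing $\mathbf{u}_0$ supported on them. First I would restrict attention to the set $D_3 = \{\eta \in J : |\eta|^{4\alpha} - 4|\tilde n|^2/|\eta|^2 \geq \tfrac14 |\eta|^{4\alpha}\}$, since on $D_3$ we have $|\mathbf{a}_\pm||\mathbf{b}_\pm| \leq C$ uniformly in $\eta$, so the solution formula \eqref{df_lu} is harmless and the only relevant damping is $|e^{\lambda_- t}| \asymp e^{-\frac{|\tilde n|^2}{|\eta|^{2+2\alpha}}t}$ from \eqref{ker_est}. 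On $D_3$ the eigenvalue $\lambda_-$ is real, $\lambda_- \asymp |\tilde n|^2/|\eta|^{2+2\alpha}$, and from \eqref{df_tht_1} (or directly \eqref{df_lu}) the $\tht$-component of the evolution behaves like $e^{-\lambda_- t}$ times the projection of $\mathbf{u}_0$ onto $\mathbf{a}_-$, up to the bounded factor coming from $\langle \mathbf{b}_-,e_2\rangle$; similarly the $v_d$-component carries an extra factor $\langle \mathbf{b}_-,e_1\rangle = \frac{|\tilde n|^2}{|\lambda_+||\eta|^2}\langle \mathbf{b}_-,e_2\rangle \asymp \frac{|\tilde n|^2}{|\eta|^{2+2\alpha}}$ for $\eta \in D_3$, which is exactly the source of the extra $(1+t)^{-1}$ in \eqref{vd_lin_decay}.

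The core of the argument is then a single-mode (or lacunary) calculation. Choose a sequence of frequencies $\eta_k = (\tilde n_k, \tilde q_k) \in D_3$ with $|\eta_k| \to \infty$ along which $\lambda_-(\eta_k) \asymp |\eta_k|^{-2(1+\alpha)}$ (for instance $|\tilde n_k| \asymp 1$ fixed and $|\tilde q_k| = \tfrac{\pi}{2}q_k \to \infty$, so $|\eta_k| \asymp |\tilde q_k|$ and $\lambda_-(\eta_k) \asymp |\eta_k|^{-2(1+\alpha)}$). Set $\widehat{\mathbf{u}_0}$ to be supported on $\{\eta_k\}$ with $\langle \mathscr F_b\mathbf{u}_0(\eta_k),\mathbf{a}_-(\eta_k)\rangle / |\mathbf{a}_-(\eta_k)|$ of size $|\eta_k|^{-m}\, k^{-\beta}$ for a small exponent $\beta = \beta(\epsilon) > 1/2$ chosen so that $\mathbf{u}_0 \in X^m$ (this needs $\sum_k |\eta_k|^{2m}\cdot |\eta_k|^{-2m} k^{-2\beta} < \infty$, i.e. $\beta > 1/2$, and one also checks $\partial_d^{2j}\mathbf{u}_0|_{\partial\Omega}=0$ since each basis function $\mathscr B_{n,q}$ already lies in $X^m$). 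Then
\[
\| \bar\tht(t) \|_{\dot H^s}^2 \gtrsim \sum_k |\eta_k|^{2s}\, e^{-2\lambda_-(\eta_k)t}\, |\eta_k|^{-2m} k^{-2\beta}
\gtrsim \sum_k |\eta_k|^{-2(m-s)}\, e^{-c\,|\eta_k|^{-2(1+\alpha)}t}\, k^{-2\beta}.
\]
Optimizing the sum over $k$ by keeping only the single term with $|\eta_k|^{2(1+\alpha)} \asymp t$ — which forces $|\eta_k| \asymp t^{1/(2(1+\alpha))}$ and hence $k \asymp t^{1/(2(1+\alpha))}$ with the indexing above — gives a lower bound $\gtrsim t^{-\frac{m-s}{1+\alpha}}\, t^{-\frac{2\beta}{2(1+\alpha)}}$, i.e. $\| \bar\tht(t)\|_{H^s} \gtrsim t^{-\frac{m-s}{2(1+\alpha)} - \frac{\beta}{2(1+\alpha)}}$, which yields \eqref{tht_lin_decay} after absorbing $\frac{\beta}{2(1+\alpha)}$ into $\epsilon$ (any $\epsilon>0$ is attainable by taking $\beta$ close to $1/2$). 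The estimate \eqref{vd_lin_decay} follows identically, carrying the additional factor $\langle\mathbf{b}_-,e_1\rangle \asymp |\eta_k|^{-2\alpha}\cdot|\tilde n_k|^2/|\eta_k|^2 \asymp |\eta_k|^{-2(1+\alpha)}$ inside the sum, which contributes an extra $|\eta_k|^{-4(1+\alpha)} \asymp t^{-2}$ at the optimizing scale — note that for $\eta\in D_3$ one has $|\tilde n|\asymp|\tilde n_k|\asymp 1$, so this factor is genuinely of size $|\eta_k|^{-2(1+\alpha)}$ and not smaller.

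The main obstacle is arranging the bookkeeping so that the \emph{same} initial datum witnesses the lower bound simultaneously for all $s \in [0,m]$ and all large $t$, rather than needing a $t$- or $s$-dependent datum; this is handled by the polynomial weight $|\eta_k|^{-m}k^{-\beta}$, which is tuned to the top regularity $m$, together with the observation that at the optimizing scale $|\eta_k|^s = |\eta_k|^m\cdot|\eta_k|^{s-m}$ so the $\dot H^s$ norm just loses the expected power $|\eta_k|^{s-m} \asymp t^{(s-m)/(2(1+\alpha))}$ relative to the $\dot H^m$ norm. A secondary point requiring a little care is verifying that restricting the lower bound to a single term of the sum is legitimate (it is, since all terms are nonnegative) and that one really can realize such $\mathbf{u}_0$ as an element of $X^m(\Omega)$ with the prescribed pairing against $\mathbf{a}_-$ — here one simply sets $\mathscr F_b\mathbf{u}_0(\eta_k)$ to be a scalar multiple of $\overline{\mathbf{a}_-(\eta_k)}/|\mathbf{a}_-(\eta_k)|$, so that $\langle\mathscr F_b\mathbf{u}_0(\eta_k),\mathbf{a}_-(\eta_k)\rangle$ has the desired magnitude and $\langle\mathscr F_b\mathbf{u}_0(\eta_k),\mathbf{a}_+(\eta_k)\rangle$ is comparable (the two eigenvectors are uniformly non-parallel on $D_3$), so no cancellation degrades the bound. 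Everything else is the routine summation-by-single-term optimization already used in the proof of Lemma~\ref{lem_lin}, run in reverse.
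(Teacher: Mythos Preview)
Your overall strategy matches the paper's: restrict to $D_3$ with $|\tilde n|$ fixed (the paper takes $|n|=1$), exploit $\lambda_-\asymp|\tilde n|^2/|\eta|^{2(1+\alpha)}$ there, and choose coefficients decaying just barely fast enough for $H^m$ membership. The paper's concrete choice is $\mathbf{u}_0=(0,\theta_0)$ with $\mathscr{F}_b\theta_0(\eta)=|q|^{-(m+\frac12+\epsilon)}$ on $D_3\cap\{|n|=1\}$, which is essentially your construction with $\beta=\tfrac12+\epsilon$ and a different (harmless) normalization of the initial vector.

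There is, however, a genuine gap in your optimization step. With the linear indexing $q_k=k$ and a \emph{single} retained term at $k\asymp t^{1/(2(1+\alpha))}$, you obtain
\[
\|\bar\theta(t)\|_{H^s}\ \gtrsim\ t^{-\frac{m-s}{2(1+\alpha)}-\frac{\beta}{2(1+\alpha)}},
\]
and since $\beta>\tfrac12$, the loss $\frac{\beta}{2(1+\alpha)}$ is bounded \emph{below} by $\frac{1}{4(1+\alpha)}$. Your claim that ``any $\epsilon>0$ is attainable by taking $\beta$ close to $1/2$'' is therefore false: only $\epsilon>\frac{1}{4(1+\alpha)}$ is reached. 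What you are missing is that the window $k\asymp t^{1/(2(1+\alpha))}$ contains $\asymp t^{1/(2(1+\alpha))}$ integers, each contributing a term of the same order to the squared norm; summing over the whole window rather than one term multiplies the lower bound for $\|\bar\theta\|_{H^s}^2$ by $t^{1/(2(1+\alpha))}$ and improves the exponent to $-\frac{m-s}{2(1+\alpha)}-\frac{2\beta-1}{4(1+\alpha)}$, which now \emph{does} tend to the sharp value as $\beta\downarrow\tfrac12$. This is exactly what the paper does: it bounds the sum over $q$ below by an integral and rescales $\tilde\tau=\tau\,t^{-1/(2(1+\alpha))}$ to extract precisely this factor of $t^{1/(2(1+\alpha))}$. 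Alternatively, had you actually implemented the lacunary option $q_k=2^k$ that you mention parenthetically, then at the optimizing scale $k\asymp\log t$ and the weight $k^{-2\beta}$ costs only a logarithmic factor, which is also enough; but with the linear indexing you wrote out, the single-term estimate is short by a fixed power of $t$ and does not prove the proposition for small $\epsilon$.
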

\begin{proof}
	Let $\epsilon > 0$ and $$\mathbf{u}_0 := (0, \sum_{\eta \in J} \mathscr{F}_b \tht_0(\eta) \mathscr{B}_{\eta}(x))^{T}, $$ where $ \mathscr{F}_b \tht_0(\eta) := |q|^{-(m+\frac 12 + \epsilon)}$ for $\eta \in D_3 \cap \{|n| = 1\}$, $\mathscr{F}_b \tht_0(\eta) = 0$ for $\eta \not\in D_3 \cap \{|n| = 1\}$. For simplicity, we use the notation $A:= D_3 \cap  \{|n| = 1\}$. We show \eqref{tht_lin_decay} first. From \eqref{df_lu}, we can see $$\| \bar{\tht} \|_{H^s} \geq \| e^{-\lambda_{-}t} \langle \mathscr{F}_b\bold{u}_0,\bold{a}_- \rangle \langle \bold{b}_-,e_2 \rangle \|_{H^s} - \| e^{-\lambda_{+}t} \langle \mathscr{F}_b\bold{u}_0,\bold{a}_+ \rangle \langle \bold{b}_+, e_2 \rangle \|_{H^s}.$$ Due to $|e^{-\lambda_{+}t}| \leq e^{-|\eta|^{2\alpha} \frac t2} \leq e^{-\frac t2}$ it is clear that 
	\begin{equation}\label{+_est}
		\| e^{-\lambda_{+}t} \langle \mathscr{F}_b\bold{u}_0,\bold{a}_+ \rangle \langle \bold{b}_+, e_2 \rangle \|_{H^s} \leq Ce^{-\frac t2}.
	\end{equation}
	On the other hand, we have $$|e^{-\lambda_{-}t} \langle \mathscr{F}_b\bold{u}_0,\bold{a}_- \rangle \langle \bold{b}_-,e_2 \rangle| = C|e^{-\lambda_{-}t} \mathscr{F}_b \tht| \geq Ce^{-\frac {2|\tilde{n}|^2}{|\eta|^{2(1+\alpha)}} t} |q|^{-(m+\frac 12 + \epsilon)}.$$ Thus, 
	\begin{align*}
		\|e^{-\lambda_{-}t} \langle \mathscr{F}_b\bold{u}_0,\bold{a}_- \rangle \langle \bold{b}_-,e_2 \rangle\|_{H^s} &\geq C\left( \sum_{J \in A} e^{-\frac {4|\tilde{n}|^2}{|\eta|^{2(1+\alpha)}}t} |q|^{-(2(m-s)+ 1 + 2\epsilon)} \right)^{\frac 12} \\
		&\geq C\left( \sum_{|q| \geq C_1} e^{-\frac {Ct}{q^{2(1+\alpha)}}} |q|^{-(2(m-s)+ 1 + 2\epsilon)} \right)^{\frac 12} \\
		&\geq Ct^{-\frac {m-s}{2(1+\alpha)}-\frac {1+2\epsilon}{4(1+\alpha)}} \left( \sum_{|q| \geq C_1} e^{-\frac {Ct}{q^{2(1+\alpha)}}} (\frac {Ct}{|q|^{2(1+\alpha)}})^{\frac {m-s}{1+\alpha}+\frac {1+2\epsilon}{2(1+\alpha)}} \right)^{\frac 12} ,
	\end{align*}
	for some $C_1>0$. Let $$f(\tau) := e^{-\frac {Ct}{|\tau|^{2(1+\alpha)}}} (\frac {Ct}{|\tau|^{2(1+\alpha)}})^{\frac {m-s}{1+\alpha}+\frac {1+2\epsilon}{2(1+\alpha)}}.$$ Then, we can verify that there exists $C_2 \geq C_1$ not depending on $t$ such that $f(\tau)$ is decreasing on the interval $(C_2 t^{\frac 1{2(1+\alpha)}},\infty)$. Thus, it holds for $t \geq 1$ that
	\begin{align*}
		\sum_{|q| \geq C_1} e^{-\frac {Ct}{|q|^{2(1+\alpha)}}} (\frac {Ct}{|q|^{2(1+\alpha)}})^{\frac {m-s}{1+\alpha}+\frac {1+2\epsilon}{2(1+\alpha)}} \geq \int_{|\tau| \geq C_2 t^{\frac 1{2(1+\alpha)}}} f(\tau) \,\ud \tau.
	\end{align*}
	By the change of variable $\tilde{\tau} = \tau t^{-\frac 1{2(1+\alpha)}}$, we can see
	\begin{align*}
		\int_{|\tau| \geq C_2 t^{\frac 1{2(1+\alpha)}}} f(\tau) \,\ud \tau = t^{\frac 1{2(1+\alpha)}} \int_{|\tilde{\tau}| \geq C_2} e^{-\frac 1{|\tilde{\tau}|^{2(1+\alpha)}}} |\tilde{\tau}|^{-(2(m-s)+1+2\epsilon)} \,\ud \tilde{\tau} \geq C
	\end{align*} for some $C>0$. 
	Combining the above yields $$\|e^{-\lambda_{-}t} \langle \mathscr{F}_b\bold{u}_0,\bold{a}_- \rangle \langle \bold{b}_-,e_2 \rangle\|_{H^s} \geq Ct^{-\frac {m-s+\epsilon}{2(1+\alpha)}}.$$ Therefore, \eqref{tht_lin_decay} is obtained.
	
	The proof of \eqref{vd_lin_decay} is similar with the previous one. By \eqref{df_lu} and \eqref{+_est}, it holds $$\| v_d \|_{H^s} \geq \| e^{-\lambda_{-}t} \langle \mathscr{F}_b\bold{u}_0,\bold{a}_- \rangle \langle \bold{b}_-,e_1 \rangle \|_{H^s} - Ce^{-\frac t2}.$$ Note that $$|e^{-\lambda_{-}t} \langle \mathscr{F}_b\bold{u}_0,\bold{a}_- \rangle \langle \bold{b}_-,e_1 \rangle| = \frac {|\tilde{n}|^2}{|\eta|^{2(1+\alpha)}} |e^{-\lambda_{-}t} \mathscr{F}_b \tht| \geq  \frac {|\tilde{n}|^2}{|\eta|^{2(1+\alpha)}}e^{-\frac {|\tilde{n}|^2}{|\eta|^{2(1+\alpha)}} \frac t2} |q|^{-(m+\frac 12 + \epsilon)}.$$ Using that $\frac {|\tilde{n}|^2}{|\eta|^{2(1+\alpha)}} \geq \frac {C}{|q|^{2(1+\alpha)}}$ for all $\eta \in A$, repeating the above procedures, we obtain \eqref{vd_lin_decay}. This completes the proof.
\end{proof}

\subsection*{Competing Interests}
 The authors have no competing interests to declare that are relevant to the content of this article.
\subsection*{Data Availability}
Data sharing not applicable to this article as no datasets were generated or analysed during the current study.

\section*{Acknowledgement}
 J. Jang's 
research is supported in part by the NSF DMS-grant 2009458. J. Kim is supported by a KIAS Individual Grant (MG086501) at Korea Institute for Advanced Study.  
\noindent 
\bibliographystyle{amsplain}


\end{document}